\documentclass{article}
\usepackage{graphicx}
\usepackage[margin=2cm]{geometry} 
\usepackage[english]{babel}

\usepackage{multirow}
\usepackage{amsmath,amsthm,amsfonts,amssymb,mathtools}
\usepackage{mathrsfs}
\usepackage{bbm}
\usepackage{amsthm}
\usepackage{mathrsfs}
\usepackage[title]{appendix}
\usepackage{xcolor}
\usepackage[colorlinks,linkcolor=blue,citecolor=blue,urlcolor=blue]{hyperref}

\usepackage[
  backend=biber,
  natbib=true,
  giveninits=true,
  style=numeric,
  doi=true,
  url=false,
  isbn=false
]{biblatex}
\DeclareNameAlias{default}{family-given}

\usepackage{cleveref}
\usepackage{parskip}
\usepackage{tikz,pgfplots}
\usepackage{caption}
\usepackage{subcaption}

\usepackage{float}
\usepackage[shortlabels]{enumitem}
\usepackage{doi}
\setenumerate{label=(\roman*),itemsep=3pt,topsep=3pt}
\pgfplotsset{compat=1.18}

\pgfmathdeclarefunction{sinc}{1}{%
  \pgfmathparse{ (abs(#1) < 1e-8) ? 1 : sin(#1)/#1 }%
}
\pgfmathdeclarefunction{fejer}{2}{%
  \pgfmathparse{ #1 * ( (sinc(#1*#2/2)) / (sinc(#2/2)) )^2 }%
}

\pgfplotscreateplotcyclelist{fejercolors}{%
  {red!70!black,  thick},
  {blue!60!black,   thick},
  {purple!50!black,  thick},
}

\numberwithin{equation}{section}
\theoremstyle{plain}

\newtheorem{theorem}{Theorem}[section]
\newtheorem{lemma}[theorem]{Lemma}
\newtheorem{corollary}[theorem]{Corollary}
\newtheorem{proposition}[theorem]{Proposition}
\theoremstyle{definition}

\newtheorem{assumption}[theorem]{Assumption}
\newtheorem{remark}[theorem]{Remark}

\makeatother

\theoremstyle{remark}

\newcounter{casecount}
\newtheorem{case}[casecount]{Case}
\AtBeginEnvironment{proof}{\setcounter{casecount}{0}}

\newcounter{stepcount}
\newtheorem{step}[stepcount]{Step}
\AtBeginEnvironment{proof}{\setcounter{stepcount}{0}}

\newcommand{\gspzero}{g_{\mathrm{sp}}}
\newcommand{\gsplambda}{g_{\mathrm{sp}, \lambda}}

\newcommand{\gtemp}{g_{\mathrm{te}}}
\newcommand{\ratio}{\alpha}
\newcommand{\sarea}[1][d-1]{\sigma_{#1}}
\newcommand{\spconstexpt}{C_{\mathrm{sp}, \E}}
\newcommand{\spconstvart}{C_{\mathrm{sp}, \V}}
\newcommand{\tempconstexpt}{C_{\mathrm{te}, \E}}
\newcommand{\tempconstvart}{C_{\mathrm{te}, \V}}
\newcommand{\boxconstexpsp}{C_{\mathrm{box},  \mathrm{sp}, \E}}
\newcommand{\boxconstvarsp}{C_{\mathrm{box}, \mathrm{sp}, \V}}
\newcommand{\boxconstexptemp}{C_{\mathrm{box}, \mathrm{te}, \E}}
\newcommand{\boxconstvartemp}{C_{\mathrm{box}, \mathrm{te}, \V}}
\newcommand\smallo{
  \mathchoice
    {{\scriptstyle\mathcal{O}}}
    {{\scriptstyle\mathcal{O}}}
    {{\scriptscriptstyle\mathcal{O}}}
    {\scalebox{.7}{$\scriptscriptstyle\mathcal{O}$}}
  }
\newcommand{\E}{\mathbb{E}}
\newcommand{\V}{\mathbb{V}}
\newcommand{\ii}{{\mathrm{i}\mkern1mu}}
\newcommand{\fv}{\omega}

\date{}
\title{Statistical inference for the stochastic wave equation based on discrete observations}
\author{Anton Tiepner\footnote{anton.tiepner$\partial$fu-berlin$\bullet$de, Institute of Geological Sciences, Freie Universität Berlin}, Mathias Trabs\footnote{trabs$\partial$kit$\bullet$edu, Department of Mathematics, Karlsruhe Institute of Technology}, Eric Ziebell\footnote{ziebelle$\partial$hu-berlin$\bullet$de,Institut für Mathematik, Humboldt Universität zu Berlin}}

\begin{document}
\maketitle

\begin{abstract}
\noindent The wave speed of a stochastic wave equation driven by Riesz noise on the unbounded multidimensional spatial domain is estimated based on discrete measurements. Central limit theorems for second-order variations of the observations in space, time, and space-time are established. Under general assumptions on the spatial and temporal sampling frequencies, the resulting method-of-moments estimators are asymptotically normally distributed. The covariance structure of the discrete increments admits a closed-form representation involving two different Fejér-type kernels, enabling a precise analysis of the interplay between spatial and temporal contributions.

\smallskip

\noindent\textit{Keywords:} Discrete measurements, Stochastic wave equation, Central limit theorem, Second-order increments, Fejér kernels
\end{abstract}

\section{Introduction}
In this paper, the wave speed $\vartheta>0$ of the stochastic wave equation
\begin{equation}
\label{eq:SPDE}
\begin{cases}
\partial_{tt}^{2}u(t,x)=\vartheta \Delta u(t,x) + \dot{W}_{\beta}(t,x), \quad t \geq 0, \quad x \in \mathbb{R}^{d},\\
u(0,\cdot)=\partial_{t}u(0,\cdot)\equiv0,\\
\E[\dot{W}_{\beta}(t,x)\dot{W}_{\beta}(s,y)]=\delta_0(t-s)|x-y|^{-\beta}, \quad x,y \in \mathbb{R}^{d}, 
\end{cases}
\end{equation}
is estimated from discrete measurements in space and time. The Gaussian noise term $\dot{W}_{\beta}$ is white in time and coloured in space, and the Riesz kernel with fixed known parameter $\beta \in (0, 2 \land d)$ specifies the spatial regularity of the noise.
Spatially coloured noise is a standard assumption in the solution theory for the stochastic wave equation \cite{conusNonLinearStochasticWave2008, dalangExtendingMartingaleMeasure1999, walshIntroductionStochasticPartial1986, balanStochasticWaveEquation2010} and is also a natural modelling assumption for stochastic partial differential equations \cite{MENDEZ2010250, faugeras2015stochastic, newhall2025primer}. In particular, point evaluations of the solution to the SPDE \eqref{eq:SPDE} are well defined in all dimensions, and the regularity, propagation of singularities, and other analytic properties of the solution process have been analysed; see for instance \cite{clarkedelacerdaHittingTimesStochastic2014,dalangLderSobolevRegularitySolution2005,leeLocalNondeterminismExact2019,leePropagationSingularitiesStochastic2022a}.

Stochastic wave equations have been proposed in a range of application areas, including semiconductor laser line width theory \cite{wenzelSemiconductorLaserLinewidth2021} and the modelling of vibrating media and structures \cite{regaNonlinearVibrationsSuspended2004, khoshnevisanMinicourseStochasticPartial2009}, such as suspended or submerged cables, DNA strands, and shock waves. The system \eqref{eq:SPDE} may be viewed as an idealised high-dimensional stochastic wave equation with spatially coloured noise posed on a large bounded domain, where boundary effects are assumed to be negligible.

The time-space structure of stochastic partial differential equations (SPDEs) allows for diverse observation schemes in statistical analysis. The three major examples are spectral, local, and discrete observations. While the statistical literature for SPDEs is largely focused on parabolic equations, research on statistics for the stochastic wave equation remains limited. For the stochastic wave equation and extensions to higher-order elastic and damping operators, spectral measurements were considered by \citet*{liuParameterEstimationHyperbolic2010,liuEstimatingSpeedDamping2008} and \citet*{delgado-vencesStatisticalInferenceStochastic2023}. The local observation scheme, first developed for the non-parametric estimation of an unknown diffusivity function in the stochastic heat equation by \citet*{altmeyerNonparametricEstimationLinear2021}, has only recently been applied to the stochastic wave equation, see \citet*{ziebellNonparametricEstimationStochastic2024}. Similarly to the spectral regimes, the results of \citet*{ziebellNonparametricEstimationStochastic2024}, which were based on the theory of Riemann-Lebesgue operators, were extended to more general second-order stochastic Cauchy problems by \citet*{tiepnerParameterEstimationHyperbolic2025} using the concept of $M, N$-functions. 

Discrete observations of the one-dimensional stochastic heat equation in both space and time were first considered by \citet*{markussenLikelihoodInferenceDiscretely2003},
\citet*{cialencoNoteParameterEstimation2020} and \citet*{bibingerVolatilityEstimationStochastic2019} on bounded spatial domains, and by \citet*{chongHighfrequencyAnalysisParabolic2020} and \citet*{bibingerTrabs2019} in the unbounded case. \citet*{hildebrandtParameterEstimationSPDEs2019} investigate a variety of method-of-moments estimators based on different kinds of first-order space-time variations, allowing a precise statistical analysis of the interplay between the number of spatial and temporal observation points. Similar questions were considered by \citet*{cialencoNoteParameterEstimation2020} using elements of Malliavin calculus. Several extensions to the two-dimensional stochastic heat equation and a detailed statistical analysis on the unit cube were performed by \citet*{tonaki1, tonaki2}. The first framework in arbitrary dimensions, later developed by \citet*{bossertParameterEstimationSecondorder2024a}, builds on realised volatilities and allows the construction of an oracle estimator of the realised volatility within the underlying model.
A statistical analysis of higher-order variations of a discretely sampled semi-linear SPDE was achieved by \citet*{cialencoStatisticalAnalysisDiscretely2024}, highlighting the connections between the regularity of the process and the power variations considered. 

\begin{figure}[t]
    \centering
    \includegraphics[width=0.75\linewidth]{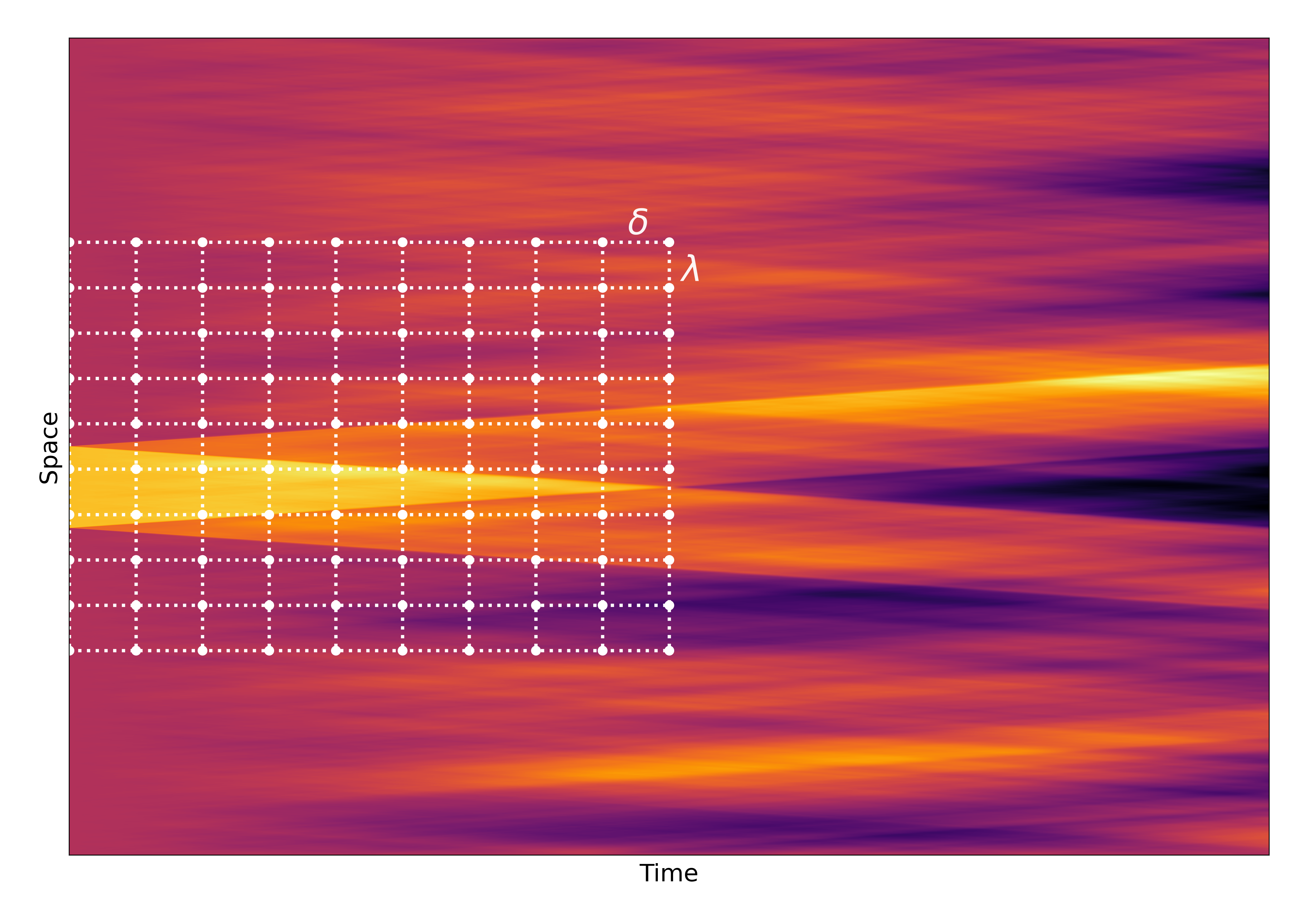}
    \caption{Illustration of the stochastic wave equation in $d=1$ with $\vartheta=\beta=0.5$, and non-zero initial condition together with the space-time observation grid with $n=10$ spatial and $m=10$ temporal observation points. The distance between two adjacent time points and spatial points is denoted by $\delta$ and $\lambda$, respectively. The simulation is based on a semi-implicit Euler scheme on a large bounded domain.}
    \label{fig:placeholder}
\end{figure}

For space-time white noise $\beta=d=1$, the wave speed of a stochastic wave equation was estimated by \citet*{assaadQuadraticVariationDrift2022a} using quadratic variations in time and space. Approximate likelihood ratio tests for the one-dimensional parabolic against hyperbolic alternatives based on the likelihood structure induced by discrete measurements and explicit method-of-moments estimators were analysed by \citet*{markussenLikelihoodInferenceDiscretely2003}. Other investigations of the quadratic variation using the Malliavin calculus include \citet*{khalilSpatialVariationSolution2018}, \citet*{khalilCorrelationStructureQuadratic2018} and \citet*{tudorHighOrderAsymptotic2019}. The Hurst parameter of a fractional-white stochastic wave equation was estimated by \citet*{khalilCorrelationStructureQuadratic2018} and \citet*{shevchenkoQuadraticVariationsFractionalwhite2023}, including a first exemplary CLT
based on box-increments, which combines measurements in both space and time.

We start our investigation by considering equidistant observations in space or time with the other variable remaining fixed. More specifically, we observe either $(u(t,x_k), k=0, \dots , n+1)$ with $x_k=\lambda k \rho$ and a direction $\rho \in \mathbb{R}^{d}\setminus \{0\}$ with $|\rho|=1$, or $(u(t_i, x), i=0, \dots, m+1)$ with $t_i=\delta i$ for $\delta,\lambda > 0$. Whilst analysing second-order variations in space and time, we discover that the variances of either variation are intrinsically related to  partial Fourier sums, Fejér kernels and Cesàro sums, see \citet{grafakosClassicalFourierAnalysis2014}. In particular, increments of the process translate into increments of the covariance function. However, the increments of the trigonometric functions, naturally involved in the covariance structure of the stochastic wave equation, can themselves be expressed through trigonometric functions, leading to Fejér-type kernels emerging within the variance. These insights constitute the theoretical backbone required to prove central limit theorems for variations of the process \eqref{eq:SPDE}, and can be summarised as follows. Denote by $\mathbf{V}_{\mathrm{sp}}$ and  $\mathbf{V}_{\mathrm{te}}$ the realised quadratic second-order variations of our observations in space and time, respectively. Under some general assumptions on the sampling frequencies $\lambda$ and $\delta$, we prove the following central limit theorems:
\begin{equation}
\label{eq:results_intro}
\begin{alignedat}{2}
\sqrt{n}\left(\frac{\lambda^{\beta-2}}{n}\mathbf{V}_{\mathrm{sp}}- \frac{t\spconstexpt}{\vartheta}\right)
  &\xrightarrow{d} N\left(0, \frac{t^{2}}{\vartheta^{2}}\spconstvart\right),
  \qquad && n\rightarrow\infty,\\
\sqrt{m}\left(\frac{\delta^{\beta-3}}{m^{2}}\mathbf{V}_{\mathrm{te}}  -  \frac{\tempconstexpt}{\vartheta^{\beta/2}}\right)
  &\xrightarrow{d} N\left(0, \frac{\tempconstvart}{\vartheta^{\beta}}\right),
  \qquad && m \rightarrow \infty,
\end{alignedat}
\end{equation}
for some positive, universal constants only depending on $\beta$ and $d$. 

Furthermore, we explore box-increments based on the space-time observations $(u(t_i,x_k),i=0,\dots,m+1,k=0,\dots,n+1)$. Both spatial and temporal components factorise through the corresponding Fejér kernels. The asymptotics of the associated second-order variation $\mathbf{V}_{\mathrm{sp,te}}$ depend on the hyperbolic sampling ratio $\ratio=\delta/\lambda$ determining the asymptotic regime. When $\ratio\rightarrow\infty$, the sampling frequency in space is higher and $\mathbf{V}_{\mathrm{sp,te}}$ is driven by the spatial increments, whereas the temporal behaviour dominates when $\ratio\rightarrow0$. In both cases we show a central limit theorem for $\mathbf{V}_{\mathrm{sp,te}}$:
\begin{equation}
\label{eq:results2_intro}
\begin{alignedat}{2}
\sqrt{nm}\left(\frac{\delta^{-1}\lambda^{\beta-2}}{nm^{2}}\mathbf{V}_{\mathrm{sp,te}}-\frac{\boxconstexpsp}{\vartheta}\right)
  &\xrightarrow{d} N\left(0, \frac{\boxconstvarsp}{\vartheta^{2}}\right),
  \qquad && \ratio \rightarrow \infty, \\
\sqrt{nm}\left(\frac{\delta^{\beta-3}}{nm^{2}}\mathbf{V}_{\mathrm{sp,te}}-\frac{\boxconstexptemp}{\vartheta^{\beta/2}}\right)
  &\xrightarrow{d} N\left(0, \frac{\boxconstvartemp}{\vartheta^{\beta}}\right),
  \qquad && \ratio \rightarrow 0,
\end{alignedat}
\end{equation}
The asymptotic normality of method-of-moments estimators based on the considered variations is obtained from the central limit theorems \eqref{eq:results_intro} and \eqref{eq:results2_intro} through the delta method. 

The paper is structured as follows. In \Cref{section:spde_model} the SPDE model is formally introduced. In \Cref{section:spatial_variation}, we treat the estimation problem for $\vartheta$ based on spatial variations of the process $(u(t,x), x \in \mathbb{R}^{d})$ along the line $(r \rho \colon r >0)$. In this chapter, we also encounter the Fejér kernel $\mathfrak{F}_{\mathrm{sp}}$ and introduce the proof strategy employed throughout this work. Following the same structure, we collect all statistical results from observations of the process $(u(t,x), t \geq 0)$ at a fixed spatial point $x \in \mathbb{R}^{d}$ in \Cref{section:temporal_variations}. In \Cref{section:space_time_variation} we combine our findings from the previous sections to analyse second-order increments in both space and time. Only proof ideas and outlines are given in the main part of the paper. Detailed proofs are deferred to \Cref{section:proofs}.

\paragraph*{Notation}
We abbreviate by $|x|=\sqrt{x_1^{2}+\dots +x_d^{2}}$ and $x \cdot y = x^{\top}y$ the usual norm and inner product on the euclidean space $\mathbb{R}^{d}$ for $d \in \mathbb{N}$. The imaginary number is written as $\ii$. Note that $A^{\top}$ is our notation for the transpose of the matrix $A$, and $f'$ will be used for an ordinary derivative of the function $f$. If $f$ is multivariate, the partial derivative in variable $x$ will be denoted by $\partial_{x} f$. Given that $(a_{m})_{m \in \mathbb{N}}$ and $(b_m)_{m \in \mathbb{N}}$ are two sequences of real numbers, we write $a_{m} \sim b_m$ if $a_m/b_m \rightarrow 1$ as $m \rightarrow \infty$ and $a_{m} \lesssim b_m$ if there exists a constant $c>0$ such that $|a_m| \leq c |b_m|$ for all $m \in \mathbb{N}$. Furthermore, we write $a_m \asymp b_m$ if both sequences have the same order of magnitude, i.e.\ $a_m \lesssim b_m$ and $b_m \lesssim a_m$. Abbreviate by $|A|$ the Lebesgue measure of a set $A$. \\
For $p \in [1, \infty]$ the spaces $L^{p}(\mathbb{R}^{d})$ and $L^{p}((0,\infty))$ abbreviate the usual $L^{p}$-spaces. For $f \in L^{1}(\mathbb{R}^{d})$ and $\xi\in\mathbb{R}^d$ we define the Fourier transform as $\mathcal{F}(f)(\xi)=\int_{\mathbb{R}^{d}}e^{\ii \xi \cdot \fv}f(\fv)\mathrm{d}\fv$. Similarly, we write $\mathcal{F}_{c}(f)(\xi)=\int_{\mathbb{R}^{d}}\cos(\xi\cdot\fv)f(\fv)\mathrm{d}\fv$ for the cosine transform. We further define for $\xi\in\mathbb{R}$ the transformations $\mathcal{F}^{+}(f)(\xi)=\int_{\mathbb{R}^{d}}f(\fv)e^{\ii |\fv| \xi}\mathrm{d}\fv$ and likewise $\mathcal{F}^{+}_{c}(f)(\xi)=\int_{\mathbb{R}^{d}}f(\fv)\cos(|\fv|\xi)\mathrm{d}\fv$. We define the unit sphere through $\mathbb{S}^{d-1}=\{x \in \mathbb{R}^{d} \colon |x|=1\}$ with $\sarea=|\mathbb{S}^{d-1}|$ abbreviating the $(d-1)$-dimensional surface area of the unit sphere. The (unnormalised) surface measure on $\mathbb{S}^{d-1}$ is abbreviated by $\sigma$. The Dirac-delta function centred at $x \in \mathbb{R}$ is abbreviated by $\delta_{x}$. For the notation associated with general increments of the functions, i.e.\ $\mathcal{I}$ and $\mathfrak{I}$, we refer to \Cref{section:general_results_on_increments}. We write $\mathrm{sinc}(x)=\sin(x)/x$ for the $\mathrm{sinc}$-function and
$\mathrm{tr}(A)$, $\Vert A \Vert_{2}$ and $\Vert A \Vert_{\infty}$ for the trace, spectral norm and maximum row norm of a matrix $A$, respectively. \\
Subscripts '$\mathrm{te}$' and '$\mathrm{sp}$' refer to temporal and spatial dependencies, and $m,n\in\mathbb{N}$ correspond to the number of temporal and spatial second-order increments with $\delta$ and $\lambda$ being their respective sampling frequencies. The constants in \eqref{eq:results_intro} and \eqref{eq:results2_intro} are explicitly given in \Cref{tab:asymptotic_constants}. We uniquely reserve $i,j\in\mathbb{Z}$ for temporal counting indices and $k,l\in\mathbb{Z}$ for spatial ones. General indices are denoted by $z,p,q\in\mathbb{Z}$.

\section{The SPDE model}
\label{section:spde_model}
In this paper, we study the linear stochastic wave equation
\eqref{eq:SPDE}, where $\vartheta>0$ is the unknown wave speed. The noise $\dot{W}_{\beta}$ is white in time and coloured in space. The solution of the stochastic wave equation admits a random field solution $(u(t,x), t \geq 0, x \in \mathbb{R}^{d})$ when the spatial covariance of the noise is given by the Riesz kernel, i.e.,  $\E[\dot{W}_{\beta}(t,x)\dot{W}_{\beta}(s,y)]=\delta_0(t-s)|x-y|^{-\beta}$ for $t,s \geq 0$ and $x,y \in \mathbb{R}^{d}$ with a fixed known parameter $\beta \in (0, 2\land d)$ specifying the spatial regularity, see \citet{dalangExtendingMartingaleMeasure1999}. If $\beta=d$, the covariance structure of the stochastic wave equation driven by Riesz noise would correspond to the case of space-time white noise. However, the stochastic wave equation driven by space-time white noise only has a pointwise solution when $\beta=d=1$. Classically, the solution to the stochastic wave equation \eqref{eq:SPDE} admits local Hölder regularity in space and time of order $1-\beta/2$, see \citet[Proposition 4.1]{dalangCriteriaHittingProbabilities2010}.
Note that all results are also valid for $d=1$ in the case of space-time white noise. 

Let $G$ be the Green's function associated with the wave equation on $\mathbb{R}^{d}$. Then, the extended martingale measure theory in \citet{dalangExtendingMartingaleMeasure1999} allows us to write the solution as the stochastic integral
\begin{equation*}
u(t,x)=\int_{0}^{t}\int_{\mathbb{R}^{d}}G_{t-s}(x-y)W_{\beta}(\mathrm{d}s,\mathrm{d}y), \quad t \geq 0, \quad x \in \mathbb{R}^{d},
\end{equation*}
where $W_{\beta}$ is the martingale measure induced by the noise $\dot{W}_{\beta}$. While the Green's function of the wave equation becomes increasingly irregular for larger dimensions and depends on the parity of $d \in \mathbb{N}$, see for instance \citet{evansPartialDifferentialEquations2010}, its Fourier transform always admits the representation
\begin{equation}
\label{eq:fourier_transform_greens_function}
\mathcal{F}(G_t)(\fv)=\frac{\sin(t \sqrt{\vartheta}|\fv|)}{\sqrt{\vartheta}|\fv|}=t\mathrm{sinc}(t\sqrt{\vartheta}|\fv|), \quad t \geq 0, \quad \fv \in \mathbb{R}^{d}.
\end{equation}
Itô's isometry from \citet[Theorem 2]{dalangExtendingMartingaleMeasure1999} allows us to exploit the representation \eqref{eq:fourier_transform_greens_function}, yielding an explicit characterisation of the covariance structure of the stochastic wave equation \eqref{eq:SPDE} which is essential throughout the paper.
\begin{proposition}
\label[proposition]{proposition:covariance_representation}
The covariance function of the stochastic wave equation \eqref{eq:SPDE} is characterised by
\begin{equation*}
\begin{aligned}
&\E[u(t,x)u(s,y)]\\
&\quad=\frac{1}{ (2\pi)^{d}\vartheta^{\beta/2}}\int_{0}^{t \land s} \int_{\mathbb{R}^{d}} \cos\left(\frac{(x-y)}{\sqrt{\vartheta}}\cdot \fv \right)\sin((t-r)|\fv|)\sin((s-r)|\fv|)|\fv|^{\beta-d-2}\mathrm{d}\fv \mathrm{d}r\\
&\quad= \frac{1}{ (2\pi)^{d}\vartheta^{\beta/2}} \int_{\mathbb{R}^{d}} \cos\left(\frac{(x-y)}{\sqrt{\vartheta}}\cdot \fv \right) \Phi_{\mathrm{te}, |\fv|}(t,s) |\fv|^{\beta-d-2}\mathrm{d}\fv
\end{aligned}
\end{equation*}
with
\begin{equation}
\label{eq:covariance_sine_integral}
\begin{aligned}
    \Phi_{\mathrm{te}, \xi}({t,s}) &\coloneqq\frac{\sin(|t-s|\xi)-\sin((t+s)\xi)}{4\xi}+\frac{1}{2}(t \land s)\cos((t-s)\xi)\\
    &=\frac{1}{2}\left((t \land s)\cos((t-s)\xi)-\frac{\cos((t \lor s)\xi)\sin((t \land s)\xi)}{\xi}\right),
\end{aligned}
\end{equation}    
for $t,s \geq 0$, $x,y \in \mathbb{R}^{d}$ and $\xi \in \mathbb{R}$.
\end{proposition}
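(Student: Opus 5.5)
We start from the mild formulation $u(t,x)=\int_0^t\int_{\mathbb{R}^d}G_{t-r}(x-z)W_\beta(\mathrm{d}r,\mathrm{d}z)$ and apply Itô's isometry from \citet[Theorem 2]{dalangExtendingMartingaleMeasure1999}. For $s\le t$ this gives
\begin{equation*}
\E[u(t,x)u(s,y)]=\int_0^{t\land s}\int_{\mathbb{R}^d}\mathcal{F}(G_{t-r}(x-\cdot))(\xi)\,\overline{\mathcal{F}(G_{s-r}(y-\cdot))(\xi)}\,\mu(\mathrm{d}\xi)\,\mathrm{d}r.
\end{equation*}
Here $\mu$ is the spectral measure of the Riesz kernel. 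It is $\mu(\mathrm{d}\xi)=c_{d,\beta}|\xi|^{\beta-d}\mathrm{d}\xi/(2\pi)^d$, with $c_{d,\beta}$ the standard constant satisfying $\mathcal{F}(|\cdot|^{-\beta})=c_{d,\beta}|\cdot|^{\beta-d}$ in the distributional sense. We absorb $c_{d,\beta}$ into the normalisation, as the statement does implicitly; otherwise it would simply appear as a prefactor.

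The spatial shifts contribute phases $e^{\ii\xi\cdot x}$ and $e^{-\ii\xi\cdot y}$. Combined with \eqref{eq:fourier_transform_greens_function}, the integrand becomes
\begin{equation*}
e^{\ii\xi\cdot(x-y)}\,\frac{\sin((t-r)\sqrt{\vartheta}|\xi|)\,\sin((s-r)\sqrt{\vartheta}|\xi|)}{\vartheta|\xi|^2}\,|\xi|^{\beta-d}.
\end{equation*}
The remaining factor is even in $\xi$, so the imaginary part integrates to zero and the exponential can be replaced by a cosine. We then substitute $\fv=\sqrt{\vartheta}\xi$. This gives $\mathrm{d}\xi=\vartheta^{-d/2}\mathrm{d}\fv$ and $|\xi|^{\beta-d-2}=\vartheta^{-(\beta-d-2)/2}|\fv|^{\beta-d-2}$, and together with the factor $\vartheta^{-1}$ the total power of $\vartheta$ is $-1-d/2-(\beta-d-2)/2=-\beta/2$. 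This yields the first line of the proposition.

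For the second line we interchange the $r$- and $\fv$-integrals by Tonelli/Fubini. Justifying this interchange is the main point of care. We bound the integrand by $\min(|\fv|^2 t^2,1)\,|\fv|^{\beta-d-2}$. This bound is integrable near $0$ because $\beta>0$, and near infinity because $\beta<2$, uniformly in $r\in[0,t\land s]$. Absolute integrability of the double integral therefore holds, and it also confirms that Dalang's condition $\int(1+|\xi|^2)^{-1}\mu(\mathrm{d}\xi)<\infty$ is satisfied.

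Next we compute $\int_0^{t\land s}\sin((t-r)\xi)\sin((s-r)\xi)\,\mathrm{d}r$ with $\xi=|\fv|$. By the product-to-sum identity the integrand equals $\tfrac12[\cos((t-s)\xi)-\cos((t+s-2r)\xi)]$. The first term integrates to $\tfrac12(t\land s)\cos((t-s)\xi)$. The second term gives
\begin{equation*}
-\frac{1}{2}\int_0^{t\land s}\cos((t+s-2r)\xi)\,\mathrm{d}r=\frac{\sin(|t-s|\xi)-\sin((t+s)\xi)}{4\xi}.
\end{equation*}
This is the first expression for $\Phi_{\mathrm{te},\xi}$.

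For the second expression we apply $\sin a-\sin b=2\cos\frac{a+b}{2}\sin\frac{a-b}{2}$ with $a=|t-s|\xi$ and $b=(t+s)\xi$. Then $\frac{a+b}{2}=(t\lor s)\xi$ and $\frac{a-b}{2}=-(t\land s)\xi$, so $\sin(|t-s|\xi)-\sin((t+s)\xi)=-2\cos((t\lor s)\xi)\sin((t\land s)\xi)$. This gives the stated identity. The only genuine obstacle is the Fubini/integrability step and the correct bookkeeping of the Riesz-kernel constant; the rest is elementary trigonometry.
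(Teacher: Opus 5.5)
Your proposal is correct and follows the paper's proof essentially step by step: Itô's isometry from Dalang's extended martingale-measure theory, then the shift property together with evenness of the integrand to pass to the cosine, then the rescaling $\fv=\sqrt{\vartheta}\xi$ to extract $\vartheta^{-\beta/2}$, then Fubini for the second line, and finally the product-to-sum and sum-to-product identities for $\Phi_{\mathrm{te},\xi}$. The only differences are that you make explicit the absolute-integrability bound $\min(t^2|\fv|^2,1)|\fv|^{\beta-d-2}$ that justifies Fubini, and you flag the Riesz-kernel constant $c_{d,\beta}$, which the paper silently absorbs into the normalisation of the noise.
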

\begin{remark}
\label[remark]{remark:geometric_structure}
\begin{enumerate}
\item[]
\item ($\beta=d=1$): For a formal proof of the representation in this special case, we refer to \citet[Lemma 1]{khalilCorrelationStructureQuadratic2018}. Moreover, there is a convenient geometric intuition for the covariance structure of the stochastic wave equation in this case. Let $\mathcal{B}([0,T] \times \mathbb{R})$ be the Borel $\sigma$-algebra. In the martingale-measure approach \cite{walshIntroductionStochasticPartial1986}, space-time white noise is usually interpreted as a centred Gaussian process $({W}(A), A \in \mathcal{B}([0,T]\times \mathbb{R}))$ with the covariance structure $\E[{W}(A){W}(B)]=|A \cap B|$ for $A, B \in \mathcal{B}([0,T]\times \mathbb{R})$. 
The stochastic wave equation is then given by
\begin{equation*}
u(t,x)=\frac{1}{ \sqrt{2\vartheta}}\int_{0}^{t}\int_{\mathbb{R}}\mathbbm{1}(|x-y| \leq \sqrt{\vartheta}(t-s)) W(\mathrm{d}s, \mathrm{d}y) = \frac{1}{ \sqrt{2\vartheta }}W(\mathcal{C}_{\vartheta}(t,x)), \quad  t >0,\quad  x \in \mathbb{R},
\end{equation*}
with the backward light cone $\mathcal{C}_{\vartheta}(t,x)=\{(s,y)\in (0,\infty) \times \mathbb{R}: 0 \leq s \leq t, |x-y|\leq \sqrt{\vartheta}(t-s)\}$. 
Thus, computing the covariance structure of the stochastic wave equation in $\beta=d=1$ amounts to calculating the area of intersection between two backward light cones:
\begin{equation*}
\E[u(t,x)u(s,y)]=\frac{1}{2 \vartheta}\E[W(\mathcal{C}_{\vartheta}(t,x))W(\mathcal{C}_{\vartheta}(s,y))]= \frac{1}{2 \vartheta} |\mathcal{C}_{\vartheta}(t,x)\cap \mathcal{C}_{\vartheta}(s,y)|.
\end{equation*}
There are two situations in which the covariance is non-zero: \Cref{figure:partial_overlap} visualizes the case of partial overlap, and the case where one light cone is contained within the other is shown in \Cref{figure:full_inclusion}. This leads to the covariance structure
\begin{equation*}
		\E[u(t,x) u(s,y)]= \frac{1}{16 \sqrt{\vartheta}}\mathbbm{1}_{\{|t-s| \leq \frac{|x-y|}{\sqrt{\vartheta}} < (t+s)\}} \Big(t+s-\frac{|x-y|}{\sqrt{\vartheta}}\Big)^{2}+ \frac{1}{4 \sqrt{\vartheta}}\mathbbm{1}_{\{\sqrt{\vartheta}|t-s|>|y-x|\}}(t \land s)^{2}.
\end{equation*}
The geometric structure is not as simple in higher dimensions. The intersection between backwards light cones is no longer itself a backwards light cone. Moreover, when $\beta \in (0, 2 \land d)$, the Riesz noise introduces long-range dependencies to the covariance structure, and even when the associated backwards light cones are disjoint, the covariance no longer becomes zero. 
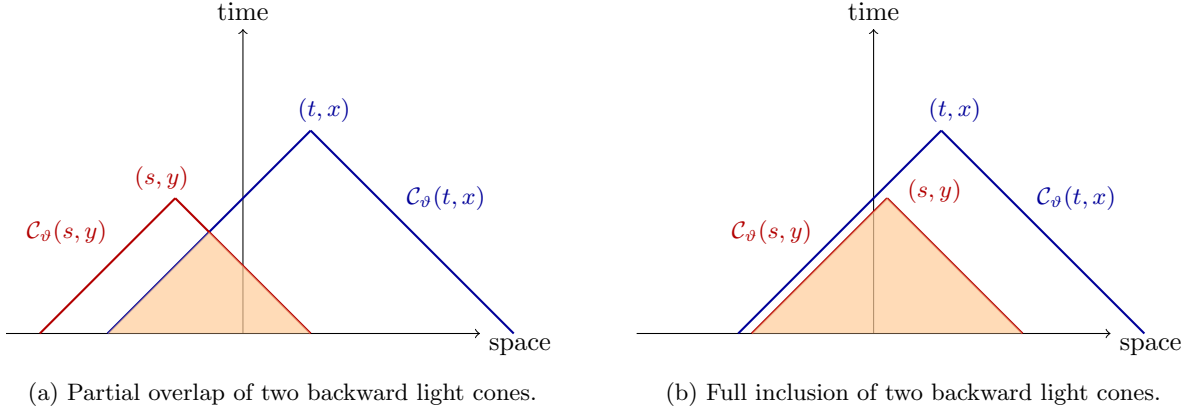
\begin{figure}[tb]
  \centering
  \begin{subfigure}[t]{0.45\textwidth}
    \centering
    \begin{tikzpicture}[scale=0.9]
      \draw[->] (-3.5,0) -- (3.5,0) node[right] at (3.5,-0.2) {space};
      \draw[->] (0,0) -- (0,4.5) node[above] {time};
      \draw[thick,blue!60!black] (1,3) -- (-2,0) -- cycle;
      \draw[thick,blue!60!black] (1,3) -- (4,0);
      \node[blue!60!black] at (1.2,3.3) {\small $(t,x)$};
      \draw[thick,red!70!black] (-1,2) -- (-3,0) -- cycle;
      \draw[thick,red!70!black] (-1,2) -- (1,0);
      \node[red!70!black] at (-1.2,2.3) {\small $(s,y)$};
      \fill[orange!40,opacity=0.7] (-2,0) -- (-0.5,1.5) -- (1,0) -- cycle;
      \node at (-2.6,1.5) {\small \textcolor{red!70!black}{$\mathcal{C}_{\vartheta}(s,y)$}};
      \node at (3,2) {\small \textcolor{blue!60!black}{ $\mathcal{C}_{\vartheta}(t,x)$}};
    \end{tikzpicture}
    \caption{Partial overlap of two backward light cones.}
    \label{figure:partial_overlap}
  \end{subfigure}
  \quad
  \begin{subfigure}[t]{0.45\textwidth}
    \centering
    \begin{tikzpicture}[scale=0.9]
      \draw[->] (-3.5,0) -- (3.5,0) node[right] at (3.5,-0.2) {space};
      \draw[->] (0,0) -- (0,4.5) node[above] {time};
      \draw[thick,blue!60!black] (1,3) -- (-2,0);
      \draw[thick,blue!60!black] (1,3) -- (4,0);
      \node[blue!60!black] at (1.2,3.3) {\small $(t,x)$};
      \draw[thick,red!70!black] (0.2,2) -- (-1.8,0);
      \draw[thick,red!70!black] (0.2,2) -- (2.2,0);
      \node[red!70!black] at (0.9,2.1) {\small $(s,y)$};
      \fill[orange!40,opacity=0.7] (-1.8,0) -- (0.2,2) -- (2.2,0) -- cycle;
      \node at (-1.5,1.5) {\small \textcolor{red!70!black}{$\mathcal{C}_{\vartheta}(s,y)$}};
      \node at (3,2) {\small \textcolor{blue!60!black}{ $\mathcal{C}_{\vartheta}(t,x)$}};
    \end{tikzpicture}
    \caption{Full inclusion of two backward light cones.}
    \label{figure:full_inclusion}
  \end{subfigure}

  \caption{(a) Partial overlap and (b) full inclusion of two backward light cones in \(\beta=d=1\). The orange corresponds to non-zero covariance.}
  \label{figure:combined_lightcones}
\end{figure}
\item \label{remark:scale_parameter} ($x=y$): If we set $x=y$ in \Cref{proposition:covariance_representation}, all integrands are radial. Thus, by switching to polar coordinates, the representation can be simplified considerably: There exists a constant $C_{\beta,d}$ such that for any fixed spatial point $x \in \mathbb{R}^{d}$, we have
	\begin{equation}
        \label{eq:concrete_equation_temporal_covariance_structure}
		\E[u(t,x)u(s,x)]= C_{\beta,d}\vartheta^{-\beta/2} \left( \frac{1}{2(3-\beta)}\left((t+s)^{3-\beta}-|t-s|^{3-\beta}\right)-( t \land s)|t-s|^{2-\beta}\right).
	\end{equation}
For a precise proof, we refer to \Cref{result:temporal_covariance_structure}. \Cref{eq:concrete_equation_temporal_covariance_structure} also shows that the covariance only depends on the unknown parameter through the scalar factor $\vartheta^{-\beta/2}$. Thus, if we add the index $\vartheta$ to the solution of the stochastic wave equation, all members of the family $(u_{\vartheta}(t,x), t \geq 0)$ share the same correlation structure up to a multiplicative factor. Equivalently, we have
\begin{equation*}
(u_{\vartheta}(t,x), t \geq 0) \overset{d}{\equiv} \vartheta^{-\beta/4}(u_{1}(t,x), t \geq 0),
\end{equation*}
see also \citet[Lemma 4.1]{assaadQuadraticVariationDrift2022a}. 
\end{enumerate}
\end{remark}

\section{Spatial variation}
\label{section:spatial_variation}
Suppose that $\rho \in \mathbb{S}^{d-1} \subset\mathbb{R}^{d}$ is a direction of observation and $t > 0$ is a fixed point in time. Let us assume we have access to discrete observations of the process $(u(t,x), x \in\mathbb{R}^{d})$ in space on the line segment $(r \rho, r \in (0, \infty))$. Thus, we observe $u(t, x_k)$ for $x_k=\lambda_n k\rho$ with mesh size $\lambda_n >0$ and $k=0, \dots,n+1$. We usually omit the index $n$ from $\lambda_n$ and simply write $\lambda=\lambda_n$, keeping in mind that $\lambda_n$ may be dependent on the number of spatial points. A visualisation of our spatial observation path is provided by \Cref{figure:visualisation_spatial_points}.
\begin{figure}[tb]
    \centering
    \begin{tikzpicture}[scale=2]

  \def\n{5} 
  \def\lambda{0.6} 
  \def\rx{0.8} 
  \def\ry{0.6} 

  \draw[->] (-0.5, 0) -- (3, 0);
  \draw[->] (0, -0.5) -- (0, 2);

  \draw[dashed] (0,0) circle (1);

  \draw[->, thick, blue!60!black] (0,0) -- ({\rx}, {\ry});
  \node[left] at ({0.9*\rx}, {0.9*\ry}) {\textcolor{blue!60!black}{\(\rho \in \mathbb{S}^1\)}};

  \draw[dashed] (0,0) -- ({\n*\lambda*\rx}, {\n*\lambda*\ry});

  \foreach \k in {0,...,\n} {
    \filldraw[red!70!black] ({\k*\lambda*\rx}, {\k*\lambda*\ry}) circle (0.03);
    
    \ifnum\k=1
      \node[below right] at ({\k*\lambda*\rx}, {\k*\lambda*\ry}) {\(x_1\)};
    \else\ifnum\k=3
      \node[below right] at ({\k*\lambda*\rx}, {\k*\lambda*\ry}) {\(x_k\)};
    \else\ifnum\k=\n
      \node[below right] at ({\k*\lambda*\rx}, {\k*\lambda*\ry}) {\(x_{n+1}\)};
    \fi\fi\fi
  }

  \node at (1.2,1.4) {\((r \rho,\ r > 0)\)};
\end{tikzpicture}
    \caption{Visualisation of the spatial observation points $(x_k)_{k=0}^{n+1}$ for a fixed $n \in \mathbb{N}$, $\lambda \in (0,1)$ in $d=2$.}   \label{figure:visualisation_spatial_points}
\end{figure}
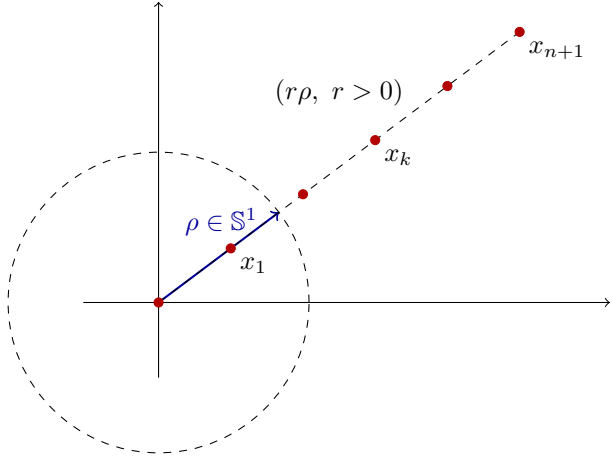
\par As the key statistic for our method-of-moments type approach, we consider the second-order spatial variation defined through
\begin{equation}
\label{eq:second_order_spatial_variation}
\mathbf{V}_{\mathrm{sp}} \coloneqq \sum_{k=1}^{n} (\mathbf{I}_{\mathrm{sp},k})^{2},
\end{equation}
with the second-order spatial increments associated with our observation vector given by
\begin{equation}
\label{eq:spatial_increments}
\begin{aligned}
\mathbf{I}_{\mathrm{sp},k} \coloneqq u(t, x_{k+1})+u(t, x_{k-1})-2u(t, x_k), \quad k=1, \dots,n.
\end{aligned}
\end{equation}
\begin{remark} 
\label[remark]{remark:intuition_second_order_increments}
\begin{enumerate}
\item[]
\item Even in the case of space-time white noise, i.e.\ assuming $\beta=d=1$, second-order spatial variations are natural. We can represent second-order increments as the difference between two first-order increments i.e.\ $\mathbf{I}_{\mathrm{sp},k}=(u(t,x_{k+1})-u(t,x_k))-(u(t,x_k)-u(t,x_{k-1}))$. The covariance structure of first-order spatial increments is given by
\begin{equation*}
\mathrm{Cov}(u(t,x_{k+1})-u(t,x_k), u(t,x_{l+1})-u(t,x_l))=\begin{cases}
 \frac{\lambda}{2 \vartheta}\Big(t-\frac{\lambda}{4 \sqrt{\vartheta}}\Big), \quad &k=l,\\
  - \frac{\lambda^{2}}{8 \vartheta^{3/2}}, \quad & k \neq l.
\end{cases}
\end{equation*}
see for instance \citet*{assaadQuadraticVariationDrift2022a}. Thus, the covariance matrix in this case is a double constant matrix, and the first-order increments remain correlated even when the spatial points involved in the increments are far apart. Since all the off-diagonals of the covariance matrix associated with the first-order increments are the same, the covariance of the second-order increments is a band matrix satisfying $\mathrm{Cov}(\mathbf{I}_{\mathrm{sp},k}, \mathbf{I}_{\mathrm{sp},l})=\frac{\lambda t}{\vartheta}$ for $k=l$, $\mathrm{Cov}(\mathbf{I}_{\mathrm{sp},k}, \mathbf{I}_{\mathrm{sp},l})= -\frac{\lambda t}{2\vartheta}$ for $|k-l|=1$ and $\mathrm{Cov}(\mathbf{I}_{\mathrm{sp},k}, \mathbf{I}_{\mathrm{sp},l})=0$ otherwise. 
\item Note that if $\beta=d=1$ and the spatial points are sufficiently far apart, it is possible that all the light cones from \Cref{figure:partial_overlap} at the observation points $x_0, \dots, x_{n+1}$ do not overlap, see also \citet{shevchenkoQuadraticVariationsFractionalwhite2023}. This is essentially an i.i.d. setting, which is another interesting consequence of the finite propagation speed of the wave equation. 
\end{enumerate}
\end{remark}
The covariance between increments \eqref{eq:spatial_increments} is a two-dimensional second-order increment of the covariance function from Proposition~\ref{proposition:covariance_representation}:
\begin{equation*}
\mathrm{Cov}( \mathbf{I}_{\mathrm{sp},k}, \mathbf{I}_{\mathrm{sp},l}) =\frac{t \lambda^{2-\beta}}{2(2\pi)^{d}\vartheta}\int_{\mathbb{R}^{d}} \mathcal{I}^{(2)}[\Phi_{\mathrm{sp}, \rho \cdot \fv}](k,l)  \left(1-\mathrm{sinc}(2t\sqrt{\vartheta}\lambda^{-1}|\fv|)\right)|\fv|^{\beta-d-2}\mathrm{d}\fv,
\end{equation*}
with the function
\begin{equation*}
    \Phi_{\mathrm{sp}, \xi}:\mathbb{R}^2\rightarrow\mathbb{R},\quad\Phi_{\mathrm{sp}, \xi}(x,y)\coloneqq \cos\left( {(x-y)} \xi \right),\quad x,y,\xi\in\mathbb{R},
\end{equation*} 
and the second-order discrete increment in two variables given by
\begin{equation}
\label{eq:second_order_incremental_operator_d_2}
\begin{aligned}
\mathcal{I}^{(2)}[h](q, z) &\coloneqq 4h(q, z) + h(q+1,z+1) + h(q+1, z-1) + h(q-1,z+1) + h(q-1, z-1) \\
& \qquad- 2\left(h(q,z+1) + h(q, z-1) + h(q-1,z)+h(q+1,z)\right)
\end{aligned}
\end{equation}
for a function $h:\mathbb{R}^2\rightarrow\mathbb{R}$ and $q,z\in\mathbb{Z}$. Using the fact that increments of trigonometric functions often admit closed form expressions, cf. \Cref{section:general_results_on_increments}, we obtain the following representation of the covariance between two different second-order spatial increments $\mathbf{I}_{\mathrm{sp},k}$ and $\mathbf{I}_{\mathrm{sp},l}$.
\begin{lemma}\label[lemma]{lemma:covariance_second_order_spatial_increments}
The covariance between two second-order spatial increments is given by 
\begin{equation*}
\mathrm{Cov}( \mathbf{I}_{\mathrm{sp},k}, \mathbf{I}_{\mathrm{sp},l})=\frac{8t\lambda^{2-\beta}}{(2\pi)^{d}\vartheta}\int_{\mathbb{R}^{d}}  \cos((k-l) {\rho \cdot \fv}) g_{\mathrm{sp},\lambda}(\fv)\mathrm{d}\fv\\
\end{equation*}
with
\begin{equation}
\label{eq:g_sp_lambda}
\gsplambda(\fv)\coloneqq (1-\mathrm{sinc}(2t\sqrt{\vartheta}\lambda^{-1}|\fv|))\sin^{4}(\rho \cdot \fv/2)|\fv|^{\beta-d-2}.
\end{equation}
For $\lambda=0$, we extend the definition naturally by $\gspzero(\fv) \coloneqq \sin^{4}(\rho \cdot \fv/2)|\fv|^{\beta-d-2}$. 
\end{lemma}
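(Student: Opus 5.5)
We start from the covariance representation of \Cref{proposition:covariance_representation} with $t=s$ and $x=x_k$, $y=x_l$. Then $x-y=\lambda(k-l)\rho$ and
\begin{equation*}
\Phi_{\mathrm{te},|\fv|}(t,t)=\frac{t}{2}-\frac{\sin(2t|\fv|)}{4|\fv|}=\frac{t}{2}\bigl(1-\mathrm{sinc}(2t|\fv|)\bigr).
\end{equation*}
By bilinearity of the covariance, $\mathrm{Cov}(\mathbf{I}_{\mathrm{sp},k},\mathbf{I}_{\mathrm{sp},l})$ is the two-variable second-order increment $\mathcal{I}^{(2)}$ from \eqref{eq:second_order_incremental_operator_d_2} of $(q,z)\mapsto \E[u(t,x_q)u(t,x_z)]$. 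We interchange the finite sum with the integral, which is harmless because the result is a finite linear combination of convergent integrals. This gives the integrand $\mathcal{I}^{(2)}$ applied to $(q,z)\mapsto\cos(\lambda(q-z)\rho\cdot\fv/\sqrt{\vartheta})$.

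Next we substitute $\fv\mapsto \sqrt{\vartheta}\fv/\lambda$ to normalise the frequency. The Jacobian $\vartheta^{d/2}\lambda^{-d}$ and the factor $|\fv|^{\beta-d-2}$, which becomes $\vartheta^{(\beta-d-2)/2}\lambda^{-(\beta-d-2)}|\fv|^{\beta-d-2}$, combine with the prefactor $\vartheta^{-\beta/2}$ to give $\vartheta^{-1}\lambda^{2-\beta}$. The factor $\mathrm{sinc}(2t|\fv|)$ becomes $\mathrm{sinc}(2t\sqrt{\vartheta}\lambda^{-1}|\fv|)$. This recovers the displayed formula preceding the lemma, with prefactor $t\lambda^{2-\beta}/(2(2\pi)^d\vartheta)$ and integrand $\mathcal{I}^{(2)}[\Phi_{\mathrm{sp},\rho\cdot\fv}](k,l)$.

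The core step is the closed-form evaluation of $\mathcal{I}^{(2)}[\Phi_{\mathrm{sp},\xi}](k,l)$ with $\xi=\rho\cdot\fv$. Since $\Phi_{\mathrm{sp},\xi}(q,z)=\mathrm{Re}\, e^{\ii(q-z)\xi}$, the operator factorises as a second difference in $q$ times a second difference in $z$. Applied to $e^{\ii q\xi}e^{-\ii z\xi}$, it produces the factor
\begin{equation*}
(e^{\ii\xi}+e^{-\ii\xi}-2)(e^{-\ii\xi}+e^{\ii\xi}-2)=(2\cos\xi-2)^2=16\sin^4(\xi/2).
\end{equation*}
Taking real parts therefore gives
\begin{equation*}
\mathcal{I}^{(2)}[\Phi_{\mathrm{sp},\xi}](k,l)=16\sin^4(\xi/2)\cos((k-l)\xi).
\end{equation*}
One can verify this directly by grouping the nine terms of \eqref{eq:second_order_incremental_operator_d_2} by the value of $q-z$: the offset $0$ carries weight $4+1+1=6$, the offsets $\pm1$ carry weight $-4$ each, and the offsets $\pm2$ carry weight $1$ each. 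This yields
\begin{equation*}
\cos(n\xi)\bigl(6-8\cos\xi+2\cos 2\xi\bigr)=16\sin^4(\xi/2)\cos(n\xi),
\end{equation*}
where the sine parts cancel by symmetry. This is precisely the kind of trigonometric increment identity referred to in \Cref{section:general_results_on_increments}.

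Multiplying the factor $16$ into the prefactor gives $16\cdot t\lambda^{2-\beta}/(2(2\pi)^d\vartheta)=8t\lambda^{2-\beta}/((2\pi)^d\vartheta)$, which is the claimed formula with $\gsplambda$ as in \eqref{eq:g_sp_lambda}. The only point needing care is integrability, which justifies both exchanging the finite sum with the integral and performing the substitution.
\begin{itemize}
\item Near $\fv=0$, the factor $1-\mathrm{sinc}$ is $O(|\fv|^2)$ and $\sin^4$ is $O(|\fv|^4)$, so the individual terms $\cos(\cdot)(1-\mathrm{sinc})|\fv|^{\beta-d-2}$ are integrable because $\beta>0$.
\item At infinity, the individual terms are integrable because $\beta<2$ gives decay like $|\fv|^{\beta-d-2}$.
\end{itemize}
Hence every term is separately absolutely integrable and no step is delicate. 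I expect the main (mild) obstacle to be the bookkeeping of the scaling constants in the change of variables.
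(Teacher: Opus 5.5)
Your proof is correct and follows the paper's route. The paper likewise specialises \Cref{proposition:covariance_representation} to $t=s$ and rescales in $\sqrt{\vartheta}$ and $\lambda$ (this is \Cref{result:spatial_covariance_function}), pulls $\mathcal{I}^{(2)}$ inside the integral, and evaluates it as $\mathfrak{I}^{(4)}[\cos(\cdot\,\rho\cdot\fv)](k-l)=16\sin^4(\rho\cdot\fv/2)\cos((k-l)\rho\cdot\fv)$ via \Cref{lemma:f_+_-_increments} and \Cref{result:higher_order_increments_trig}. Your factorisation of $\mathcal{I}^{(2)}$ into two one-variable second differences is a self-contained version of that last step, and your integrability check supplies the justification for the interchange and substitution that the paper leaves implicit.
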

\begin{remark}
One of the consequences of the geometry of the stochastic wave equation is that explicit representations for the covariance of increments such as \Cref{lemma:covariance_second_order_spatial_increments} are available. While they are in principle also viable in the parabolic setting, cf. \citet[Proposition 2.1]{hildebrandtParameterEstimationSPDEs2019}, the resulting closed-form representation is not effective in the analysis, and increments are immediately approximated by derivatives up to an additional error which needs to be accounted for, see \citet[Proposition 3.5]{hildebrandtParameterEstimationSPDEs2019} and \citet{bibingerVolatilityEstimationStochastic2019}.
\end{remark}
\Cref{lemma:covariance_second_order_spatial_increments} shows that even in general dimensions where $\beta \in (0, 2 \land d)$ the covariance matrix associated with the second-order increments 
is still a Toeplitz matrix as described in \Cref{remark:intuition_second_order_increments}. Furthermore, we notice that the distance of the spatial points $\lambda$ and the unknown $\vartheta$ enter the covariance as a scalar factor and through the $\mathrm{sinc}$ part of the integral. Thus, as $\lambda \rightarrow 0$, the covariance should scale as $\lambda^{2-\beta}$ since the $\mathrm{sinc}$-term vanishes asymptotically, allowing us to characterise the expectation and variance of the spatial variation $\mathbf{V}_{\mathrm{sp}}$.
\begin{proposition}\label[proposition]{proposition:expectation_spatial_variation_t_fixed}
\label{eq:expectation_convergence_speed_result_spatial_fixed_t}
Assuming that $\lambda \rightarrow 0$ as $n \rightarrow \infty$, the expectation of the second-order spatial variation defined through \eqref{eq:second_order_spatial_variation} satisfies
\begin{equation*}
\frac{\lambda^{\beta-2}}{n}\E[\mathbf{V}_{\mathrm{sp}}] = \frac{t}{\vartheta}\spconstexpt+\mathcal{O}(\lambda^{2})
\end{equation*}
for the constant $\spconstexpt>0$ given in \Cref{tab:asymptotic_constants}.
\end{proposition}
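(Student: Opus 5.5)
From \Cref{lemma:covariance_second_order_spatial_increments} with $k=l$, every increment has the same variance, so
\begin{equation*}
\frac{\lambda^{\beta-2}}{n}\E[\mathbf{V}_{\mathrm{sp}}]=\lambda^{\beta-2}\,\mathrm{Var}(\mathbf{I}_{\mathrm{sp},1})=\frac{8t}{(2\pi)^{d}\vartheta}\int_{\mathbb{R}^{d}}\gsplambda(\fv)\,\mathrm{d}\fv .
\end{equation*}
This is exact for every $n$; the only $n$-dependence is through $\lambda$. The plan is to set
\begin{equation*}
\spconstexpt\coloneqq\frac{8}{(2\pi)^{d}}\int_{\mathbb{R}^{d}}\gspzero(\fv)\,\mathrm{d}\fv ,
\end{equation*}
which is presumably the constant in \Cref{tab:asymptotic_constants}, and to show that $\int_{\mathbb{R}^d}(\gspzero-\gsplambda)(\fv)\,\mathrm{d}\fv=\mathcal{O}(\lambda^2)$.

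First I check that $\spconstexpt$ is finite and positive. Near the origin, $\sin^4(\rho\cdot\fv/2)\lesssim|\fv|^4$, so the integrand is $\lesssim|\fv|^{\beta-d+2}$, which is integrable. At infinity, $\gspzero\le|\fv|^{\beta-d-2}$, which is integrable because $\beta<2$. Positivity is clear since $\gspzero\ge 0$ and is not identically zero.

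The error term is
\begin{equation*}
\int_{\mathbb{R}^{d}}\mathrm{sinc}\bigl(2t\sqrt{\vartheta}\lambda^{-1}|\fv|\bigr)\sin^{4}(\rho\cdot\fv/2)|\fv|^{\beta-d-2}\,\mathrm{d}\fv .
\end{equation*}
Substituting $\fv=\lambda\eta$ gives
\begin{equation*}
\lambda^{2-\beta}\int_{\mathbb{R}^{d}}\mathrm{sinc}\bigl(2t\sqrt{\vartheta}|\eta|\bigr)\sin^4(\lambda\rho\cdot\eta/2)|\eta|^{\beta-d-2}\,\mathrm{d}\eta .
\end{equation*}
Bounding $|\mathrm{sinc}|$ and $|\sin|$ crudely only yields $\lambda^{2-\beta}$ times a logarithm or similar, which is not enough. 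The decisive step is therefore to exploit the oscillation of $\sin(2t\sqrt\vartheta|\fv|/\lambda)$. I would pass to polar coordinates $\fv=r\theta$ with $\theta\in\mathbb{S}^{d-1}$. The error becomes
\begin{equation*}
\frac{\lambda}{2t\sqrt{\vartheta}}\int_{\mathbb{S}^{d-1}}\int_0^\infty \sin\bigl(2t\sqrt\vartheta r/\lambda\bigr)\,\sin^4(r\rho\cdot\theta/2)\,r^{\beta-4}\,\mathrm{d}r\,\sigma(\mathrm{d}\theta).
\end{equation*}
The inner radial integral is a sine transform, at frequency $\omega=2t\sqrt\vartheta/\lambda\to\infty$, of $h_\theta(r)=\sin^4(ra/2)r^{\beta-4}$ with $a=\rho\cdot\theta$. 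This function behaves like $r^{\beta}a^4/16$ at $0$, is smooth on $(0,\infty)$, and its derivatives decay like $r^{\beta-4}$.

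I would show that the inner integral is $\mathcal{O}(\omega^{-1})$, uniformly in $\theta$ (since $|a|\le1$), by one integration by parts. The boundary terms vanish because $h_\theta(0)=0$. What remains is $\omega^{-1}\int_0^\infty\cos(\omega r)h_\theta'(r)\,\mathrm{d}r$, and $h_\theta'$ is integrable: it is $\mathcal{O}(r^{\beta-1})$ near $0$ and $\mathcal{O}(r^{\beta-4})$ at infinity. This gives an error of order $\lambda\cdot\lambda=\lambda^2$, which is the claimed rate.

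To sharpen this or double-check it, a second integration by parts is available. It produces the boundary term $\omega^{-2}h_\theta'(0^+)$, but $h_\theta'(r)\sim c\,r^{\beta-1}$ blows up at $0$ when $\beta<1$, so I would not rely on it. If needed, I would instead split the radial integral at $r=\lambda$: near zero use $|h_\theta|\lesssim r^\beta$, which contributes $\lambda\cdot\lambda^{1+\beta}$ after the prefactor, and on $[\lambda,\infty)$ integrate by parts once.

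The main obstacle is obtaining this uniform $\mathcal{O}(\lambda)$ oscillatory bound for the radial integral despite the singular behaviour of $h_\theta'$ near the origin. The single integration by parts with the $L^1$ bound on $h_\theta'$ seems sufficient for it. Multiplying by $8t/((2\pi)^d\vartheta)$ then yields the claim.
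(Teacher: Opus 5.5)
Your proposal is correct and is essentially the paper's proof. The paper also writes $\mathrm{sinc}(x)=\sin(x)/x$ to extract one factor $\lambda$, then gains the second factor from Riemann--Lebesgue together with $\nabla(\gspzero/|\fv|)\in L^{1}(\mathbb{R}^{d})$, i.e.\ one integration by parts, which even yields $\smallo(\lambda^{2})$. Your passage to polar coordinates, with a uniform $L^{1}$ bound on $h_\theta'$, is a cleaner justification of that step for the radial oscillation $\sin(2t\sqrt{\vartheta}\lambda^{-1}|\fv|)$.
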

\begin{remark}
\begin{enumerate}
\item[]
\item If we start the process \eqref{eq:SPDE} within the deterministic initial conditions $u(0, \cdot)=u_0$ and $\partial_t u(0,x)=v_0$ the solution of the stochastic wave equation involves the additional deterministic term $u_\mathrm{det}(t,x)\coloneqq(\partial_t G_t * u_0) (x) + (G_t * v_0)(x)$. We denote by $\mathbf{V}_{\mathrm{sp}, \mathrm{det}}$ the second-order spatial variation of $u_{\mathrm{det}}$ similarly to \eqref{eq:second_order_spatial_variation}, where $u$ is replaced by $u_{\mathrm{det}}$ in \eqref{eq:spatial_increments}. 
Let $\gamma_0$ and $\gamma_1$ abbreviate the Hölder regularity of $u_0$ and $v_0$, respectively. While the operator cosine (i.e. $\partial_t G_t * u_0$) preserves the regularity of the initial condition, the operator sine (i.e. $G_t * v_0$) provides one additional order of regularity. Let $\mathbf V_{\mathrm{sp}, \mathrm{det}}$ be the second-order variation of $u_{\mathrm{det}}$. If $u_{\mathrm{det}}$ is sufficiently regular, i.e. $\gamma=\mathrm{min}(\gamma_0, \gamma_1+1) > 1-\beta/2$, we obtain $n^{-1}\lambda^{\beta-2}\mathbb{E}[\mathbf{V}_{\mathrm{sp}, \mathrm{det}}]=O(\lambda^{2\gamma-(2-\beta)})$ and our statistical results remain true also for non-zero initial conditions.
\item Suppose instead of \eqref{eq:SPDE}, we would consider the non-linear stochastic wave equation $\partial_{tt}^{2}u(t,x)=\vartheta\Delta u(t,x)+F(u(t,x))+\dot{W}_{\beta}(t,x)$ with a Lipschitz non-linearity $F$. Then, with $u_{\mathrm{lin}}$ being the solution to \eqref{eq:SPDE} the random solution to the non-linear equation is given by
\begin{equation*}
u(t,x)= u_{\mathrm{lin}}(t,x)+ u_{\mathrm{rem}}(t,x), \quad u_{\mathrm{rem}}(t,x)\coloneqq\int_{0}^{t}\int_{\mathbb{R}^{d}}G_{t-s}(x-y)F(u(s,y))\mathrm{d}s \mathrm{d}y.
\end{equation*}
Thus, the spatial variation $\mathbf{V}_{\mathrm{sp}}$ may be decomposed into three parts, $\mathbf{V}_{\mathrm{sp},\mathrm{lin}}$ and  $\mathbf{V}_{\mathrm{sp},\mathrm{rem}}$, i.e. the second-order variations of $u_{\mathrm{lin}}$ and $u_{\mathrm{rem}}$, respectively, and a lower order cross-term. \citet[Theorem 7.6]{conusNonLinearStochasticWave2008} states that the solution to the non-linear stochastic wave equation is $\alpha$-Hölder continuous of order $\alpha < 1-\beta/2$. By the Lipschitz-continuity of $F$ the same also holds for $F(u(s,\cdot))$. Moreover, the Fourier transform of the Green's function in \eqref{eq:fourier_transform_greens_function} is essentially a Fourier multiplier of negative order one and damps high frequencies through the factor $|\fv|^{-1}$. Thus, the derivative along direction $\rho$ of the operator sine function induced by $G$ is still $\alpha$-Hölder continuous, implying a bound of the form $\mathbb{E}[|\mathbf{I}_{\mathrm{sp},k}[u_{\mathrm{rem}}]|^{2}] \lesssim \lambda^{2 +2\alpha} \mathbb{E}[\Vert \partial_\rho u_{\mathrm{rem}}(t, \cdot) \Vert^{2}_{C^{0,\alpha}}]<\infty$. Consequently, $n^{-1}\lambda^{\beta-2}\mathbb{E}[\mathbf{V}_{\mathrm{sp},\mathrm{rem}}] = O(\lambda^{2\alpha + \beta})$. A more precise analysis might be possible by representing $u_{\mathrm{rem}}$ directly using the Fourier transform of $G$ and incorporating the second-order increments as an additional Fourier multiplier as in \Cref{lemma:covariance_second_order_spatial_increments}. 

Overall, this shows that under a relatively mild assumption on the non-linearity $F$, the linear part of the solution appears to dominate. This is in line with an analogous behaviour for the non-linear stochastic heat equation, cf. \citet{HildebrandtTrabs2023}. A full investigation of the non-linear setting might thus be an interesting avenue for future research.
\end{enumerate} 
\end{remark}
Now that we have understood the asymptotic behaviour of the expectation of the statistic $\mathbf{V}_{\mathrm{sp}}$, we would also like to investigate its variance. 
\begin{proposition}\label[proposition]{proposition:asymptotic_scaling_for_the_variance}
Suppose that $\lambda \rightarrow 0$. Then, the variance of the second-order spatial variation satisfies 
\begin{equation*}
\frac{\lambda^{2\beta-4}}{n}\V(\mathbf{V}_{\mathrm{sp}}) \rightarrow \frac{t^{2}}{\vartheta^{2}}\spconstvart,
\end{equation*}
for the constant $\spconstvart>0$ given in \Cref{tab:asymptotic_constants}.
\end{proposition}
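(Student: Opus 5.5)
Since $(\mathbf{I}_{\mathrm{sp},k})_{k=1}^n$ is a centred Gaussian vector, Wick's formula gives
\begin{equation*}
\V(\mathbf{V}_{\mathrm{sp}})=2\sum_{k,l=1}^{n}\mathrm{Cov}(\mathbf{I}_{\mathrm{sp},k},\mathbf{I}_{\mathrm{sp},l})^{2}.
\end{equation*}
By \Cref{lemma:covariance_second_order_spatial_increments} the covariance matrix is Toeplitz. Write $\mathrm{Cov}(\mathbf{I}_{\mathrm{sp},k},\mathbf{I}_{\mathrm{sp},l})=\frac{8t\lambda^{2-\beta}}{(2\pi)^d\vartheta}c_\lambda(k-l)$ with $c_\lambda(z)=\int_{\mathbb{R}^d}\cos(z\rho\cdot\fv)\gsplambda(\fv)\mathrm{d}\fv$. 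Then
\begin{equation*}
\frac{\lambda^{2\beta-4}}{n}\V(\mathbf{V}_{\mathrm{sp}})=\frac{128t^2}{(2\pi)^{2d}\vartheta^2}\sum_{|z|<n}\Big(1-\frac{|z|}{n}\Big)c_\lambda(z)^2 .
\end{equation*}
This is a Fejér/Cesàro sum. The claim therefore reduces to showing $\sum_{|z|<n}(1-|z|/n)c_\lambda(z)^2\to\sum_{z\in\mathbb{Z}}c_0(z)^2$, and we set $\spconstvart=\frac{128}{(2\pi)^{2d}}\sum_{z}c_0(z)^2$, where $c_0$ is built from $\gspzero$. (I expect this to match the constant in \Cref{tab:asymptotic_constants}, possibly written in Fourier/Parseval form.)

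\textbf{Step 1: reduce to a one-dimensional profile.} Choose coordinates with $\rho=e_1$ and integrate out the orthogonal components. This gives $c_\lambda(z)=\int_{\mathbb{R}}\cos(zs)\,h_\lambda(s)\mathrm{d}s$ with
\begin{equation*}
h_\lambda(s)=\sin^4(s/2)\int_{\mathbb{R}^{d-1}}\big(1-\mathrm{sinc}(2t\sqrt\vartheta\lambda^{-1}|(s,\eta)|)\big)|(s,\eta)|^{\beta-d-2}\mathrm{d}\eta .
\end{equation*}
For $\lambda=0$ scaling gives $h_0(s)=K\sin^4(s/2)|s|^{\beta-3}$. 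This function is integrable: near $0$ it behaves like $|s|^{1+\beta}$, and at infinity like $|s|^{\beta-3}$ with $\beta<2$. Since $|1-\mathrm{sinc}|\le 2$, we also get the uniform domination $|h_\lambda|\le 2h_0$, and $h_\lambda\to h_0$ pointwise as $\lambda\to0$.

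\textbf{Step 2: square-summable Fourier coefficients.} Fold $h_\lambda$ onto $[-\pi,\pi]$ by setting $H_\lambda(s)=\sum_{p\in\mathbb{Z}}h_\lambda(s+2\pi p)$. Then $c_\lambda(z)$ are the cosine Fourier coefficients of $H_\lambda$, and Parseval gives $\sum_z c_\lambda(z)^2=2\pi\int_{-\pi}^{\pi}H_\lambda^2$. The periodisation converges because the tail behaves like $|s|^{\beta-3}$. Moreover $H_0\in L^2(-\pi,\pi)$ (indeed bounded), and $|H_\lambda|\le 2H_0$. Dominated convergence then yields $H_\lambda\to H_0$ in $L^2$, hence $\sum_z c_\lambda(z)^2\to\sum_z c_0(z)^2$ and $c_\lambda\to c_0$ in $\ell^2(\mathbb{Z})$.

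\textbf{Step 3: remove the Fejér weights.} Split
\begin{equation*}
\sum_{|z|<n}\Big(1-\frac{|z|}{n}\Big)c_\lambda(z)^2-\sum_z c_0(z)^2
\end{equation*}
into two pieces. The first, $\sum_z w_n(z)(c_\lambda(z)^2-c_0(z)^2)$ with $0\le w_n\le1$, is bounded by $\|c_\lambda-c_0\|_{\ell^2}\|c_\lambda+c_0\|_{\ell^2}\to0$. The second is $\sum_z(1-w_n(z))c_0(z)^2\to0$ by dominated convergence in $\ell^2$, since $w_n(z)\to1$ pointwise. Positivity of the limit constant follows because $h_0\not\equiv0$ and $h_0\ge0$ give $H_0\not\equiv0$.

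\textbf{Main obstacle.} The hardest part is Step 2: justifying the folding and Parseval identity, and the $L^2$ domination of the periodisation. If the domination $|H_\lambda|\le2H_0$ turns out to be too crude, the fallback is a direct decay bound $|c_\lambda(z)|\lesssim(1+|z|)^{\beta-3}$, uniformly in $\lambda$. This would come from integrating by parts twice in $s$, using the smoothness of $\sin^4(s/2)$ near $0$ and controlling the tail of $|s|^{\beta-3}$. Since $\beta<2$, such a bound is square-summable, which gives the $\ell^2$ convergence by dominated convergence term by term.
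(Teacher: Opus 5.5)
Your proof is correct, and your constant is exactly the tabulated $\spconstvart$, since $c_0(z)=\mathcal{F}_c(\gspzero)(z\rho)$. Your Wick/Toeplitz reduction to the Ces\`aro sum is the paper's Fej\'er-kernel factorisation, just done more directly.

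The genuine difference is the limit step. The paper stays in $\mathbb{R}^d$. It takes pointwise limits of $\mathcal{F}_c(\gsplambda)(k\rho)$ and dominates the series through the uniform decay $|\mathcal{F}_c(\gsplambda)(k\rho)|\lesssim|k|^{-1}$. That decay rests on the uniform $L^1$ bound for $\nabla\gsplambda$ from \Cref{result:uniform_integrability_f_lambda}, which requires differentiating the $\mathrm{sinc}$ factor. You instead reduce to the one-dimensional profile, periodise, and apply Parseval. Everything then follows from $L^2$ dominated convergence of $H_\lambda$, using only $0\le 1-\mathrm{sinc}\le 2$ and the boundedness of $H_0$. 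You need no derivative estimates, and you get for free both positivity of the constant and the closed form $\spconstvart=\frac{256\pi}{(2\pi)^{2d}}\|H_0\|^2_{L^2(-\pi,\pi)}$. The paper itself uses this periodisation device only later, in \Cref{lemma:C_k_j_f_a_j}.

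What the paper's $|k|^{-1}$ bound buys is reuse in the row-sum estimate of \Cref{eq:numerator_control_spatial_t_fixed}, which an $\ell^2$ statement alone cannot replace. Your viewpoint offers a better substitute there. Up to a constant, $H_\lambda$ is the spectral density of the increments, so $x^\top\Sigma_n x=\frac{8t\lambda^{2-\beta}}{(2\pi)^d\vartheta}\int_{-\pi}^{\pi}|\sum_k x_k e^{\ii ks}|^2H_\lambda(s)\,\mathrm{d}s$. This gives $\|\Sigma_n\|_2\lesssim\lambda^{2-\beta}$ uniformly in $n$, and hence the Lyapunov condition without the $\log(n)$ loss.

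One correction, which concerns only your fallback. $c_0(z)$ is a constant multiple of $\mathfrak{I}^{(4)}[|\cdot|^{2-\beta}](z)$, so for $\beta\neq1$ it decays exactly like $|z|^{-2-\beta}$. The claimed bound $(1+|z|)^{\beta-3}$ is therefore false for $\beta<1/2$. Your main argument never uses it.
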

The proof of \Cref{proposition:asymptotic_scaling_for_the_variance} is rooted in classical Fourier analysis. Using Wicks' (Isserlis') theorem \cite[Theorem 1.28]{Janson_1997} and \Cref{lemma:covariance_second_order_spatial_increments}, we obtain the asymptotic
\begin{equation}
\label{eq:variance_asymptotic_1}
\frac{\lambda^{2\beta-4}}{n}\V(\mathbf{V}_{\mathrm{sp}}) \asymp \frac{1}{n}\sum_{k,l=1}^{n}\left(\int_{\mathbb{R}^{d}}\cos((k-l)\rho \cdot \fv)g_{\mathrm{sp},\lambda}(\fv)\mathrm{d}\fv\right)^{2}.
\end{equation}
The sum \eqref{eq:variance_asymptotic_1} admits a representation through the Fejér kernel, visualised in \Cref{figure:fejer},
\begin{equation}
\label{eq:introducing_the_fejér_kernel}
\mathfrak{F}_{\mathrm{sp}}(x) \coloneqq 
\frac{\sin^2\!\left(\tfrac{n x}{2}\right)}{n\,\sin^2\!\left(\tfrac{x}{2}\right)}
= \underbrace{\frac{1}{n}\sum_{k,l=1}^{n} \cos\!\big((k-l)x\big)}_{\text{cosine representation}}
= \underbrace{\sum_{|k|\leq n-1} w_{\mathrm{sp},k} e^{\ii kx}}_{\text{Fourier representation}}=\underbrace{\sum_{|k|\leq n-1} w_{\mathrm{sp},k} \cos(kx)}_{\text{simplification}},
\quad x \in \mathbb{R},
\end{equation}
with the symmetric weights $w_{\mathrm{sp}, k}\coloneqq1-|k|/n$, which itself has several useful representations summarised in \citet[Proposition 3.1.7]{grafakosClassicalFourierAnalysis2014}.
\begin{figure}[tb]
\centering
\begin{tikzpicture}[scale=0.8]
  \begin{axis}[
    width=12cm, height=8cm,
    domain=-12:12,
    samples=502,              
    trig format=rad,
    xmin=-12, xmax=12,
    ymin=0,   ymax=7,
    major grid style={line width=0.4pt},
    axis lines=left,           
    xtick={-10,-5,0,5,10},  
    ytick={0,2,4,6,8,10},
    xlabel={$x$},
    ylabel={},
    title={},
    legend style={draw=none, fill=none},
    legend pos=north east,
    cycle list name=fejercolors,
  ]

    \addplot {fejer( 2, x)}; \addlegendentry{$n=2$}
    \addplot {fejer( 4, x)}; \addlegendentry{$n=4$}
    \addplot {fejer( 6, x)}; \addlegendentry{$n=6$}
  \end{axis}
\end{tikzpicture}
\caption{Visualisation of the Fejér kernel $\mathfrak{F}_{\mathrm{sp}}$ defined through \eqref{eq:introducing_the_fejér_kernel} for different values of $n \in \mathbb{N}$.}
\label{figure:fejer}
\end{figure}
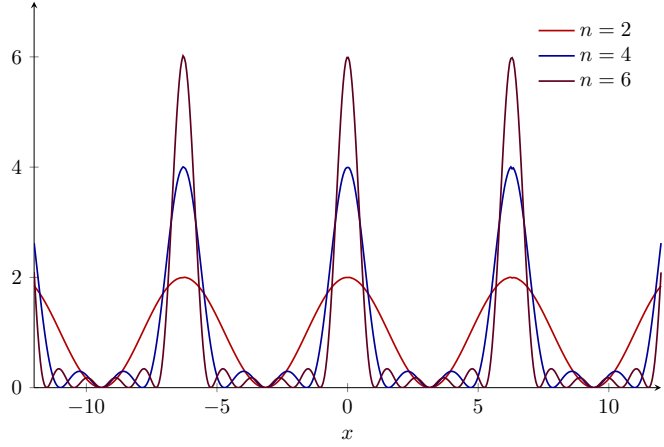
We may first expand the square and represent \eqref{eq:variance_asymptotic_1} as an integral in two different variables. Together with the sum-to-product formula for the cosine, and the cosine representation from \eqref{eq:introducing_the_fejér_kernel}, we have
\begin{equation}
\label{eq:variance_asymptotic_2}
\frac{\lambda^{2\beta-4}}{n}\V(\mathbf{V}_{\mathrm{sp}}) \asymp \int_{\mathbb{R}^{d}}\int_{\mathbb{R}^{d}} \big(\mathfrak{F}_{\mathrm{sp}}(\rho \cdot (\fv_1+\fv_2))+ \mathfrak{F}_{\mathrm{sp}}({\rho \cdot (\fv_1-\fv_2)})\big)\gsplambda(\fv_1)\gsplambda(\fv_2) \mathrm{d}\fv_1 \mathrm{d}\fv_2. 
\end{equation}
If we now express the Fejér kernel through the Fourier representation from \eqref{eq:introducing_the_fejér_kernel}, both Fejér kernels can be merged. In fact, since $\gsplambda$ is an even function, the double integral even factorises and we can fully represent the variance through the Fourier coefficients of the function $\gsplambda$:
\begin{equation}
\label{eq:partial_fourier_sum}
\frac{\lambda^{2\beta-4}}{n}\V(\mathbf{V}_{\mathrm{sp}}) \asymp \sum_{|k| \leq n-1}w_{\mathrm{sp},k} \left(\mathcal{F}_c(g_{\mathrm{sp},\lambda})(k \rho)\right)^{2} \rightarrow \sum_{k \in \mathbb{Z}} \left(\mathcal{F}_c(\gspzero)(k\rho)\right)^2 < \infty. 
\end{equation}

\begin{remark}
\begin{enumerate}
\item[]
\item Fejér and Dirichlet kernels are fundamentally related to the idea of reproducing functions through their Fourier coefficients, for instance, using partial Fourier sums or Cesàro sums such as \eqref{eq:partial_fourier_sum}. For an in-depth treatment of the subject and how to incorporate results on the decay of the Fourier transform, we refer to \citet[Chapter 3.2, Chapter 3.3.2]{grafakosClassicalFourierAnalysis2014}. While we follow that approach throughout this paper, the Fejér kernel can also be considered to converge to a counting measure (see \Cref{figure:fejer}):
\begin{equation*}
\mathfrak{F}_{\mathrm{sp}} \rightarrow 2\pi \mu_{2\pi \mathbb{Z}}=2\pi\sum_{k \in \mathbb{Z}}\delta_{2\pi k}, \quad n \rightarrow \infty,
\end{equation*}
and the limit of \eqref{eq:variance_asymptotic_2} can be represented as an integral over suitable hyper-planes. Both perspectives are linked through a higher-dimensional version of the Poisson summation principle. 
\item Even in the case $\beta=d=1$, a result identical to \Cref{proposition:asymptotic_scaling_for_the_variance} for first-order increments cannot be obtained as the number of non-trivial off-diagonal entries of the associated covariance matrix scales quadratically in $n$. In \eqref{eq:spatial_increments}, we have instead considered the incremental order two. In view of \Cref{lemma:covariance_second_order_spatial_increments}, the incremental order enters the covariance representation of the increments through the power of the sine term and the constant. The power of the sine term is crucial as it smoothens out the singularities of the Riesz colouring term $|\cdot|^{\beta-d-2}$ around the origin. We even require the derivative of $\gspzero$ to be integrable for the series in \eqref{eq:partial_fourier_sum} to converge, which is not satisfied given first-order increments.
\end{enumerate}
\end{remark}

\Cref{proposition:asymptotic_scaling_for_the_variance} already characterises the asymptotic behaviour of second-order spatial variation precisely. This is a crucial step in the verification of the Lyapunov condition for a central limit theorem: 
\begin{equation}
\label{eq:CLT_condition_maximum_row_norm_version}
\frac{\mathbf{V}_{\mathrm{sp}} - \E[\mathbf{V}_{\mathrm{sp}}]}{\sqrt{\V(\mathbf{V}_{\mathrm{sp}})}} \xrightarrow{d} N(0,1) \quad\text{ if } \quad \frac{\left(\max_{k=1, \dots,n}\sum_{l=1}^{n} |\mathrm{Cov}(\mathbf{I}_{\mathrm{sp},k}, \mathbf{I}_{\mathrm{sp},l})|\right)^{2}}{\V(\mathbf{V}_{\mathrm{sp}})} \rightarrow 0, \quad n \rightarrow \infty,
\end{equation}
see for instance \citet[Proposition 3.1]{hildebrandtParameterEstimationSPDEs2019}.
\begin{remark}
Let $\Sigma_n=(\mathrm{Cov}(\mathbf{I}_{\mathrm{sp},k}, \mathbf{I}_{\mathrm{sp},l}))_{k,l=1, \dots,n}$ be the covariance matrix associated with the second-order increments. Then, the vector of increments $(\mathbf{I}_{\mathrm{sp},1}, \dots, \mathbf{I}_{\mathrm{sp},n})$ is distributed according to $N(0, \Sigma_n)$ and its norm is the variation $\mathbf{V}_{\mathrm{sp}}$. In particular,  $\mathbf{V}_{\mathrm{sp}}$ can be rewritten in terms of the eigenvalues of the matrix $\Sigma_n$ and Lyapunov’s condition is satisfied provided that $\mathrm{tr}(\Sigma_n^{4})/\mathrm{tr}(\Sigma_n^{2})^{2}\rightarrow 0$. This condition is equivalent to $\Vert  \Sigma_n\Vert_{2}^{2}/\V(\mathbf{V}_{\mathrm{sp}}) \rightarrow 0$ as $n \rightarrow \infty$ and leads to \eqref{eq:CLT_condition_maximum_row_norm_version} if the spectral norm is bounded by the maximum absolute row norm. 
\end{remark}
By exploiting the Toeplitz structure of the covariance matrix in \Cref{lemma:covariance_second_order_spatial_increments}, the numerator in the condition \eqref{eq:CLT_condition_maximum_row_norm_version} can be controlled, and we obtain the central limit theorem. 
\begin{theorem}\label[theorem]{theorem:clt_spatial_fixed_t}
Suppose that $\lambda \rightarrow 0$ as $n\rightarrow\infty$, the second-order spatial variation satisfies 
\begin{equation*}
\sqrt{n}\left(\frac{\lambda^{\beta-2}}{n}\mathbf{V}_{\mathrm{sp}}- \E\left[\frac{\lambda^{\beta-2}}{n}\mathbf{V}_{\mathrm{sp}}\right]\right) \xrightarrow{d} N\left(0, \frac{t^{2}}{\vartheta^{2}}\spconstvart\right), \quad n \rightarrow \infty.
\end{equation*}
\end{theorem}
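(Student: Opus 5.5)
The plan is to combine the Lyapunov-type criterion \eqref{eq:CLT_condition_maximum_row_norm_version} with the variance asymptotics of \Cref{proposition:asymptotic_scaling_for_the_variance}.

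\emph{Reduction to a standardised CLT.} Write $\Sigma_n=(\mathrm{Cov}(\mathbf{I}_{\mathrm{sp},k},\mathbf{I}_{\mathrm{sp},l}))_{k,l}$. Once \eqref{eq:CLT_condition_maximum_row_norm_version} is verified, we have $(\mathbf{V}_{\mathrm{sp}}-\E[\mathbf{V}_{\mathrm{sp}}])/\sqrt{\V(\mathbf{V}_{\mathrm{sp}})}\to N(0,1)$. The statistic in the theorem equals
\begin{equation*}
\sqrt{n}\,\frac{\lambda^{\beta-2}}{n}(\mathbf{V}_{\mathrm{sp}}-\E[\mathbf{V}_{\mathrm{sp}}])
=\Big(\frac{\lambda^{2\beta-4}}{n}\V(\mathbf{V}_{\mathrm{sp}})\Big)^{1/2}\frac{\mathbf{V}_{\mathrm{sp}}-\E[\mathbf{V}_{\mathrm{sp}}]}{\sqrt{\V(\mathbf{V}_{\mathrm{sp}})}} .
\end{equation*}
By \Cref{proposition:asymptotic_scaling_for_the_variance} the prefactor converges to $t\vartheta^{-1}(\spconstvart)^{1/2}$, and Slutsky's lemma then gives the claim. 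It therefore remains to show
\begin{equation*}
\Big(\max_k\sum_{l}|(\Sigma_n)_{kl}|\Big)^2\Big/\V(\mathbf{V}_{\mathrm{sp}})\to0 .
\end{equation*}
Since $\V(\mathbf{V}_{\mathrm{sp}})\asymp n\lambda^{4-2\beta}$, this reduces to proving $\max_k\sum_l|(\Sigma_n)_{kl}|=o(\sqrt n\,\lambda^{2-\beta})$.

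\emph{Row norm bound via Toeplitz structure.} By \Cref{lemma:covariance_second_order_spatial_increments}, $(\Sigma_n)_{kl}=c\,t\lambda^{2-\beta}\vartheta^{-1}a_\lambda(k-l)$ with $a_\lambda(z)=\mathcal{F}_c(\gsplambda)(z\rho)$. The matrix is Toeplitz, so every row sum is bounded by $\sum_{|z|\le n-1}|a_\lambda(z)|$, and it suffices to show this sum is $o(\sqrt n)$ uniformly in small $\lambda$. I aim for the stronger bound
\begin{equation*}
\sum_{|z|\le n-1}|a_\lambda(z)|\le\sqrt{2n-1}\Big(\sum_{z\in\mathbb{Z}}a_\lambda(z)^2\Big)^{1/2}
\end{equation*}
from Cauchy--Schwarz. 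This only gives $O(\sqrt n)$, so I need a refinement. Split the sum at a level $N$: the part with $|z|\le N$ contributes at most $(2N+1)\sup_z|a_\lambda(z)|\lesssim N\,\Vert\gsplambda\Vert_{L^1}$, which is bounded. The tail is bounded by $\sqrt{2n}\,(\sum_{|z|>N}a_\lambda(z)^2)^{1/2}$. Hence
\begin{equation*}
\limsup_n n^{-1/2}\sum_{|z|\le n-1}|a_\lambda(z)|\le\sqrt2\,\sup_{\lambda\le\lambda_0}\Big(\sum_{|z|>N}a_\lambda(z)^2\Big)^{1/2},
\end{equation*}
and it remains to show this is arbitrarily small for large $N$.

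\emph{Main obstacle: uniform tail control.} The hardest step is the uniform square-summability of $a_\lambda(z)$. I would obtain decay in $z$ by integrating by parts in the direction $\rho$: $z\,a_\lambda(z)=\int\sin(z\rho\cdot\fv)\,\partial_\rho\gsplambda(\fv)\,\mathrm{d}\fv$. This gives $|a_\lambda(z)|\le|z|^{-1}\Vert\partial_\rho\gsplambda\Vert_{L^1}$, and hence $\sum_{|z|>N}a_\lambda^2\lesssim N^{-1}\sup_\lambda\Vert\partial_\rho\gsplambda\Vert_{L^1}^2$. So I must check $\sup_{\lambda\le\lambda_0}\Vert\partial_\rho\gsplambda\Vert_{L^1}<\infty$, which is exactly the integrability of the derivative flagged in the remark after \eqref{eq:partial_fourier_sum}. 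The contributions are as follows.
\begin{itemize}
\item Near the origin, $\sin^4(\rho\cdot\fv/2)$ is $O(|\fv|^4)$ with derivative $O(|\fv|^3)$, so the integrand is $\lesssim|\fv|^{\beta-d+1}$ and hence integrable.
\item At infinity, the derivative of $|\fv|^{\beta-d-2}$ is $\lesssim|\fv|^{\beta-d-3}$, and the derivative of $\sin^4$ is bounded. Both are integrable because $\beta<2$. One caveat: the factor $\sin^3(\rho\cdot\fv/2)|\fv|^{\beta-d-2}$ is not radially decaying fast enough in directions orthogonal to $\rho$. I would handle this with polar coordinates, or by bounding $|\sin|\le1$ and checking $\int|\fv|^{\beta-d-2}\mathbf 1_{|\fv|>1}\mathrm{d}\fv<\infty$, which holds since $\beta<2$.
\item The factor $1-\mathrm{sinc}(2t\sqrt\vartheta\lambda^{-1}|\fv|)$ is bounded by $2$. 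Its derivative is $\lambda^{-1}\mathrm{sinc}'(\cdot)$, bounded by $\min(\lambda^{-2}|\fv|,|\fv|^{-1})$. Multiplied by $\sin^4\lesssim|\fv|^4$ near zero, this gives an integrable contribution uniformly in $\lambda$.
\end{itemize}
With this uniform bound, the tail is $O(N^{-1/2})$ uniformly in $\lambda$. Letting $n\to\infty$ and then $N\to\infty$ verifies the Lyapunov condition and completes the proof.
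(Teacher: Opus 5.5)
Your proof is correct and follows the paper's route: the Lyapunov condition via the maximal row norm, the Toeplitz structure, uniform $|z|^{-1}$ decay of $\mathcal{F}_c(g_{\mathrm{sp},\lambda})(z\rho)$ obtained by integration by parts from a $\lambda$-uniform $L^1$ bound on $\nabla g_{\mathrm{sp},\lambda}$, and Slutsky combined with the variance asymptotics. The only difference is cosmetic. Once you have $|a_\lambda(z)|\lesssim |z|^{-1}$ uniformly in $\lambda$, the paper simply sums this bound to get a row norm of order $\lambda^{2-\beta}\log n$, so your Cauchy--Schwarz truncation at level $N$ is valid but unnecessary and gives no explicit rate.
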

\begin{remark}
If we assume that $\lambda=1$ s.t. we leave the high-frequency regime, the expression for the expectation from \Cref{proposition:expectation_spatial_variation_t_fixed} is not easily inverted in $\vartheta$, making the construction of a method-of-moments type estimator difficult. However, the arguments leading to the representation of the variance in \eqref{eq:partial_fourier_sum} remain fully valid, only that the limiting constant will depend non-linearly on $\vartheta$. Thus, a central limit theorem of the form $n^{-1/2}(\mathbf{V}_{\mathrm{sp}} - \E[\mathbf{V}_{\mathrm{sp}}]) \xrightarrow{d} N(0, \tilde{C}_{\mathrm{sp}}(\beta,d,\vartheta))$ holds even when $\lambda$ is constant. 
\end{remark}
It remains to show that when the distance $\lambda$ between the spatial points $x_0, \dots, x_{n+1}$ shrinks sufficiently fast, we can replace $\E[n^{-1}\lambda^{\beta-2}\mathbf{V}_{\mathrm{sp}}]$ by its limit in \Cref{theorem:clt_spatial_fixed_t}. Since we have seen in \Cref{proposition:expectation_spatial_variation_t_fixed} that the bias is of order $\lambda$, it becomes clear that $\sqrt{n}\lambda^{2} \rightarrow 0$ is a sufficient condition which is summarised within the following corollary.
\begin{corollary}\label[corollary]{corollary:spatial_clt_fixed_time_expectation_swap}
Suppose that $\lambda= o(n^{-1/4})$. Then, the second-order spatial variation satisfies
\begin{equation*}
\sqrt{n}\left(\frac{\lambda^{\beta-2}}{n}\mathbf{V}_{\mathrm{sp}}- \frac{t}{\vartheta}\spconstexpt\right) \xrightarrow{d} N\left(0, \frac{t^{2}}{\vartheta^{2}}\spconstvart\right), \quad n \rightarrow \infty.
\end{equation*}
\end{corollary}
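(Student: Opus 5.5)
The corollary follows from \Cref{theorem:clt_spatial_fixed_t}, \Cref{proposition:expectation_spatial_variation_t_fixed} and Slutsky's lemma. I would write
\begin{equation*}
\sqrt{n}\left(\frac{\lambda^{\beta-2}}{n}\mathbf{V}_{\mathrm{sp}}- \frac{t}{\vartheta}\spconstexpt\right)
=\sqrt{n}\left(\frac{\lambda^{\beta-2}}{n}\mathbf{V}_{\mathrm{sp}}- \E\left[\frac{\lambda^{\beta-2}}{n}\mathbf{V}_{\mathrm{sp}}\right]\right)
+\sqrt{n}\left(\E\left[\frac{\lambda^{\beta-2}}{n}\mathbf{V}_{\mathrm{sp}}\right]-\frac{t}{\vartheta}\spconstexpt\right).
\end{equation*}

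The assumption $\lambda=o(n^{-1/4})$ implies in particular $\lambda\to0$. Hence both earlier results are applicable. By \Cref{theorem:clt_spatial_fixed_t}, the first summand converges in distribution to $N(0,t^{2}\spconstvart/\vartheta^{2})$.

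The second summand is deterministic. By \Cref{proposition:expectation_spatial_variation_t_fixed} it equals $\sqrt{n}\,\mathcal{O}(\lambda^{2})$, so its absolute value is at most $c\sqrt{n}\lambda^{2}$ for a constant $c$ and all large $n$. Since $\lambda=o(n^{-1/4})$ gives $\lambda^{2}=o(n^{-1/2})$, we get $\sqrt{n}\lambda^{2}\to0$, and the bias term vanishes.

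Slutsky's lemma then gives the claimed limit. There is no real obstacle; the only point to check is that the $\mathcal{O}(\lambda^{2})$ in \Cref{proposition:expectation_spatial_variation_t_fixed} is uniform along the sequence $\lambda_n\to0$, which is how that proposition is stated.
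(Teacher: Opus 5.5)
Your proposal is correct and is the paper's proof: the same split into a centred fluctuation term, handled by \Cref{theorem:clt_spatial_fixed_t}, and a deterministic bias term of order $\sqrt{n}\lambda^{2}\to 0$ from \Cref{proposition:expectation_spatial_variation_t_fixed}, joined by Slutsky's lemma. Your explicit remark that $\lambda=o(n^{-1/4})$ already forces $\lambda\to0$, so both cited results apply, is a useful detail the paper leaves implicit.
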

Based on \Cref{corollary:spatial_clt_fixed_time_expectation_swap}, we can now construct a method-of-moments type estimator for the unknown wave speed $\vartheta>0$:
\begin{equation}
\label{eq:estimator_spatial_variation_fixed_t}
\hat{\vartheta}_{\mathrm{sp},n} \coloneqq \frac{t \spconstexpt}{n^{-1}\lambda^{\beta-2}\mathbf{V}_{\mathrm{sp}}}. 
\end{equation}
The delta method applied to the estimator \eqref{eq:estimator_spatial_variation_fixed_t} yields the following central limit theorem. 
\begin{corollary}\label[corollary]{corollary:clt_estimator_spatial_fixed_time}
Suppose that $\lambda=o(n^{-1/4})$. Then, the method-of-moments estimator \eqref{eq:estimator_spatial_variation_fixed_t} admits the central limit theorem
\begin{equation*}
\sqrt{n}(\hat{\vartheta}_{\mathrm{sp},n}-\vartheta) \xrightarrow{d} N\left(0, \vartheta^{2}\frac{\spconstvart}{(\spconstexpt)^{2}} \right). 
\end{equation*}
\end{corollary}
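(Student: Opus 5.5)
This follows from \Cref{corollary:spatial_clt_fixed_time_expectation_swap} via the delta method. Write $T_n \coloneqq n^{-1}\lambda^{\beta-2}\mathbf{V}_{\mathrm{sp}}$ and $\mu \coloneqq t\spconstexpt/\vartheta>0$. Then $\hat{\vartheta}_{\mathrm{sp},n}=f(T_n)$ with $f(x)=t\spconstexpt/x$, and $f(\mu)=\vartheta$. Under $\lambda=o(n^{-1/4})$, \Cref{corollary:spatial_clt_fixed_time_expectation_swap} gives
\[
\sqrt{n}(T_n-\mu)\xrightarrow{d} N\big(0,\tfrac{t^2}{\vartheta^2}\spconstvart\big).
\]

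First, this weak convergence implies $T_n-\mu=O_{\mathbb{P}}(n^{-1/2})$, so $T_n\to\mu$ in probability. Since $\mu>0$, we get $T_n>\mu/2$ with probability tending to one, so $f(T_n)$ is well defined on an event of asymptotically full probability. Off that event we may define the estimator arbitrarily; this does not affect convergence in distribution.

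Second, $f$ is continuously differentiable in a neighbourhood of $\mu$ with
\[
f'(\mu)=-\frac{t\spconstexpt}{\mu^2}=-\frac{\vartheta^2}{t\spconstexpt}.
\]
By a first-order Taylor expansion with an intermediate point $\tilde T_n$ between $T_n$ and $\mu$,
\[
\sqrt{n}(\hat{\vartheta}_{\mathrm{sp},n}-\vartheta)=f'(\tilde T_n)\sqrt{n}(T_n-\mu).
\]
Because $\tilde T_n\to\mu$ in probability, continuity of $f'$ gives $f'(\tilde T_n)\to f'(\mu)$ in probability. Slutsky's lemma then yields a centred normal limit with variance
\[
f'(\mu)^2\,\frac{t^2}{\vartheta^2}\spconstvart=\frac{\vartheta^4}{t^2(\spconstexpt)^2}\cdot\frac{t^2}{\vartheta^2}\spconstvart=\vartheta^2\frac{\spconstvart}{(\spconstexpt)^2},
\]
which is the claimed variance.

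There is no real obstacle. The only points needing care are that $\spconstexpt>0$ (stated in \Cref{proposition:expectation_spatial_variation_t_fixed}), which makes $f$ smooth near $\mu$, and that the estimator is undefined only on the null event $\mathbf{V}_{\mathrm{sp}}=0$, which in any case does not affect the distributional limit.
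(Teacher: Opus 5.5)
Your proposal is correct and matches the paper's proof: both apply the delta method with $f(x)=t\spconstexpt/x$ to \Cref{corollary:spatial_clt_fixed_time_expectation_swap}, using $f'(t\spconstexpt/\vartheta)=-\vartheta^{2}/(t\spconstexpt)$ to arrive at the same asymptotic variance. Your remarks on the estimator being well defined with probability tending to one, and the explicit Taylor/Slutsky step, only make explicit what the paper leaves implicit when it invokes the delta method.
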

\begin{remark}
\label[remark]{remark:confidence_intervals}
Based on the central limit theorem \Cref{corollary:clt_estimator_spatial_fixed_time}, we can now construct asymptotic confidence intervals with confidence level $1-\alpha$ by $[\hat{\vartheta}_{\mathrm{sp},n} -(\hat{\vartheta}_{\mathrm{sp},n}/\sqrt{n})c q_{1-\alpha/2},\hat{\vartheta}_{\mathrm{sp},n} + (\hat{\vartheta}_{\mathrm{sp},n}/\sqrt{n})c q_{1-\alpha/2}]$ where $c=\spconstvart^{1/2}/\spconstexpt$ and $q_{1-\alpha/2}$
is the $1-\alpha/2$ quantile of the standard normal distribution. 
\end{remark}
\section{Temporal variation}
\label{section:temporal_variations}
In contrast to \Cref{section:spatial_variation}, we start by fixing a spatial point $x \in \mathbb{R}^{d}$ and assume that we have access to observations of the process $(u(t,x), t \in \mathbb{R})$ at equidistant time points $t_i$ for $i=0, \dots, m+1$, i.e. we observe $u(t_i, x)$ for $t_i=\delta i$ and $i=0, \dots, m+1$. \Cref{figure:visualisation_of_temporal_observations} provides a visualisation of the observation scheme. As before, $\delta=\delta_m$ may depend on the number of time points.

\begin{figure}[tb]
\centering
\begin{tikzpicture}

  \def\m{6}                          
  \pgfmathtruncatemacro{\mplusone}{\m+1}
  \def\d{0.7}                        
  \def\k{3}                          
  \def\xs{1.8} \def\ys{1.0}          
  \pgfmathsetmacro{\tmax}{\d*(\m+1)} 
  \pgfmathsetmacro{\xoff}{0.28}      
  \pgfmathsetmacro{\yoff}{0.25}

  \begin{axis}[
    view={55}{22},
    axis lines=center,                
    xlabel={}, ylabel={}, zlabel={time},
    xmin=-0.5, xmax=3.2,
    ymin=-0.5, ymax=2.6,
    zmin=0,   zmax=\tmax+0.6,
    ticks=none,
    tick label style={font=\small},
    label style={font=\small},
    enlargelimits=upper,
    clip=false
  ]

    \addplot3[
      surf, shader=flat, draw=none, opacity=0.06,
      domain=-0.5:3.2, y domain=-0.5:2.6
    ] {0};

    \addplot3[very thick, blue!60!black]
      coordinates {(\xs,\ys,0) (\xs,\ys,\tmax)};

    \pgfplotsinvokeforeach{0,...,\mplusone}{
      \pgfmathsetmacro{\ti}{\d*#1}
      \addplot3[only marks, mark=*, mark size=1.7pt, red!70!black]
        coordinates {(\xs,\ys,\ti)};
    }

    \node[anchor=west] at (axis cs:\xs,\ys,\d) {$t_1$};
    \node[anchor=west] at (axis cs:\xs,\ys,\k*\d) {$t_k$};
    \node[anchor=west] at (axis cs:\xs,\ys,\tmax) {$t_{m+1}$};

    \coordinate (P)  at (axis cs:\xs,\ys,0);   
    \coordinate (Px) at (axis cs:\xs,0,0);     
    \coordinate (Py) at (axis cs:0,\ys,0);     

    \draw[densely dashed, gray] (Px) -- (P);   
    \draw[densely dashed, gray] (Py) -- (P);   

    \addplot3[only marks, mark=o, mark size=1.6pt]
      coordinates {(\xs,0,0) (0,\ys,0)};
    \node[below] at (axis cs:\xs,0,0) {$x_1$};
    \node[left]  at (axis cs:0,\ys,0.3) {$x_2$};

    \addplot3[<->] coordinates {(\xs+\xoff,\ys+\yoff,\d) (\xs+\xoff,\ys+\yoff,2*\d)};
    \node[anchor=west] at (axis cs:\xs+\xoff,\ys+\yoff,1.5*\d) {$\delta$};

  \end{axis}
\end{tikzpicture}
\caption{Visualisation of the time discrete observations $t_0, t_1, \dots, t_{m+1}$ with distance $t_{i+1}-t_{i}=\delta$ in $d=2$ at the fixed spatial point $x=(x_1, x_2)$.}
\label{figure:visualisation_of_temporal_observations}
\end{figure}

Similarly, to \eqref{eq:second_order_spatial_variation}, we follow a method-of-moment type approach and introduce the second-order temporal variation
\begin{equation}
\label{eq:second_order_temporal_variation}
\mathbf{V}_{\mathrm{te}} \coloneqq \sum_{i=1}^{m} \mathbf{I}_{\mathrm{te},i}^{2},
\end{equation}
with the second-order temporal increments
\begin{equation}
\label{eq:temporal_increments}
 \mathbf{I}_{\mathrm{te},i} \coloneqq u(t_{i+1},x)+u(t_{i-1},x)-2u(t_{i}, x), \quad i=1, \dots, m.
\end{equation}
\begin{remark}
\label[remark]{remark:MLE}
Consider the case of space-time white noise $\beta=d=1$. Then, in view of \Cref{figure:full_inclusion}, the process $(u(t,x), t \geq 0)$ has the time-scaled covariance structure of a Brownian motion
\begin{equation*}
\E[u(t_i, x)u(t_j, x)]=\frac{1}{4 \sqrt{\vartheta}} (t_i \land t_j)^{2}=\frac{\delta^{2}}{4\sqrt{\vartheta}} (i^{2} \land j^{2}), \quad i,j=1,\dots,m,
\end{equation*}
In particular, the vector $U_{m}=(u(t_1,x), \dots, u(t_m, x))$ is distributed according to a multivariate Gaussian $U_m \sim N(0, {\delta^{2}}/(4\sqrt{\vartheta})A_m )$ with $A_m = [i^{2}\land j^{2}]_{i,j=1,\dots,m}$. Thus, the log-likelihood $l$ associated with the vector $U_{m}$ satisfies $\partial_{\vartheta}l(\vartheta| U_m)
={m}/(4\vartheta)-{Q_m}/(\delta^{2}\sqrt{\vartheta})$ with $Q_m=U_m^{\top}A_m^{-1}U_m$, leading to the maximum likelihood estimator $\hat{\vartheta}_m^{\mathrm{MLE}}= \left({\delta^{2}m}/(4Q_m)\right)^{2}$. We observe that computing the MLE involves computing the inverse of the matrix $A_m$. According to \citet[8.11]{seberMatrixHandbookStatisticians2007}, the inverse is a tridiagonal matrix $A_m^{-1}=\mathrm{trid}[b^{(m-1)}, a^{(m)}, b^{(m-1)}]$ with $a^{(m)} \in \mathbb{R}^{m}$ and $b^{(m-1)} \in \mathbb{R}^{m-1}$ given by $a^{(m)}_{j} = {1}/(2j-1)+{1}/(2j+1)$ for $j=1, \dots, m$ and $b^{(m-1)}_j = -{1}/(2j+1)$ for $j=1, \dots,m-1$, respectively. Thus, $Q_m$ is given by
\begin{equation*}
Q_m = u(t_1, x)^{2} + \sum_{j=1}^{m-1} \frac{(u(t_{j+1}, x)-u(t_j,x))^{2}}{2j+1} + \frac{u(t_m,x)^{2}}{2m+1},
\end{equation*}
and the MLE involves a weighted quadratic variation. Notice that
\begin{equation*}
\V(u(t_{j+1}, x)-u(t_{j}, x))=\frac{1}{4\sqrt{\vartheta}}(t_{j+1}^{2} - t_{j}^{2})= \frac{\delta^{2}}{4\sqrt{\vartheta}}[(j+1)^{2}-j^{2}]=\frac{\delta^{2}}{4\sqrt{\vartheta}}[2j+1]. 
\end{equation*}
Thus, increments with higher variance are weighted less then increments with smaller variance by the maximum likelihood estimator. 
\end{remark}
Our next aim is to obtain a representation of the covariance between two different second-order temporal increments $\mathbf{I}_{\mathrm{te},i}$ and $\mathbf{I}_{\mathrm{te},j}$. While for temporal variations it is possible to work with precise evaluations of the explicit covariance formula \eqref{eq:concrete_equation_temporal_covariance_structure} directly, we unified our approach throughout this paper and follow the Fourier methodology outlined in \Cref{section:spatial_variation}. 

The covariance between two different second-order temporal variations is characterised by the two-dimensional second-order increment of the function $(i,j) \mapsto \Phi_{\mathrm{te}, |\fv|}(\delta i,\delta j)$ from \eqref{eq:covariance_sine_integral}, which is computed explicitly in the following result.
\begin{lemma}\label[lemma]{lemma:covariance_second_order_temporal_increments}
The covariance between two different second-order temporal increments is given by
\begin{equation*}
\begin{aligned}
&\mathrm{Cov}(\mathbf{I}_{\mathrm{te},i}, \mathbf{I}_{\mathrm{te},j})
= \delta^{3-\beta}\left((i \land j) \left(\frac{8}{ (2\pi)^{d}\vartheta^{\beta/2}} \right)\int_{\mathbb{R}^{d}}  \cos((i-j) |\fv|) \gtemp(\fv)\mathrm{d}\fv + R(i,j)\right)
\end{aligned}
\end{equation*}
with
\begin{equation}
\label{eq:g_temp_d}
\gtemp(\fv)\coloneqq\sin^4(|\fv|/2)|\fv|^{\beta-d-2}, 
\end{equation}
and an explicit remainder term given by \eqref{eq:time_remainders}, which is uniformly bounded by a constant independent of $i$ and $j$. 
\end{lemma}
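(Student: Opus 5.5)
Starting from \Cref{proposition:covariance_representation} with $x=y$, we have
\begin{equation*}
\mathrm{Cov}(\mathbf{I}_{\mathrm{te},i},\mathbf{I}_{\mathrm{te},j})=\frac{1}{(2\pi)^d\vartheta^{\beta/2}}\int_{\mathbb{R}^d}\mathcal{I}^{(2)}\big[\Phi_{\mathrm{te},|\fv|}(\delta\,\cdot,\delta\,\cdot)\big](i,j)\,|\fv|^{\beta-d-2}\mathrm{d}\fv .
\end{equation*}
The substitution $\fv\mapsto\fv/\delta$ removes $\delta$ from the arguments. Since $\Phi_{\mathrm{te},\xi/\delta}(\delta p,\delta q)=\delta\,\Phi_{\mathrm{te},\xi}(p,q)$ by \eqref{eq:covariance_sine_integral}, and the Jacobian together with the Riesz weight contributes $\delta^{2-\beta}$, this produces the prefactor $\delta^{3-\beta}$. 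It therefore remains to compute $\mathcal{I}^{(2)}[\Phi_{\mathrm{te},\xi}](i,j)$ for integers $i,j\ge1$ and $\xi=|\fv|$, using the second representation in \eqref{eq:covariance_sine_integral}. By symmetry I would assume $i\le j$.

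I would split $\Phi_{\mathrm{te},\xi}$ into the main part $\tfrac12(p\wedge q)\cos((p-q)\xi)$ and the bounded part $\tfrac{1}{4\xi}(\sin(|p-q|\xi)-\sin((p+q)\xi))$.

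For the main part, the nine-point stencil \eqref{eq:second_order_incremental_operator_d_2} only involves $p\in\{i-1,i,i+1\}$ and $q\in\{j-1,j,j+1\}$. When $|i-j|\ge2$, the factor $p\wedge q=p$ is linear in $p$, so it can be written as $i+(p-i)$. The part with constant $i$ gives $i\,\mathcal{I}^{(2)}[\Phi_{\mathrm{sp},\xi}](i,j)$, and the trigonometric identity behind \Cref{lemma:covariance_second_order_spatial_increments} (the factorisation $(2\cos\xi-2)^2=16\sin^4(\xi/2)$) turns this into $16\,i\cos((i-j)\xi)\sin^4(\xi/2)$. After the prefactor $\tfrac12$ this is the claimed $8\,(i\wedge j)\cos((i-j)\xi)\sin^4(\xi/2)$. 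The part with $(p-i)$ is a first difference in $p$ combined with a second difference in $q$ of $\cos((p-q)\xi)$. This gives an expression of the form $\sin((i-j)\xi)\sin(\xi)\sin^2(\xi/2)$, times constants, and it has no factor $i$; it goes into $R(i,j)$. For $|i-j|\le1$ the minimum switches inside the stencil, which produces a few additional explicit bounded terms, also put into $R$.

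For the bounded part, the $\sin(|p-q|\xi)/\xi$ term is a second difference in each variable of a function of $p-q$ alone. Since $(p,q)$ stay within distance one of $(i,j)$ and no factor $i\wedge j$ appears, it contributes a term of size $\lesssim\min(\xi^3,\xi^{-1})$. The same holds for the $\sin((p+q)\xi)/\xi$ term, which is an analogous stencil in $p+q$. This collection of terms defines the remainder \eqref{eq:time_remainders}.

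The main obstacle is showing that $\int|\mathcal{I}^{(2)}\text{-remainder}|\,|\fv|^{\beta-d-2}\mathrm{d}\fv$ is bounded uniformly in $i,j$. After passing to polar coordinates the weight is $r^{\beta-3}\mathrm{d}r$. The plan is to verify that every remainder piece carries at least $\xi^3$ near the origin; this holds because each is a mixed second-order difference, contributing a factor $\sin^2(\xi/2)$ times a further $\sin\xi$ or $\xi$. Near infinity every piece is $O(\xi^{-1})$ or $O(1)$, with $\beta-3<-1$. With these two bounds the integral is finite with a bound independent of $i,j$, since the $i,j$-dependence sits only inside bounded trigonometric factors. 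The delicate point is the diagonal cases $|i-j|\le1$: there the terms coming from the minimum's kink must be checked individually for the $\xi^{3}$ vanishing at zero, since a careless regrouping gives only $\xi^{2}$, which is not enough when $\beta\le 2$.
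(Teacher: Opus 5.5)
Your overall route matches the paper's: rescaling to extract $\delta^{3-\beta}$, splitting $\Phi_{\mathrm{te},\xi}$ into the $(t\wedge s)\cos$ part and the $\sin/\xi$ part, peeling off $16(i\wedge j)\sin^4(\xi/2)\cos((i-j)\xi)$ via the discrete product rule, and treating $|i-j|\le1$ separately. Your bounds for $|i-j|\ge2$ are fine. The gap is in the step you yourself flag as the main obstacle, the behaviour at $\xi=0$ for $|i-j|\le1$, where two of your claims are false.

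First, the stencil of $\sin(|p-q|\xi)/(4\xi)$ is not $\lesssim\min(\xi^3,\xi^{-1})$ there. The function $|p-q|$ has its kink inside the stencil, just as $p\wedge q$ does.
\begin{itemize}
\item For $i=j$ this stencil equals $(2\sin2\xi-8\sin\xi)/(4\xi)\to-1$, while the kink term of the minimum is $\tfrac12(4\cos\xi-2\cos2\xi)\to1$.
\item For $|i-j|=1$ one gets $(7\sin\xi+\sin3\xi-4\sin2\xi)/(4\xi)\to\tfrac12$ and $-\tfrac12\cos\xi\to-\tfrac12$.
\end{itemize}
Each piece is $O(1)$, hence not integrable against $r^{\beta-3}\,\mathrm{d}r$ at the origin. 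So they cannot be checked individually; they must be combined, which is exactly what the paper's $r^{(2)}_{i,j}$ in \eqref{eq:temporal_remainder_terms} does.

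Second, even after combining, the remainder does not vanish to order $\xi^3$. Its integrand is $\tfrac23\xi^2+O(\xi^3)$ for $i=j$ and $\tfrac16\xi^2+O(\xi^3)$ for $|i-j|=1$, uniformly in $i$. These constants are the variance and lag-one covariance of second differences of integrated Brownian motion, whose covariance is the limit of $\Phi_{\mathrm{te},\xi}/\xi^2$ as $\xi\to0$. Since the main term is fixed by the statement, no regrouping can improve this, so the $\xi^3$ verification you plan would fail.

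What rescues the argument is that your threshold is wrong. In polar coordinates the weight is $r^{\beta-3}$, so vanishing of any order greater than $2-\beta$ suffices. In particular, $\xi^2$ is enough for every $\beta>0$, since $\int_0^1 r^{2}r^{\beta-3}\,\mathrm{d}r=1/\beta<\infty$. It is not insufficient for $\beta\le2$, as you assert. The paper uses exactly this: it bounds the grouped kink terms via $|\cos r-\mathrm{sinc}(r)|\lesssim r^2$ on $[0,1]$, while the $r^{(1)}$ and $r^{(3)}$ pieces vanish to order $\xi^3$ anyway. With the kink terms of the minimum and of $|p-q|$ grouped, and the $\xi^2$ criterion in place of $\xi^3$, your plan goes through.
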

\begin{remark}
Compared with \Cref{lemma:covariance_second_order_spatial_increments} the covariance structure of the time increments derived in \Cref{lemma:covariance_second_order_temporal_increments} involves additional explicit remainder terms, depending on the distance between the indices $i$ and $j$. These terms emerge, since the temporal process $(u(t,x), t \geq 0)$ is non-stationary and its covariance involves the mapping $\Phi_{\mathrm{te, \xi}}$ defined through \eqref{eq:covariance_sine_integral}, which resembles the covariance of the standard harmonic oscillator. By carefully taking account the different cases emerging from the absolute value and the minimum in the representation \eqref{eq:covariance_sine_integral}, increments of the mapping $\Phi_{\mathrm{te, \xi}}$ could still be computed explicitly due to its highly trigonometric nature. 
\end{remark}
Up the remainder terms both representations for the covariance in \Cref{lemma:covariance_second_order_spatial_increments} and \Cref{lemma:covariance_second_order_temporal_increments} are structurally quite similar. They differ in their scaling in $\lambda$ and $\delta$, respectively. In fact, the entire temporal covariance structure is completely independent of spatial point $x \in \mathbb{R}^{d}$ chosen, while in \eqref{lemma:covariance_second_order_spatial_increments} $\gsplambda$ is still time-dependent.  Moreover, in contrast to the spatial covariance structure, $\vartheta$ is a scale parameter of the Gaussian process $(u(t,x), t \geq 0)$ such that the covariance structure depends on $\vartheta$ only through a scalar factor. This fact is particularly important for the expected value of the temporal variation $\mathbf{V}_{\mathrm{te}}$ considered in the following result, because the expression for the expectation can always be reversed in $\vartheta$.
\begin{proposition}\label[proposition]{proposition:expectation_temp_fixed_space}
The expected value of the second-order temporal variation $\mathbf{V}_{\mathrm{te}}$ admits the asymptotic behaviour:
\begin{equation*}
\frac{\delta^{\beta-3}}{m^{2}}\E[\mathbf{V}_{\mathrm{te}}] =  \frac{\tempconstexpt}{\vartheta^{\beta/2}}+\mathcal{O}(m^{-1})
\end{equation*}
for the constant $\tempconstexpt>0$ given in \Cref{tab:asymptotic_constants}. 
\end{proposition}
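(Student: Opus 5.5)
Taking $i=j$ in \Cref{lemma:covariance_second_order_temporal_increments}, every summand of $\E[\mathbf{V}_{\mathrm{te}}]=\sum_{i=1}^m \V(\mathbf{I}_{\mathrm{te},i})$ takes the form
\begin{equation*}
\V(\mathbf{I}_{\mathrm{te},i})=\delta^{3-\beta}\left( i\,\frac{8}{(2\pi)^{d}\vartheta^{\beta/2}}\int_{\mathbb{R}^{d}}\gtemp(\fv)\mathrm{d}\fv+R(i,i)\right).
\end{equation*}
Two checks come first. The constant $\int_{\mathbb{R}^d}\gtemp(\fv)\mathrm{d}\fv$ must be finite and positive. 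In polar coordinates it equals $\sarea\int_0^\infty \sin^4(r/2)r^{\beta-3}\mathrm{d}r$. Near the origin $\sin^4(r/2)\sim r^4/16$, so the integrand behaves like $r^{\beta+1}$ and is integrable. At infinity it is bounded by $r^{\beta-3}$, which is integrable since $\beta<2$. The integrand is nonnegative and not identically zero, so the integral is positive.

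Summing over $i$ and using $\sum_{i=1}^m i=m(m+1)/2$ gives
\begin{equation*}
\frac{\delta^{\beta-3}}{m^2}\E[\mathbf{V}_{\mathrm{te}}]=\frac{m+1}{2m}\cdot\frac{8}{(2\pi)^d\vartheta^{\beta/2}}\int_{\mathbb{R}^d}\gtemp(\fv)\mathrm{d}\fv+\frac{1}{m^2}\sum_{i=1}^m R(i,i).
\end{equation*}
This suggests defining $\tempconstexpt\coloneqq 4(2\pi)^{-d}\sarea\int_0^\infty\sin^4(r/2)r^{\beta-3}\mathrm{d}r$, which should agree with \Cref{tab:asymptotic_constants}. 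The first term then equals $\tempconstexpt\vartheta^{-\beta/2}(1+1/m)$, and its deviation from the limit is $\mathcal{O}(m^{-1})$.

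The remaining step, and the only real obstacle, is the remainder. \Cref{lemma:covariance_second_order_temporal_increments} states that $R(i,j)$ is uniformly bounded, and I expect the bound to be of the form $C\vartheta^{-\beta/2}$. Then $m^{-2}\sum_{i=1}^m|R(i,i)|\le C\vartheta^{-\beta/2}m^{-1}=\mathcal{O}(m^{-1})$, which completes the argument.

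To make the remainder explicit, or to double-check it, I would evaluate \eqref{eq:concrete_equation_temporal_covariance_structure} along the diagonal. The leading contribution comes from $(t\land s)|t-s|^{2-\beta}$ and is linear in $i$. The $(t+s)^{3-\beta}$ terms yield a third-order-type difference $\delta^{3-\beta}\big[(2i+2)^{3-\beta}+(2i-2)^{3-\beta}+\dots\big]$. By Taylor expansion this is of order $i^{1-\beta}$, which is bounded because $\beta>0$, so uniform boundedness holds without depending on the Fourier route.
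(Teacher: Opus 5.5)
Your main argument is correct and is the paper's own proof. You take the diagonal of \Cref{lemma:covariance_second_order_temporal_increments}, use $\sum_{i=1}^m i=m(m+1)/2$ to obtain $(1+m^{-1})\tempconstexpt\vartheta^{-\beta/2}$, and absorb $m^{-2}\sum_{i}R(i,i)$ through the uniform bound of \Cref{lemma:remainder_bounds_temp}.

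Your optional double-check paragraph is wrong as written, although the main proof does not rely on it. By \Cref{lemma:f_+_-_increments}, the $(t+s)^{3-\beta}$ term on the diagonal contributes the fourth-order difference $\mathfrak{I}^{(4)}[x\mapsto x^{3-\beta}](2i)=\mathcal{O}(i^{-1-\beta})$. It is not a third-order difference of order $i^{1-\beta}$. Moreover, $i^{1-\beta}$ is unbounded for $\beta<1$, and after summation it would give only $\mathcal{O}(m^{-\beta})$ rather than $\mathcal{O}(m^{-1})$.
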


For the spatial variations, the variance of $\mathbf{V}_{\mathrm{sp}}$ leads naturally to the cosine representation of the Fejér kernel \eqref{eq:introducing_the_fejér_kernel}. Assuming that we can ignore the remainder terms from \Cref{lemma:covariance_second_order_temporal_increments}, the same line of argumentation leads to the representation
\begin{equation*}
\frac{\delta^{2\beta-6}}{m^{3}}\V(\mathbf{V}_{\mathrm{te}}) \asymp \frac{1}{m^{3}}\sum_{i,j=1}^{m}\left((i \land j)\int_{\mathbb{R}^{d}}  \cos((i-j) |\fv|) \gtemp(\fv)\mathrm{d}\fv\right)^{2}.
\end{equation*}
Due to the additional factor $(i \land j)$, an expansion of the square as two integrals leads to a mapping 
\begin{equation}
\label{eq:fejer_time_kernel}
\mathfrak{F}_{\mathrm{te}}(x)\coloneqq \frac{1}{m^{3}}\sum_{i,j=1}^{m} (i \land j)^{2} \cos((i-j)x)={\sum_{|j| \leq m-1} {w}_{\mathrm{te},j} \cos(jx)}, \quad 
w_{\mathrm{te},j}\coloneqq \frac{1}{m^{3}}\sum_{i=1}^{m-|j|} i^{2},
\end{equation}
taking the role of the Fejér kernel, see \Cref{lemma:fourier_representation_time_fejer_kernel} for a proof. In particular, we obtain the representation
\begin{equation*}
\frac{\delta^{2\beta-6}}{m^{3}}\V(\mathbf{V}_{\mathrm{te}}) \asymp \int_{\mathbb{R}^{d}}\int_{\mathbb{R}^{d}}\left(\mathfrak{F}_{\mathrm{te}}(|\fv_1|+|\fv_2|)+\mathfrak{F}_{\mathrm{te}}(|\fv_1|-|\fv_2|)\right)\gtemp(\fv_1)\gtemp(\fv_2)\mathrm{d}\fv_1 \mathrm{d}\fv_2,
\end{equation*}
which allows us to find a suitable representation as a partial Fourier sum
\begin{equation}
\label{eq:variance_heuristic_temp}
\frac{\delta^{2\beta-6}}{m^{3}}\V(\mathbf{V}_{\mathrm{te}}) \asymp  \sum_{|j| \leq m-1} w_{\mathrm{te},j} \left(\mathcal{F}^{+}_{c}(\gtemp)(j)\right)^{2}.
\end{equation}
\begin{remark}
In principle, since all functions in \eqref{proposition:covariance_representation} are radial if a fixed spatial point is considered, one can also pass to polar coordinates immediately, which for instance leads to the representation \eqref{eq:concrete_equation_temporal_covariance_structure}. In that case the term $\mathcal{F}_c^{+}(g_{\mathrm{te}})(j)$ in \eqref{eq:variance_heuristic_temp} would correspond exactly to the cosine transform of the mapping $r\mapsto\sin^{4}(r/2)r^{\beta-3}$ on $[0,\infty)$.
\end{remark}
\begin{proposition}\label[proposition]{proposition:variance_temporal_second_order_variation_fixed_space}
The variance of the second-order temporal variation has the asymptotic behaviour
\begin{equation*}
\frac{\delta^{2\beta-6}}{m^{3}}\V(\mathbf{V}_{\mathrm{te}}) \rightarrow \frac{\tempconstvart}{\vartheta^{\beta}}, \quad m \rightarrow \infty,
\end{equation*}
for the constant $\tempconstvart>0$ given in \Cref{tab:asymptotic_constants}.
\end{proposition}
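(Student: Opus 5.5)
Since the increments are centred Gaussian, Isserlis' theorem gives $\V(\mathbf{V}_{\mathrm{te}})=2\sum_{i,j=1}^{m}\mathrm{Cov}(\mathbf{I}_{\mathrm{te},i},\mathbf{I}_{\mathrm{te},j})^{2}$. The plan is to insert \Cref{lemma:covariance_second_order_temporal_increments} and write $\mathrm{Cov}(\mathbf{I}_{\mathrm{te},i},\mathbf{I}_{\mathrm{te},j})=\delta^{3-\beta}(c_\vartheta (i\land j)a_{i-j}+R(i,j))$, where $c_\vartheta=8(2\pi)^{-d}\vartheta^{-\beta/2}$ and $a_z=\int_{\mathbb{R}^d}\cos(z|\fv|)\gtemp(\fv)\mathrm{d}\fv=\mathcal{F}^{+}_c(\gtemp)(z)$. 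Then
\begin{equation*}
\frac{\delta^{2\beta-6}}{m^{3}}\V(\mathbf{V}_{\mathrm{te}})=\frac{2}{m^{3}}\sum_{i,j=1}^{m}\bigl(c_\vartheta (i\land j)a_{i-j}+R(i,j)\bigr)^{2}.
\end{equation*}
I will first treat the main term $\frac{2c_\vartheta^2}{m^3}\sum_{i,j}(i\land j)^2a_{i-j}^2$. Then I will show that the cross term and the pure remainder term are $o(1)$.

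\textbf{Main term.} For this I regroup by $z=i-j$ rather than go through the kernel $\mathfrak{F}_{\mathrm{te}}$ itself; this is equivalent to \eqref{eq:variance_heuristic_temp} via \Cref{lemma:fourier_representation_time_fejer_kernel}. For fixed $z$ we have $\sum_{i-j=z}(i\land j)^2=\sum_{i=1}^{m-|z|}i^2$, so the main term equals $2c_\vartheta^2\sum_{|z|\le m-1}w_{\mathrm{te},z}a_z^2$. The weights satisfy $0\le w_{\mathrm{te},z}\le 1/3+O(1/m)$ and $w_{\mathrm{te},z}\to 1/3$ for each fixed $z$. By dominated convergence the main term therefore tends to $\frac{2c_\vartheta^2}{3}\sum_{z\in\mathbb{Z}}a_z^2$, provided $(a_z)\in\ell^2$. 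This identifies $\tempconstvart$, after pulling $\vartheta^{-\beta}$ out of $c_\vartheta^2$.

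\textbf{Summability of $a_z$.} In polar coordinates, $a_z=\sarea\int_0^\infty\cos(zr)\sin^4(r/2)r^{\beta-3}\mathrm{d}r$. The integrand behaves like $r^{\beta+1}/16$ near $0$ and like $r^{\beta-3}$ at infinity, so it is integrable. Its derivative $\frac{\mathrm{d}}{\mathrm{d}r}[\sin^4(r/2)r^{\beta-3}]$ is also in $L^1(0,\infty)$, since it is $O(r^{\beta})$ at $0$ and $O(r^{\beta-3})$ at $\infty$. One integration by parts, with vanishing boundary terms because $\sin(zr)$ vanishes at $0$ and the function decays at $\infty$, gives $|a_z|\lesssim |z|^{-1}$. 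Hence $\sum_z a_z^2<\infty$. A second integration by parts may even give $|z|^{-2}$ up to boundary contributions, but that is not needed.

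\textbf{Remainder terms (main obstacle).} Here a uniform bound $|R(i,j)|\le C$ alone is not enough. The pure remainder contributes $m^{-3}\sum_{i,j}R^2\lesssim m^{-1}$, which is fine. The cross term, however, is $m^{-3}\sum_{i,j}(i\land j)|a_{i-j}||R(i,j)|\lesssim m^{-3}\sum_{z}|a_z|\,m^2$, and with $|a_z|\lesssim|z|^{-1}$ this is only $O(m^{-1}\log m)$. So this term is harmless too. Alternatively, Cauchy--Schwarz bounds it by the square root of (main term $\times$ pure remainder), which is $O(m^{-1/2})$ using only boundedness of $R$. Either way, all non-leading contributions vanish, and the convergence $\frac{\delta^{2\beta-6}}{m^3}\V(\mathbf{V}_{\mathrm{te}})\to\tempconstvart/\vartheta^{\beta}$ follows. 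The delicate point I would double-check is that the uniform bound on $R$ in \eqref{eq:time_remainders} does not hide growth in $i\land j$. If it did, I would instead use the explicit form of $R$, which comes from the $\sin$ and $\cos/\xi$ terms of $\Phi_{\mathrm{te},\xi}$ and carries no $(t\land s)$ factor, to obtain an $O(1)$ bound directly.
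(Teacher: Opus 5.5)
Your proposal is correct and follows essentially the same route as the paper. Both arguments apply Isserlis' theorem, reduce the leading term to $2c_\vartheta^2\sum_{|z|\le m-1}w_{\mathrm{te},z}\,\mathcal{F}^{+}_c(\gtemp)(z)^2$, and pass to the limit by dominated convergence using the $|z|^{-1}$ decay from one integration by parts; they then control the remaining terms by Cauchy--Schwarz and the uniform bound on $R(i,j)$ from \Cref{lemma:remainder_bounds_temp}, which gives an $\mathcal{O}(m^{-1/2})$ error. The only difference is cosmetic: you regroup by $z=i-j$ directly rather than going through the double integral and the factorisation of $\mathfrak{F}_{\mathrm{te}}$, which reaches the same partial Fourier sum more quickly.
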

\begin{remark}
\label[remark]{remark:suboptimal_variance_first_order_increments}
First-order increments appear naturally within the MLE in the case of space-time white noise, see \Cref{remark:MLE}. However, the parameter $\beta$ influences the overall regularity of the process $(u(t,x), t \geq 0)$. As a result, a statement of the of \Cref{proposition:variance_temporal_second_order_variation_fixed_space} is in general false when considering first-order increments. Given the first-order variation 
$$\mathbf{V}_{\mathrm{te}}^{(1)}\coloneqq\sum_{i=1}^{m}(u(t_{i+1},x)-u(t_{i-1},x))^{2},$$ one can verify, for instance, using \eqref{eq:concrete_equation_temporal_covariance_structure} that
\begin{equation*}
\V(\mathbf{V}_{\mathrm{te}}^{(1)}) \asymp \begin{cases}
m^{3}\delta^{6-2\beta}, & \beta\in (1/2,2),\\
m^{3}\log(m)\delta^{5}, &\beta = 1/2,\\
m^{4-2\beta}\delta^{6-2\beta}, & \beta \in (0, 1/2).
\end{cases}
\end{equation*}
Smaller values of $\beta$ correspond to more spatial colouring, and the process $(u(t,x), t \geq 0)$ will have more regularity. At some critical point, i.e.\ at $\beta=1/2$, a phase transition occurs and the first-order (quadratic) variation degenerates. This critical point exactly corresponds to the well-known phase transition encountered for the Hurst coefficient at $H=3/4$, see for instance \citet{COHEN2006187} and the references therein. 
\end{remark}
Analogously to \eqref{eq:CLT_condition_maximum_row_norm_version}, we prove a central limit theorem using the Lyapunov condition 
\begin{equation}
\label{eq:CLT_condition_maximum_row_norm_version_temporal}
 \frac{\left(\max_{i=1, \dots,m}\sum_{j=1}^{m} |\mathrm{Cov}(\mathbf{I}_{\mathrm{te},i}, \mathbf{I}_{\mathrm{te},j})|\right)^{2}}{\V(\mathbf{V}_{\mathrm{te}})} \rightarrow 0, \quad m \rightarrow \infty.
\end{equation}
\begin{theorem}\label[theorem]{theorem:clt_temporal_space_fixed}
The second-order temporal variation \eqref{eq:second_order_temporal_variation} satisfies
\begin{equation*}
\sqrt{m}\left(\frac{\delta^{\beta-3}}{m^{2}}\mathbf{V}_{\mathrm{te}}  -  \frac{\tempconstexpt}{\vartheta^{\beta/2}}\right) \xrightarrow{d} N\left(0, \frac{\tempconstvart}{\vartheta^{\beta}}\right), \quad m \rightarrow \infty.
\end{equation*}
\end{theorem}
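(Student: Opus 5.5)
The proof splits into a centred CLT and a bias removal. Write
\[
\sqrt{m}\Big(\tfrac{\delta^{\beta-3}}{m^{2}}\mathbf{V}_{\mathrm{te}} - \tfrac{\tempconstexpt}{\vartheta^{\beta/2}}\Big) = \sqrt{m}\,\tfrac{\delta^{\beta-3}}{m^{2}}\big(\mathbf{V}_{\mathrm{te}}-\E[\mathbf{V}_{\mathrm{te}}]\big) + \sqrt{m}\Big(\tfrac{\delta^{\beta-3}}{m^{2}}\E[\mathbf{V}_{\mathrm{te}}]-\tfrac{\tempconstexpt}{\vartheta^{\beta/2}}\Big).
\]
By \Cref{proposition:expectation_temp_fixed_space}, the second term is $\mathcal{O}(m^{-1/2})\to0$. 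No condition on $\delta$ is needed, because the temporal covariance scales exactly as $\delta^{3-\beta}$. For the first term, \Cref{proposition:variance_temporal_second_order_variation_fixed_space} gives $\V(\tfrac{\delta^{\beta-3}}{m^{2}}\mathbf{V}_{\mathrm{te}})\, m \to \tempconstvart/\vartheta^{\beta}$. It therefore suffices to show $(\mathbf{V}_{\mathrm{te}}-\E\mathbf{V}_{\mathrm{te}})/\sqrt{\V(\mathbf{V}_{\mathrm{te}})}\to N(0,1)$ and then apply Slutsky's lemma. This follows from the Lyapunov criterion \eqref{eq:CLT_condition_maximum_row_norm_version_temporal}, which is valid because $\mathbf{V}_{\mathrm{te}}$ is the squared norm of a Gaussian vector. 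Since the denominator is of order $m^{3}\delta^{6-2\beta}$, the task reduces to showing
\[
\max_{i\le m}\sum_{j=1}^{m}|\mathrm{Cov}(\mathbf{I}_{\mathrm{te},i},\mathbf{I}_{\mathrm{te},j})| = o\big(m^{3/2}\delta^{3-\beta}\big).
\]

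To bound the row sums, use \Cref{lemma:covariance_second_order_temporal_increments}:
\[
|\mathrm{Cov}(\mathbf{I}_{\mathrm{te},i},\mathbf{I}_{\mathrm{te},j})| \lesssim \delta^{3-\beta}\big((i\wedge j)\,|\mathcal{F}^{+}_{c}(\gtemp)(i-j)| + |R(i,j)|\big).
\]
The remainder is uniformly bounded, so its contribution to a row sum is at most $\lesssim m$, which is $o(m^{3/2})$. For the main term, $i\wedge j\le m$, so the row sum is at most $m\sum_{z\in\mathbb{Z}}|\mathcal{F}^{+}_c(\gtemp)(z)|$. It then suffices to prove $\sum_{z}|\mathcal{F}^{+}_c(\gtemp)(z)|<\infty$, i.e.\ decay like $|z|^{-1-\epsilon}$. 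In polar coordinates this is the cosine transform of $h(r)=\sin^4(r/2)r^{\beta-3}$ on $[0,\infty)$. Near $0$, $h(r)\sim r^{1+\beta}/16$, and near infinity $h$ decays like $r^{\beta-3}$. Integrating by parts twice, the boundary terms vanish at $0$ (since $h(0)=0$ and $h'(0)=0$ for $\beta>0$), so $|\mathcal{F}_c(h)(z)|\lesssim z^{-2}\|h''\|_{L^1}$. One has to check $h''\in L^1$: near $0$, $h''\sim r^{\beta-1}$, which is integrable for $\beta>0$, and at infinity $h''\sim r^{\beta-3}$, which is integrable since $\beta<2$. This yields $\sum_z|\mathcal{F}_c^+(\gtemp)(z)|<\infty$, so the row sums are $\lesssim m\,\delta^{3-\beta}$ and the Lyapunov ratio is $\lesssim m^{2}/m^{3}\to0$.

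The main obstacle is the uniform control of $R(i,j)$ summed over $j$. The lemma only states a uniform bound, which gives a row sum of order $m$ and is still enough. If that uniform bound hides a term growing like $i\wedge j$ times something non-summable, I would return to the explicit form \eqref{eq:time_remainders} and bound those terms by transforms of the same type, $(i+j)$-shifted cosine transforms of $\gtemp$. These decay like $(i+j)^{-2}$ by the same integration-by-parts argument. Everything else follows directly from the two preceding propositions and Slutsky's lemma.
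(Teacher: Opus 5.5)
Your proposal is correct and follows the paper's proof. It uses the same split into a centred part and an $\mathcal{O}(m^{-1/2})$ bias via \Cref{proposition:expectation_temp_fixed_space}, and the same Lyapunov row-sum criterion \eqref{eq:CLT_condition_maximum_row_norm_version_temporal} built on \Cref{lemma:covariance_second_order_temporal_increments} together with the uniform remainder bound of \Cref{lemma:remainder_bounds_temp}, so your fallback for $R(i,j)$ is not needed.

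The only difference is in the Fourier decay. You integrate by parts twice to get $|\mathcal{F}^{+}_{c}(\gtemp)(z)|\lesssim z^{-2}$ and hence row sums of order $\mathcal{O}(m\delta^{3-\beta})$. The paper integrates by parts once, obtaining $1/|j|$ decay and row sums of order $\mathcal{O}(m\log(m)\delta^{3-\beta})$. Either bound suffices for the Lyapunov condition.
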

\begin{remark}
\label[remark]{remark:bias_in_the_purely_temporal_situation}
Note that in contrast to \Cref{corollary:spatial_clt_fixed_time_expectation_swap}, we do not need to impose any additional convergence behaviour for $\delta$. Indeed, the remainder term in \Cref{proposition:expectation_temp_fixed_space} already converges to zero even when multiplied by the additional factor $\sqrt{m}$. In fact, \Cref{theorem:clt_temporal_space_fixed} covers both high-frequency and low-frequency regimes by default. This is natural in view of \Cref{remark:geometric_structure} \ref{remark:scale_parameter} as the canonical statistic in the model is independent of $\delta$ after whitening. In essence, the estimation of $\vartheta$ is uneffected by the sampling regime and only depends on the sample size $m$.
\end{remark}
Based on \Cref{proposition:expectation_temp_fixed_space}, we define the method-of-moments estimator
\begin{equation}
\label{eq:mom_estimator_temporal_space_fixed}
\hat{\vartheta}_{\mathrm{te},m} \coloneqq \left(\frac{\tempconstexpt}{m^{-2}\delta^{\beta-3}\mathbf{V}_{\mathrm{te}}}\right)^{2/\beta}
\end{equation}
which satisfies the following CLT by applying the delta method in \Cref{theorem:clt_temporal_space_fixed}.
\begin{corollary}\label[corollary]{corollary:clt_method_of_moments_estimator_temporal_space_fixed}
The method-of-moments estimator \eqref{eq:mom_estimator_temporal_space_fixed} satisfies
\begin{equation*}
\sqrt{m}(\hat{\vartheta}_{\mathrm{te},m}- \vartheta) \xrightarrow{d} N\left(0, \vartheta^{2} \frac{4 \tempconstvart}{\beta^{2}\tempconstexpt^{2}}\right), \quad m \rightarrow\infty.
\end{equation*}
\end{corollary}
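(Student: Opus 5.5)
This follows from \Cref{theorem:clt_temporal_space_fixed} by the delta method. Write
\[
\hat{T}_m \coloneqq \frac{\delta^{\beta-3}}{m^{2}}\mathbf{V}_{\mathrm{te}}, \qquad \mu \coloneqq \frac{\tempconstexpt}{\vartheta^{\beta/2}} > 0 .
\]
\Cref{theorem:clt_temporal_space_fixed} gives
\[
\sqrt{m}(\hat{T}_m - \mu) \xrightarrow{d} N\left(0, \tempconstvart\vartheta^{-\beta}\right).
\]
The estimator \eqref{eq:mom_estimator_temporal_space_fixed} is $\hat{\vartheta}_{\mathrm{te},m} = f(\hat{T}_m)$ with
\[
f(y) \coloneqq (\tempconstexpt/y)^{2/\beta}, \qquad y>0,
\]
and $f(\mu) = \vartheta$. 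The map $f$ is continuously differentiable on $(0,\infty)$, which is a neighbourhood of $\mu$.

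First, I handle the fact that $\hat{T}_m$ may be nonpositive. Since $\mathbf{V}_{\mathrm{te}} \geq 0$, the only problem is $\hat{T}_m = 0$, and this happens with probability zero because the increments are nondegenerate Gaussian. In any case, the CLT implies $\hat{T}_m \to \mu$ in probability, so $\hat{T}_m>0$ with probability tending to one, which is all the delta method needs. Here $\tempconstexpt>0$ by \Cref{proposition:expectation_temp_fixed_space}.

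Second, I compute the derivative:
\[
f'(y) = -\frac{2}{\beta}\,\tempconstexpt^{2/\beta}\, y^{-2/\beta-1} = -\frac{2}{\beta}\frac{f(y)}{y},
\qquad\text{so}\qquad
f'(\mu) = -\frac{2}{\beta}\,\frac{\vartheta\,\vartheta^{\beta/2}}{\tempconstexpt}.
\]
By the delta method, $\sqrt{m}(\hat{\vartheta}_{\mathrm{te},m}-\vartheta)$ converges in distribution to a centred normal with variance
\[
f'(\mu)^2\,\tempconstvart\vartheta^{-\beta} = \frac{4\vartheta^{2+\beta}}{\beta^2\tempconstexpt^2}\cdot\frac{\tempconstvart}{\vartheta^{\beta}} = \vartheta^2\frac{4\tempconstvart}{\beta^2\tempconstexpt^2}.
\]
This is the claimed variance.

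There is no real obstacle here. The centring at the true limit $\mu$, rather than at the expectation, is already part of \Cref{theorem:clt_temporal_space_fixed}, as explained in \Cref{remark:bias_in_the_purely_temporal_situation}. The only points needing care are the positivity of $\hat{T}_m$ and the algebra for $f'(\mu)$.
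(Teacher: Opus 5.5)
Your proposal is correct and follows the paper's proof essentially verbatim: the delta method applied to \Cref{theorem:clt_temporal_space_fixed} with $f(x)=(\tempconstexpt/x)^{2/\beta}$, using $f'(x)=-(2/\beta)f(x)/x$ and $f(\tempconstexpt/\vartheta^{\beta/2})=\vartheta$ to obtain the stated variance. Your extra remark that $\hat{T}_m>0$ with probability tending to one is a harmless detail that the paper leaves implicit.
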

While the asymptotic variance in \Cref{theorem:clt_temporal_space_fixed} and \Cref{corollary:clt_method_of_moments_estimator_temporal_space_fixed} remains independent of the spatial point $x$, it does depend on the spatial regularity $\beta$, the unknown wave speed $\vartheta$, the constant $\tempconstexpt$ and $\tempconstvart$. As described in \Cref{remark:confidence_intervals}, confidence intervals can be constructed based on \Cref{corollary:clt_method_of_moments_estimator_temporal_space_fixed}, via the plug-in principle.

\begin{remark}
\begin{enumerate}
\item[]

\item 
In the case of temporal observations at a fixed spatial observation point $x \in \mathbb{R}^{d}$, it is not difficult to see that the convergence rate $\sqrt{m}$ is optimal. Since $\vartheta$ is a scale parameter of the Gaussian process $(u(t,x), t \geq 0)$ the covariance matrix of the observation vector $(u(t_0,x), \dots, u(t_{m+1},x))$ depends on the unknown parameter only through the scalar $\vartheta^{-\beta/2}$, see \Cref{remark:geometric_structure} \ref{remark:scale_parameter}. For two different probability measures, $P$ and $Q$, we define the (squared) Hellinger distance $H^{2}(P,Q)={\frac {1}{2}}\int ({\sqrt {P(dx)}}-{\sqrt {Q(dx)}})^{2}$. The Hellinger distance between the laws $P_{\vartheta_0,m}$ and $P_{\vartheta_1,m}$ of the time-discrete observations $(u(t_i,x), i =0,\dots,m+1)$ for two different wave speeds $\vartheta_0$ and $\vartheta_1$ is given by
\begin{equation*}
    H^{2}(P_{\vartheta_0,m}, P_{\vartheta_1,m})= 1- \left(1-\frac{(\vartheta_0^{\beta/2}- \vartheta_1^{\beta/2})^{2}}{(\vartheta_0^{\beta/2}+\vartheta_1^{\beta/2})^{2}}\right)^{(m+2)/4}.
\end{equation*}
In particular, the optimal rate of convergence for estimating the wave speed $\vartheta$ is $m^{-1/2}$.
As suggested by \Cref{remark:suboptimal_variance_first_order_increments}, the method-of-moments estimator based on first-order increments does not, in general, achieve the optimal rate of convergence whenever $\beta \in (0, 1/2)$. For instance, in the boundary case $\beta=1/2$ it attains the convergence rate $m^{-1/2}\mathrm{log}(m)^{2}$ and a log-factor is lost. In contrast, whenever $\beta \in (1/2, 2 \land d)$, first-order increments would be sufficient and the optimal rate of convergence can also be obtained.
\item While \Cref{theorem:clt_temporal_space_fixed} shows that the method-of-moments estimator $\hat{\vartheta}_{\mathrm{te},m}$ achieves the optimal rate of convergence, it does not achieve minimal asymptotic variance. In general settings with a complicated covariance structure, it may not be easy to disentangle the non-linear dependence of the covariance matrix from the unknown parameter $\vartheta$ and to determine the MLE as in \Cref{remark:MLE}. Which is why the method-of-moments approach forms a robust alternative. However, in this particular case, since $\vartheta$ is a scale parameter as discussed in \Cref{remark:geometric_structure} \ref{remark:scale_parameter}, the MLE is computable and given by 
\begin{equation*}
\hat{\vartheta}_{\mathrm{MLE},m} \coloneqq \left(\frac{1}{\frac{1}{m+2}|\Sigma_m^{-1/2}U_m|^{2}}\right)^{2/\beta},
\end{equation*}
where $U_m=(u(t_0, x), \dots, u(t_{m+1},x))$ and $\Sigma_m$ is the covariance matrix of the vector $U_m$ in the special case $\vartheta=1$. While it may, in general, not be clear how to obtain a precise analytic representation for $\Sigma_m^{-1/2}$, it can at least be approximated numerically. For a more detailed discussion of efficiency considerations in the one-dimensional case, we refer to \citet{markussenLikelihoodInferenceDiscretely2003}.
\end{enumerate}
\end{remark}

\section{Space-time variation}
\label{section:space_time_variation}
In this section, we combine our insights from \Cref{section:spatial_variation} and \Cref{section:temporal_variations}, and consider observations of the form $u(t_i, x_k)$ with $t_i=\delta i$ for $i=0, \dots, m+1$ and $x_k=\lambda k\rho$ for $k=0, \dots, n+1$ and $\rho \in \mathbb{S}^{d-1}$. Thus, we are considering equidistant observations of the process \eqref{eq:SPDE} on a space-time grid $\{(t_i, x_k) \colon i=0, \dots, m+1, k=0, \dots, n+1\}$, visualised in \Cref{figure:visualisation_space_time_grid}.

\begin{figure}
    \centering
    \begin{tikzpicture}

  \def\n{5}                          
  \def\m{6}                          
  \pgfmathtruncatemacro{\mplusone}{\m+1}
  \def\lambda{0.6}                   
  \def\d{0.7}                        
  \def\rx{0.8}
  \def\ry{0.6}

  \pgfmathsetmacro{\L}{\n*\lambda}             
  \pgfmathsetmacro{\tmax}{\d*(\m+1)}           
  \pgfmathsetmacro{\xoff}{0.28}                
  \pgfmathsetmacro{\yoff}{0.22}
  \def\klabel{3}                                
   
  \begin{axis}[
    view={55}{22},
    axis lines=center,
    xlabel={}, ylabel={}, zlabel={time},
    xmin=-0.6, xmax=3.2,
    ymin=-0.6, ymax=2.6,
    zmin=0,   zmax=\tmax+0.6,
    ticks=none,
    enlargelimits=upper,
    clip=false
  ]

    \addplot3[
      surf, shader=flat, draw=none, opacity=0.06,
      domain=-0.6:3.2, y domain=-0.6:2.6
    ] {0};

  \draw[dashed] (0,0) circle (1);

    \draw[->, very thick, blue!60!black]
      (axis cs:0,0,0) -- (axis cs:\rx,\ry,0);

    \draw[dashed, gray]
      (axis cs:0,0,0) -- (axis cs:\L*\rx,\L*\ry,0);

    \pgfplotsinvokeforeach{0,...,\mplusone}{
      \pgfmathsetmacro{\ti}{\d*#1}

      \draw[dashed, gray]
        (axis cs:0,0,\ti) -- (axis cs:\L*\rx,\L*\ry,\ti);

      \foreach \kk in {0,...,\n}{
        \addplot3[only marks, mark=*, mark size=1.7pt, red!70!black]
          coordinates {(\kk*\lambda*\rx, \kk*\lambda*\ry, \ti)};
      }
    }

    \node[anchor=west] at (axis cs:0,-0.4,\klabel*\d) {$t_i$};
    \node[anchor=west] at (axis cs:0,-0.6,\tmax) {$t_{m+1}$};

    \node[below right] at (axis cs:\klabel*\lambda*\rx,\klabel*\lambda*\ry,0) {$x_k$};
    \node[below right] at (axis cs:\n*\lambda*\rx,\n*\lambda*\ry,0) {$x_{n+1}$};




  \end{axis}
\end{tikzpicture}
    \caption{Visualisation of the space-time grid $(t_i, x_k)$ in $d=2$ for $i=0, \dots, m+1$ and $k=0, \dots, n+1$. The blue vector symbolising the direction of the spatial measurements is $\rho \in \mathbb{S}^1$.}
    \label{figure:visualisation_space_time_grid}
\end{figure}

We extend the definition of the spatial and temporal increments \eqref{eq:spatial_increments} and \eqref{eq:temporal_increments}, by including the fixed variable in the notation. More specifically, we write $\mathbf{I}_{\mathrm{sp},k}(t)$ for the second-order spatial increments at time $t$ and $\mathbf{I}_{\mathrm{te},i}(x)$ for the second-order temporal increment at point $x \in \mathbb{R}^{d}$. We can now take increments of the increments in the remaining variable and define space-time box-increments of the form
\begin{equation}
\label{eq:space_time_increments}
\begin{aligned}
\mathbf{I}_{\mathrm{sp},\mathrm{te},i,k}\coloneqq\mathbf{I}_{\mathrm{sp},k}(t_{i+1})+\mathbf{I}_{\mathrm{sp},k}(t_{i-1})-2\mathbf{I}_{\mathrm{sp},k}(t_i)=\mathbf{I}_{\mathrm{te},i}(x_{k+1})+\mathbf{I}_{\mathrm{te},i}(x_{k-1})-2\mathbf{I}_{\mathrm{te},i}(x_k).
\end{aligned}
\end{equation}
The associated space-time variation is then given by
\begin{equation}
\label{eq:space_time_variation}
\mathbf{V}_{\mathrm{sp}, \mathrm{te}}\coloneqq \sum_{i=1}^{m}\sum_{k=1}^{n} \mathbf{I}_{\mathrm{sp},\mathrm{te},i,k}^{2}.
\end{equation}
We would like to derive the covariance between two box increments $\mathbf{I}_{\mathrm{sp},\mathrm{te},i,k}$ and $\mathbf{I}_{\mathrm{sp},\mathrm{te},j,l}$. The strategy from \Cref{lemma:covariance_second_order_temporal_increments} and \Cref{lemma:covariance_second_order_spatial_increments} is still applicable, noting that we obtain a second-order increment of the four-dimensional covariance function. Notice that the covariance function from \Cref{proposition:covariance_representation} is particularly simple in the sense that there are no actual cross-dependencies between the temporal and the spatial factors. Thus, the second-order increment in four dimensions can be decomposed into the product of two two-dimensional second-order increments evaluated in the variables $k,l$ and $i,j$, respectively. Based on this idea, the following result provides a full characterisation of the covariance structure of the box increments. 
\begin{lemma}\label[lemma]{lemma:covariance_space_time_increments}
The space-time increments \eqref{eq:space_time_increments} admit the covariance structure
\begin{equation*}
\begin{aligned}
&\mathrm{Cov}(\mathbf{I}_{\mathrm{sp},\mathrm{te},i,k}, \mathbf{I}_{\mathrm{sp},\mathrm{te},j,l}) \\
&= \delta \lambda^{2-\beta} \Bigg(\frac{128 (i \land j) }{(2\pi)^{d}\vartheta} \int_{\mathbb{R}^{d}} \cos\left((k-l) {\rho \cdot \fv}\right)\sin^{4}(\ratio\sqrt{\vartheta} |\fv|/2)\cos((i-j)\ratio\sqrt{\vartheta} |\fv|)\gspzero(\fv)\mathrm{d}\fv\\
&\qquad+R_{\mathrm{sp}}(i,j,k,l)\Bigg)\\
&= \delta^{3-\beta} \Bigg(\frac{128 (i \land j) }{(2\pi)^{d}\vartheta^{\beta/2}} \int_{\mathbb{R}^{d}} \cos\left((\ratio\sqrt{\vartheta})^{-1}(k-l) {\rho \cdot \fv}\right)\sin^{4}((\ratio\sqrt{\vartheta})^{-1} \rho \cdot \fv/2)\cos((i-j) |\fv|)\gtemp(\fv)\mathrm{d}\fv\\
&\qquad+R_{\mathrm{te}}(i,j,k,l)\Bigg),
\end{aligned}
\end{equation*}
with $$\ratio = \delta/\lambda$$ and remainder terms, defined in \eqref{eq:spatial_remainder_box_inc} and \eqref{eq:temp_remainder_box_inc}, which remain uniformly bounded up to a constant independently of $i,j,k,l$ and $\alpha$, in the regimes $\alpha \rightarrow \infty$ and $\alpha \rightarrow 0$, respectively. 
\end{lemma}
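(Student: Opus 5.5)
We start from Proposition~\ref{proposition:covariance_representation}. By linearity, $\mathrm{Cov}(\mathbf{I}_{\mathrm{sp},\mathrm{te},i,k},\mathbf{I}_{\mathrm{sp},\mathrm{te},j,l})$ is a four-dimensional second-order increment of $(t,s,x,y)\mapsto\E[u(t,x)u(s,y)]$, taken in $(t,s)$ at $(\delta i,\delta j)$ and in $(x,y)$ at $(\lambda k\rho,\lambda l\rho)$. In the second representation of that proposition the integrand factorises as $\cos((x-y)\cdot\fv/\sqrt{\vartheta})\,\Phi_{\mathrm{te},|\fv|}(t,s)\,|\fv|^{\beta-d-2}$. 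We therefore move the increment operator inside the $\fv$-integral, which is legitimate since only finitely many evaluations enter. This gives
\begin{equation*}
\frac{1}{(2\pi)^d\vartheta^{\beta/2}}\int_{\mathbb{R}^d}\mathcal{I}^{(2)}[\Phi_{\mathrm{sp},\lambda\rho\cdot\fv/\sqrt{\vartheta}}](k,l)\,\mathcal{I}^{(2)}[\Phi_{\mathrm{te},|\fv|}(\delta\,\cdot,\delta\,\cdot)](i,j)\,|\fv|^{\beta-d-2}\mathrm{d}\fv.
\end{equation*}
The two factors are exactly the objects already computed in the proofs of Lemma~\ref{lemma:covariance_second_order_spatial_increments} and Lemma~\ref{lemma:covariance_second_order_temporal_increments}, and we reuse both.

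\textbf{Spatial factor.} The trigonometric increment identity from Section~\ref{section:general_results_on_increments} gives
\begin{equation*}
\mathcal{I}^{(2)}[\Phi_{\mathrm{sp},\xi}](k,l)=16\sin^4(\xi/2)\cos((k-l)\xi),\qquad \xi=\lambda\rho\cdot\fv/\sqrt{\vartheta}.
\end{equation*}

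\textbf{Temporal factor.} For $\xi=|\fv|$, the temporal increment of $\Phi_{\mathrm{te},\xi}(\delta i,\delta j)$ splits into a main term and a remainder. The main term is $8(i\wedge j)\,\delta\,\sin^4(\delta\xi/2)\cos((i-j)\delta\xi)$; it comes from $\tfrac12(t\wedge s)\cos((t-s)\xi)$, where the increment hits the cosine and the factor $(i\wedge j)$ is preserved. The remainder collects the pieces where the increment hits $(i\wedge j)$ near the diagonal $|i-j|\le 1$, together with the $\sin((t+s)\xi)/\xi$ and $\sin(|t-s|\xi)/\xi$ pieces. These are bounded trigonometric expressions multiplied by $\xi^{-1}$ or by an indicator of $|i-j|\le1$. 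This is precisely what produced $R(i,j)$ in Lemma~\ref{lemma:covariance_second_order_temporal_increments}, before integrating against the $\sin^4$ weight.

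\textbf{The two scalings.} We now rescale the integration variable in two ways.
\begin{itemize}
\item For the first representation, substitute $\fv\mapsto\sqrt{\vartheta}\fv/\lambda$. Then $|\fv|^{\beta-d-2}\mathrm{d}\fv$ produces $\lambda^{2-\beta}\vartheta^{(\beta-2)/2}\vartheta^{d/2}\vartheta^{-d/2}$. Together with the prefactor $\vartheta^{-\beta/2}$ this gives $\vartheta^{-1}\lambda^{2-\beta}$, and the temporal argument becomes $\delta\sqrt{\vartheta}|\fv|/\lambda=\ratio\sqrt{\vartheta}|\fv|$. The product of the main terms is then $128(i\wedge j)\delta\,\sin^4(\rho\cdot\fv/2)\sin^4(\ratio\sqrt{\vartheta}|\fv|/2)\cos((k-l)\rho\cdot\fv)\cos((i-j)\ratio\sqrt{\vartheta}|\fv|)$, which with $\gspzero$ is the first display.
\item For the second representation, substitute $\fv\mapsto\fv/\delta$ instead. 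This gives $\delta^{2-\beta}\cdot\delta=\delta^{3-\beta}$, and the spatial argument becomes $\lambda\rho\cdot\fv/(\delta\sqrt{\vartheta})=(\ratio\sqrt{\vartheta})^{-1}\rho\cdot\fv$, which yields the second display with $\gtemp$.
\end{itemize}
In each case $R_{\mathrm{sp}}$ and $R_{\mathrm{te}}$ are defined as the integral of the spatial main factor times the temporal remainder, under the respective scaling.

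\textbf{Main obstacle: uniform boundedness of the remainders.} These must be bounded uniformly in $i,j,k,l$ and in $\ratio$, in the respective regimes. I would bound $|\cos|\le1$ and split the remainder into its $\xi^{-1}$-type pieces and its bounded pieces.

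In the $\ratio\to0$ regime, working in the second scaling, the spatial factor $\sin^4((\ratio\sqrt{\vartheta})^{-1}\rho\cdot\fv/2)$ is bounded by $1$. The temporal remainder was already shown to be integrable against $|\fv|^{\beta-d-2}$ with the $\sin^2$ or $\sin^4$ smoothing it carries, as in Lemma~\ref{lemma:covariance_second_order_temporal_increments}. So the bound is inherited directly and is uniform in $\ratio$.

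In the $\ratio\to\infty$ regime, working in the first scaling, I expect the remainder pieces to carry a factor such as $\sin^2(\ratio\sqrt{\vartheta}|\fv|/2)/(\ratio\sqrt{\vartheta}|\fv|)$, or similar with an indicator. Here the bound $|\sin(x)|/x\le1$ alone loses nothing only if I handle the origin carefully. Near $\fv=0$ I would use $\sin^4(\rho\cdot\fv/2)\lesssim|\fv|^4$, which makes $|\fv|^{2+\beta-d}$ integrable. At infinity I would use $|\fv|^{\beta-d-2}$ with the temporal pieces bounded by a constant, noting that $\beta<2$ ensures integrability. The case distinctions $|i-j|\in\{0,1,\ge2\}$ coming from the absolute value and the minimum in $\Phi_{\mathrm{te}}$ are tedious but finite, and each is handled the same way.
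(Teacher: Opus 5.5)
Your proposal is correct and follows the paper's route. You factorise the four-dimensional increment of the covariance from \Cref{proposition:covariance_representation} into the spatial and temporal two-dimensional increments already computed for \Cref{lemma:covariance_second_order_spatial_increments} and \Cref{lemma:covariance_second_order_temporal_increments}, rescale $\fv$ in two ways to get the $\delta\lambda^{2-\beta}$ and $\delta^{3-\beta}$ forms, and bound the remainders via uniformly bounded temporal pieces against $\gspzero\in L^1$ (for $\ratio\to\infty$) and via \Cref{lemma:remainder_bounds_temp} (for $\ratio\to0$). Your point that in the second scaling the temporal remainder is literally the one from \Cref{lemma:covariance_second_order_temporal_increments}, so $R_{\mathrm{te}}$ is bounded uniformly in every $\ratio>0$, is correct and cleaner than the paper: its $R^{(1)}_{\mathrm{te}}$ carries a spurious factor $\sqrt{\vartheta}\ratio$, because the temporal increment writes $|\fv|^{-1}$ where $(\sqrt{\vartheta}\ratio|\fv|)^{-1}$ belongs, and this is why the paper needs $\ratio$ bounded.
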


When $\alpha\rightarrow\infty$, the spatial resolution is much finer than the temporal one and the asymptotics of the space-time variation will be dominated by the spatial behaviour. Vice versa, $\alpha\rightarrow0$ corresponds to a sampling regime driven by the temporal frequency and $\mathbf{V}_{\mathrm{sp}, \mathrm{te}}$ essentially acts as $\mathbf{V}_{\mathrm{te}}$ averaged through space.
Up to the scaling factors, the entire covariance structure revealed in \Cref{lemma:covariance_space_time_increments} depends on the ratio between the spatial and temporal resolution levels. Neglecting the remainder terms, the variance of a second-order space-time increment satisfies
\begin{equation}
\label{eq:appearence_RL_lemma}
\begin{aligned}
\delta^{-1}\lambda^{\beta-2}\frac{\V(\mathbf{I}_{\mathrm{sp},\mathrm{te},i,k})}{i} &\asymp \frac{1}{\vartheta}\left(\int_{\mathbb{R}^{d}} \sin^{4}(\ratio \sqrt{\vartheta}|\fv|/2)\gspzero(\fv)\mathrm{d}\fv\right),\\
\delta^{\beta-3} \frac{\V(\mathbf{I}_{\mathrm{sp},\mathrm{te},i,k})}{i} &\asymp \frac{1}{\vartheta^{\beta/2}}\left(\int_{\mathbb{R}^{d}}\sin^{4}((\ratio \sqrt{\vartheta})^{-1}\rho \cdot \fv/2) \gtemp(\fv)\mathrm{d}\fv\right). 
\end{aligned}
\end{equation}
The $\mathrm{sin}^{4}$ is a bounded function, while $\gspzero$ and $\gtemp$ are integrable. The generalised Riemann-Lebesgue Lemma, see for instance \citet{kahaneGeneralizationsRiemannLebesgueCantorLebesgue1980}, has proven to be an important tool in statistics for the stochastic wave equation, see \citet{ziebellNonparametricEstimationStochastic2024}. In fact, if we assume that $\ratio \rightarrow \infty$ or $\ratio \rightarrow 0$, the oscillatory part of the corresponding integrals in \eqref{eq:appearence_RL_lemma} will converge to the mean of the function $\sin^{4}$, and we observe 
\begin{equation}
\label{eq:Riemann_lebesgue_illustration}
\begin{aligned}
\frac{1}{\vartheta}\left(\int_{\mathbb{R}^{d}} \sin^{4}(\ratio \sqrt{\vartheta}|\fv|/2)\gspzero(\fv)\mathrm{d}\fv\right) &\rightarrow \frac{1}{\vartheta} \left(\frac{3}{8}\int_{\mathbb{R}^{d}}\gspzero(\fv)\mathrm{d}\fv\right), \quad \ratio \rightarrow \infty,\\
 \frac{1}{\vartheta^{\beta/2}}\left(\int_{\mathbb{R}^{d}}\sin^{4}((\ratio \sqrt{\vartheta})^{-1}\rho \cdot \fv/2) \gtemp(\fv)\mathrm{d}\fv\right) & \rightarrow \frac{1}{\vartheta^{\beta/2}}\left(\frac{3}{8} \int_{\mathbb{R}^{d}}\gtemp(\fv)\mathrm{d}\fv\right), \quad \ratio \rightarrow 0. 
\end{aligned}
\end{equation}
Notice that in both cases the limits in \eqref{eq:Riemann_lebesgue_illustration} do not depend on the factor $\sqrt{\vartheta}$ anymore, allowing us to prove the following useful asymptotic characterisation of the expectation of \eqref{eq:space_time_variation}.
\begin{proposition}\label[proposition]{proposition:box_expectation}
The expectation of the space-time variation \eqref{eq:space_time_variation} satisfies
\begin{equation*}
\begin{aligned}
\frac{\delta^{-1}\lambda^{\beta-2}}{nm^{2}}\E[\mathbf{V}_{\mathrm{sp}, \mathrm{te}}]&=\frac{\boxconstexpsp}{\vartheta}+\mathcal{O}\left(m^{-1}+\ratio^{-2}\right), \quad \alpha \rightarrow \infty,\\
\frac{\delta^{\beta-3}}{nm^{2}}\E[\mathbf{V}_{\mathrm{sp}, \mathrm{te}}]&=\frac{\boxconstexptemp}{\vartheta^{\beta/2}}+\mathcal{O}\left(m^{-1}+\ratio^2\right), \quad \alpha \rightarrow 0,
\end{aligned}
\end{equation*}
for constants $\boxconstexpsp,\boxconstexptemp>0$ given in \Cref{tab:asymptotic_constants}.
\end{proposition}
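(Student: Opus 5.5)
Setting $i=j$, $k=l$ in \Cref{lemma:covariance_space_time_increments} and summing over the grid gives, in the regime $\ratio\to\infty$,
\begin{equation*}
\frac{\delta^{-1}\lambda^{\beta-2}}{nm^{2}}\E[\mathbf{V}_{\mathrm{sp},\mathrm{te}}]=\frac{1}{nm^{2}}\sum_{i=1}^{m}\sum_{k=1}^{n}\Bigg(\frac{128\, i}{(2\pi)^{d}\vartheta}\int_{\mathbb{R}^{d}}\sin^{4}(\ratio\sqrt{\vartheta}|\fv|/2)\gspzero(\fv)\mathrm{d}\fv+R_{\mathrm{sp}}(i,i,k,k)\Bigg),
\end{equation*}
and analogously with $\delta^{\beta-3}$, $\gtemp$, $\sin^{4}((\ratio\sqrt{\vartheta})^{-1}\rho\cdot\fv/2)$ and $R_{\mathrm{te}}$ when $\ratio\to0$.

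\textbf{Summing over the grid.} Since $\sum_{i=1}^{m} i=m^{2}/2+\mathcal{O}(m)$ and the remainders are uniformly bounded by \Cref{lemma:covariance_space_time_increments}, the remainder contribution is $\mathcal{O}(nm/(nm^{2}))=\mathcal{O}(m^{-1})$. What is left is
\[
\frac{64}{(2\pi)^{d}\vartheta}J(\ratio)\big(1+\mathcal{O}(m^{-1})\big), \qquad J(\ratio)\coloneqq\int_{\mathbb{R}^{d}}\sin^{4}(\ratio\sqrt{\vartheta}|\fv|/2)\gspzero(\fv)\mathrm{d}\fv,
\]
with the analogous expression in the temporal regime. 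So everything reduces to a quantitative version of the Riemann--Lebesgue limit \eqref{eq:Riemann_lebesgue_illustration}. I expect to take $\boxconstexpsp=\frac{64}{(2\pi)^{d}}\cdot\frac38\int\gspzero$ and $\boxconstexptemp=\frac{64}{(2\pi)^{d}}\cdot\frac38\int\gtemp$, consistent with \Cref{tab:asymptotic_constants}.

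\textbf{Rate of the Riemann--Lebesgue step.} I would use the exact identity $\sin^{4}(y/2)=\frac38-\frac12\cos y+\frac18\cos 2y$. This gives
\[
J(\ratio)-\tfrac38\int\gspzero=-\tfrac12\mathcal{F}^{+}_{c}(\gspzero)(\ratio\sqrt\vartheta)+\tfrac18\mathcal{F}^{+}_{c}(\gspzero)(2\ratio\sqrt\vartheta).
\]
It then suffices to show $|\mathcal{F}^{+}_{c}(\gspzero)(s)|\lesssim s^{-2}$ as $s\to\infty$.

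To do this, pass to polar coordinates $\fv=r\theta$. The radial function becomes $h(r)=r^{\beta-3}\int_{\mathbb{S}^{d-1}}\sin^{4}(r\rho\cdot\theta/2)\,\sigma(\mathrm{d}\theta)$. Near $0$ it behaves like $c\,r^{\beta+1}$, since $\sin^{4}$ contributes $r^{4}$, and it is smooth on $(0,\infty)$. At infinity $h$ and its first two derivatives decay like $r^{\beta-3}$, which is integrable because $\beta<2$.

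Integrating $\int_0^\infty h(r)\cos(rs)\,\mathrm{d}r$ by parts twice then yields $s^{-2}$:
\begin{itemize}
\item The first boundary term contains $\sin(rs)h(r)$, which vanishes at both ends.
\item The second boundary term is $-h'(0)\,s^{-2}$. This is zero when $\beta>0$, since $h'(r)\sim r^{\beta}$ near $0$.
\item The remaining integral $\int_0^\infty h''(r)\cos(rs)\,\mathrm{d}r$ is finite because $h''\sim r^{\beta-1}$ near $0$ is integrable.
\end{itemize}
Hence $J(\ratio)=\frac38\int\gspzero+\mathcal{O}(\ratio^{-2})$.

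For the temporal regime, write $\fv=(\fv\cdot\rho)\rho+\fv_\perp$. The integral becomes a one-dimensional cosine transform in $s=(\ratio\sqrt\vartheta)^{-1}\to\infty$ of the marginal $\tilde h(v)=\int\gtemp(v\rho+\fv_\perp)\,\mathrm{d}\fv_\perp$. The same two integrations by parts then give $\mathcal{O}(\ratio^{2})$.

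\textbf{Main obstacle.} The hard step is justifying the two integrations by parts for $\tilde h$. Its smoothness in $v$ and the integrability of $\tilde h''$ are less transparent than in the radial case, because of the singularity of $|\fv|^{\beta-d-2}$ and the mixing of directions. I would first try bounding the derivatives by differentiating under the integral, using $|\partial_v^{p}\gtemp(\fv)|\lesssim |\fv|^{\beta-d-2-p}\min(1,|\fv|^{4})$. If the marginal-decay route gets messy, a fallback is to insert a smooth cutoff near the origin and treat the low-frequency piece via $\sin^{4}(x)\le x^{4}$ directly.
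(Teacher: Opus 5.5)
Your proof is correct and follows the paper's argument. It uses the diagonal entries of \Cref{lemma:covariance_space_time_increments}, $\sum_{i\le m}i=m^2/2+\mathcal{O}(m)$, uniformly bounded remainders contributing $\mathcal{O}(m^{-1})$, and the identity $\sin^4(y/2)=\frac38-\frac12\cos y+\frac18\cos 2y$ together with second-order Fourier decay; in the regime $\ratio\to\infty$, your polar reduction for the radial phase $\cos(s|\fv|)$ is actually more careful than the paper's bare appeal to the Riemann--Lebesgue lemma. The step you flag as the main obstacle is the easy one: for $\ratio\to0$ the phase $s\,\rho\cdot\fv$ is a plane wave, so integrating by parts twice in $v$ along each line $v\rho+\fv_\perp$ with $\fv_\perp\neq0$ (where $\gtemp$ is smooth), then using Tonelli, gives $|\mathcal{F}_c(\gtemp)(s\rho)|\le s^{-2}\Vert\partial_\rho^{2}\gtemp\Vert_{L^1(\mathbb{R}^d)}$, finite by \Cref{result:regularity_g}, so no regularity of the marginal $\tilde h$ is needed (for $d=1$ the same computation as for your $h$ applies; also note that derivatives of $\gtemp$ decay only like $|\fv|^{\beta-d-2}$ at infinity, not $|\fv|^{\beta-d-2-p}$, which is still integrable).
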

The remainder in \Cref{proposition:box_expectation} is composed of two parts. The contribution $m^{-1}$ is inherited from the temporal variations similar to \Cref{proposition:expectation_temp_fixed_space} as the rescaled variance of an increment increases with growing time, cf. \eqref{eq:appearence_RL_lemma}. Depending on the asymptotic regime, the second part is an error incurred due to the approximation in \eqref{eq:Riemann_lebesgue_illustration} via the Riemann-Lebesgue lemma.

As in the previous section, we will eventually need to make sure that the bias converges to zero asymptotically. The expected rate in the central limit theorem will be exactly $\sqrt{nm}$ leading to the following assumption. 
\begin{assumption}
\label[assumption]{assumption:bias_convergence}
We assume that
\begin{equation}
\label{eq:bias_assumption_n_frac_m}
\frac{n}{m} \rightarrow 0
\end{equation}
and 
\begin{equation}
\label{eq:bias_assumption_ratio}
\begin{cases}
\ratio/(nm)^{1/4} \rightarrow \infty,  & \ratio \rightarrow \infty,\\
\ratio(nm)^{1/4} \rightarrow 0, &\ratio \rightarrow 0.
\end{cases}
\end{equation}
\end{assumption}
We have already seen in \Cref{lemma:covariance_space_time_increments} that the spatial and temporal components of the covariance structure of the increments factorise. As a consequence, when studying the variance of the space-time variation $\mathbf{V}_{\mathrm{sp}, \mathrm{te}}$, we can combine techniques from \Cref{section:spatial_variation} and \Cref{section:temporal_variations}. Both the argument for $\ratio \rightarrow \infty$ and $\ratio \rightarrow 0$ are structurally very similar up to some technical differences in handling the term $\rho \cdot \fv$ compared to $|\fv|$. Here we describe the arguments in the spatial asymptotic $\ratio \rightarrow \infty$. Both the Fejér kernels $\mathfrak{F}_{\mathrm{sp}}$ and $\mathfrak{F}_{\mathrm{te}}$ emerge in the rescaled variance $\delta^{-2}\lambda^{2\beta-4}n^{-1}m^{-3}\V(\mathbf{V}_{\mathrm{sp}, \mathrm{te}})$ asymptotically behaving as
\begin{equation}
\label{eq:double_fejer_representation_variance_box_increments}
\begin{aligned}
\int_{\mathbb{R}^{d}}\int_{\mathbb{R}^{d}}  \mathfrak{F}_{\mathrm{sp}}(\fv_1, \fv_2)\mathfrak{F}_{\mathrm{te}}(\fv_1, \fv_2)  \sin^{4}(\ratio \sqrt{\vartheta}|\fv_1|/2)\gspzero(\fv_1)\sin^{4}(\ratio \sqrt{\vartheta}|\fv_2|/2)\gspzero(\fv_2)\mathrm{d}\fv_1\mathrm{d}\fv_2\\
\end{aligned}
\end{equation}
and
\begin{equation*}
\begin{aligned}
\mathfrak{F}_{\mathrm{sp}}(\fv_1, \fv_2) &\coloneqq \left(\mathfrak{F}_{\mathrm{sp}}(\rho \cdot (\fv_1 + \fv_2)) + \mathfrak{F}_{\mathrm{sp}}(\rho \cdot (\fv_1 - \fv_2)) \right),\\
\mathfrak{F}_{\mathrm{te}}(\fv_1, \fv_2) &\coloneqq \left(\mathfrak{F}_{\mathrm{te}}(\ratio \sqrt{\vartheta} (|\fv_1|+|\fv_2|)) +\mathfrak{F}_{\mathrm{te}}(\ratio \sqrt{\vartheta} (|\fv_1|-|\fv_2|)) \right).
\end{aligned}
\end{equation*}
The entire expression \eqref{eq:double_fejer_representation_variance_box_increments} factorises and can be reduced to the following double sum of Fourier coefficients
\begin{equation}
\label{eq:variance_asymptotic_heuristic_boxincrements}
\frac{\delta^{-2}\lambda^{2\beta-4}}{nm^{3}}\V(\mathbf{V}_{\mathrm{sp}, \mathrm{te}}) \asymp \sum_{\substack{|j|\leq m-1\\|k|\leq n-1}}w_{\mathrm{sp},k}w_{\mathrm{te},j}\left(\int_{\mathbb{R}^{d}} \cos(k \rho \cdot \fv)\cos(j \ratio \sqrt{\vartheta} |\fv|) \sin^{4}(\ratio \sqrt{\vartheta}|\fv|/2)\gspzero(\fv)\mathrm{d}\fv\right)^{2}. 
\end{equation}
As we can express 
\begin{equation*}
    \cos(j\ratio\sqrt{\vartheta}|\fv|)\sin^4(\ratio\sqrt{\vartheta}|\fv|/2)=\sum_{z\in\{0,\pm1,\pm2\}}c_z \cos((j+z)\ratio\sqrt{\vartheta}|\fv|)
\end{equation*}
for some factors $c_z$ to be defined later, \eqref{eq:variance_asymptotic_heuristic_boxincrements} consists of integrals containing high frequent cosine waves which vanish by the Riemann-Lebesgue Lemma, whenever the decay in spatial direction, i.e.\ the variable $k$, is sufficiently fast. In that case, solely terms where $\cos((j+z)\ratio\sqrt{\vartheta}|\fv|)=1$, i.e.\ $|j|\leq 2$ and $z=-j$, constitute to the asymptotics of \eqref{eq:variance_asymptotic_heuristic_boxincrements}. To ensure a sufficient decay in spatial and temporal direction, respectively, we impose the following additional requirement.

\begin{assumption}
\label[assumption]{assumption:helper}
    If $d=1$, we assume 
    \begin{equation}
    \label{eq:bias_assumption_ratio_helper}
    \begin{cases}
    \frac{\sqrt{\vartheta}\ratio}{2}\geq n,  & \ratio \rightarrow \infty,\\
    \frac{1}{2\sqrt{\vartheta}\ratio}\geq m, &\ratio \rightarrow 0.
    \end{cases}
    \end{equation}
\end{assumption}

The proof of the following result gives a rigorous meaning to \eqref{eq:variance_asymptotic_heuristic_boxincrements} and derives the precise asymptotic for the variance of the space-time variation \eqref{eq:space_time_variation}. 
 \begin{proposition}\label[proposition]{proposition:variance_space_time_variation} Grant \Cref{assumption:bias_convergence} and \Cref{assumption:helper}. Then, we have
 \begin{equation*}
 \begin{aligned}
 \frac{\delta^{-2}\lambda^{2\beta-4}}{nm^{3}}\V(\mathbf{V}_{\mathrm{sp}, \mathrm{te}}) &\rightarrow \frac{\boxconstvarsp}{\vartheta^{2}}, \quad \ratio \rightarrow \infty,\\
 \frac{\delta^{2\beta-6}}{nm^{3}}\V(\mathbf{V}_{\mathrm{sp}, \mathrm{te}}) &\rightarrow \frac{\boxconstvartemp}{\vartheta^{\beta}}, \quad \ratio \rightarrow 0,
 \end{aligned}
 \end{equation*}
 for constants $\boxconstvarsp,\boxconstvartemp>0$ given in \Cref{tab:asymptotic_constants}.
 \end{proposition}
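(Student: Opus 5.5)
We treat the regime $\ratio\to\infty$ in detail; the case $\ratio\to0$ is the same argument with the roles of $\rho\cdot\fv$ and $|\fv|$ swapped, using the second representation in \Cref{lemma:covariance_space_time_increments}.

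\textbf{Step 1: reduce to the main covariance term.} Write $\Sigma$ for the covariance matrix of the box increments, and split $\Sigma=\delta\lambda^{2-\beta}(M+R_{\mathrm{sp}})$. Here $M_{(i,k),(j,l)}$ is the leading term
\begin{equation*}
\frac{128(i\land j)}{(2\pi)^d\vartheta}\int_{\mathbb{R}^d}\cos((k-l)\rho\cdot\fv)\cos((i-j)\ratio\sqrt\vartheta|\fv|)\sin^4(\ratio\sqrt\vartheta|\fv|/2)\gspzero(\fv)\,\mathrm d\fv .
\end{equation*}
By Wick's theorem,
\begin{equation*}
\V(\mathbf V_{\mathrm{sp},\mathrm{te}})=2\,\mathrm{tr}(\Sigma^2)=2\delta^2\lambda^{4-2\beta}\,\big\Vert M+R_{\mathrm{sp}}\big\Vert_F^2 .
\end{equation*}
\begin{itemize}
\item The uniform bound on $R_{\mathrm{sp}}$ alone only gives $\Vert R_{\mathrm{sp}}\Vert_F^2\lesssim n^2m^2$, which is too weak.
\item We therefore use the explicit form \eqref{eq:spatial_remainder_box_inc}. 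It consists of terms without the factor $i\land j$ that are bounded and decay in $|k-l|$ in the same way as $M$. This should give $\Vert R_{\mathrm{sp}}\Vert_F^2=\mathcal O(nm)$.
\item Hence $\Vert R_{\mathrm{sp}}\Vert_F^2=o(nm^3)$. By Cauchy--Schwarz the cross term is $o(nm^3)$ once $\Vert M\Vert_F^2\asymp nm^3$.
\end{itemize}

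\textbf{Step 2: Fejér representation.} Expand $\Vert M\Vert_F^2$ as a double integral over $(\fv_1,\fv_2)$. Use the product-to-sum formula in both cosines. The cosine representations \eqref{eq:introducing_the_fejér_kernel} and \eqref{eq:fejer_time_kernel} then turn the sum over $k,l$ and the sum over $i,j$ (with weight $(i\land j)^2$) into $n\mathfrak F_{\mathrm{sp}}$ and $m^3\mathfrak F_{\mathrm{te}}$. This yields \eqref{eq:double_fejer_representation_variance_box_increments} times $nm^3$.

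Insert the Fourier representations of both kernels and use that the integrand is even. The double integral factorises into \eqref{eq:variance_asymptotic_heuristic_boxincrements}, namely
\begin{equation*}
\sum_{|k|<n,\,|j|<m} w_{\mathrm{sp},k}\,w_{\mathrm{te},j}\,a_{j,k}^2,
\end{equation*}
where $a_{j,k}$ is the displayed integral. This step is exact algebra.

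\textbf{Step 3: limit of the double Fourier sum.} Write
\begin{equation*}
\cos(j s)\sin^4(s/2)=\sum_{z\in\{0,\pm1,\pm2\}}c_z\cos((j+z)s),\qquad s=\ratio\sqrt\vartheta|\fv|,
\end{equation*}
with $c_0=3/8$, $c_{\pm1}=-1/4$, $c_{\pm2}=1/16$. This splits $a_{j,k}=\sum_z c_z b_{j+z,k}$, where
\begin{equation*}
b_{p,k}=\int_{\mathbb{R}^d}\cos(k\rho\cdot\fv)\cos(p\ratio\sqrt\vartheta|\fv|)\gspzero(\fv)\,\mathrm d\fv .
\end{equation*}
\begin{itemize}
\item For $p=0$: $b_{0,k}=\mathcal F_c(\gspzero)(k\rho)$. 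These coefficients are square-summable in $k$ by the argument behind \eqref{eq:partial_fourier_sum}.
\item For $p\neq0$: $b_{p,k}$ is the Fourier transform of $\gspzero$ evaluated on the "sphere" of radius $p\ratio\sqrt\vartheta\to\infty$, shifted by $k\rho$. For $d\ge2$ it is bounded via the decay of the spherical average, of order $|p\ratio|^{-(d-1)/2}$ times a function integrable in $k$. For $d=1$ the frequencies are $k\pm p\ratio\sqrt\vartheta$. \Cref{assumption:helper} guarantees $|k\pm p\ratio\sqrt\vartheta|\ge p\ratio\sqrt\vartheta/2$ for $|k|\le n$, so decay of $\mathcal F(\gspzero)$ (from integrability of $\gspzero'$) applies.
\end{itemize}
In both cases the off-diagonal contributions summed over $|j|<m$, $|k|<n$ with the weights vanish. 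Only the terms $j+z=0$ with $|j|\le2$ survive. Since $w_{\mathrm{te},j}\to1/3$ for fixed $j$ and $w_{\mathrm{sp},k}\to1$, dominated convergence gives the limit
\begin{equation*}
\frac13\sum_{|j|\le2}c_{-j}^2\sum_{k\in\mathbb Z}\mathcal F_c(\gspzero)(k\rho)^2 ,
\end{equation*}
which together with the prefactors defines $\boxconstvarsp/\vartheta^2$.

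\textbf{Main obstacle.} The hardest step is the uniform summability in Step 3: controlling
\begin{equation*}
\sum_{0<|p|\lesssim m}\ \sum_{|k|<n} b_{p,k}^2\to0 .
\end{equation*}
We would attack it by integrating by parts in the radial variable to gain powers of $(p\ratio)^{-1}$, and by using \eqref{eq:bias_assumption_ratio} to make $m\,\ratio^{-2}$-type sums small. The remainder control in Step 1 is second in difficulty.
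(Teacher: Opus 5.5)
Your overall structure is the paper's: Wick's formula, the double Fej\'er factorisation into $\sum_{k,j}w_{\mathrm{sp},k}w_{\mathrm{te},j}a_{j,k}^2$, the five-term expansion of $\sin^4$, and isolating the terms with $j+z=0$. Your $c_z$ and the limiting constant are right. Your $d=1$ argument also works: with \Cref{assumption:helper}, one integration by parts gives $|b_{p,k}|\lesssim(|p|\alpha)^{-1}$ for $|k|<n$, so the off-diagonal part is $\lesssim n\alpha^{-2}\lesssim\alpha^{-1}\to0$.

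\textbf{The gap is the case $d\ge2$.} This is where the real content of the proposition lies, and the route you sketch does not reach it.
\begin{enumerate}
\item If you integrate by parts in $r=|\omega|$ after passing to polar coordinates, the derivative also falls on $\cos(kr\,\rho\cdot\theta)$ and costs a factor $|k|$. You then only get $|b_{p,k}|\lesssim(1+|k|)(|p|\alpha)^{-1}$, and an off-diagonal total of order $n^3\alpha^{-2}$.
\item The assumptions \eqref{eq:bias_assumption_n_frac_m} and \eqref{eq:bias_assumption_ratio} give $nm\alpha^{-4}\to0$, hence $n\alpha^{-2}\to0$ and $\alpha^{-1}\log m\to0$. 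They do \emph{not} give $m\alpha^{-2}\to0$ or $n^3\alpha^{-2}\to0$: take $n=m^{1/2}$ and $\alpha=m^{3/8}\log m$. So ``$m\alpha^{-2}$-type sums'' cannot be made small.
\item Your alternative is a pointwise spherical-average bound of order $|p\alpha|^{-(d-1)/2}$ times an $\ell^2$-profile in $k$. It is only asserted. Moreover, $k\mapsto b_{p,k}$ has its mass near the resonance $|k|\approx|p|\alpha\sqrt{\vartheta}$, so such a profile cannot be independent of $p$. What you actually need is a bound on $\sum_k b_{p,k}^2$ that does not care where that mass sits.
\end{enumerate}

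\textbf{The missing idea} (the paper's \Cref{lemma:C_k_j_f_a_j}) is to separate the direction $\rho$ from the radial oscillation, instead of using full polar coordinates.
\begin{enumerate}
\item Rotate to $\rho=e_1$, write $\omega=(s,y)$ and integrate out $y$. This gives $b_{p,k}=\int_{\mathbb{R}}\cos(ks)\sin^4(s/2)H_p(s)\,\mathrm{d}s$ with $H_p(s)=\sigma_{d-2}\int_{|s|}^{\infty}\cos(p\alpha\sqrt{\vartheta}v)\,v^{\beta-d-1}(v^2-s^2)^{(d-3)/2}\,\mathrm{d}v$.
\item Periodise in $s$. Then $(b_{p,k})_{k\in\mathbb{Z}}$ are the cosine coefficients of $f_p(s)=\sin^4(s/2)\sum_{l\in\mathbb{Z}}H_p(s+2\pi l)$ on $(-\pi,\pi)$. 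Parseval gives $\sum_{k\in\mathbb{Z}}b_{p,k}^2=2\pi\Vert f_p\Vert_{L^2(-\pi,\pi)}^2$, so no individual $k$ has to be controlled.
\item Integrate by parts in $v$ at fixed $s$; this never touches $\cos(ks)$. For $d=2$, the singularity $(v^2-s^2)^{-1/2}$ at $v=|s|$ forces a split at $v=|s|+\varepsilon$ with $\varepsilon=(|p|\alpha)^{-1}$. This yields $|H_p(s)|\lesssim|s|^{\beta-7/2}(|p|\alpha)^{-1/2}$, hence $\sum_k b_{p,k}^2\lesssim(|p|\alpha)^{-1}$.
\item Apply Cauchy--Schwarz over the $z$-pairs and sum over $|j|<m$. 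The off-diagonal contribution is then $\lesssim\alpha^{-1}\log m+\alpha^{-1/2}\to0$. The regime $\alpha\to0$ is analogous, using Funk--Hecke.
\end{enumerate}

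\textbf{Step 1 is simpler than you think.} Since $\sup|R_{\mathrm{sp}}|\lesssim1$, the crude bound already gives $(nm^3)^{-1}\Vert R_{\mathrm{sp}}\Vert_F^2\lesssim n/m\to0$ by \eqref{eq:bias_assumption_n_frac_m}. The cross term is then $\lesssim\sqrt{n/m}$ by Cauchy--Schwarz. This is exactly what that part of \Cref{assumption:bias_convergence} is for, so your unproved claim $\Vert R_{\mathrm{sp}}\Vert_F^2=\mathcal{O}(nm)$ is unnecessary.
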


 \begin{remark}
\label[remark]{rmk:assumption_weakening}
Note that \Cref{assumption:bias_convergence} will be required eventually for the asymptotic analysis of the bias. Neither \Cref{assumption:bias_convergence} nor \Cref{assumption:helper} are required at all for \Cref{proposition:variance_space_time_variation} when $d \geq 3$ and can be weakened for $d=2$, which is revealed by the discussion following \Cref{lemma:C_k_j_f_a_j}. Nevertheless \Cref{assumption:bias_convergence} and \Cref{assumption:helper} simplify the proof considerably and were chosen to increase the readability of the manuscript. The dimensional dependence of the more precise arguments stems from the fact that in higher dimensions it is actually possible to separate the Fourier decay of the coefficients of \eqref{eq:variance_asymptotic_heuristic_boxincrements} along the directions in which $j$ and $k$ act. This is fundamentally impossible when $d=1$, and $d=2$ constitutes an edge case where the arguments may be applied, but only yield  the high-dimensional decay, partially.
\end{remark}
The required condition for a central limit theorem for the space-time variation \eqref{eq:space_time_variation} now involves both the spatial and the temporal components, amounting to
\begin{equation}
\label{eq:lyapunov_condition_box_increments}
\frac{\left( \max_{j=1,\dots,m}\max_{l=1,\dots,n} \sum_{k=1}^{n}\sum_{i=1}^{m}|\mathrm{Cov}(\mathbf{I}_{\mathrm{sp},\mathrm{te},i,k},\mathbf{I}_{\mathrm{sp},\mathrm{te},j,l})|\right)^{2}}{\V(\mathbf{V}_{\mathrm{sp}, \mathrm{te}})} \rightarrow 0.
\end{equation}

\begin{theorem}\label[theorem]{theorem:clt_box_increments}  Grant \Cref{assumption:bias_convergence} and \Cref{assumption:helper}. Then the space-time variation \eqref{eq:space_time_variation} satisfies
\begin{equation*}
\begin{aligned}
{\sqrt{nm}\left(\frac{\delta^{-1}\lambda^{\beta-2}}{nm^{2}}\mathbf{V}_{\mathrm{sp}, \mathrm{te}}-\frac{C_{\mathrm{box,sp,\E}}}{\vartheta}\right)} &\xrightarrow{d} N\left(0, \frac{\boxconstvarsp}{\vartheta^{2}}\right), \quad \ratio \rightarrow \infty,\\
{\sqrt{nm}\left(\frac{\delta^{\beta-3}}{nm^{2}}\mathbf{V}_{\mathrm{sp}, \mathrm{te}}-\frac{C_{\mathrm{box,temp,\E}}}{\vartheta^{\beta/2}}\right)} &\xrightarrow{d} N\left(0, \frac{\boxconstvartemp}{\vartheta^{\beta}}\right), \quad \ratio \rightarrow 0.
\end{aligned}
\end{equation*}
\end{theorem}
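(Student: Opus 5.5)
The proof follows the template of \Cref{theorem:clt_spatial_fixed_t} and \Cref{theorem:clt_temporal_space_fixed}: a CLT for the centred statistic via the Lyapunov-type condition \eqref{eq:lyapunov_condition_box_increments}, followed by replacing the expectation with its limit using \Cref{proposition:box_expectation} and \Cref{assumption:bias_convergence}. I describe the regime $\ratio\to\infty$; the case $\ratio\to0$ is identical after using the second representation in \Cref{lemma:covariance_space_time_increments}, with $\gtemp$ and $\rho\cdot\fv$ replaced by $|\fv|$ and the temporal scaling $\delta^{3-\beta}$.

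\textbf{Step 1: CLT for the centred statistic.} The vector $(\mathbf{I}_{\mathrm{sp},\mathrm{te},i,k})_{i,k}$ is centred Gaussian with covariance matrix $\Sigma_{nm}$ indexed by pairs $(i,k)$. Hence $\mathbf{V}_{\mathrm{sp},\mathrm{te}}-\E[\mathbf{V}_{\mathrm{sp},\mathrm{te}}]$ is a weighted sum of centred $\chi^2_1$ variables in the eigenvalues of $\Sigma_{nm}$. As in the remark after \eqref{eq:CLT_condition_maximum_row_norm_version}, it suffices to show $\Vert\Sigma_{nm}\Vert_2^2/\V(\mathbf{V}_{\mathrm{sp},\mathrm{te}})\to0$. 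Since $\Vert\Sigma_{nm}\Vert_2\le\Vert\Sigma_{nm}\Vert_\infty$, this reduces to \eqref{eq:lyapunov_condition_box_increments}. By \Cref{proposition:variance_space_time_variation}, the denominator is of exact order $\delta^2\lambda^{4-2\beta}nm^3$. So I need
\begin{equation*}
\max_{j,l}\sum_{i,k}|\mathrm{Cov}(\mathbf{I}_{\mathrm{sp},\mathrm{te},i,k},\mathbf{I}_{\mathrm{sp},\mathrm{te},j,l})| = o\big(\delta\lambda^{2-\beta}n^{1/2}m^{3/2}\big).
\end{equation*}

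\textbf{Step 2: bounding the row sums.} I use the first representation of \Cref{lemma:covariance_space_time_increments}. The remainder $R_{\mathrm{sp}}$ is uniformly bounded, so it contributes at most $\delta\lambda^{2-\beta}nm$ to the row sum. This is $o(\delta\lambda^{2-\beta}n^{1/2}m^{3/2})$ precisely because $n/m\to0$ by \eqref{eq:bias_assumption_n_frac_m}. For the main term I bound $(i\land j)\le m$ and aim to show
\begin{equation*}
\sum_{i=1}^m\sum_{k=1}^n |a_{i-j,k-l}| = o\big(m^{1/2}n^{1/2}\big),
\qquad a_{p,q}\coloneqq\int_{\mathbb{R}^d}\cos(q\rho\cdot\fv)\sin^4(\ratio\sqrt\vartheta|\fv|/2)\cos(p\ratio\sqrt\vartheta|\fv|)\gspzero(\fv)\,\mathrm{d}\fv.
\end{equation*}
I expand $\cos(p\ratio\sqrt\vartheta|\fv|)\sin^4(\cdot)$ into the five cosines $c_z\cos((p+z)\ratio\sqrt\vartheta|\fv|)$, as in the discussion before \Cref{assumption:helper}. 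The terms with $p+z=0$ occur only for $|p|\le2$. They give $\sum_{q}|\mathcal F_c(\gspzero)(q\rho)|<\infty$, which is summable by the decay of the Fourier coefficients already used in \eqref{eq:partial_fourier_sum}, so they contribute $O(1)$. The terms with $p+z\neq0$ combine the frequency $q\rho$ with a radial frequency of size $|p+z|\ratio\sqrt\vartheta\ge\ratio\sqrt\vartheta$. For these I would reuse the decay estimates from the proof of \Cref{proposition:variance_space_time_variation}: a radial-frequency decay of order $(|p+z|\ratio)^{-\gamma}$ for some $\gamma>0$, and summability in $q$. Summed over $|p|\le m$, this gives at most $O(m\,\ratio^{-\gamma})$ up to logarithms, or $O(m^{1-\gamma}\ratio^{-\gamma})$ if $\gamma<1$. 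This is $o(\sqrt{nm})$ because $\ratio\gg(nm)^{1/4}$ by \eqref{eq:bias_assumption_ratio} and $m\gg n$.

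\textbf{Main obstacle.} The hardest step is the decay bound in Step 2 for $d=1$. There $\rho\cdot\fv=\pm|\fv|$, so the two frequencies $q$ and $(p+z)\ratio\sqrt\vartheta$ live on the same axis and can cancel. \Cref{assumption:helper}, namely $\sqrt\vartheta\ratio/2\ge n$, is what prevents this: it guarantees $|(p+z)\ratio\sqrt\vartheta\pm q|\ge |p+z|\ratio\sqrt\vartheta/2$ for $|q|\le n$. One integration by parts against the integrable derivative of $\gspzero$ then gives decay $(|p+z|\ratio)^{-1}$. I would try to extract exactly this bound from the lemma underlying \Cref{proposition:variance_space_time_variation}.

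\textbf{Step 3: replacing the expectation.} Writing the statistic as fluctuation plus bias, I get $\sqrt{nm}\,\mathcal O(m^{-1}+\ratio^{-2})$ from \Cref{proposition:box_expectation}. This tends to zero since $\sqrt{n/m}\to0$ and $\ratio^{-2}\sqrt{nm}\to0$ by \Cref{assumption:bias_convergence}. Slutsky's lemma together with the variance limit in \Cref{proposition:variance_space_time_variation} then yields the stated normal limits.
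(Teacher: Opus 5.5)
Your architecture is the paper's: you verify Lyapunov's condition \eqref{eq:lyapunov_condition_box_increments} through a row-sum bound, and the bounded remainder contributes $nm$, which is negligible because $n/m\to0$. You expand the main term into the coefficients $C_{\mathrm{sp},k,j,z}$ and separate the diagonal $j+z=0$. The bias is then removed with \Cref{proposition:box_expectation} and \Cref{assumption:bias_convergence}. Your $d=1$ treatment is a valid variant. One integration by parts with the separation $|(p+z)\ratio\sqrt{\vartheta}\pm q|\ge |p+z|\ratio\sqrt{\vartheta}/2$ gives $\sum_{|p|<m}\sum_{|q|<n}|C_{\mathrm{sp},q,p,z}|\lesssim n\ratio^{-1}\log m$. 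By contrast, \Cref{lemma:d_1_bound_C_j_k} integrates by parts twice and bounds each coefficient by $\ratio^{-2}$ uniformly in $p$, which gives $nm\ratio^{-2}$; that is where the paper needs $\ratio\gg(nm)^{1/4}$ in the Lyapunov step.

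The $d\ge2$ step, however, is mis-specified. The estimate you can reuse there, \Cref{lemma:C_k_j_f_a_j}, only gives square-summability in $q$: $\sum_{q}C_{\mathrm{sp},q,p,z}^2\lesssim(\ratio|p+z|)^{-1}$, obtained by periodisation and Plancherel. There is no $\ell^1$-in-$q$ bound independent of $n$. You therefore need Cauchy--Schwarz over $|q|\le n-1$, which costs a factor $\sqrt n$: $\sum_{|p|<m}\sum_{|q|<n}|C_{\mathrm{sp},q,p,z}|\lesssim\sqrt{n}\sum_{|p|<m}(\ratio|p+z|)^{-1/2}\lesssim\sqrt{nm}\,\ratio^{-1/2}$. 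This is $o(\sqrt{nm})$ simply because $\ratio\to\infty$, and it is exactly the paper's argument.

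Your closing bookkeeping with an unspecified $\gamma>0$ does not close in general. For small $\gamma$, $m^{1-\gamma}\ratio^{-\gamma}$ need not be $o(\sqrt{nm})$ under \Cref{assumption:bias_convergence}; take for instance $n$ bounded, $\ratio=m^{3/10}$ and $\gamma=1/10$. Moreover, $m\gg n$ works against you in the main term and only helps with the remainder.

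Finally, the decay behind \eqref{eq:partial_fourier_sum} is only $|k|^{-1}$. So absolute summability of $\mathcal{F}_c(\gspzero)(q\rho)$ needs the second-derivative bound from \Cref{result:regularity_g}. Alternatively, Cauchy--Schwarz gives $O(\sqrt n)$ for the diagonal terms, as in the paper, and that also suffices.
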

An appropriate rescaling of the variations is essential to achieve asymptotic convergence. In the case of space-time white noise (i.e. $\beta=d=1$) and choices $\delta=m^{-1}$ and $\lambda=n^{-1}$, the quadratic variations $\mathbf{V}_{\mathrm{sp}}$ and $\mathbf{V}_{\mathrm{te}}$, based solely on spatial and temporal increments, respectively, converge to their asymptotic limits without further normalisation; cf. \Cref{corollary:clt_estimator_spatial_fixed_time} and \Cref{theorem:clt_temporal_space_fixed}; see also \citet[Proposition 4.1 and Proposition 5.1]{assaadQuadraticVariationDrift2022a}. 

 When $\beta \in (0, 2 \land d)$, the local Hölder regularity of the process in both space and time is $1-\beta/2$. Thus, both the variance of a spatial increment in \Cref{lemma:covariance_second_order_spatial_increments} and of temporal increment \Cref{lemma:covariance_second_order_temporal_increments} have a scalar contribution of size $\lambda^{2-\beta}$ and $\delta^{2-\beta}$, respectively. However, the purely spatial process is stationary, while the temporal process is non-stationary and incurs an additional increase in variance of order $\delta i$. This increased variance, must be compensated for in the rescaling of the temporal variation compared to the spatial one. 

The regimes $\ratio \rightarrow \infty$ and $\ratio \rightarrow 0$ essentially state that we observe at a higher spatial or temporal observation frequency, respectively. As a consequence the rescaling of the space-time variation $\mathbf{V}_{\mathrm{sp},\mathrm{te}}$ in \Cref{theorem:clt_box_increments} needs to account for the dominating spatial or temporal resolution. In fact, the ratio between both normalisations
\begin{equation*}
\frac{\delta^{3-\beta}}{\delta^{-1}\lambda^{\beta-2}}=\left(\frac{\delta}{\lambda}\right)^{2-\beta}=\ratio^{2-\beta}
\end{equation*}
is scaled exactly in proportion to the ratio $\ratio$ determining the asymptotic regime. 
\begin{remark}
Since the renormalisation is rooted in the regularity of the solutions, the correct rescaling for the stochastic heat equation in \citet{hildebrandtParameterEstimationSPDEs2019} differs. While the stochastic wave equation is Hölder continuous of order $1-\beta/2$ in both space and time, the corresponding orders for the stochastic heat equation are $1-\beta/2$ in space and $1/2-\beta/4$ in time, see \citet{dalangHittingProbabilitiesSystems2013}, leading to different normalisations in \citet[Proposition 3.5 and Theorem 3.7]{hildebrandtParameterEstimationSPDEs2019} compared to the hyperbolic setting. Furthermore, in the parabolic setting, the corresponding sampling regime scales with $r=\lambda/\sqrt{\delta}=\alpha^{-1}\sqrt{\delta}$ compared to the situation of the stochastic wave equation, where both time and space contribute equally. 
\end{remark}

As a consequence, the dependence of the asymptotic variance on the wave speed $\vartheta$ in \Cref{proposition:variance_space_time_variation} is as in \Cref{section:spatial_variation,section:temporal_variations}. The resulting method of moments estimators are given by
\begin{equation*}
\hat{\vartheta}_{\mathrm{box},\mathrm{sp}, m,n} \coloneqq \frac{\boxconstexpsp}{n^{-1}m^{-2}\delta^{-1}\lambda^{\beta-2}\mathbf{V}_{\mathrm{sp}, \mathrm{te}}}, \quad \hat{\vartheta}_{\mathrm{box},\mathrm{te}, m,n} \coloneqq \left(\frac{\boxconstexptemp}{n^{-1}m^{-2}\delta^{\beta-3}\mathbf{V}_{\mathrm{sp}, \mathrm{te}}}\right)^{2/\beta},
\end{equation*}
and have the following asymptotic behaviour. 
\begin{corollary}
\label[corollary]{corollary:clt_estimators_box}
Grant \Cref{assumption:bias_convergence} and \Cref{assumption:helper}. Then, we have
\begin{equation*}
\begin{aligned}
&\sqrt{nm}(\hat{\vartheta}_{\mathrm{box},\mathrm{sp},m,n} - \vartheta) \xrightarrow{d} N\left(0, \vartheta^{2}\frac{\boxconstvarsp}{(\boxconstexpsp)^{2}}\right), \quad \ratio \rightarrow \infty,\\
&\sqrt{nm}(\hat{\vartheta}_{\mathrm{box},\mathrm{te},m,n} - \vartheta) \xrightarrow{d} N\left(0, \frac{4 \boxconstexptemp}{\beta^{2}\boxconstexptemp^{2}}\right), \quad \ratio \rightarrow 0.
\end{aligned}
\end{equation*}
\end{corollary}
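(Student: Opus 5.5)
This is a delta-method argument applied to \Cref{theorem:clt_box_increments}; we treat the two regimes separately.

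\emph{Regime $\ratio\rightarrow\infty$.} Set $S_{nm}\coloneqq n^{-1}m^{-2}\delta^{-1}\lambda^{\beta-2}\mathbf{V}_{\mathrm{sp},\mathrm{te}}$ and $\mu\coloneqq\boxconstexpsp/\vartheta$. Under \Cref{assumption:bias_convergence} and \Cref{assumption:helper}, \Cref{theorem:clt_box_increments} gives
\begin{equation*}
\sqrt{nm}(S_{nm}-\mu)\xrightarrow{d}N\left(0,\boxconstvarsp/\vartheta^{2}\right).
\end{equation*}
The estimator is $\hat{\vartheta}_{\mathrm{box},\mathrm{sp},m,n}=f(S_{nm})$ with $f(s)=\boxconstexpsp/s$. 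This map is continuously differentiable near $\mu>0$, with $f(\mu)=\vartheta$ and $f'(\mu)=-\boxconstexpsp/\mu^{2}=-\vartheta^{2}/\boxconstexpsp$.

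Since $S_{nm}\rightarrow\mu$ in probability, $S_{nm}$ is positive with probability tending to one, so the estimator is well defined asymptotically. A first-order Taylor expansion with mean-value point $\tilde S$ between $S_{nm}$ and $\mu$ gives
\begin{equation*}
\sqrt{nm}(\hat\vartheta-\vartheta)=f'(\tilde S)\sqrt{nm}(S_{nm}-\mu).
\end{equation*}
Because $f'(\tilde S)\rightarrow f'(\mu)$ in probability, Slutsky's lemma yields the limiting variance
\begin{equation*}
f'(\mu)^{2}\,\frac{\boxconstvarsp}{\vartheta^{2}}=\vartheta^{2}\,\frac{\boxconstvarsp}{(\boxconstexpsp)^{2}},
\end{equation*}
as claimed.

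\emph{Regime $\ratio\rightarrow0$.} Here $T_{nm}\coloneqq n^{-1}m^{-2}\delta^{\beta-3}\mathbf{V}_{\mathrm{sp},\mathrm{te}}$ satisfies $\sqrt{nm}(T_{nm}-\nu)\xrightarrow{d}N(0,\boxconstvartemp/\vartheta^{\beta})$ with $\nu=\boxconstexptemp\vartheta^{-\beta/2}$. Now take $g(s)=(\boxconstexptemp/s)^{2/\beta}$, so that $g(\nu)=\vartheta$ and
\begin{equation*}
g'(\nu)=-\frac{2}{\beta}\frac{g(\nu)}{\nu}=-\frac{2}{\beta}\frac{\vartheta^{1+\beta/2}}{\boxconstexptemp}.
\end{equation*}
The delta method then gives the limiting variance
\begin{equation*}
\frac{4\vartheta^{2+\beta}}{\beta^{2}\boxconstexptemp^{2}}\cdot\frac{\boxconstvartemp}{\vartheta^{\beta}}=\vartheta^{2}\frac{4\boxconstvartemp}{\beta^{2}\boxconstexptemp^{2}},
\end{equation*}
the analogue of \Cref{corollary:clt_method_of_moments_estimator_temporal_space_fixed}. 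The displayed statement writes this as $4\boxconstexptemp/(\beta^{2}\boxconstexptemp^{2})$, apparently with $\boxconstvartemp$ mistyped as $\boxconstexptemp$ and the factor $\vartheta^{2}$ dropped; I would prove the corrected form.

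There is no real obstacle here, since all the analytic work sits in \Cref{theorem:clt_box_increments}. The only points needing care are that the centring constant is the true limit, not the expectation — which the theorem already provides under \Cref{assumption:bias_convergence} — and that $S_{nm},T_{nm}>0$ with probability tending to one, which follows from consistency.
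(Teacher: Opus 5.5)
Your proposal is correct and takes the same route the paper intends: no separate proof is given there, but the corollary follows from \Cref{theorem:clt_box_increments} by the delta method, exactly as in the proofs of \Cref{corollary:clt_estimator_spatial_fixed_time} and \Cref{corollary:clt_method_of_moments_estimator_temporal_space_fixed}. You are also right that the displayed variance in the regime $\ratio\rightarrow0$ is a typo and should read $\vartheta^{2}\,4\boxconstvartemp/(\beta^{2}\boxconstexptemp^{2})$.
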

While the scaling regimes are very different in the parabolic setting, the actual rate of convergence in \Cref{corollary:clt_estimators_box} mirrors exactly the results from \citet{hildebrandtParameterEstimationSPDEs2019}.

\section{Proofs and auxiliary results}
\label{section:proofs}

In the following four subsections we prove the results presented in the \Cref{section:spde_model,section:spatial_variation,section:temporal_variations,section:space_time_variation}, in numerical order. Our analysis of the variations in space and time is based on the covariance between second-order increments. As such, an in-depths understanding of the incremental structure is necessary, and both general and explicit results are collected in \Cref{section:general_results_on_increments}. Furthermore, the regularity of the functions $\gspzero$ and $\gtemp$ constituting to the covariance of increments is discussed in \Cref{sec: ax_results_g}.

The next table summarizes the asymptotic constants emerging in the expected values and variances of the second-order variations from \Cref{section:spatial_variation,section:temporal_variations,section:space_time_variation}.
\begin{table}[h!]
    \centering
    \scalebox{1.1}{
    \begin{tabular}{l|l|l}
    \multicolumn{1}{c|}{Variations in}&\multicolumn{1}{c|}{Expected value}&\multicolumn{1}{c}{Variance}\\ \hline
         Space&$\spconstexpt\coloneqq\frac{8}{(2\pi)^{d}}\lVert\gspzero\rVert_{L^1(\mathbb{R}^d)}$&$\spconstvart\coloneqq\frac{128}{(2\pi)^{2d}}\sum_{k \in \mathbb{Z}}\left(\mathcal{F}_c(\gspzero)(k\rho)\right)^{2}$\\
         Time&$\tempconstexpt\coloneqq\frac{4}{ (2\pi)^{d}}\lVert\gtemp\rVert_{L^1(\mathbb{R}^{d})}$&$\tempconstvart\coloneqq\frac{128}{3(2\pi)^{2d}}\sum_{j \in \mathbb{Z}} \left(\mathcal{F}_{c}^{+}(\gtemp)(j)\right)^{2}$\\
         Space-time, $\alpha\rightarrow\infty$&$\boxconstexpsp\coloneqq\frac{24}{(2\pi)^{d}}\lVert\gspzero\rVert_{L^1(\mathbb{R}^d)}$&$\boxconstvarsp\coloneqq\frac{2^{8}35}{3(2\pi)^{2d}}\sum_{k\in\mathbb{Z}}\left(\mathcal{F}_c(\gspzero)(k\rho)\right)^2$\\
         Space-time, $\alpha\rightarrow0$&$\boxconstexptemp\coloneqq\frac{24}{ (2\pi)^{d}}\lVert\gtemp\rVert_{L^1(\mathbb{R}^{d})}$&$\boxconstvartemp\coloneqq\frac{2^{8}35}{3(2\pi)^{2d}}\sum_{j\in\mathbb{Z}}\left(\mathcal{F}_c^+(\gtemp)(j)\right)^2$
    \end{tabular}}
    \caption{Asymptotic limits of expectation and variance of the rescaled variations in space, time and space-time defined in \Cref{section:spatial_variation,section:temporal_variations,section:space_time_variation} with sampling frequency ratio $\alpha=\delta/\lambda$. The functions $\gspzero$ and $\gtemp$ are introduced in Lemma~\ref{lemma:covariance_second_order_spatial_increments} and Lemma~\ref{lemma:covariance_second_order_temporal_increments}, respectively.}
    \label{tab:asymptotic_constants}
\end{table}

\subsection{Proofs for \Cref{section:spde_model}}

\begin{proof}[Proof of \Cref{proposition:covariance_representation}]
Itô's isometry from \citet{dalangExtendingMartingaleMeasure1999} gives the representation
\begin{equation*}
\begin{aligned}
\E[u(t,x)u(s,y)]=\frac{1}{(2\pi)^{d}}\int_{0}^{t \land s} \int_{\mathbb{R}^{d}} \mathcal{F}(G_{t-r}(x-\cdot))(\fv)\overline{\mathcal{F}(G_{s-r}(y-\cdot))}(\fv) |\fv|^{\beta-d}\mathrm{d}\fv \mathrm{d}r.
\end{aligned}
\end{equation*}
Using the shift property of the Fourier transform yields
\begin{equation*}
\E[u(t,x)u(s,y)]=\frac{1}{(2\pi)^{d}}\int_{0}^{t \land s} \int_{\mathbb{R}^{d}} e^{\ii(x-y) \cdot \fv }\overline{\mathcal{F}(G_{t-r}(\cdot))}(\fv){\mathcal{F}(G_{s-r}(\cdot))}(\fv) |\fv|^{\beta-d}\mathrm{d}\fv \mathrm{d}r.
\end{equation*}
By taking the real part, we obtain
\begin{equation*}
\begin{aligned}
\E[u(t,x)u(s,y)]&= \frac{1}{(2\pi)^{d}\vartheta}\int_{0}^{t \land s} \int_{\mathbb{R}^{d}} \cos((x-y)  \cdot \fv) \sin((t-r)|\fv|\sqrt{\vartheta}) \sin((s-r)|\fv|\sqrt{\vartheta}) |\fv|^{\beta-d-2}\mathrm{d}\fv \mathrm{d}r\\
&= \frac{1}{(2\pi)^{d}\vartheta^{\beta/2}}\int_{0}^{t \land s} \int_{\mathbb{R}^{d}} \cos\left(\frac{(x-y)}{\sqrt{\vartheta}}  \cdot \fv \right) \sin((t-r)|\fv|) \sin((s-r)|\fv|) |\fv|^{\beta-d-2}\mathrm{d}\fv \mathrm{d}r,
\end{aligned}
\end{equation*}
which is exactly the desired representation. The second characterisation follows immediately with Fubini's theorem. It remains to calculate the integral.
Using the product-to-sum formula $\sin(x)\sin(y)=\frac{1}{2}(\cos(x-y)-\cos(x+y))$, we obtain
\begin{equation*}
\begin{aligned}
\Phi_{\mathrm{te}, \xi}(t,s)&=\int_{0}^{t \land s}\sin((t-r)\xi)\sin((s-r)\xi)\mathrm{d}r\\
&= \frac{1}{2}\int_{0}^{t \land s}\cos((t-s)\xi)\mathrm{d}r - \frac{1}{2} \int_{0}^{t \land s}\cos((t+s-2r)\xi)\mathrm{d}r\\
&= \frac{1}{2} (t \land s)\cos((t-s)\xi) + \frac{1}{2}\int_{|t-s|\xi}^{(t+s)\xi} \cos(r) \frac{\mathrm{d}r}{-2\xi}\\
&=\frac{\sin(|t-s|\xi)-\sin((t+s)\xi)}{4\xi}+\frac{1}{2}(t \land s)\cos((t-s)\xi). 
\end{aligned}
\end{equation*}
Another application of the sum-to-product identity $\sin(x)-\sin(y)=2\cos((x+y)/2)\sin((x-y)/2)$ yields together with $\sin(-x)=-\sin(x)$:
\begin{equation*}
\begin{aligned}
\frac{\sin(|t-s|\xi)-\sin((t+s)\xi)}{4\xi}&=\frac{\cos((|t-s|+(t+s))\xi/2)\sin((|t-s|-(t+s))\xi/2)}{2 \xi}\\
&=-\frac{\cos((t \lor s)\xi)\sin((t \land s)\xi)}{2 \xi} 
\end{aligned}
\end{equation*}
and thus
\begin{equation*}
\begin{aligned}
\Phi_{\mathrm{te}, \xi}(t,s)
&=\frac{1}{2}\left((t \land s)\cos((t-s)\xi)-\frac{\cos((t \lor s)\xi)\sin((t \land s)\xi)}{\xi}\right).
\end{aligned}\qedhere
\end{equation*}
\end{proof}

\begin{proposition}	\label[proposition]{result:temporal_covariance_structure}
	There exists a constant $C_{\beta,d}>0$ such that for any fixed point in space $x \in \mathbb{R}^{d}$, we have
	\begin{equation}
		\label{eq:covariance_function_fixed_point_in_space}
		\E[u(t,x)u(s,x)]= C_{\beta,d}\vartheta^{-\beta/2} \left( \frac{1}{2(3-\beta)}\left((t+s)^{3-\beta}-|t-s|^{3-\beta}\right)-( t \land s)|t-s|^{2-\beta}\right).
	\end{equation}
\end{proposition}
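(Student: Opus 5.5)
Starting point: \Cref{proposition:covariance_representation} with $x=y$, which gives
\begin{equation*}
\E[u(t,x)u(s,x)]=\frac{1}{(2\pi)^{d}\vartheta^{\beta/2}}\int_{\mathbb{R}^{d}}\Phi_{\mathrm{te},|\fv|}(t,s)|\fv|^{\beta-d-2}\mathrm{d}\fv .
\end{equation*}
The integrand is radial, so polar coordinates reduce this to $\sarea(2\pi)^{-d}\vartheta^{-\beta/2}\int_0^\infty \Phi_{\mathrm{te},r}(t,s)r^{\beta-3}\mathrm{d}r$. Convergence is clear. Near $r=0$ we have $\Phi_{\mathrm{te},r}(t,s)=\int_0^{t\land s}\sin((t-q)r)\sin((s-q)r)\mathrm{d}q=O(r^2)$, and $\beta-1>-1$. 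At infinity $\Phi$ is bounded and $\beta-3<-1$.

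The main step uses the first (time-integral) form of $\Phi$ together with the product-to-sum identity:
\begin{equation*}
\Phi_{\mathrm{te},r}(t,s)=\frac12\int_0^{t\land s}\big(\cos((t-s)r)-\cos((t+s-2q)r)\big)\mathrm{d}q .
\end{equation*}
Subtract $1$ from both cosines to write the integrand as $(1-\cos((t+s-2q)r))-(1-\cos(|t-s|r))$. Each piece is then separately integrable against $r^{\beta-3}$ on $(0,\infty)$, since $\beta\in(0,2)$.

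Next, apply the classical scaling identity
\begin{equation*}
\int_0^\infty(1-\cos(ar))r^{\beta-3}\mathrm{d}r=K_\beta|a|^{2-\beta},\qquad K_\beta=\int_0^\infty(1-\cos r)r^{\beta-3}\mathrm{d}r\in(0,\infty),
\end{equation*}
which follows from the substitution $r\mapsto r/|a|$. Fubini is justified because everything is nonnegative after this splitting. This yields
\begin{equation*}
\int_0^\infty\Phi_{\mathrm{te},r}(t,s)r^{\beta-3}\mathrm{d}r=\frac{K_\beta}{2}\Big(\int_0^{t\land s}(t+s-2q)^{2-\beta}\mathrm{d}q-(t\land s)|t-s|^{2-\beta}\Big).
\end{equation*}

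The remaining integral is elementary. With $v=t+s-2q$, the variable $v$ runs from $t+s$ down to $t+s-2(t\land s)=|t-s|$, so
\begin{equation*}
\int_0^{t\land s}(t+s-2q)^{2-\beta}\mathrm{d}q=\frac{(t+s)^{3-\beta}-|t-s|^{3-\beta}}{2(3-\beta)} .
\end{equation*}
Collecting constants gives \eqref{eq:covariance_function_fixed_point_in_space} with $C_{\beta,d}=\sarea K_\beta/(2(2\pi)^{d})>0$.

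The only delicate point is the Fubini exchange between $q$ and $r$. The raw integrand $\cos((t-s)r)-\cos((t+s-2q)r)$ does not have an absolutely integrable negative part at large $r$ when taken term by term. The fix is to split only after subtracting $1$ from each cosine, so that each piece is nonnegative and Tonelli applies. Positivity of $K_\beta$, and hence of $C_{\beta,d}$, is immediate since $1-\cos r\ge0$ and the integrand is not identically zero.
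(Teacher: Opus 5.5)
Your proof is correct and follows the paper's route essentially step for step: polar coordinates, product-to-sum, a scaling identity for the radial integral, then the elementary integral in the time variable. There are two small differences. The paper starts from the first form in \Cref{proposition:covariance_representation}, where the time integral is already outside, so it never needs to exchange integrals. It also integrates by parts to reach $c_\beta=\int_0^\infty\sin(r)r^{\beta-2}\,\mathrm{d}r$, which is only conditionally convergent for $\beta\geq1$, whereas your $K_\beta=\int_0^\infty(1-\cos r)r^{\beta-3}\,\mathrm{d}r$ converges absolutely. Since $K_\beta=c_\beta/(2-\beta)$, your constant coincides with the paper's $C_{\beta,d}$.

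One correction to your closing remark: term-by-term integrability fails as $r\to0$, where $r^{\beta-3}$ is not integrable, not at large $r$. Moreover, the unsplit difference is jointly absolutely integrable, because it is $O(r^2)$ near the origin uniformly in $q$. So subtracting $1$ is what makes the scaling identity applicable to each piece; Fubini holds without it.
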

\begin{proof}
	Consider the representation for the covariance function from \Cref{proposition:covariance_representation}. For a single fixed spatial point, the integrand within the integral representation of the covariance is radial. In polar coordinates we obtain
	\begin{equation*}
		\begin{aligned}
			 & \E[u(t,x)u(s,x)]                                                                                                                                                                         \\
			 & = \frac{1}{(2\pi)^{d} \vartheta}\int_{0}^{t \land s}\int_{\mathbb{R}^{d}} {\sin((t-r)\sqrt{\vartheta}|\fv|)\sin((s-r)\sqrt{\vartheta}|\fv|)}|\fv|^{\beta-d-2} \mathrm{d}\fv \mathrm{d}r \\
			 & = \frac{\sarea}{(2\pi)^{d} \vartheta^{\beta/2}}\int_{0}^{t \land s}\int_{0}^{\infty} {\sin((t-r)\fv)\sin((s-r)\fv)}\fv^{\beta-3} \mathrm{d}\fv \mathrm{d}r,
		\end{aligned}
	\end{equation*}
	where we have used a change of variables to remove the dependence of the integral on the wave speed $\vartheta$.
	Using again that
	$\sin((t-r)\fv)\sin((s-r)\fv)  =\frac{1}{2}\left(\cos((t-s)\fv)-\cos((t+s-2r)\fv)\right)$,
	we obtain for the inner integral
	\begin{equation*}
		\begin{aligned}
			 & \int_{0}^{\infty} {\sin((t-r)\fv)\sin((s-r)\fv)}\fv^{\beta-3} \mathrm{d}\fv                                                                                                   \\
			 & =\left[ \sin((t-r)\fv)\sin((s-r)\fv) \frac{\fv^{\beta-2}}{\beta-2}\right]_{0}^{\infty}                                                                                        \\
			 & \qquad-\frac{1}{2}\int_{0}^{\infty} \left({-(t-s) \sin((t-s)\fv) + (t+s-2r) \sin((t+s-2r)\fv)} \right)\frac{\fv^{\beta-2}}{\beta-2}\mathrm{d}\fv                                  \\
			 & = \frac{1}{2(\beta-2)}\left( (t-s)\int_{0}^{\infty}\sin((t-s)\fv)\fv^{\beta-2}\mathrm{d}\fv - (t+s-2r)\int_{0}^{\infty}\sin((t+s-2r)\fv)\fv^{\beta-2} \mathrm{d}\fv\right).
		\end{aligned}
	\end{equation*}
	Setting $c_\beta \coloneqq \int_{0}^{\infty}\sin(\fv)\fv^{\beta-2}d\fv>0$, we observe
	\begin{equation*}
		\begin{aligned}
			\int_{0}^{\infty} {\sin((t-r)\fv)\sin((s-r)\fv)}\fv^{\beta-3} \mathrm{d}\fv
			 & = \frac{c_\beta}{2(\beta-2)} \left( |t-s|^{2-\beta}-(t+s-2r)^{2-\beta}\right).
		\end{aligned}
	\end{equation*}
	Note that if $d \geq 2$ the integral defining $c_\beta$ is only absolutely convergent for $\beta \in (0,1)$, but it remains conditionally convergent for $\beta \in [1,2)$ and is thus finite for all $\beta \in (0, 2 \land d)$. 
    This yields for $s \leq t$:
	\begin{equation*}
		\begin{aligned}
			 & \E[u(t,x)u(s,x)]                                                                                                                                                                        \\
			 & =\frac{\sarea}{(2\pi)^{d} \vartheta^{\beta/2}}\int_{0}^{t \land s}\frac{c_\beta}{2(\beta-2)} \left(|t-s|^{2-\beta}-(t+s-2r)^{2-\beta}\right) \mathrm{d}r                     \\
			 & =\frac{\sarea c_\beta}{(2\pi)^{d} \vartheta^{\beta/2}2(\beta-2)} \left( ( t \land s)|t-s|^{2-\beta}-\int_{0}^{t \land s}(t+s-2r)^{2-\beta}
			\mathrm{d}r\right)                                                                                                                                                                                 \\
			 & =\frac{\sarea c_\beta}{(2\pi)^{d} \vartheta^{\beta/2}2(\beta-2)} \left( ( t \land s)|t-s|^{2-\beta}-\frac{1}{2}\int_{|t-s|}^{t+s}r^{2-\beta}
			\mathrm{d}r\right)                                                                                                                                                                                 \\
			 & =\frac{\sarea c_\beta}{(2\pi)^{d} \vartheta^{\beta/2}2(\beta-2)} \left( ( t \land s)|t-s|^{2-\beta}-\frac{1}{2(3-\beta)}\left((t+s)^{3-\beta}-|t-s|^{3-\beta}\right)\right)\\
          &= C_{\beta,d}\vartheta^{-\beta/2} \left( \frac{1}{2(3-\beta)}\left((t+s)^{3-\beta}-|t-s|^{3-\beta}\right)-( t \land s)|t-s|^{2-\beta}\right).
		\end{aligned}
	\end{equation*}
The constant is given by $C_{\beta, d} \coloneqq \frac{\sarea c_\beta}{(2\pi)^{d} 2(2-\beta)}$.
\end{proof}

\subsection{Proofs for \Cref{section:spatial_variation}}
We begin by finding a suitable expression for the spatial covariance function of the stochastic wave equation. 
\begin{lemma}\label[lemma]{result:spatial_covariance_function}
Given a fixed time $t \geq 0$, the covariance function of the stochastic wave equation at two different spatial locations is given by
\begin{equation*}
\E[u(t,x_k)u(t,x_l)]=\frac{t\lambda^{2-\beta}}{2(2\pi)^{d}\vartheta}\int_{\mathbb{R}^{d}} \cos\left( {(k-l)} \rho \cdot \fv \right) \left(1 - \mathrm{sinc}(2t\sqrt{\vartheta}\lambda^{-1}|\fv|)\right)|\fv|^{\beta-d-2}\mathrm{d}\fv.
\end{equation*}
\end{lemma}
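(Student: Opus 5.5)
We start from Proposition~\ref{proposition:covariance_representation} with $s=t$, $x=x_k=\lambda k\rho$, $y=x_l=\lambda l\rho$. The goal is to evaluate $\Phi_{\mathrm{te},|\fv|}(t,t)$ explicitly and then rescale the frequency variable to extract the powers of $\lambda$ and $\vartheta$.

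First, setting $s=t$ in \eqref{eq:covariance_sine_integral} gives
\begin{equation*}
\Phi_{\mathrm{te},\xi}(t,t)=\frac{1}{2}\Big(t-\frac{\cos(t\xi)\sin(t\xi)}{\xi}\Big)=\frac{t}{2}\Big(1-\frac{\sin(2t\xi)}{2t\xi}\Big)=\frac{t}{2}\big(1-\mathrm{sinc}(2t\xi)\big).
\end{equation*}
Therefore
\begin{equation*}
\E[u(t,x_k)u(t,x_l)]=\frac{t}{2(2\pi)^{d}\vartheta^{\beta/2}}\int_{\mathbb{R}^{d}}\cos\Big(\frac{\lambda(k-l)}{\sqrt{\vartheta}}\rho\cdot\fv\Big)\big(1-\mathrm{sinc}(2t|\fv|)\big)|\fv|^{\beta-d-2}\,\mathrm{d}\fv.
\end{equation*}

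Second, substitute $\fv=\sqrt{\vartheta}\lambda^{-1}\eta$, so that $\mathrm{d}\fv=\vartheta^{d/2}\lambda^{-d}\,\mathrm{d}\eta$ and $|\fv|^{\beta-d-2}=\vartheta^{(\beta-d-2)/2}\lambda^{-(\beta-d-2)}|\eta|^{\beta-d-2}$. The cosine argument becomes $(k-l)\rho\cdot\eta$, and the sinc argument becomes $2t\sqrt{\vartheta}\lambda^{-1}|\eta|$. Collecting the scalar factors:
\begin{itemize}
\item the powers of $\vartheta$ are $\vartheta^{-\beta/2}\vartheta^{d/2}\vartheta^{(\beta-d-2)/2}=\vartheta^{-1}$;
\item the powers of $\lambda$ are $\lambda^{-d}\lambda^{d+2-\beta}=\lambda^{2-\beta}$.
\end{itemize}
This yields exactly the claimed formula with prefactor $t\lambda^{2-\beta}/(2(2\pi)^d\vartheta)$.

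The one point needing care is justifying the manipulations: Proposition~\ref{proposition:covariance_representation} must hold with an integrable or at least well-defined integrand. Near the origin, $1-\mathrm{sinc}(2t|\fv|)\asymp t^{2}|\fv|^{2}$, so the integrand is $O(|\fv|^{\beta-d})$, which is integrable since $\beta>0$. At infinity it is $O(|\fv|^{\beta-d-2})$, which is integrable since $\beta<2$. Hence the integral converges absolutely, and the linear change of variables is legitimate.
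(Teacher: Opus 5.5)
Your proposal is correct and takes the same route as the paper. You set $s=t$ in \Cref{proposition:covariance_representation}, reduce $\Phi_{\mathrm{te},\xi}(t,t)$ to $\tfrac{t}{2}(1-\mathrm{sinc}(2t\xi))$ via $\cos(x)\sin(x)=\sin(2x)/2$, and rescale the frequency by $\sqrt{\vartheta}\lambda^{-1}$; your exponent bookkeeping for $\vartheta$ and $\lambda$ checks out, and the absolute-integrability check (behaviour $|\fv|^{\beta-d}$ near the origin, $|\fv|^{\beta-d-2}$ at infinity) is a justification for the substitution that the paper leaves implicit.
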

\begin{proof}
This is an immediate consequence of \Cref{proposition:covariance_representation} by setting $t=s$, rescaling in $\sqrt{\vartheta}$ as well as $\lambda$ and using $\cos(x)\sin(x)=\sin(2x)/2$.
\end{proof}

\begin{proof}[Proof of \Cref{lemma:covariance_second_order_spatial_increments}]
In this proof, we are going to use the notation from \Cref{section:general_results_on_increments}. Let us denote by $c(k,l)=\E[u(t,x_k)u(t,x_l)]$ the spatial covariance function of the stochastic wave equation evaluated at our observation points $x_k$ for $k,l=0, \dots, n+1$. Then, the covariance of the second-order spatial increments can be represented as a second-order increment of the covariance function $\E[\mathbf{I}_{\mathrm{sp},k}\mathbf{I}_{\mathrm{sp},l}]=\mathcal{I}^{(2)}[c](k,l)$. Thus, pulling the incremental operator into the representation of the covariance function, \Cref{result:spatial_covariance_function} together with \Cref{lemma:f_+_-_increments} yield
\begin{equation}
\label{eq:cov_rep_second_order_sp_variation_proof_increments_pulled_in}
\begin{aligned}
\mathrm{Cov}( \mathbf{I}_{\mathrm{sp},k}, \mathbf{I}_{\mathrm{sp},l}) &=\frac{t \lambda^{2-\beta}}{2(2\pi)^{d}\vartheta}\int_{\mathbb{R}^{d}} \mathcal{I}^{(2)}[\Phi_{\mathrm{sp}, \rho \cdot \fv}](k,l)  \left(1-\mathrm{sinc}(2t\sqrt{\vartheta}\lambda^{-1}|\fv|)\right)|\fv|^{\beta-d-2}\mathrm{d}\fv\\
&=\frac{t\lambda^{2-\beta}}{2(2\pi)^{d}\vartheta}\int_{\mathbb{R}^{d}}  \mathfrak{I}^{(4)}[\cos(\cdot (\rho^{\top} \fv))](k-l) \left(1-\mathrm{sinc}(2t\sqrt{\vartheta}\lambda^{-1}|\fv|)\right)|\fv|^{\beta-d-2}\mathrm{d}\fv,
\end{aligned}
\end{equation}
with $\Phi_{\mathrm{sp}, \xi}(k,l)=\cos\left( {(k-l)} \xi \right)$. We have already computed the increments of the cosine in \Cref{result:higher_order_increments_trig}, yielding
\begin{equation*}
\begin{aligned}
\mathfrak{I}^{(4)}[\cos(\cdot (\rho^{\top} \fv))](k-l) &= 16\sin^{4}(\rho \cdot \fv/2)\cos\left((k-l) {\rho \cdot \fv}\right),
\end{aligned}
\end{equation*}
and the result follows by plugging the representation for $\mathfrak{I}^{(4)}[\Phi_{\mathrm{sp}, \rho \cdot \fv}](k-l)$ back into \eqref{eq:cov_rep_second_order_sp_variation_proof_increments_pulled_in}.
\end{proof}

\begin{proof}[Proof of \Cref{proposition:expectation_spatial_variation_t_fixed}]
In view of the Toeplitz structure of the spatial increments described by \Cref{lemma:covariance_second_order_spatial_increments}, their expectation satisfies
\begin{align*}
\E[\mathbf{V}_{\mathrm{sp}}]=\sum_{k=1}^{n}\V(\mathbf{I}_{\mathrm{sp},k})&=n\lambda^{2-\beta}\frac{8t}{(2\pi)^{d}\vartheta} \int_{\mathbb{R}^{d}} \sin^{4}(\rho \cdot \fv/2 )(1-\mathrm{sinc}(2t\sqrt{\vartheta}\lambda^{-1}|\fv|))|\fv|^{\beta-d-2}\mathrm{d}\fv\\
&=n\lambda^{2-\beta}\frac{8t}{(2\pi)^{d}\vartheta}\int_{\mathbb{R}^{d}} (1-\mathrm{sinc}(2t\sqrt{\vartheta}\lambda^{-1}|\fv|))\gspzero(\fv)\mathrm{d}\fv.
\end{align*}
With $\spconstexpt$ from \Cref{tab:asymptotic_constants} we obtain
\begin{equation*}
\begin{aligned}
\frac{\lambda^{\beta-2}}{n}\E[\mathbf{V}_{\mathrm{sp}}] - \frac{t}{\vartheta}\spconstexpt &= \frac{8t}{(2\pi)^{d}\vartheta} \int_{\mathbb{R}^{d}} \mathrm{sinc}(2t\sqrt{\vartheta}\lambda^{-1}|\fv|)\gspzero(\fv)\mathrm{d}\fv\\
&= \lambda  \frac{8t}{(2\pi)^{d}\vartheta}\int_{\mathbb{R}^{d}} \frac{\sin(2t\sqrt{\vartheta}\lambda^{-1}|\fv|)}{2t \sqrt{\vartheta}}\gspzero(\fv)/|\fv|\mathrm{d}\fv\\
&= \smallo(\lambda^{2}),
\end{aligned}
\end{equation*}
where have used the Riemann-Lebesgue Lemma in the last step and the fact that $\nabla (\gspzero(\fv)/|\fv|) \in L^{1}(\mathbb{R}^{d})$ for $\beta \in (0, 2 \land d)$ by \Cref{result:regularity_g}. \qedhere
\end{proof}

\begin{proof}[Proof of \Cref{proposition:asymptotic_scaling_for_the_variance}]
\begin{step}[Details for the derivation of \eqref{eq:partial_fourier_sum}]
Using Wicks' theorem (Isserlis' theorem) and \Cref{lemma:covariance_second_order_spatial_increments}, we have
\begin{equation*}
\begin{aligned}
&\frac{\lambda^{2\beta-4}}{n}\V(\mathbf{V}_{\mathrm{sp}}) = \frac{2 \lambda^{2\beta-4}}{n}\sum_{k,l=1}^{n}\mathrm{Cov}(\mathbf{I}_{\mathrm{sp},k}, \mathbf{I}_{\mathrm{sp},l})^{2}\\
&= \frac{t^{2}}{\vartheta^{2}}\frac{128}{(2\pi)^{2d}}\frac{1}{n}\sum_{k,l=1}^{n}\left(\int_{\mathbb{R}^{d}}\cos((k-l)\rho \cdot \fv)\gsplambda(\fv)\mathrm{d}\fv\right)^{2}\\
&=\frac{t^{2}}{\vartheta^{2}}\frac{64}{(2\pi)^{2d}} \frac{1}{n}\sum_{k,l=1}^{n} \int_{\mathbb{R}^{d}}\int_{\mathbb{R}^{d}} \left(\cos\left({(k-l)}\rho \cdot (\fv_1+\fv_2)\right)+ \cos\left({(k-l)}\rho \cdot (\fv_1-\fv_2)\right) \right)\gsplambda(\fv_1)\gsplambda(\fv_2) \mathrm{d}\fv_1 \mathrm{d}\fv_2\\
&= \frac{t^{2}}{\vartheta^{2}}\frac{128}{(2\pi)^{2d}}\int_{\mathbb{R}^{d}}\int_{\mathbb{R}^{d}} \frac{1}{2}\big(\mathfrak{F}_{\mathrm{sp}}(\rho \cdot (\fv_1+\fv_2))+ \mathfrak{F}_{\mathrm{sp}}({\rho \cdot (\fv_1-\fv_2)})\big)\gsplambda(\fv_1)\gsplambda(\fv_2) \mathrm{d}\fv_1 \mathrm{d}\fv_2. 
\end{aligned}
\end{equation*}
By plugging in the representations for the Fejér kernel \eqref{eq:introducing_the_fejér_kernel}, we can see that the $\fv_1$ and $\fv_2$ dependencies factorise and we obtain
\begin{align}
&\frac{1}{2}\big(\mathfrak{F}_{\mathrm{sp}}(\rho \cdot (\fv_1+\fv_2))+ \mathfrak{F}_{\mathrm{sp}}({\rho \cdot (\fv_1-\fv_2)})\big)\nonumber\\
&=\frac{1}{2}\left(\sum_{|k|\leq n-1}w_{\mathrm{sp},k}e^{\ii k\,\rho \cdot (\fv_1 +\fv_2)}+ \sum_{|k|\leq n-1}w_{\mathrm{sp},k}e^{\ii k\, \rho \cdot (\fv_1 -\fv_2)}\right)\nonumber\\
&= \sum_{|k|\leq n-1}w_{\mathrm{sp},k}\frac{1}{2}\left(e^{\ii k \,\rho \cdot (\fv_1 +\fv_2)}+ e^{\ii k\,\rho \cdot (\fv_1 -\fv_2)}\right)\label{eq:fejer_factorisation}\\
&=  \sum_{|k|\leq n-1}w_{\mathrm{sp},k} e^{\ii k\,\rho \cdot \fv_1}\cos(k \rho \cdot\fv_2)\nonumber\\
&=  \sum_{|k|\leq n-1}w_{\mathrm{sp},k} \cos(k\rho \cdot \fv_1)\cos(k \rho \cdot\fv_2),\nonumber
\end{align}
where the last line follows by symmetry of the weights and the cosine as well as the antisymmetry of the sine. Plugging \eqref{eq:fejer_factorisation} back into our representation of the variance yields
\begin{align*}
&\frac{\lambda^{2\beta-4}}{n}\V(\mathbf{V}_{\mathrm{sp}}) \\
&= \frac{t^{2}}{\vartheta^{2}}\frac{128}{(2\pi)^{2d}}\sum_{|k|\leq n-1}w_{\mathrm{sp},k}\int_{\mathbb{R}^{d}}\int_{\mathbb{R}^{d}} \cos(k\,\rho \cdot \fv_1)\cos(k \rho \cdot\fv_2)\gsplambda(\fv_1)\gsplambda(\fv_2) \mathrm{d}\fv_1 \mathrm{d}\fv_2\\
&= \frac{t^{2}}{\vartheta^{2}}\frac{128}{(2\pi)^{2d}}\sum_{|k|\leq n-1}w_{\mathrm{sp},k}\left(\int_{\mathbb{R}^{d}} \cos(k\,\rho \cdot \fv_1)\gsplambda(\fv_1)\mathrm{d}\fv_1 \right) \left(\int_{\mathbb{R}^{d}}\cos(k \rho \cdot \fv_2)\gsplambda(\fv_2) \mathrm{d}\fv_2\right)\\
&=\frac{t^{2}}{\vartheta^{2}}\frac{128}{(2\pi)^{2d}}\sum_{|k|\leq n-1}w_{\mathrm{sp},k}|\mathcal{F}_c(\gsplambda)(k\rho)|^{2}.
\end{align*}
\end{step}
\begin{step}[Determining the limit]
By dominated convergence, we obtain as $\lambda \rightarrow 0$ the limit 
\begin{equation*}
\mathcal{F}_c(\gsplambda)(k \rho) \rightarrow \mathcal{F}_c(\gspzero)(k \rho), \quad \lambda \rightarrow 0,
\end{equation*}
where we have used \Cref{result:uniform_integrability_f_lambda} for the uniform integrability of $\gsplambda$, i.e.\ of the series $((g_{\mathrm{sp}, \lambda_n})_{n \in \mathbb{N}}$. \Cref{result:uniform_integrability_f_lambda} also shows that the derivative remains uniformly integrable for $\lambda>0$, such that independently of $\lambda$ we have $|\mathcal{F}_c(\gsplambda)(k \rho)| \lesssim |k|^{-1}$, see also \citet[Theorem 3.3.9]{grafakosClassicalFourierAnalysis2014}. Thus, we have the uniform decay $|\mathcal{F}_c(\gsplambda)(k\rho)|^{2} \lesssim |k|^{-2}$. Recall $\spconstvart$ from \Cref{tab:asymptotic_constants}. Another application of the dominated convergence theorem concludes the result: 
\begin{equation*}
 \frac{\lambda^{2\beta-4}}{n}\V(\mathbf{V}_{\mathrm{sp}})\rightarrow \frac{t^{2}}{\vartheta^{2}}\frac{128}{(2\pi)^{2d}}\sum_{k \in \mathbb{Z}}|\mathcal{F}_c(\gspzero)(k\rho)|^{2}=\frac{t^2}{\vartheta^2}\spconstvart < \infty, \quad n \rightarrow \infty, \quad\lambda \rightarrow 0. 
\end{equation*}
Note that we may extend the finite sum to an infinite sum by extending the weights through zero for all the remaining values.\qedhere
\end{step}
\end{proof}

\begin{lemma}\label[lemma]{eq:numerator_control_spatial_t_fixed}
The numerator in \eqref{eq:CLT_condition_maximum_row_norm_version} satisfies
\begin{equation*}
\left(\max_{k=1, \dots, n}\sum_{l=1}^{n}|\mathrm{Cov}( \mathbf{I}_{\mathrm{sp},k}, \mathbf{I}_{\mathrm{sp},l})| \right)^{2} = \mathcal{O}\left( \lambda^{4-2\beta}\log(n)^{2}\right).
\end{equation*}
\end{lemma}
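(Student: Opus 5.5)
By \Cref{lemma:covariance_second_order_spatial_increments} the covariance matrix is Toeplitz. Every row sum is therefore bounded by the same quantity:
\begin{equation*}
\max_{k}\sum_{l=1}^{n}|\mathrm{Cov}(\mathbf{I}_{\mathrm{sp},k},\mathbf{I}_{\mathrm{sp},l})| \le \frac{8t\lambda^{2-\beta}}{(2\pi)^{d}\vartheta}\sum_{|z|\le n-1}|\mathcal{F}_c(\gsplambda)(z\rho)|.
\end{equation*}
It then suffices to show $\sum_{|z|\le n-1}|\mathcal{F}_c(\gsplambda)(z\rho)|=\mathcal{O}(\log n)$, uniformly in $\lambda$. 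Squaring this gives the claimed $\mathcal{O}(\lambda^{4-2\beta}\log(n)^2)$.

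We split the sum into the term $z=0$ and the terms $z\neq0$. The term $z=0$ is bounded by $\|\gsplambda\|_{L^1}\le\|\gspzero\|_{L^1}<\infty$, because $|1-\mathrm{sinc}|\le 2$ and $\gspzero$ is integrable by \Cref{result:regularity_g}. For $z\neq 0$ we use the decay already established in the proof of \Cref{proposition:asymptotic_scaling_for_the_variance}. By \Cref{result:uniform_integrability_f_lambda}, the directional derivative $\rho\cdot\nabla\gsplambda$ is integrable uniformly in $\lambda$. We integrate by parts once along the direction $\rho$, writing $\fv=r\rho+\fv^\perp$ so that $\cos(z\rho\cdot\fv)=\cos(zr)$:
\begin{equation*}
|\mathcal{F}_c(\gsplambda)(z\rho)| = \frac{1}{|z|}\Big|\int_{\mathbb{R}^d}\sin(z\rho\cdot\fv)\,\partial_\rho\gsplambda(\fv)\,\mathrm{d}\fv\Big| \le \frac{\sup_{\lambda}\|\partial_\rho\gsplambda\|_{L^1}}{|z|}.
\end{equation*}
Summing over $1\le|z|\le n-1$ gives a harmonic sum of order $\log n$, which yields the claim.

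The main obstacle is justifying this integration by parts and the uniform $L^1$ bound on the derivative. One difficulty is the singularity of $|\fv|^{\beta-d-2}$ at the origin; the factor $\sin^4(\rho\cdot\fv/2)\sim|\rho\cdot\fv|^4/16$ tames it only partially, because near the hyperplane $\rho\cdot\fv=0$ the numerator vanishes while $|\fv|$ does not. The other difficulty is the $\lambda$-dependent term $\mathrm{sinc}(2t\sqrt{\vartheta}\lambda^{-1}|\fv|)$, whose derivative carries a factor $\lambda^{-1}$. For the latter we use that $\lambda^{-1}\,\mathrm{sinc}'(c\lambda^{-1}|\fv|)$ is bounded by a constant multiple of $|\fv|^{-1}$ uniformly in $\lambda$, since $|\mathrm{sinc}'(x)|\lesssim \min(1,1/x)$. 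The derivative of the product is then dominated by a $\lambda$-free function, $|\nabla\gspzero|+\gspzero/|\fv|$, which is integrable by \Cref{result:regularity_g}. This is exactly the content we take from \Cref{result:uniform_integrability_f_lambda}. Boundary terms in the integration by parts vanish by the decay of $\gsplambda$ at infinity, since $\beta-d-2<-d$, and by its integrable behaviour at the origin. A fully rigorous version would cut out a small ball around the origin and let its radius tend to zero.

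We do not aim for a bound sharper than $\log n$. Summability of $|\mathcal{F}_c(\gspzero)(z\rho)|$ is not available in general, since only first-order smoothness is at hand, but $\log(n)^2/n\to0$ is enough for \eqref{eq:CLT_condition_maximum_row_norm_version} when combined with \Cref{proposition:asymptotic_scaling_for_the_variance}.
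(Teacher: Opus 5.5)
Your proposal is correct and follows the paper's proof: the Toeplitz structure reduces the maximal row sum to $\sum_{|z|\le n-1}|r_z|$, one integration by parts using the uniform $L^1$ bound on $\nabla\gsplambda$ from \Cref{result:uniform_integrability_f_lambda} gives $|r_z|\lesssim\lambda^{2-\beta}/|z|$, and the harmonic sum produces the $\log(n)$ factor. There is one harmless slip: $|1-\mathrm{sinc}|\le 2$ gives $\|\gsplambda\|_{L^1}\le 2\|\gspzero\|_{L^1}$, not $\|\gsplambda\|_{L^1}\le\|\gspzero\|_{L^1}$, which changes only the constant.
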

\begin{proof}
By \Cref{lemma:covariance_second_order_spatial_increments}, the covariance matrix of the vector of increments is a Toeplitz matrix and its entries may be abbreviated by $r_{k-l} \coloneqq \mathrm{Cov}( \mathbf{I}_{\mathrm{sp},k}, \mathbf{I}_{\mathrm{sp},l})$.
Each entry only depends on the difference $k-l$ of indices. In particular, if we sum over all the possible configurations, we have
\begin{equation*}
\max_{k=1, \dots, n}\sum_{l=1}^{n}|r_{k-l}| \leq 2\sum_{|k| \leq n-1} |r_{k}|.
\end{equation*}
We further observe by integration by parts and \Cref{result:uniform_integrability_f_lambda}:
\begin{equation*}
\begin{aligned}
|r_k|&=\lambda^{2-\beta}\frac{8t}{(2\pi)^{d}\vartheta}\left|\int_{\mathbb{R}^{d}}  \cos(k {\rho \cdot \fv}) g_{\mathrm{sp},\lambda}(\fv)\mathrm{d}\fv\right|\lesssim \lambda^{2-\beta} \frac{1}{|k|}, \quad 1 \leq |k| \leq n-1. 
\end{aligned}
\end{equation*}
As a consequence, whenever $k \neq 0$, it holds that $\sum_{1 \leq |k| \leq n-1} |r_k| \lesssim \lambda^{2-\beta} \sum_{k=1}^{n}\frac{1}{k}$. The result follows from the fact that the $n$-th harmonic number grows like $\mathrm{log}(n)$.
\end{proof}

\begin{proof}[Proof of \Cref{theorem:clt_spatial_fixed_t}]
By \Cref{eq:numerator_control_spatial_t_fixed} and \Cref{proposition:asymptotic_scaling_for_the_variance}, we observe 
\begin{equation*}
 \frac{\left(\max_{k=1, \dots,n}\sum_{l=1}^{n} |\mathrm{Cov}(\mathbf{I}_{\mathrm{sp},k}, \mathbf{I}_{\mathrm{sp},l})|\right)^{2}}{\V(\mathbf{V}_{\mathrm{sp}})} =\mathcal{O}\left(\frac{\lambda^{4-2\beta}\log(n)^{2}}{n \lambda^{4-2\beta}}\right)=\smallo(1), \quad n \rightarrow \infty.
\end{equation*}
Therefore, by \citet[Proposition 3.1]{hildebrandtParameterEstimationSPDEs2019}, we have
\begin{equation*}
\frac{\mathbf{V}_{\mathrm{sp}} - \E[\mathbf{V}_{\mathrm{sp}}]}{\sqrt{\V(\mathbf{V}_{\mathrm{sp}})}} \xrightarrow{d} N(0,1), \quad n \rightarrow \infty. 
\end{equation*}
Consequently, rescaling and Slutsky's theorem yields by \Cref{proposition:asymptotic_scaling_for_the_variance}
\begin{equation*}
\sqrt{n}\left(\frac{\lambda^{\beta-2}}{n}\mathbf{V}_{\mathrm{sp}}- \E\left[\frac{\lambda^{\beta-2}}{n}\mathbf{V}_{\mathrm{sp}}\right]\right) \xrightarrow{d} N\left(0, \frac{t^{2}}{\vartheta^{2}}\spconstvart\right), \quad n \rightarrow \infty. \qedhere
\end{equation*}
\end{proof}
\begin{proof}[Proof of \Cref{corollary:spatial_clt_fixed_time_expectation_swap}]
We decompose 
\begin{equation*}
\begin{aligned}
&\sqrt{n}\left(\frac{\lambda^{\beta-2}}{n}\mathbf{V}_{\mathrm{sp}}- \frac{t}{\vartheta}\spconstexpt\right)  \\
&= \sqrt{n}\left(\frac{\lambda^{\beta-2}}{n}\mathbf{V}_{\mathrm{sp}}- \E\left[\frac{\lambda^{\beta-2}}{n}\mathbf{V}_{\mathrm{sp}}\right]\right) + \sqrt{n}\left(\frac{\lambda^{\beta-2}}{n}\E[\mathbf{V}_{\mathrm{sp}}] - \frac{t}{\vartheta}\spconstexpt\right).\\
\end{aligned}
\end{equation*}
The result follows by combining \Cref{eq:expectation_convergence_speed_result_spatial_fixed_t} and \Cref{theorem:clt_spatial_fixed_t} and another application of Slutsky's theorem.
\end{proof}

\begin{proof}[Proof of \Cref{corollary:clt_estimator_spatial_fixed_time}]
We apply the delta method with $f(x)=t \spconstexpt/x$. The derivative of $f$ is given by $f'(x)=-t \spconstexpt/x^{2}$ and we obtain with $f'(t\spconstexpt/\vartheta))=-\vartheta^{2}/(t\spconstexpt )$
\begin{equation*}
\sqrt{n}(\hat{\vartheta}_{\mathrm{sp},n}-\vartheta) \xrightarrow{d} N\left(0, \frac{t^{2}}{\vartheta^{2}}\spconstvart f'(t \spconstexpt/\vartheta)^{2}\right)=N\left(0, \frac{\vartheta^{2}\spconstvart}{(\spconstexpt)^{2}} \right). \qedhere
\end{equation*}
\end{proof}

\subsection{Proofs for \Cref{section:temporal_variations}}
\begin{proof}[Proof of \Cref{lemma:covariance_second_order_temporal_increments}]
The notation for increments of functions used throughout this proof is based on \Cref{section:general_results_on_increments}. By \Cref{proposition:covariance_representation} the covariance admits the representation
\begin{equation*}
\begin{aligned}
\E[u(t_i,x)u(t_j,x)]
&= \frac{\delta^{3-\beta}}{ (2\pi)^{d}\vartheta^{\beta/2}} \int_{\mathbb{R}^{d}} \Phi_{\mathrm{te}, |\fv|}(i,j) |\fv|^{\beta-d-2}\mathrm{d}\fv, 
\end{aligned}
\end{equation*}
with
\begin{equation*}
\begin{aligned}
\Phi_{\mathrm{te}, \xi}(t,s) &= \frac{1}{2}\left((t \land s)\cos((t-s)\xi)-\frac{\cos((t \lor s)\xi)\sin((t \land s)\xi)}{\xi}\right)\\
&=\frac{1}{2}\left((t \land s)\cos((t-s)\xi)+\frac{\sin(|t-s|\xi)-\sin((t+s)\xi)}{2\xi}\right)\\
&=\frac{1}{2}\left( f_{\xi,1}(t,s) + \frac{f_{\xi,2}(t,s)-f_{\xi,3}(t,s)}{2\xi}\right),
\end{aligned}
\end{equation*}
where $f_{\xi,1}(t,s)=(t \land s) \cos((t-s)\xi)$, $f_{\xi,2}(t,s)=\sin(|t-s|\xi)$, $f_{\xi,3}(t,s)=\sin((t+s)\xi)$ for $\xi\in\mathbb R$.
The covariance between two temporal increments is given by
\begin{equation}
\label{eq:general_form_temp_cov_f_1_2_3}
\begin{aligned}
\mathrm{Cov}(\mathbf{I}_{\mathrm{te},i}, \mathbf{I}_{\mathrm{te},j}) &= \frac{\delta^{3-\beta}}{ (2\pi)^{d}\vartheta^{\beta/2}} \int_{\mathbb{R}^{d}} \mathcal{I}^{(2)}[\Phi_{\mathrm{te}, |\fv|}](i,j) |\fv|^{\beta-d-2}\mathrm{d}\fv\\
&=\frac{\delta^{3-\beta}}{ (2\pi)^{d}\vartheta^{\beta/2}} \int_{\mathbb{R}^{d}} \frac{1}{2}\left( \mathcal{I}^{(2)}[f_{|\fv|,1}](i,j) + \frac{\mathcal{I}^{(2)}[f_{|\fv|,2}](i,j)-\mathcal{I}^{(2)}[f_{|\fv|,3}](i,j)}{2|\fv|}\right) |\fv|^{\beta-d-2}\mathrm{d}\fv.
\end{aligned}
\end{equation}
By \Cref{result:higher_order_increments_trig}, we obtain a closed form representation of odd and even increments of the cosine
\begin{equation}
\label{eq:trig_expression_cos}
\begin{aligned}
\mathfrak{I}^{(4)}[\cos(\cdot \xi)](|i-j|)&=2^{4}\sin^{4}(\xi/2)\cos\left(|i-j|\xi + 2\pi\right)=2^{4}\sin^{4}(\xi/2)\cos\left(|i-j|\xi\right),\\
\mathfrak{I}^{(3)}[\cos(\cdot \xi)](|i-j|)&=-2^{3}\sin(\xi)\sin^{2}(\xi/2)\sin(|i-j|\xi + \pi)=\operatorname{sign}(i-j)2^{3}\sin(\xi)\sin^{2}(\xi/2)\sin((i-j)\xi)\\
&=2^{3}\sin(\xi)\sin^{2}(\xi/2)\sin(|i-j|\xi).
\end{aligned}
\end{equation}
Clearly, since the cosine is symmetric, the mapping $x \mapsto \cos(x \xi)$ is also symmetric and we may apply \Cref{lemma:minimum_increment_lemma} with $f_{\xi,1}$, yielding with \eqref{eq:trig_expression_cos}:
\begin{equation*}
\begin{aligned}
\mathcal{I}^{(2)}[f_{\xi,1}](i,j)&=
\begin{cases}
(i\land j)\mathfrak{I}^{(4)}[\cos(\cdot \xi)](|i-j|) -\mathfrak{I}^{(3)}[\cos(\cdot \xi)](|i-j|)+ 4\cos(\xi)-2\cos(2\xi), &i=j,\\
(i \land j)\mathfrak{I}^{(4)}[\cos(\cdot \xi)](|i-j|)-\mathfrak{I}^{(3)}[\cos(\cdot \xi)](|i-j|)-\cos(\xi), &|i-j|=1,\\
(i\land j) \mathfrak{I}^{(4)}[\cos(\cdot \xi)](|i-j|) - \mathfrak{I}^{(3)}[\cos(\cdot \xi)](|i-j|), &|i-j|\geq2
\end{cases}\\
&=\begin{cases}
(i\land j)2^{4}\sin^{4}(\xi/2)\cos(|i-j|\xi)+ 4\cos(\xi)-2\cos(2\xi), &i=j,\\
(i \land j)2^{4}\sin^{4}(\xi/2)\cos(|i-j|\xi) - 2^{3}\sin^2(\xi)\sin^{2}(\xi/2)-\cos(\xi), &|i-j|=1,\\
(i\land j) 2^{4}\sin^{4}(\xi/2)\cos\left(|i-j|\xi\right) - 2^{3}\sin(\xi)\sin^{2}(\xi/2)\sin(|i-j|\xi), &|i-j|\geq2.
\end{cases}
\end{aligned}
\end{equation*}

By \Cref{result:increment_sine_absolute}, \Cref{result:higher_order_increments_trig} and \Cref{lemma:f_+_-_increments}, we further have
\begin{equation*}
\begin{aligned}
\mathcal{I}^{(2)}[f_{\xi,2}](i,j)&=\begin{cases}
-8\sin(\xi)+2\sin(2\xi), & i=j,\\
7\sin(\xi)+\sin(3\xi)-4\sin(2\xi), & |i-j|=1,\\
2^{4}\sin^{4}(\xi/2)\sin(|i-j|\xi), & |i-j| \geq 2,
\end{cases}
\end{aligned}
\end{equation*}
and
\begin{equation*}
\begin{aligned}
\mathcal{I}^{(2)}[f_{\xi,3}](i,j) &= \mathfrak{I}^{(4)}[\sin(\cdot \xi)](i+j) = 2^{4}\sin^{4}(\xi/2)\sin\left((i+j)\xi\right). 
\end{aligned}
\end{equation*}
Plugging the general representation for the second-order increments back into \eqref{eq:general_form_temp_cov_f_1_2_3} yields
\begin{equation*}
\begin{aligned}
&\mathrm{Cov}(\mathbf{I}_{\mathrm{te},i}, \mathbf{I}_{\mathrm{te},j})\\
&= \frac{8\delta^{3-\beta}}{ (2\pi)^{d}\vartheta^{\beta/2}}(i \land j)\int_{\mathbb{R}^{d}} \sin^{4}(|\fv|/2) \cos((i-j) |\fv|)|\fv|^{\beta-d-2}\mathrm{d}\fv + \delta^{3-\beta}R(i,j)
\end{aligned}
\end{equation*}
with
\begin{equation}
\label{eq:time_remainders}
\begin{aligned}
R(i,j)&\coloneqq R^{(1)}(i,j)+R^{(2)}(i,j)+R^{(3)}(i,j),\\
R^{(1)}(i,j) &\coloneqq \frac{4}{ (2\pi)^{d}\vartheta^{\beta/2}}\int_{\mathbb{R}^{d}} \sin^{4}(|\fv|/2) r_{i,j}^{(1)}(|\fv|)|\fv|^{\beta-d-3} \mathrm{d}\fv,\\
R^{(2)}(i,j) &\coloneqq \frac{1}{2 (2\pi)^{d}\vartheta^{\beta/2}}\int_{\mathbb{R}^{d}} r_{i,j}^{(2)}(|\fv|)|\fv|^{\beta-d-2}\mathrm{d}\fv,\\
R^{(3)}(i,j) &\coloneqq \frac{1}{2 (2\pi)^{d}\vartheta^{\beta/2}}\int_{\mathbb{R}^{d}} r_{i,j}^{(3)}(|\fv|)|\fv|^{\beta-d-2}\mathrm{d}\fv,
\end{aligned}
\end{equation}
where
\begin{equation}
\label{eq:temporal_remainder_terms}
\begin{aligned}
r_{i,j}^{(1)}(\xi) &\coloneqq 
\begin{cases}
-\sin(2i\xi), &i=j,\\
-\sin((i+j)\xi),& |i-j|=1,\\
\sin(|i-j|\xi)-\sin((i+j)\xi),& |i-j| \geq 2,
\end{cases}
\\
r_{i,j}^{(2)}(\xi)&\coloneqq \begin{cases}
4\cos(\xi)-2\cos(2\xi)-4\mathrm{sinc}(\xi)+2\mathrm{sinc}(2\xi), &i=j,\\
-\cos(\xi)+\frac{7}{2}\mathrm{sinc}(\xi)+\frac{3}{2}\mathrm{sinc}(3\xi)-4\mathrm{sinc}(2\xi), & |i-j|=1,\\
0, &|i-j|\geq 2,
\end{cases}\\
r_{i,j}^{(3)}(\xi) &\coloneqq \begin{cases}
0, & i =j,\\
-8\sin^{2}(\xi)\sin^{2}(\xi/2), &|i-j|=1,\\
-8\sin(\xi)\sin^{2}(\xi/2)\sin(|i-j|\xi), &|i-j|\geq 2.
\end{cases}
\end{aligned}
\end{equation}
The boundedness of the remainder term $R(i,j)$ is shown in \Cref{lemma:remainder_bounds_temp}.
\end{proof}

\begin{lemma}\label[lemma]{lemma:remainder_bounds_temp}
    The reminder terms $R(i,j)$, $i,j\leq m,$ given in \eqref{eq:time_remainders} satisfy
    \begin{equation*}
        \sup_{i,j\leq m}|R(i,j)|\lesssim c
    \end{equation*}
    for some universal constant $c$.
\end{lemma}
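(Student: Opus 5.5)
We bound $R^{(1)}$, $R^{(2)}$ and $R^{(3)}$ from \eqref{eq:time_remainders} separately. In each case we dominate the integrand by an integrable function of $|\fv|$ that does not depend on $i$ and $j$. Passing to polar coordinates, every integral becomes $\sarea/((2\pi)^d\vartheta^{\beta/2})$ times a one-dimensional integral over $\xi=|\fv|\in(0,\infty)$ with weight $\xi^{\beta-3}$ or $\xi^{\beta-4}$. Everything then reduces to checking integrability near $\xi=0$ and near $\xi=\infty$, using $\beta\in(0,2)$.

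\emph{The terms $R^{(3)}$ and $R^{(2)}$.} For $R^{(3)}$ we use $|r^{(3)}_{i,j}(\xi)|\le 8|\sin\xi|\sin^2(\xi/2)\le 8\min(\xi^3/4,1)$, which holds uniformly in $i,j$. The radial integrand is then $\lesssim \min(\xi^3,1)\xi^{\beta-3}$, integrable at $0$ since $\beta>-1$ and at $\infty$ since $\beta<2$. For $R^{(2)}$ only the cases $i=j$ and $|i-j|=1$ contribute, and both are fixed functions of $\xi$ without dependence on $i,j$. They are bounded for large $\xi$, so the integrand decays like $\xi^{\beta-3}$ and is integrable at infinity. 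Near $0$ we Taylor expand: $4\cos\xi-2\cos 2\xi=2+O(\xi^2)$ and $-4\,\mathrm{sinc}\,\xi+2\,\mathrm{sinc}\,2\xi=-2+O(\xi^2)$, so the $i=j$ expression is $O(\xi^2)$. Similarly $-1+\tfrac72+\tfrac32-4=0$ shows the $|i-j|=1$ expression is $O(\xi^2)$. In both cases the integrand is $O(\xi^{\beta-1})$, integrable at $0$.

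\emph{The term $R^{(1)}$.} This is the only delicate term, since $r^{(1)}_{i,j}$ contains $\sin((i+j)\xi)$ and $\sin(|i-j|\xi)$ with frequencies growing in $i,j$. The naive bound $|r^{(1)}_{i,j}|\le 2$ gives the integrand $\sin^4(\xi/2)\xi^{\beta-4}\lesssim \xi^{\beta}$ near $0$ and $\lesssim \xi^{\beta-4}$ at infinity. Both are integrable for $\beta\in(0,2)$, because the weight here is $|\fv|^{\beta-d-3}$ and the extra $\sin^4$ absorbs the additional singularity. So the naive bound already gives $\sup_{i,j}|R^{(1)}(i,j)|\le \frac{8\sarea}{(2\pi)^d\vartheta^{\beta/2}}\int_0^\infty \sin^4(\xi/2)\xi^{\beta-4}\,\mathrm d\xi<\infty$.

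The bound is therefore a sum of three $i,j$-independent constants. The main point to verify carefully is the cancellation at the origin for $r^{(2)}$. Without that cancellation, a constant term in $r^{(2)}$ would produce a non-integrable singularity $\xi^{\beta-3}$ at zero. If the Taylor coefficients failed to cancel, I would instead recombine $r^{(2)}$ with the $i=j$ and $|i-j|=1$ parts of the other remainders before taking absolute values, since the total covariance is finite.
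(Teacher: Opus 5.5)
Your proposal is correct and follows the paper's proof essentially step by step. You use the same split into $R^{(1)},R^{(2)},R^{(3)}$, the crude bound $|r^{(1)}_{i,j}|\le 2$ absorbed by $\sin^4(\xi/2)$ against the weight $\xi^{\beta-4}$, the bound $|r^{(3)}_{i,j}|\lesssim 1\wedge\xi^3$, and an $O(\xi^2)$ cancellation at the origin for $r^{(2)}$, followed by the same radial integrability checks using only $\beta\in(0,2)$. The one minor difference is in $r^{(2)}$: you verify directly that the constant Taylor coefficients vanish, whereas the paper rewrites $r^{(2)}$ in terms of $\cos-\mathrm{sinc}$ and bounds that power series.
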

\begin{proof}
Due to the decomposition \eqref{eq:time_remainders} it suffices to show that $R^{(z)}(i,j)$, $z\in\{1,2,3\},$ $i,j\leq m,$ are uniformly bounded.
\begin{case}[Remainder $R^{(1)}$]
    As $|r_{i,j}^{(1)}(r)|\leq 2$ and $|\sin(r/2)|\leq (1\land r)$ for $r\geq0$, it immediately follows by splitting the integral
    \begin{equation*}
        \begin{aligned}
            \sup_{i,j\leq m}\left|R^{(1)}(i,j)\right| &= \frac{4}{ (2\pi)^{d}\vartheta^{\beta/2}}\left|\int_{\mathbb{R}^{d}} \sin^{4}(|\fv|/2) r_{i,j}^{(1)}(|\fv|)|\fv|^{\beta-d-3} \mathrm{d}\fv\right|\\
            &\leq \frac{8\sarea}{ (2\pi)^{d}\vartheta^{\beta/2}}\int_0^\infty\sin^4(r/2)r^{\beta-4}\mathrm{d}r\\
            &\lesssim \int_0^1r^{\beta}\mathrm{d}r+\int_1^\infty r^{\beta-4}\mathrm{d}r
            <\infty.
        \end{aligned}
    \end{equation*}
\end{case}
\begin{case}[Remainder $R^{(2)}$]
    
Consider the case $i=j$. Then
    \begin{equation*}
    \begin{aligned}
        R^{(2)}(i,j)&=\frac{1}{2 (2\pi)^{d}\vartheta^{\beta/2}}\int_{\mathbb{R}^{d}} r_{i,j}^{(2)}(|\fv|)|\fv|^{\beta-d-2}\mathrm{d}\fv\\
        &=\frac{1}{2 (2\pi)^{d}\vartheta^{\beta/2}}\int_{\mathbb{R}^{d}} \left(4(\cos(|\fv|)-\operatorname{sinc}(|\fv|))-2(\cos(2|\fv|)-\operatorname{sinc}(2|\fv|))\right)|\fv|^{\beta-d-2}\mathrm{d}\fv\\
        &=\frac{\sarea}{2 (2\pi)^{d}\vartheta^{\beta/2}}\int_{0}^\infty \left(4(\cos(r)-\operatorname{sinc}(r))-2(\cos(2r)-\operatorname{sinc}(2r))\right)r^{\beta-3}\mathrm{d}r.\\
    \end{aligned}
    \end{equation*}
    As in the case of $R^{(1)}$, by splitting the integral it is sufficient to show integrability around $0$ due to the rapid decay of $r^{\beta-3}$. We will show that 
    \begin{equation}
    \label{eq: diff_cos_sinc}
        |\cos(r)-\operatorname{sinc}(r)|\lesssim r^2,\quad r\in[0,1],
    \end{equation}
    implying 
    $$\int_{0}^1|\cos(r)-\operatorname{sinc}(r)|r^{\beta-3}\mathrm{d}r<\infty.$$
    By the series representation of the sine and cosine function, it holds for $r\leq1$
    \begin{equation*}
        \begin{aligned}
            |\cos(r)-\operatorname{sinc}(r)|&=\left|\sum_{n=0}^\infty\frac{(-1)^n}{(2n)!}r^{2n}-\sum_{n=0}^\infty\frac{(-1)^n}{(2n+1)!}r^{2n}\right|
            =\left|\sum_{n=0}^\infty\frac{(-1)^n2n}{(2n+1)!}r^{2n}\right| \\
            &\leq \frac{r^2}{3}+r^2\sum_{n=2}^\infty\frac{1}{(2n)!}\leq \frac{r^2}{3}+r^2\sum_{n=2}^\infty\frac{1}{2^n}< r^2,
        \end{aligned}
    \end{equation*}
    proving \eqref{eq: diff_cos_sinc} and thus $|\cos(2r)-\operatorname{sinc}(2r)|\lesssim r^2$, too. Likewise, the same arguments apply for the case $|i-j|=1$.
\end{case}
\begin{case}[Remainder $R^{(3)}$]
    Since $|r^{(3)}_{i,j}(r)|\leq 8(1\land r^3)$ for $r\geq0$, we obtain similarly to the case $R^{(1)}$ the upper bound
    $$\sup_{i,j\leq m}\left|R^{(3)}(i,j)\right|\lesssim 1$$
    by splitting the integral into two parts and exploiting the regularity of the sine around the origin. \qedhere
\end{case}
\end{proof}
\begin{proof}[Proof of \Cref{proposition:expectation_temp_fixed_space}]

Recall $\tempconstexpt$ from \Cref{tab:asymptotic_constants}. By \Cref{lemma:covariance_second_order_temporal_increments} and \Cref{lemma:remainder_bounds_temp}, we have
\begin{align*}
\delta^{\beta-3}m^{-2}\E[\mathbf{V}_{\mathrm{te},m}]&= \frac{1}{m^{2}}\sum_{i=1}^{m}i \left(\frac{8}{ (2\pi)^{d}\vartheta^{\beta/2}} \right)\int_{\mathbb{R}^{d}} \gtemp(\fv)\mathrm{d}\fv + \frac{1}{m^{2}}\sum_{i=1}^{m} R(i,i)\\
&= \vartheta^{-\beta/2}\tempconstexpt + \frac{1}{m}\vartheta^{-\beta/2} \tempconstexpt + \mathcal{O}\left(\frac{1}{m} \right).\tag*{\qedhere}
\end{align*}
\end{proof}

\begin{lemma}\label[lemma]{lemma:fourier_representation_time_fejer_kernel}
The kernel defined through \eqref{eq:fejer_time_kernel}
admits the representation
\begin{equation*}
\mathfrak{F}_{\mathrm{te}}(x)=\sum_{|j|\leq m-1} w_{\mathrm{te},j}e^{\ii jx}={\sum_{|j| \leq m-1} {w}_{\mathrm{te},j} \cos(jx)},
\end{equation*}
with the weights 
\begin{equation*}
w_{\mathrm{te},j}= \frac{1}{m^{3}}\sum_{i=1}^{m-|j|} i^{2},
\end{equation*}
satisfying $0 \leq w_{\mathrm{te},j}\leq 1$ and $w_{\mathrm{te},j}\rightarrow 1/3$ as $m \rightarrow \infty$ for all $j$.
\end{lemma}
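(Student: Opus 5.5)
We reorganise the double sum in \eqref{eq:fejer_time_kernel} according to the difference of the indices. For fixed $j\in\{-(m-1),\dots,m-1\}$, the pairs $(i_1,i_2)\in\{1,\dots,m\}^2$ with $i_1-i_2=j$ are $(i+|j|,i)$ or $(i,i+|j|)$ with $i=1,\dots,m-|j|$. In every such pair we have $i_1\land i_2=i$. Hence
\begin{equation*}
\frac{1}{m^{3}}\sum_{i_1,i_2=1}^{m}(i_1\land i_2)^{2}e^{\ii(i_1-i_2)x}=\sum_{|j|\leq m-1}\Big(\frac{1}{m^{3}}\sum_{i=1}^{m-|j|}i^{2}\Big)e^{\ii jx}=\sum_{|j|\leq m-1}w_{\mathrm{te},j}e^{\ii jx}.
\end{equation*}
The left-hand side is real, because the summand is symmetric under swapping $i_1\leftrightarrow i_2$. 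Taking real parts therefore gives the cosine form, which coincides with the definition $\mathfrak{F}_{\mathrm{te}}(x)=m^{-3}\sum_{i,j}(i\land j)^2\cos((i-j)x)$. Equivalently, the weights are symmetric, $w_{\mathrm{te},j}=w_{\mathrm{te},-j}$, so the imaginary parts $\sin(jx)$ cancel in pairs. This is the same bookkeeping as for the classical Fejér kernel \eqref{eq:introducing_the_fejér_kernel}, where the multiplicity $n-|k|$ is now replaced by $\sum_{i\le m-|j|}i^2$.

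For the bounds, $w_{\mathrm{te},j}\ge 0$ is clear. We also have
\begin{equation*}
w_{\mathrm{te},j}\le w_{\mathrm{te},0}=\frac{m(m+1)(2m+1)}{6m^3}.
\end{equation*}
At this point $w_{\mathrm{te},0}$ exceeds $1$ for small $m$ (e.g.\ $m=1$ gives $1$, and $m=2$ gives $5/8$), so the inequality has to be checked via $(m+1)(2m+1)\le 6m^2$. This holds for all $m\ge1$, since $6m^2-2m^2-3m-1=4m^2-3m-1=(4m+1)(m-1)\ge0$. Hence $0\le w_{\mathrm{te},j}\le1$.

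For the limit, fix $j$ and use $\sum_{i=1}^{N}i^2=N(N+1)(2N+1)/6$ with $N=m-|j|$. This gives
\begin{equation*}
w_{\mathrm{te},j}=\frac{(m-|j|)(m-|j|+1)(2m-2|j|+1)}{6m^3}\to\frac{2}{6}=\frac13, \qquad m\to\infty.
\end{equation*}
There is no real obstacle in this argument. The only step needing care is the exact counting of the pairs for each difference $j$, together with the observation that the minimum equals the smaller index $i$ in both orientations.
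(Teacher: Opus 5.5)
Your proposal is correct and follows the paper's proof essentially step for step: you regroup the double sum by the index difference, note that the minimum equals the smaller index in either orientation, use symmetry to pass between the exponential and cosine forms, and obtain the limit from the closed form $\tilde m(\tilde m+1)(2\tilde m+1)/(6m^3)$ with $\tilde m=m-|j|$. Your explicit check $(m+1)(2m+1)\le 6m^2$, which gives $w_{\mathrm{te},j}\le w_{\mathrm{te},0}\le 1$, supplies a step the paper only asserts; however, your remark that $w_{\mathrm{te},0}$ ``exceeds $1$ for small $m$'' contradicts your own examples and should instead say that it is at most $1$, with equality at $m=1$.
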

\begin{proof}
Note that since both the cosine and $(i \land j)^{2}$ are symmetric, we immediately obtain the identity
\begin{equation*}
\mathfrak{F}_{\mathrm{te}}(x)=\frac{1}{m^{3}}\sum_{i=1}^{m}\sum_{j=1}^{m}(i \land j)^{2}\cos((i-j)x)=\frac{1}{m^{3}}\sum_{i=1}^{m}\sum_{j=1}^{m}(i \land j)^{2}e^{\ii (i-j)x}.
\end{equation*}
Setting $i-j=j'$, we obtain by the symmetry of the weights $w_{\mathrm{te},j}$
\begin{equation*}
\begin{aligned}
    \mathfrak{F}_{\mathrm{te}}(x)&=\sum_{|j'| \leq m-1} \left(\frac{1}{m^{3}}\sum_{i,j=1,\dots,m \colon i-j=j'}^{m}(i \land j)^{2} \right)e^{\ii j' x}\\
    &=\sum_{|j'| \leq m-1} \left(\frac{1}{m^{3}}\sum_{j=1}^{m-|j'|} j^{2} \right)e^{\ii j' x}=\sum_{|j'|\leq m-1} w_{\mathrm{te},j'}e^{\ii j'x}\\
    &={\sum_{|j| \leq m-1} {w}_{\mathrm{te},j} \cos(jx)}.
\end{aligned}
\end{equation*}
Note that $0 \leq w_{\mathrm{te},j} \leq 1$. The limit follows immediately by setting $\tilde{m}=m-|j'|$ and observing
\begin{equation*}
w_{\mathrm{te},j}=\frac{1}{m^{3}}\sum_{j=1}^{m-|j'|} j^{2}= \frac{\tilde{m}(\tilde{m}+1)(2\tilde{m}+1)}{6m^{3}} \rightarrow \frac{1}{3}, \quad m \rightarrow \infty,
\end{equation*}
where we have used that $\tilde{m}^{3}/m^{3} \rightarrow 1$. 
\end{proof}
\begin{proof}[Proof of \Cref{proposition:variance_temporal_second_order_variation_fixed_space}]
    We observe with Wick's theorem that
\begin{align}
\delta^{2\beta-6}m^{-3}\V(\mathbf{V}_{\mathrm{te}})&= \delta^{2\beta-6}m^{-3}2\sum_{i,j=1}^{m} \mathrm{Cov}(\mathbf{I}_{\mathrm{te},i}, \mathbf{I}_{\mathrm{te},j})^{2}\label{eq: tempvarcalculation}\\
&=  2m^{-3}\sum_{i,j=1}^{m}\left((i \land j) \left(\frac{8}{ (2\pi)^{d}\vartheta^{\beta/2}} \right)\int_{\mathbb{R}^{d}}\cos(|\fv| (i-j)) \gtemp(\fv)\mathrm{d}\fv + R(i,j)\right)^{2}.\nonumber
\end{align}
Recall $\tempconstvart$ from \Cref{tab:asymptotic_constants}. In the first step, we will show the convergence of the leading order expression
\begin{equation}
\label{eq: firststeptempvar}
    2\left(\frac{8}{ (2\pi)^{d}\vartheta^{\beta/2}} \right)^2m^{-3}\sum_{i,j=1}^{m} \left((i \land j)\int_{\mathbb{R}^{d}}\cos(|\fv| (i-j)) \gtemp(\fv)\mathrm{d}\fv\right)^{2}\rightarrow\frac{\tempconstvart}{\vartheta^\beta},\quad m\rightarrow\infty.
\end{equation}
In the second step, we verify that the the remainder terms $R(i,j)$, $i,j\leq m,$ in \eqref{eq: tempvarcalculation} and their crossterms with the leading order expression are neglectable, proving the assertion.
\begin{step}[Determining the limit]
    \begin{equation*}
    \begin{aligned}
   &2\left(\frac{8}{ (2\pi)^{d}\vartheta^{\beta/2}} \right)^{2}\frac{1}{m^{3}}\sum_{i,j=1}^{m}\left((i \land j)\int_{\mathbb{R}^{d}}\cos(|\fv| (i-j))\gtemp(\fv)\mathrm{d}r\right)^{2}\\
    &=2\left(\frac{8}{ (2\pi)^{d}\vartheta^{\beta/2}} \right)^{2} \int_{\mathbb{R}^{d}}\int_{\mathbb{R}^{d}} \frac{1}{m^{3}}\sum_{i,j=1}^{m}(i \land j)^{2}\cos(|\fv_1|(i-j))\cos(|\fv_2|(i-j))\gtemp(\fv_1)\gtemp(\fv_2)\mathrm{d}\fv_1 \mathrm{d}\fv_2 \\
    &=2\left(\frac{8}{ (2\pi)^{d}\vartheta^{\beta/2}} \right)^{2} \int_{\mathbb{R}^{d}}\int_{\mathbb{R}^{d}} \frac{1}{2}\frac{1}{m^{3}}\sum_{i,j=1}^{m}(i \land j)^{2}[\cos((|\fv_1|+|\fv_2|)(i-j))+\cos((|\fv_1|-|\fv_2|)(i-j))]\\
    &\hspace{60mm}\cdot\gtemp(\fv_1)\gtemp(\fv_2)\mathrm{d}\fv_1 \mathrm{d}\fv_2 \\
    &=2\left(\frac{8}{ (2\pi)^{d}\vartheta^{\beta/2}} \right)^{2} \int_{\mathbb{R}^{d}}\int_{\mathbb{R}^{d}} \frac{1}{2}[\mathfrak{F}_{\mathrm{te}}(|\fv_1|+|\fv_2|)+\mathfrak{F}_{\mathrm{te}}(|\fv_1|-|\fv_2|)]\gtemp(\fv_1)\gtemp(\fv_2)\mathrm{d}\fv_1 \mathrm{d}\fv_2.
    \end{aligned}
    \end{equation*}
    Next, we can plug in the Fourier representation from \Cref{lemma:fourier_representation_time_fejer_kernel} and obtain 
    \begin{equation}
    \label{eq:time_fejer_factorisation}
    \begin{aligned}
     \frac{1}{2}[\mathfrak{F}_{\mathrm{te}}(|\fv_1|+|\fv_2|)+\mathfrak{F}_{\mathrm{te}}(|\fv_1|-|\fv_2|)] &= \sum_{|j| \leq m-1} w_{\mathrm{te},j}\frac{1}{2}[e^{\ii j(|\fv_1|+|\fv_2|)}+e^{\ii j(|\fv_1|-|\fv_2|)}] \\
    &= \sum_{|j| \leq m-1} w_{\mathrm{te},j} e^{\ii j|\fv_1|}\cos(j|\fv_2|)\\
    &=\sum_{|j| \leq m-1} w_{\mathrm{te},j} \cos(j|\fv_1|)\cos(j|\fv_2|),
    \end{aligned}
    \end{equation}
    where the last equality follows by symmetry of the weights $w_{\mathrm{te}}$ and the cosine as well as the antisymmetry of the sine function. Plugging \eqref{eq:time_fejer_factorisation} back into the penultimate display yields
    \begin{equation*}
    \begin{aligned}
        &2\left(\frac{8}{ (2\pi)^{d}\vartheta^{\beta/2}} \right)^{2}\frac{1}{m^{3}}\sum_{i,j=1}^{m}\left((i \land j)\int_{\mathbb{R}^{d}}\cos(|\fv| (i-j))\gtemp(\fv)\mathrm{d}\fv\right)^{2}\\
        &=2\left(\frac{8}{ (2\pi)^{d}\vartheta^{\beta/2}} \right)^{2}\sum_{|j| \leq m-1}w_{\mathrm{te},j} |\mathcal{F}^+_c(\gtemp)(j)|^{2}.
    \end{aligned}
    \end{equation*}
    By \Cref{result:regularity_g} it is clear that both $\gtemp$ and its derivative live in $L^{1}(\mathbb{R}^{d})$. Since $\gtemp$ is radial, we have
    \begin{equation*}
    \mathcal{F}_c^{+}(\gtemp)(j)=\int_{\mathbb{R}^{d}} \gtemp(\fv)\cos(|\fv|j) \mathrm{d}\fv = \int_{0}^{\infty}\cos(rj)\sin^{4}(r)r^{\beta-3}\mathrm{d}r.
    \end{equation*}
    As consequence, we know that $|\mathcal{F}_c^{+}(\gtemp)(j)| \lesssim \frac{ 1}{|j|}$. Furthermore, the weights satisfy $0 \leq w_{\mathrm{te},j}\leq 1$ with the pointwise limit $w_{\mathrm{te},j} \rightarrow 1/3$. We may extend the weights by zero, i.e.\ $w_{\mathrm{te},j}=0$ whenever $|j| > m-1$ and the dominated convergence theorem yields
    \begin{equation*}
    \sum_{|j| \leq m-1}w_{\mathrm{te},j} |\mathcal{F}_c^{+}(\gtemp)(j)|^{2}= \sum_{j \in \mathbb{Z}}w_{\mathrm{te},j} |\mathcal{F}_c^{+}(\gtemp)(j)|^{2} \rightarrow \frac{1}{3}\sum_{j \in \mathbb{Z}} |\mathcal{F}_c^{+}(\gtemp)(j)|^{2}< \infty.
    \end{equation*}
    Thus, the leading order term satisfies
    \begin{equation*}
        2\left(\frac{8}{ (2\pi)^{d}\vartheta^{\beta/2}} \right)^{2}\frac{1}{m^{3}}\sum_{i,j=1}^{m}\left((i \land j)\int_{\mathbb{R}^{d}}\cos( |\fv| (i-j))\gtemp(\fv)\mathrm{d}\fv\right)^{2}\rightarrow\frac{\tempconstvart}{\vartheta^\beta},\quad m\rightarrow\infty.
    \end{equation*}
\end{step}
\begin{step}[Elimination of the remainder terms]
    By \Cref{lemma:remainder_bounds_temp} the remainders $R(i,j),$ $i,j\leq m,$ are of order $\mathcal{O}(1)$ independently of $i$ and $j$. By the Cauchy-Schwarz inequality for double sums, i.e.\ 
    \begin{equation}
    \label{eq:cauchy-schwarz}
        \left|\sum_{i,j=1}^{m}a_{i,j}b_{i,j}\right| \lesssim\sqrt{\sum_{i,j=1}^{m}a_{i,j}^{2} \sum_{i,j=1}^{m}b_{i,j}^{2}},
    \end{equation}
    this yields together with the first step
    \begin{equation*}
        \begin{aligned}
            &\delta^{2\beta-6}m^{-3}\V(\mathbf{V}_{\mathrm{te}})\\
            &=2m^{-3}\sum_{i,j=1}^{m}\left((i \land j) \left(\frac{8}{ (2\pi)^{d}\vartheta^{\beta/2}} \right)\int_{\mathbb{R}^{d}}\cos(|\fv| (i-j)) \gtemp(\fv)\mathrm{d}\fv + R(i,j)\right)^{2}\\
            &=2m^{-3}\sum_{i,j=1}^{m}\left((i \land j) \left(\frac{8}{ (2\pi)^{d}\vartheta^{\beta/2}} \right)\int_{\mathbb{R}^{d}}\cos(|\fv| (i-j)) \gtemp(\fv)\mathrm{d}\fv\right)^2 +\mathcal{O}(m^{-1/2})
        \end{aligned}
    \end{equation*}
    since     $\sum_{i,j=1}^mR(i,j)^{2}=\mathcal{O}(m^2).$ \qedhere
\end{step}
\end{proof}

\begin{lemma}
\label[lemma]{result:numerator_temporal_variation_fixed_space}
The numerator in \eqref{eq:CLT_condition_maximum_row_norm_version_temporal} satisfies
\begin{equation*}
\left(\max_{i=1, \dots,m}\sum_{j=1}^{m} |\mathrm{Cov}(\mathbf{I}_{\mathrm{te},i}, \mathbf{I}_{\mathrm{te},j})|\right)^{2} = \delta^{6-2\beta}\mathcal{O}( m^{2}\left(1 + \log(m)\right)^{2}). 
\end{equation*}
\end{lemma}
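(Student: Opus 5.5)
We bound the maximal row sum by splitting each covariance via \Cref{lemma:covariance_second_order_temporal_increments} into its leading term and the remainder $R(i,j)$. For every $i$,
\begin{equation*}
\sum_{j=1}^{m} |\mathrm{Cov}(\mathbf{I}_{\mathrm{te},i}, \mathbf{I}_{\mathrm{te},j})| \le \delta^{3-\beta}\left(\frac{8}{(2\pi)^{d}\vartheta^{\beta/2}}\sum_{j=1}^{m}(i\land j)\,\bigl|\mathcal{F}_c^{+}(\gtemp)(i-j)\bigr| + \sum_{j=1}^{m}|R(i,j)|\right).
\end{equation*}
The leading term is handled exactly as in \Cref{eq:numerator_control_spatial_t_fixed}. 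The remainder term needs more than the uniform bound of \Cref{lemma:remainder_bounds_temp}, since $m\cdot\mathcal{O}(1)$ is already of the claimed order $m(1+\log m)$; it is still worth checking whether $R(i,j)$ decays in $|i-j|$, because that would keep this contribution harmless relative to the main one.

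\textbf{Leading term.} Since $\gtemp$ is radial and $\gtemp,\nabla\gtemp\in L^{1}(\mathbb{R}^d)$ by \Cref{result:regularity_g}, one integration by parts in the radial variable gives $|\mathcal{F}_c^{+}(\gtemp)(z)|\lesssim 1\land |z|^{-1}$ for $z\in\mathbb{Z}$; this bound was already used in the proof of \Cref{proposition:variance_temporal_second_order_variation_fixed_space}. Bounding $(i\land j)\le m$ then yields
\begin{equation*}
\sum_{j=1}^{m}(i\land j)\,\bigl|\mathcal{F}_c^{+}(\gtemp)(i-j)\bigr| \lesssim m\Bigl(1+2\sum_{z=1}^{m}z^{-1}\Bigr)\lesssim m(1+\log m),
\end{equation*}
uniformly in $i$, via the harmonic number bound.

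\textbf{Remainder term.} Here I will use the explicit form \eqref{eq:temporal_remainder_terms}. The terms $R^{(2)}(i,j)$ vanish for $|i-j|\ge2$, so they contribute $\mathcal{O}(1)$ to each row. For $|i-j|\ge2$, the terms $R^{(1)}$ and $R^{(3)}$ are radial integrals of $\sin(|i-j|r)$ or $\sin((i+j)r)$ against the profiles $\sin^4(r/2)r^{\beta-4}$ and $\sin(r)\sin^2(r/2)r^{\beta-3}$. Both profiles are $\mathcal{O}(r^{\beta})$ near the origin and integrable with integrable derivative away from it, so integrating by parts once gives decay $\lesssim (1\land |i-j|^{-1})+(1\land (i+j)^{-1})$. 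Summing over $j$ produces at most $\mathcal{O}(1+\log m)$.

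Even without this decay, the crude bound $\sum_j |R(i,j)|\le m\sup|R| = \mathcal{O}(m)$ from \Cref{lemma:remainder_bounds_temp} already fits inside $\mathcal{O}(m(1+\log m))$. So the finer analysis is optional, and the claim holds either way.

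\textbf{Conclusion.} Combining both pieces, each row sum is $\delta^{3-\beta}\mathcal{O}(m(1+\log m))$, and taking the maximum over $i$ and squaring gives $\delta^{6-2\beta}\mathcal{O}(m^{2}(1+\log m)^2)$.

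The main obstacle is only the Fourier decay $|\mathcal{F}_c^{+}(\gtemp)(z)|\lesssim |z|^{-1}$, specifically justifying the radial integration by parts near $r=0$. There $\gtemp$ behaves like $r^{\beta+2-d}\cdot r^{d-1}$ in polar form, so the boundary terms vanish.
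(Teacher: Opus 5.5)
Your proposal is correct and follows essentially the same route as the paper: you split the covariance via \Cref{lemma:covariance_second_order_temporal_increments}, absorb the remainder into an $\mathcal{O}(m)$ row contribution using the uniform bound of \Cref{lemma:remainder_bounds_temp}, and control the leading part by $(i\land j)\le m$ together with the $|j|^{-1}$ decay of $\mathcal{F}_c^{+}(\gtemp)$ and the harmonic sum. Your extra decay analysis of $R^{(1)}$ and $R^{(3)}$ is valid, but as you note yourself it is not needed for the stated bound.
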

\begin{proof}
By \Cref{lemma:covariance_second_order_temporal_increments} and \Cref{lemma:remainder_bounds_temp}, we have
\begin{equation*}
\begin{aligned}
&|\mathrm{Cov}(\mathbf{I}_{\mathrm{te},i}, \mathbf{I}_{\mathrm{te},j})| \\
&=  \delta^{3-\beta} \left|(i \land j) \left(\frac{8}{ (2\pi)^{d}\vartheta^{\beta/2}} \right)\int_{\mathbb{R}^{d}}\cos(|\fv| (i-j)) \gtemp(\fv)\mathrm{d}\fv + R(i,j)\right|\\
&\lesssim \delta^{3-\beta}\left( \sup_{i,j\leq m}|R(i,j)| + (i \land j) \left|\int_{\mathbb{R}^{d}}\cos(|\fv| (i-j)) \gtemp(\fv)\mathrm{d}\fv\right|\right)\\
&\lesssim \delta^{3-\beta}\left(1 + (i \land j)\left|\int_{\mathbb{R}^{d}}\cos(|\fv| (i-j)) \gtemp(\fv)\mathrm{d}\fv\right|\right).
\end{aligned}
\end{equation*}
Consequently,
\begin{equation*}
\begin{aligned}
\max_{i=1, \dots,m}\sum_{j=1}^{m} |\mathrm{Cov}(\mathbf{I}_{\mathrm{te},i}, \mathbf{I}_{\mathrm{te},j})| &\lesssim \delta^{3-\beta}\left(m + \max_{i=1, \dots, m}\sum_{j=1}^{m} (i \land j)\left|\int_{\mathbb{R}^{d}}\cos(|\fv| (i-j)) \gtemp(\fv)\mathrm{d}\fv \right|\right)\\
&\lesssim \delta^{3-\beta}\left(m + m\sum_{1 \leq |j| \leq m-1} \left|\int_{\mathbb{R}^{d}}\cos(|\fv| j) \gtemp(\fv)\mathrm{d}\fv \right|\right)\\
&\lesssim \delta^{3-\beta}\left(m + m\sum_{1 \leq |j| \leq m-1} \frac{1}{|j|}\right)\\
&= \delta^{3-\beta}\mathcal{O}(m(1+ \log(m)))
\end{aligned}
\end{equation*}
where we have used the fact that by \Cref{result:regularity_g} both $\gtemp$ and its derivative are in $L^{1}(\mathbb{R}^{d})$. The case $i-j=0$, contributes to the term of order $m$.
\qedhere
\end{proof}

\begin{proof}[Proof of \Cref{theorem:clt_temporal_space_fixed}]
By \Cref{result:numerator_temporal_variation_fixed_space} and \Cref{proposition:variance_temporal_second_order_variation_fixed_space}, we have
\begin{equation*}
\frac{\left(\max_{i=1, \dots,m}\sum_{j=1}^{m} |\mathrm{Cov}(\mathbf{I}_{\mathrm{te},i}, \mathbf{I}_{\mathrm{te},j})|\right)^{2}}{\V(\mathbf{V}_{\mathrm{te},m})} = \mathcal{O}\left(\frac{\delta^{6-2\beta}(m+m\log(m))^{2}}{\delta^{6-2\beta}m^{3}} \right) \rightarrow 0, \quad m \rightarrow \infty.
\end{equation*}
Thus, by \citet[Proposition 3.1]{hildebrandtParameterEstimationSPDEs2019} we obtain
\begin{equation*}
\frac{\mathbf{V}_{\mathrm{te},m} - \E[\mathbf{V}_{\mathrm{te},m}]}{\sqrt{\V(\mathbf{V}_{\mathrm{te},m})}} \xrightarrow{d} N(0,1), \quad m \rightarrow \infty.
\end{equation*}
Together with \Cref{proposition:variance_temporal_second_order_variation_fixed_space} the appropriate rescaling from \Cref{proposition:expectation_temp_fixed_space} yields
\begin{equation*}
\sqrt{m}\left(\frac{\delta^{\beta-3}}{m^{2}}\mathbf{V}_{\mathrm{te},m}  - \E\left[\frac{\delta^{\beta-3}}{m^{2}}\mathbf{V}_{\mathrm{te},m} \right]\right) \xrightarrow{d} N\left(0, \frac{\tempconstvart}{\vartheta^{\beta}}\right), \quad m \rightarrow \infty.
\end{equation*}
Finally, we notice that the remainder in \Cref{proposition:expectation_temp_fixed_space} even decays with rate $m^{-1}$ so that upon multiplying with $\sqrt{m}$ it still converges to zero. The resulting  convergence
\begin{equation*}
\sqrt{m}\left(\frac{\delta^{\beta-3}}{m^{2}}\mathbf{V}_{\mathrm{te},m}  -  \frac{\tempconstexpt}{\vartheta^{\beta/2}}\right) \xrightarrow{d} N\left(0, \frac{\tempconstvart}{\vartheta^{\beta}}\right), \quad m \rightarrow \infty,
\end{equation*}
follows by replacing the expectation with its limit. 
\end{proof}

\begin{proof}[Proof of \Cref{corollary:clt_method_of_moments_estimator_temporal_space_fixed}]
Consider the mapping $f(x)=(\tempconstexpt/x)^{2/\beta}$. By applying the delta method to \Cref{theorem:clt_temporal_space_fixed}, we observe
\begin{equation*}
\sqrt{m}(\hat{\vartheta}_{\mathrm{te}, m} - \vartheta) \xrightarrow{d} N\left(0, f'\left( \frac{\tempconstexpt}{\vartheta^{\beta/2}}\right)^{2}\frac{\tempconstvart}{\vartheta^{\beta}} \right), \quad m \rightarrow \infty.
\end{equation*}
The derivative of $f$ is given by $f'(x)=-(2/\beta)f(x)/x$. Clearly $f(\tempconstexpt/\vartheta^{\beta/2})=\vartheta$ and we have
\begin{equation*}
f'\left( \frac{\tempconstexpt}{\vartheta^{\beta/2}}\right)^{2}\frac{\tempconstvart}{\vartheta^{\beta}}  =  \left(\frac{4 \vartheta^{2} \vartheta^{\beta}}{\beta^{2}\tempconstexpt^{2}}\right) \frac{\tempconstvart}{\vartheta^{\beta}} = \vartheta^{2}\frac{4 \tempconstvart}{\beta^{2} \tempconstexpt^{2}},
\end{equation*}
which gives precisely the desired asymptotic variance. \qedhere
\end{proof}
\subsection{Proofs for \Cref{section:space_time_variation}}
\begin{proof}[Proof of \Cref{lemma:covariance_space_time_increments}]
By \Cref{proposition:covariance_representation} the covariance function is given by
\begin{align*}
\E[u(t_i,x_k)u(t_j,x_l)]
&= \frac{1}{ (2\pi)^{d}\vartheta^{\beta/2}} \int_{\mathbb{R}^{d}} \cos\left(\frac{\lambda(k-l)\rho}{\sqrt{\vartheta}}\cdot \fv \right) \Phi_{\mathrm{te}, |\fv|}(\delta i,\delta j) |\fv|^{\beta-d-2}\mathrm{d}\fv\\
&=\frac{\lambda^{2-\beta}}{ (2\pi)^{d}\vartheta} \int_{\mathbb{R}^{d}} \cos\left({(k-l)} \rho\cdot \fv \right) \Phi_{\mathrm{te}, \sqrt{\vartheta}\lambda^{-1}|\fv|}(\delta i,\delta j) |\fv|^{\beta-d-2}\mathrm{d}\fv\\
&=\frac{\delta\lambda^{2-\beta}}{ (2\pi)^{d}\vartheta} \int_{\mathbb{R}^{d}} \cos\left({(k-l)} \rho\cdot \fv \right) \Phi_{\mathrm{te}, \sqrt{\vartheta}\ratio|\fv|}(i, j) |\fv|^{\beta-d-2}\mathrm{d}\fv\\
&= \frac{\delta\lambda^{2-\beta}}{ (2\pi)^{d}\vartheta} \int_{\mathbb{R}^{d}} \Phi_{\mathrm{sp}, \rho \cdot \fv}(k,l) \Phi_{\mathrm{te}, \sqrt{\vartheta}\ratio|\fv|}(i, j) |\fv|^{\beta-d-2}\mathrm{d}\fv
\end{align*}
with $\Phi_{\mathrm{sp}, \rho \cdot \fv}(k,l)= \cos\left( {(k-l)} \rho \cdot \fv \right)$. The covariance between two second-order space-time increments \eqref{eq:space_time_increments} factorises and can be represented as
\begin{equation*}
\mathrm{Cov}(\mathbf{I}_{\mathrm{sp},\mathrm{te},i,k}, \mathbf{I}_{\mathrm{sp},\mathrm{te},j,l})=\frac{\delta\lambda^{2-\beta}}{ (2\pi)^{d}\vartheta} \int_{\mathbb{R}^{d}} \mathcal{I}^{2}[\Phi_{\mathrm{sp}, \rho \cdot \fv}](k,l) \mathcal{I}^{2}[\Phi_{\mathrm{te}, \sqrt{\vartheta}\ratio|\fv|}](i,j) |\fv|^{\beta-d-2}\mathrm{d}\fv.
\end{equation*}
Both of the second-order increment $\mathcal{I}^{2}[\Phi_{\mathrm{sp}, \rho \cdot \fv}](k,l)$ and $\mathcal{I}^{2}[\Phi_{\mathrm{te}, \sqrt{\vartheta}\ratio|\fv|}](i,j)$ have already been computed in the proofs of \Cref{lemma:covariance_second_order_spatial_increments}  and \Cref{lemma:covariance_second_order_temporal_increments}, respectively. In particular, they are given by
\begin{equation*}
\begin{aligned}
\mathcal{I}^{2}[\Phi_{\mathrm{sp}, \rho \cdot \fv}](k,l) &= \mathfrak{I}^{(4)}[\cos(\cdot (\rho^{\top} \fv))](k-l) = 16\sin^{4}(\rho \cdot \fv/2)\cos\left((k-l) {\rho \cdot \fv}\right)\\
\end{aligned}
\end{equation*}
and
\begin{equation*}
\begin{aligned}
\mathcal{I}^{2}[\Phi_{\mathrm{te}, \sqrt{\vartheta}\ratio|\fv|}](i,j)&= \frac{1}{2} \Big(2^{4}(i \land j)\sin^{4}(\sqrt{\vartheta}\ratio |\fv|/2)\cos((i-j)\sqrt{\vartheta}\ratio |\fv|) \\
&\qquad + 2^{3}\sin^{4}(\sqrt{\vartheta}\ratio |\fv|/2)r_{i,j}^{(1)}(\sqrt{\vartheta}\ratio |\fv|)|\fv|^{-1}+r_{i,j}^{(2)}(\sqrt{\vartheta}\ratio |\fv|)+ r_{i,j}^{(3)}(\sqrt{\vartheta}\ratio |\fv|) \Big),
\end{aligned}
\end{equation*}
with $r_{i,j}^{(z)}$, $z \in \{1, 2, 3\}$, defined through \eqref{eq:temporal_remainder_terms}. Thus, in total the covariance satisfies
\begin{equation*}
\begin{aligned}
&\mathrm{Cov}(\mathbf{I}_{\mathrm{sp},\mathrm{te},i,k}, \mathbf{I}_{\mathrm{sp},\mathrm{te},j,l}) \\
&= \delta \lambda^{2-\beta} \Bigg(\frac{128 (i \land j) }{(2\pi)^{d}\vartheta} \int_{\mathbb{R}^{d}} \cos\left((k-l) {\rho \cdot \fv}\right)\sin^{4}(\ratio\sqrt{\vartheta} |\fv|/2)\cos((i-j)\ratio\sqrt{\vartheta} |\fv|)\gspzero(\fv)\mathrm{d}\fv+R_{\mathrm{sp}}(i,j,k,l)\Bigg),
\end{aligned}
\end{equation*}
with $\gspzero$ from \Cref{lemma:covariance_second_order_spatial_increments} and 
\begin{equation}
\label{eq:spatial_remainder_box_inc}
\begin{aligned}
R_{\mathrm{sp}}(i,j,k,l)&\coloneqq R_{\mathrm{sp}}^{(1)}(i,j,k,l)+R_{\mathrm{sp}}^{(2)}(i,j,k,l)+R_{\mathrm{sp}}^{(3)}(i,j,k,l),\\
R_{\mathrm{sp}}^{(1)}(i,j,k,l) &\coloneqq \frac{64}{(2\pi)^{d}\vartheta}\int_{\mathbb{R}^{d}} \cos\left((k-l) {\rho \cdot \fv}\right)\sin^{4}(\sqrt{\vartheta}\ratio |\fv|/2)r_{i,j}^{(1)}(\sqrt{\vartheta}\ratio |\fv|)|\fv|^{-1}g_{\mathrm{sp}}(\fv)\mathrm{d}\fv,\\
R_{\mathrm{sp}}^{(2)}(i,j,k,l) &\coloneqq\frac{8}{(2\pi)^{d}\vartheta}\int_{\mathbb{R}^{d}}\cos\left((k-l) {\rho \cdot \fv}\right)r_{i,j}^{(2)}(\sqrt{\vartheta}\ratio |\fv|)\gspzero(\fv)\mathrm{d}\fv,\\
R_{\mathrm{sp}}^{(3)}(i,j,k,l) &\coloneqq\frac{8}{(2\pi)^{d}\vartheta}\int_{\mathbb{R}^{d}} \cos\left((k-l) {\rho \cdot \fv}\right)r_{i,j}^{(3)}(\sqrt{\vartheta}\ratio |\fv|)\gspzero(\fv)\mathrm{d}\fv.
\end{aligned}
\end{equation}
By applying the change of variables $\fv \mapsto \alpha \sqrt{\vartheta} \fv$ in all of the integrals, we further obtain the second representation for the covariance
\begin{equation*}
\begin{aligned}
&\mathrm{Cov}(\mathbf{I}_{\mathrm{sp},\mathrm{te},i,k}, \mathbf{I}_{\mathrm{sp},\mathrm{te},j,l}) \\
&= \delta^{3-\beta} \Bigg(\frac{128 (i \land j) }{(2\pi)^{d}\vartheta^{\beta/2}} \int_{\mathbb{R}^{d}} \cos\left((\ratio\sqrt{\vartheta})^{-1}(k-l) {\rho \cdot \fv}\right)\sin^{4}((\ratio\sqrt{\vartheta})^{-1} \rho \cdot \fv/2)\cos((i-j) |\fv|)\gtemp(\fv)\mathrm{d}\fv\\
&\qquad+R_{\mathrm{te}}(i,j,k,l)\Bigg),
\end{aligned}
\end{equation*}
with 
\begin{align}
\label{eq:temp_remainder_box_inc}
R_{\mathrm{te}}(i,j,k,l)&\coloneqq R_{\mathrm{te}}^{(1)}(i,j,k,l)+R_{\mathrm{te}}^{(2)}(i,j,k,l)+R_{\mathrm{te}}^{(3)}(i,j,k,l),\\
R_{\mathrm{te}}^{(1)}(i,j,k,l)&\coloneqq \frac{64}{(2\pi)^{d}\vartheta^{\beta/2}} \left(\sqrt{\vartheta} \alpha\right)\int_{\mathbb{R}^{d}} \cos\left((\sqrt{\vartheta}\ratio)^{-1}(k-l) {\rho \cdot \fv}\right)\sin^{4}((\sqrt{\vartheta}\ratio)^{-1}\rho \cdot \fv/2),\nonumber\\
&\hspace{35mm}\cdot r_{i,j}^{(1)}(|\fv|) (|\fv|^{-1}g_{\mathrm{te}}(\fv))\mathrm{d}\fv,\nonumber\\
R_{\mathrm{te}}^{(2)}(i,j,k,l)&\coloneqq\frac{8}{(2\pi)^{d}\vartheta^{\beta/2}}\int_{\mathbb{R}^{d}}\cos\left((\ratio\sqrt{\vartheta})^{-1}(k-l) {\rho \cdot \fv}\right)\sin^{4}((\sqrt{\vartheta}\ratio)^{-1}\rho \cdot \fv/2)r_{i,j}^{(2)}( |\fv|)|\fv|^{\beta-d-2}\mathrm{d}\fv,\nonumber\\
R_{\mathrm{te}}^{(3)}(i,j,k,l)&\coloneqq\frac{8}{(2\pi)^{d}\vartheta^{\beta/2}}\int_{\mathbb{R}^{d}} \cos\left((\ratio\sqrt{\vartheta})^{-1}(k-l) {\rho \cdot \fv}\right)\sin^{4}((\sqrt{\vartheta}\ratio)^{-1}\rho \cdot \fv/2)r_{i,j}^{(3)}(|\fv|)|\fv|^{\beta-d-2}\mathrm{d}\fv.\nonumber
\end{align}
Notice that $r_{i,j}^{(z)}(\xi)$, $\xi\in\mathbb{R},z\in\{1,2,3\},$ is uniformly bounded in $\xi$ independently of $i,j,z$. Thus, the uniform boundedness of $R_{\mathrm{sp}}(i,j,k,l)$, i.e.
$$\sup_{i,j\leq m}\sup_{k,l\leq n}|R_{\mathrm{sp}}(i,j,k,l)|\lesssim c$$
for some universal constant $c$, follows immediately since $\gspzero \in L^{1}(\mathbb{R}^{d})$ and $|\cdot|^{-1}\gspzero \in L^{1}(\mathbb{R}^{d})$. The boundedness argument for the remainders $R_{\mathrm{te}}(i,j,k,l)$ immediately reduces to \Cref{lemma:remainder_bounds_temp}, if we additionally assume that $\alpha$ remains bounded, which holds in the temporal regime $\alpha\rightarrow0$.
\end{proof}

\begin{proof}[Proof of \Cref{proposition:box_expectation}]
Throughout this proof, we are going to use the following Riemann-Lebesgue argument serveral times. Observe that $\sin^4(x)=3/8-\cos(2x)/2+\cos(4x)/8$. Thus, since $\partial_{i}\partial_j\gspzero,\partial_{i}\partial_j\gtemp \in L^{1}(\mathbb{R}^{d})$ by \Cref{result:regularity_g} for all $i,j=1, \dots, d$, we obtain with the Riemann-Lebesgue lemma the asymptotic expansions
\begin{align}
    \int_{\mathbb{R}^{d}} \sin^{4}(\ratio\sqrt{\vartheta} |\fv|/2)\gspzero(\fv)\mathrm{d}\fv &= \frac{3}{8}\int_{\mathbb{R}^{d}}\gspzero(\fv) \mathrm{d}\fv + \smallo(\ratio^{-2}), \quad \alpha \rightarrow \infty \label{eq:rl_alpha_inf}\\
    \int_{\mathbb{R}^{d}}\sin^{4}((\ratio\sqrt{\vartheta})^{-1} \rho \cdot \fv/2)\gtemp(\fv)\mathrm{d}\fv&=\frac{3}{8}\int_{\mathbb{R}^d}\gtemp(\fv)\mathrm{d}\fv+\smallo (\ratio^2), \quad \alpha \rightarrow 0,
    \label{eq:rl_alpha_zero}
\end{align}
see \citet[Table 3.1]{grafakosClassicalFourierAnalysis2014}. Notice further that as described in  \Cref{lemma:covariance_space_time_increments} the remainder terms $R_{\mathrm{sp}}(i,j,k,l)$ and $R_{\mathrm{te}}(i,j,k,l)$ defined in \eqref{eq:spatial_remainder_box_inc} and \eqref{eq:temp_remainder_box_inc} satisfy the rough upper bounds
\begin{align}
    &\sup_{i,j\leq m}\sup_{k,l\leq n}|R_{\mathrm{sp}}(i,j,k,l)| \lesssim c;\label{eq:uniform_remainder_bound_sp}\\
    &\sup_{i,j\leq m}\sup_{k,l\leq n}|R_{\mathrm{te}}(i,j,k,l)| 
    \lesssim c, \quad \alpha \lesssim1,\label{eq:uniform_remainder_bound_temp}
\end{align}
for some universal constant $c$. Recall the constants $\boxconstexpsp,\boxconstexptemp$ from \Cref{tab:asymptotic_constants}.
\begin{step}[$\ratio \rightarrow \infty$] 
By \Cref{lemma:covariance_space_time_increments}, we obtain the representation
    \begin{equation*}
    \begin{aligned}
        \frac{\delta^{-1}\lambda^{\beta-2}}{nm^{2}}\E[\mathbf{V}_{\mathrm{sp}, \mathrm{te}}]&=  \frac{1}{nm^{2}}\sum_{k=1}^{n}\sum_{i=1}^{m}  \Bigg(\frac{128 i }{(2\pi)^{d}\vartheta} \int_{\mathbb{R}^{d}} \sin^{4}(\ratio\sqrt{\vartheta} |\fv|/2)\gspzero(\fv)\mathrm{d}\fv+R_{\mathrm{sp}}(i,i,k,k)\Bigg)\\
        &=\frac{64 }{(2\pi)^{d}\vartheta}  \Bigg( \int_{\mathbb{R}^{d}} \sin^{4}(\ratio\sqrt{\vartheta} |\fv|/2)\gspzero(\fv)\mathrm{d}\fv \Bigg)\\ &\qquad+ \frac{64}{m(2\pi)^{d}\vartheta}  \Bigg( \int_{\mathbb{R}^{d}} \sin^{4}(\ratio\sqrt{\vartheta} |\fv|/2)\gspzero(\fv)\mathrm{d}\fv \Bigg) + \frac{1}{nm^{2}}\sum_{k=1}^{n}\sum_{i=1}^{m} R_{\mathrm{sp}}(i,i,k,k)\\
        &=\vartheta^{-1}\boxconstexpsp + \smallo(\ratio^{-2}) + \mathcal{O}(m^{-1}),
    \end{aligned}
    \end{equation*}
where we have used that $m^{-2}\sum_{i=1}^{m}i=1/2 -1/(2m)$, \eqref{eq:rl_alpha_inf} and  \eqref{eq:uniform_remainder_bound_sp}. 
\end{step}
\begin{step}[$\ratio \rightarrow 0$]
For the regime $\ratio \rightarrow 0$, we will exploit the second representation from \Cref{lemma:covariance_space_time_increments} yielding
\begin{equation*}
\begin{aligned}
\frac{\delta^{\beta-3}}{nm^{2}}\E[\mathbf{V}_{\mathrm{sp}, \mathrm{te}}]&= \frac{1}{nm^{2}}\sum_{k=1}^{n}\sum_{i=1}^{m} \Bigg(\frac{128 i}{(2\pi)^{d}\vartheta^{\beta/2}} \int_{\mathbb{R}^{d}}\sin^{4}((\ratio\sqrt{\vartheta})^{-1} \rho \cdot \fv/2)\gtemp(\fv)\mathrm{d}\fv+R_{\mathrm{te}}(i,i,k,k)\Bigg)\\
&= \vartheta^{-\beta/2}\boxconstexptemp + \smallo(\ratio^{2})+\mathcal{O}(m^{-1})
\end{aligned}
\end{equation*}
with \eqref{eq:rl_alpha_zero} and \eqref{eq:uniform_remainder_bound_temp}.\qedhere
\end{step}
\end{proof}

\begin{proof}[Proof of \Cref{proposition:variance_space_time_variation}]
Recall both $\boxconstvarsp,\boxconstvartemp$ from \Cref{tab:asymptotic_constants}.
\begin{case}[$\ratio\rightarrow\infty$]
Observe by Wicks' theorem and \Cref{lemma:covariance_space_time_increments} that
\begin{align}
&\frac{\delta^{-2}\lambda^{2\beta-4}}{nm^{3}}\V(\mathbf{V}_{\mathrm{sp}, \mathrm{te}}) \nonumber\\
&= \frac{2\delta^{-2}\lambda^{2\beta-4}}{nm^{3}}\sum_{i,j=1}^{m}\sum_{l,k=1}^{n} \mathrm{Cov}(\mathbf{I}_{\mathrm{sp},\mathrm{te},i,k}, \mathbf{I}_{\mathrm{sp},\mathrm{te},j,l})^{2}\nonumber\\
&=\frac{2}{nm^{3}}\sum_{i,j=1}^{m}\sum_{l,k=1}^{n}\Bigg(\frac{128 (i \land j)}{ (2\pi)^{d}\vartheta} \int_{\mathbb{R}^{d}}  \cos\left((k-l) {\rho \cdot \fv}\right) \sin^{4}(\ratio \sqrt{\vartheta}|\fv|/2)\cos(\ratio \sqrt{\vartheta} (i-j)|\fv|) \gspzero(\fv)\mathrm{d}\fv\nonumber\\
&\qquad+ R_{\mathrm{sp}}(i,j,k,l)\Bigg)^{2}.\label{eq:main_boxincr_var_1}
\end{align}
\begin{step}[Determining the limit]
    Using our representations \eqref{eq:introducing_the_fejér_kernel} and \eqref{eq:fejer_time_kernel} and utilizing the factorisations in \eqref{eq:fejer_factorisation} and \eqref{eq:time_fejer_factorisation}, we can rewrite the main expression in \eqref{eq:main_boxincr_var_1} as
    \begin{align}
        &2 \frac{1}{m^{3}} \sum_{i,j=1}^{m} \frac{1}{n}\sum_{l,k=1}^{n}\Bigg(\frac{128 (i \land j)}{ (2\pi)^{d}\vartheta} \int_{\mathbb{R}^{d}}  \cos\left((k-l) {\rho \cdot \fv}\right) \cos(\ratio \sqrt{\vartheta} (i-j)|\fv|) \sin^{4}(\ratio \sqrt{\vartheta}|\fv|/2)\gspzero(\fv)\mathrm{d}\fv \Bigg)^{2}\label{eq:varlimit_sp_main}\\
        &=\frac{2^{15}}{(2\pi)^{2d}\vartheta^{2}} \sum_{|j|\leq m-1}\sum_{|k|\leq n-1}w_{\mathrm{te},j}w_{\mathrm{sp},k}\left(\int_{\mathbb{R}^{d}}\cos(k\rho\cdot\fv)\cos(j\ratio\sqrt{\vartheta}|\fv|)\sin^{4}(\ratio \sqrt{\vartheta}|\fv|/2)\gspzero(\fv)\mathrm{d}\fv\right)^2\nonumber
    \end{align}
As $\sin^4(x)=3/8-\cos(2x)/2+\cos(4x)/8$, we obtain 
\begin{equation*}
    \begin{aligned}
        &\int_{\mathbb{R}^{d}}\cos(k\rho\cdot\fv)\cos(j\ratio\sqrt{\vartheta}|\fv|)\sin^{4}(\ratio \sqrt{\vartheta}|\fv|/2)\gspzero(\fv)\mathrm{d}\fv\\
        &=\sum_{z\in\{0,\pm 1,\pm 2\}}\frac{(3-|z|)(-1)^{|z|}}{|z|!8}\int_{\mathbb{R}^{d}}\cos(k\rho\cdot\fv)\cos((j+z)\ratio\sqrt{\vartheta}|\fv|)\gspzero(\fv)\mathrm{d}\fv\\
        &=\sum_{z\in\{0,\pm 1,\pm 2\}}c_z C_{\mathrm{sp},k,j,z}
    \end{aligned}
\end{equation*}
with 
\begin{equation}
    \label{eq:coeff_gamma}
    \begin{aligned}
        C_{\mathrm{sp},k,j,z}&\coloneqq\int_{\mathbb{R}^{d}}\cos(k\rho\cdot\fv)\cos((j+z)\ratio\sqrt{\vartheta}|\fv|)\sin^4(\rho\cdot\fv/2)|\fv|^{\beta-d-2}\mathrm{d}\fv,\\
        c_z&\coloneqq \frac{(3-|z|)(-1)^{|z|}}{|z|!8}.
    \end{aligned}
\end{equation}
Note that $C_{\mathrm{sp},k,j,z}=\mathcal{F}_c(\gspzero)(k\rho)$ if and only if $j+z=0$, i.e.\ $z=-j$, which is only possible for $|j|\leq2$ and exactly one $z\in\{0,\pm1,\pm2\}$. 
Hence, we can rewrite \eqref{eq:varlimit_sp_main} as 
\begin{align*}
        &\frac{2^{15}}{(2\pi)^{2d}\vartheta^{2}} \sum_{|j|\leq m-1}\sum_{|k|\leq n-1}w_{\mathrm{te},j}w_{\mathrm{sp},k}\left(\sum_{z\in\{0,\pm 1,\pm 2\}}c_z C_{\mathrm{sp},k,j,z}\right)^2\\
        &=\frac{2^{15}}{(2\pi)^{2d}\vartheta^{2}} \sum_{|j|\leq m-1}\sum_{z_1,z_2\in\{0,\pm 1,\pm 2\}}c_{z_1}c_{z_2}w_{\mathrm{te},j}\sum_{|k|\leq n-1}w_{\mathrm{sp},k} C_{\mathrm{sp},k,j,z_1}C_{\mathrm{sp},k,j,z_2}\\
        &=\frac{2^{15}}{(2\pi)^{2d}\vartheta^{2}} \sum_{|j|\leq m-1}\sum_{\substack{z_1,z_2\in\{0,\pm 1,\pm 2\}\\
        z_1=z_2=-j}}c_{z_1}c_{z_2}w_{\mathrm{te},j}\sum_{|k|\leq n-1}w_{\mathrm{sp},k} C_{\mathrm{sp},k,j,z_1}C_{\mathrm{sp},k,j,z_2}\\
        &\qquad+\frac{2^{15}}{(2\pi)^{2d}\vartheta^{2}} \sum_{|j|\leq m-1}\sum_{\substack{z_1,z_2\in\{0,\pm 1,\pm 2\}\\
        z_1\neq-j\text{ or }z_2\neq-j}}c_{z_1}c_{z_2}w_{\mathrm{te},j}\sum_{|k|\leq n-1}w_{\mathrm{sp},k} C_{\mathrm{sp},k,j,z_1}C_{\mathrm{sp},k,j,z_2}\\
        &=\frac{2^{15}}{(2\pi)^{2d}\vartheta^{2}} \sum_{z\in\{0,\pm 1,\pm 2\}}c_{z}^2w_{\mathrm{te},j}\sum_{|k|\leq n-1}w_{\mathrm{sp},k} \left(\mathcal{F}_c(\gspzero)(k\rho)\right)^2\\
        &\qquad+\frac{2^{15}}{(2\pi)^{2d}\vartheta^{2}} \sum_{|j|\leq m-1}\sum_{\substack{z_1,z_2\in\{0,\pm 1,\pm 2\}\\
        z_1\neq-j\text{ or }z_2\neq-j}}c_{z_1}c_{z_2}w_{\mathrm{te},j}\sum_{|k|\leq n-1}w_{\mathrm{sp},k} C_{\mathrm{sp},k,j,z_1}C_{\mathrm{sp},k,j,z_2}\\
        &\rightarrow\frac{2^{15}}{3(2\pi)^{2d}\vartheta^{2}} \sum_{z\in\{0,\pm 1,\pm 2\}}c_{z}^2\sum_{k\in\mathbb{Z}}\left(\mathcal{F}_c(\gspzero)(k\rho)\right)^2\\
        &=\frac{2^{8}35}{3(2\pi)^{2d}\vartheta^{2}}\sum_{k\in\mathbb{Z}}\left(\mathcal{F}_c(\gspzero)(k\rho)\right)^2=\frac{C_{\mathrm{box,sp,\V}}}{\vartheta^2},
\end{align*}
where we have used $ \sum_{z\in\{0,\pm 1,\pm 2\}}c_{z}^2=35/128$ in the last line. The convergence in the penultimate line follows from \Cref{lemma:fourier_representation_time_fejer_kernel}, the proof of \Cref{proposition:asymptotic_scaling_for_the_variance}, i.e.\ $w_{\mathrm{te},j}\rightarrow1/3$ and $\sum_{|k|\leq n-1}w_{\mathrm{sp},k}\left(\mathcal{F}_c(\gspzero)(k\rho)\right)^2\rightarrow\sum_{k\in\mathbb{Z}}\left(\mathcal{F}_c(\gspzero)(k\rho)\right)^2$, and the following bound
\begin{align*}
        &\left|\frac{2^{15}}{(2\pi)^{2d}\vartheta^{2}} \sum_{|j|\leq m-1}\sum_{\substack{z_1,z_2\in\{0,\pm 1,\pm 2\}\\
        z_1\neq-j\text{ or }z_2\neq-j}}c_{z_1}c_{z_2}w_{\mathrm{te},j}\sum_{|k|\leq n-1}w_{\mathrm{sp},k} C_{\mathrm{sp},k,j,z_1}C_{\mathrm{sp},k,j,z_2}\right|\\
        &\lesssim \sum_{|j|\leq m-1}\sum_{\substack{z_1,z_2\in\{0,\pm 1,\pm 2\}\\
        z_1\neq-j\text{ or }z_2\neq-j}}\sum_{|k|\leq n-1}\left| C_{\mathrm{sp},k,j,z_1}C_{\mathrm{sp},k,j,z_2}\right|\\
        &\leq\sum_{|j|\leq m-1}\sum_{\substack{z_1,z_2\in\{0,\pm 1,\pm 2\}\\
        z_1\neq-j\text{ or }z_2\neq-j}}\left(\sum_{|k|\leq n-1}C_{\mathrm{sp},k,j,z_1}^2\right)^{1/2}\left(\sum_{|k|\leq n-1}C_{\mathrm{sp},k,j,z_2}^2\right)^{1/2}\\
        &=\sum_{|j|\leq m-1}\sum_{\substack{z_1,z_2\in\{0,\pm 1,\pm 2\}\\
        z_1,z_2\neq-j}}\left(\sum_{|k|\leq n-1}C_{\mathrm{sp},k,j,z_1}^2\right)^{1/2}\left(\sum_{|k|\leq n-1}C_{\mathrm{sp},k,j,z_2}^2\right)^{1/2}\\
        &\qquad +2\sum_{|j|\leq 2}\sum_{\substack{z\in\{0,\pm 1,\pm 2\}\\
        z\neq-j}}\left(\sum_{|k|\leq n-1}C_{\mathrm{sp},k,j,z}^2\right)^{1/2}\left(\sum_{|k|\leq n-1}\left(\mathcal{F}_c(\gspzero)(k\rho)\right)^2\right)^{1/2}\\
        &\lesssim \begin{cases}
            mn\ratio^{-4}+n\ratio^{-4},\quad&d=1,\\
            \alpha^{-1}\log(m)+\alpha^{-1/2},\quad &d\geq2,
        \end{cases}\\
        &\rightarrow0
\end{align*}
due to \Cref{lemma:C_k_j_f_a_j}, \Cref{lemma:d_1_bound_C_j_k} together with \Cref{assumption:bias_convergence} and \Cref{assumption:helper}.
\end{step}
\begin{step}[Controlling the remainder]
For the remainder term in \eqref{eq:main_boxincr_var_1}, we observe by \Cref{lemma:covariance_space_time_increments}
\begin{equation*}
\begin{aligned}
&\frac{2}{nm^{3}}\sum_{i,j=1}^{m}\sum_{l,k=1}^{n}R_{\mathrm{sp}}(i,j,k,l)^{2}= \frac{1}{nm^{3}}\mathcal{O}(m^{2}n^{2})=\mathcal{O}\left(\frac{n}{m}\right) \rightarrow 0,
\end{aligned}
\end{equation*}
where the convergence follows immediately from \eqref{eq:bias_assumption_n_frac_m} in \Cref{assumption:bias_convergence}. As discussed in \Cref{proposition:variance_temporal_second_order_variation_fixed_space} the cross term remains of lower order by the Cauchy-Schwarz inequality. 
\end{step}
\end{case}
\begin{case}[$\ratio\rightarrow0$]
    The proof for $\ratio\rightarrow0$ is identical. We obtain by \Cref{lemma:covariance_space_time_increments}, \Cref{lemma:C_k_j_f_a_j} and \Cref{lemma:d_1_bound_C_j_k} the convergence
    \begin{equation*}
        \begin{aligned}
        \frac{\delta^{2\beta-6}}{nm^{3}}\V(\mathbf{V}_{\mathrm{sp}, \mathrm{te}})&=\frac{2\delta^{2\beta-6}}{nm^{3}}\sum_{i,j=1}^m\sum_{l,k=1}^n\mathrm{Cov}(\mathbf{I}_{\mathrm{sp},\mathrm{te},i,k}, \mathbf{I}_{\mathrm{sp},\mathrm{te},j,l})^2\\
        &= \frac{2^{15}}{(2\pi)^{2d}\vartheta^{\beta}} \sum_{|k|\leq n-1}w_{\mathrm{sp},k}\sum_{|j|\leq m-1}w_{\mathrm{te},j}\left(\sum_{z\in\{0,\pm1,\pm2\}}c_z C_{\mathrm{te},k,j,z}\right)^2+\smallo(1)\\
        &\rightarrow \frac{2^{8}35}{3(2\pi)^{2d}\vartheta^{\beta}}\sum_{j\in\mathbb{Z}}\left(\mathcal{F}_c^+(\gtemp)(j)\right)^2=\frac{C_{\mathrm{box,temp,\V}}}{\vartheta^\beta}
        \end{aligned}
    \end{equation*}
    with $c_z$ from \eqref{eq:coeff_gamma} and
    \begin{equation*}
        C_{\mathrm{te},k,j,z}\coloneqq\int_{\mathbb{R}^{d}} \cos(j  |\fv|)\cos((k+z)(\ratio \sqrt{\vartheta})^{-1} \rho \cdot \fv)\sin^{4}(|\fv|/2)|\fv|^{\beta-d-2}\mathrm{d}\fv.\tag*{\qedhere}
    \end{equation*}
\end{case}
\end{proof}

\begin{lemma}
  \label[lemma]{lemma:C_k_j_f_a_j}
  Let $k,j \in \mathbb{Z}$,  $\ratio>0$ and $z\in\{-2,-1,0,1,2\}$. Consider the coefficients
  \begin{equation*}
  \begin{aligned}
  C_{\mathrm{sp},k,j,z} &= \int_{\mathbb{R}^{d}} \cos(k \rho \cdot \fv)\cos((j+z) \ratio \sqrt{\vartheta} |\fv|)\sin^{4}(\rho \cdot \fv/2)|\fv|^{\beta-d-2}\mathrm{d}\fv,\\
  C_{\mathrm{te},k,j,z} &=\int_{\mathbb{R}^{d}} \cos(j  |\fv|)\cos((k+z)(\ratio \sqrt{\vartheta})^{-1} \rho \cdot \fv)\sin^{4}(|\fv|/2)|\fv|^{\beta-d-2}\mathrm{d}\fv.\\
  \end{aligned}
  \end{equation*}
  Assume that $d\geq 2$. If $|j+z|\geq 1,$ then
  \begin{equation}
  \label{eq:spatial_sum_decay}
  \sum_{k \in \mathbb{Z} } C_{\mathrm{sp},k,j,z}^{2}
  \lesssim \frac{1}{\ratio|j+z|},
  \end{equation}
  and if $|k+z| \geq 1$, then
  \begin{equation}
  \label{eq:temporal_sum_decay}
  \sum_{j \in \mathbb{Z} } C_{\mathrm{te},k,j,z}^{2}
  \lesssim \frac{\ratio}{|k+z|}.
  \end{equation}
  \end{lemma}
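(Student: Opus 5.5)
We view $k\mapsto C_{\mathrm{sp},k,j,z}$ as Fourier coefficients of a $2\pi$-periodic function and use Parseval to turn $\sum_k C^2$ into an $L^2$-norm on the torus. Write $\fv=(s\rho,\eta)$ with $s=\rho\cdot\fv\in\mathbb{R}$ and $\eta\in\rho^{\perp}\cong\mathbb{R}^{d-1}$. Set $N\coloneqq j+z$ and $a\coloneqq N\ratio\sqrt{\vartheta}$. Define
\begin{equation*}
h_N(s)\coloneqq \sin^4(s/2)\int_{\mathbb{R}^{d-1}}\cos\big(a\sqrt{s^2+|\eta|^2}\big)\,(s^2+|\eta|^2)^{(\beta-d-2)/2}\,\mathrm{d}\eta .
\end{equation*}
Then $C_{\mathrm{sp},k,j,z}=\int_{\mathbb{R}}\cos(ks)h_N(s)\,\mathrm{d}s=\int_{-\pi}^{\pi}\cos(ks)H_N(s)\,\mathrm{d}s$, where $H_N(s)=\sum_{p\in\mathbb{Z}}h_N(s+2\pi p)$ is the periodisation. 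Parseval (Bessel suffices) gives
\begin{equation*}
\sum_{k}C_{\mathrm{sp},k,j,z}^2\lesssim \Vert H_N\Vert_{L^2(-\pi,\pi)}^2\lesssim \Big(\sum_{p}\Vert h_N(\cdot+2\pi p)\Vert_{L^2(-\pi,\pi)}\Big)^2 .
\end{equation*}
The statement therefore reduces to showing $|h_N(s)|\lesssim |a|^{-1/2}\,\omega(s)$ for a weight $\omega$ that is square integrable near $0$ and whose translates have summable local $L^2$ norms. This yields the bound $1/(\ratio|N|)$.

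The core step is the oscillatory estimate for the inner $\eta$-integral. Pass to polar coordinates in $\eta$ ($d-1\ge1$ dimensions, radius $r$) and substitute $u=\sqrt{s^2+r^2}$, so that $r\,\mathrm{d}r=u\,\mathrm{d}u$. The inner integral becomes
\begin{equation*}
\sarea[d-2]\int_{|s|}^{\infty}\cos(au)\,u^{\beta-d-1}(u^2-s^2)^{(d-3)/2}\,\mathrm{d}u .
\end{equation*}
For $d\ge3$ the amplitude is monotone, or splits into finitely many monotone pieces, and decays like $u^{\beta-4}$. Integrating by parts once, or applying the second mean value theorem, gives a bound $|a|^{-1}$ times the amplitude near $u\approx|s|$. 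This is about $|a|^{-1}|s|^{\beta-4}\cdot|s|^{d-3}\cdot|s|$-scale terms, i.e. $\lesssim |a|^{-1}|s|^{\beta-3}$ up to the correct homogeneity. For $d=2$ the factor $(u^2-s^2)^{-1/2}$ has an integrable endpoint singularity, and the standard van der Corput / stationary-endpoint estimate gives only $|a|^{-1/2}$. Specifically, $\int_{|s|}^{\infty}\cos(au)(u-|s|)^{-1/2}\phi(u)\,\mathrm{d}u=O(|a|^{-1/2}\phi(|s|)|s|^{-1/2})$. This is the weaker decay claimed, uniformly valid for all $d\ge2$. In either case we obtain
\begin{equation*}
|h_N(s)|\lesssim \min\big(1,\ |a|^{-1/2}\big)\,\sin^4(s/2)\,|s|^{\beta-3-1/2}\cdot(\text{power of }|s|),
\end{equation*}
and squaring gives the factor $|a|^{-1}\asymp(\ratio|N|)^{-1}$. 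The $\sin^4(s/2)\lesssim s^4\wedge1$ factor kills the singularity at $s=0$, since $s^{8}\cdot s^{2\beta-7}$ is integrable. The tail decays like $|s|^{2\beta-7}$, which makes the sum over translates $p$ converge.

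The main obstacle will be making this oscillatory estimate uniform in $s$. We need it both near $s=0$, where $u$ starts at $0$ and the amplitude $u^{\beta-d-1}$ is singular, and for large $s$. Conditional convergence must be handled when $\beta\ge1$. I would split the $u$-integral at $u=2|s|$. On $[|s|,2|s|]$ I would use the endpoint-singularity estimate; on $[2|s|,\infty)$ I would integrate by parts with the smooth amplitude, taking $\min(1,\cdot)$ against the trivial bound when $|a||s|\lesssim1$. Checking that the resulting weight $\omega$ has $\int\omega^2<\infty$ together with the $\sin^4$ factor is then routine.

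The temporal estimate \eqref{eq:temporal_sum_decay} is symmetric. Now $j\mapsto C_{\mathrm{te},k,j,z}$ are cosine coefficients in the radial variable $r=|\fv|$. Write $C_{\mathrm{te},k,j,z}=\int_0^{\infty}\cos(jr)\tilde h(r)\,\mathrm{d}r$ with
\begin{equation*}
\tilde h(r)=\sin^4(r/2)\,r^{\beta-3}\int_{\mathbb{S}^{d-1}}\cos\big(b\,r\,\rho\cdot\sigma\big)\,\sigma(\mathrm{d}\sigma),\qquad b=(k+z)(\ratio\sqrt{\vartheta})^{-1}.
\end{equation*}
The spherical integral is the Fourier transform of surface measure, bounded by $\lesssim(1+|b|r)^{-(d-1)/2}\le (|b|r)^{-1/2}$ for $d\ge2$. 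Applying Parseval in $j$ after even extension and periodisation as before gives $\sum_j C_{\mathrm{te},k,j,z}^2\lesssim |b|^{-1}\int(\sin^4(r/2)r^{\beta-7/2})^2\,\mathrm{d}r\lesssim \ratio/|k+z|$.
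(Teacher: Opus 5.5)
Your proposal is correct. For \eqref{eq:spatial_sum_decay} it is essentially the paper's proof. The paper also rotates to $\rho=e_1$ and expresses $C_{\mathrm{sp},k,j,z}$ as a cosine coefficient of the periodisation of $\sin^4(s/2)H(s)$, where $H$ is the slice integral over $\rho^{\perp}$. It then substitutes $v=\sqrt{s^2+r^2}$ and proves $|H(s)|\lesssim(\alpha|j+z|)^{-1/2}|s|^{\beta-7/2}$ by cutting at $|s|+\varepsilon$ with $\varepsilon=(\alpha|j+z|)^{-1}$. The piece containing the $(v-|s|)^{-1/2}$ endpoint singularity is bounded directly, and the rest is integrated by parts. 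That single cut is uniform in $s\neq0$, so your extra cut at $2|s|$ and the case distinction in $|a||s|$ are unnecessary. There is also no conditional convergence to handle, since the amplitude is $O(v^{\beta-4})$ at infinity.

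For \eqref{eq:temporal_sum_decay} you take a different route. You quote the decay $|\widehat{\sigma}(\xi)|\lesssim(1+|\xi|)^{-(d-1)/2}$ of the Fourier transform of surface measure. The paper instead writes the spherical factor via Funk--Hecke as $2\sigma_{d-2}r^{2-d}\int_0^r\cos(bs)(r^2-s^2)^{(d-3)/2}\,\mathrm{d}s$ and repeats the $\varepsilon$-splitting with $\varepsilon=|b|^{-1}$. In effect it reproves by hand the weakest ($d=2$) case of that decay. Your version is shorter and exposes the dimension dependence behind the improvements discussed after the lemma. The paper's version keeps both halves elementary and parallel.

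Some bookkeeping in your sketch needs correcting:
\begin{itemize}
\item By scaling, the slice integral equals $|s|^{\beta-3}F(a|s|)$. So for $d\ge3$ one integration by parts gives $|a|^{-1}|s|^{\beta-4}$, not $|a|^{-1}|s|^{\beta-3}$.
\item Taking the geometric mean of this with the trivial bound $|s|^{\beta-3}$ gives exactly the weight $|a|^{-1/2}|s|^{\beta-7/2}$. This is the ``power of $|s|$'' your display leaves open.
\item Your endpoint formula holds with $\phi(u)=u^{\beta-3}$ only if the factor $(u+|s|)^{-1/2}$ is kept in the integrand.
\end{itemize}
The conclusion is unaffected, since $\sin^8(s/2)|s|^{2\beta-7}$ is integrable at $0$ and $\sum_{p\neq0}|p|^{\beta-7/2}<\infty$.
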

\begin{proof}
    \begin{step}[Proving \eqref{eq:spatial_sum_decay}]
        $C_{\mathrm{sp},k,j,z}$ depends on $\fv$ only through $|\fv|$ and $\rho \cdot \fv$. By rotational invariance, we can rotate the coordinate system and assume that $\rho=e_{1}$, yielding
          \begin{equation}
          \label{eq:periodisation_C_k_j}
          \begin{aligned}
          C_{\mathrm{sp}, k,j,z}&= \int_{\mathbb{R}^{d}} \cos(k \rho \cdot \fv)\cos((j+z) \ratio \sqrt{\vartheta} |\fv|)\sin^{4}(\rho \cdot \fv/2)|\fv|^{\beta-d-2}\mathrm{d}\fv,\\
          &= \int_{\mathbb{R}^{d}} \cos(k \fv_1)\cos((j+z) \ratio \sqrt{\vartheta} |\fv|)\sin^{4}(\fv_1/2)|\fv|^{\beta-d-2}\mathrm{d}\fv\\
          &= \int_{\mathbb{R}} \cos(k s)\sin^{4}(s/2) H_{\mathrm{sp},\ratio,j}(s) \mathrm{d}s
          \end{aligned}
          \end{equation}
          with
          \begin{equation*}
          \begin{aligned}
          H_{\mathrm{sp},\ratio,j}(s) &\coloneqq\int_{\mathbb{R}^{d-1}}\cos\left((j+z)\ratio \sqrt{\vartheta}\sqrt{s^{2}+|\fv|^{2}}\right)\left(s^{2}+|\fv|^{2}\right)^{\frac{\beta-d-2}{2}}\mathrm{d}\fv\\
          &=\sarea[d-2]\int_{0}^{\infty} \cos\left((j+z)\ratio \sqrt{\vartheta}\sqrt{s^{2}+r^{2}}\right)\left(s^{2}+r^{2}\right)^{\frac{\beta-d-2}{2}}r^{d-2}\mathrm{d}r\\
          &=\sarea[d-2]\int_{|s|}^{\infty} \cos((j+z)\ratio \sqrt{\vartheta} v)v^{\beta-d-1}(v^{2}-s^{2})^{\frac{d-3}{2}}\mathrm{d}v,
          \end{aligned}
          \end{equation*}
          where we have rewritten the integral in polar coordinates and then applied the transformation $v(r)=\sqrt{s^{2}+r^{2}}$. Next, notice that $\cos(ks)$ and $\sin^{4}(s/2)$ are $2\pi$-periodic. Thus, splitting $\mathbb{R}$ into intervals of length $2\pi$ we can rewrite
          \begin{equation*}
          \begin{aligned}
          C_{\mathrm{sp},k,j,z}&=\sum_{l \in \mathbb{Z}} \int_{-\pi}^{\pi}\cos(k(s+2\pi l))\sin^{4}((s+2\pi l)/2)H_{\mathrm{sp},\ratio,j}(s+2\pi l) \mathrm{d}s\\
          &=\int_{-\pi}^{\pi}\cos(ks)\sin^{4}(s/2)\sum_{l \in \mathbb{Z}}H_{\mathrm{sp},\ratio,j}(s+2\pi l)\mathrm{d}s\\
          &=\int_{-\pi}^{\pi}\cos(ks)f_{\mathrm{sp},\ratio,j}(s) \mathrm{d}s,
          \end{aligned}
          \end{equation*}
          with $f_{\mathrm{sp},\ratio,j}(s)=\sin^{4}(s/2)\sum_{l \in \mathbb{Z}}H_{\mathrm{sp},\ratio,j}(s+2\pi l)$. Now provided that $f_{\mathrm{sp},\ratio,j}$ is well-defined and an element of $L^{2}((-\pi,\pi))$, we obtain
          \begin{equation}
          \label{eq:plancherel_estimate}
          \sum_{k \in \mathbb{Z}} C_{\mathrm{sp},k,j,z}^{2} = \sum_{k \in \mathbb{Z}}\left(\int_{-\pi}^{\pi}\cos(ks)f_{\mathrm{sp},\ratio,j}(s) \mathrm{d}s\right)^{2}=2\pi\Vert f_{\mathrm{sp},\ratio,j} \Vert^{2}_{L^{2}((-\pi,\pi))}.
          \end{equation}
          Next, we investigate the mapping $H_{\mathrm{sp},\ratio,j}$. Splitting the integral at $\varepsilon>0$ we can rewrite $H_{\mathrm{sp},\ratio,j}$ as
          \begin{equation}
          \label{eq: integral_splitting_H}
          \begin{aligned}
             H_{\mathrm{sp},\ratio,j}(s)&=\int_{|s|}^{|s|+\varepsilon} \cos((j + z)\ratio \sqrt{\vartheta} v)v^{\beta-d-1}(v^{2}-s^{2})^{\frac{d-3}{2}}\mathrm{d}v\\
             &\qquad+\int_{|s|+\varepsilon}^{\infty} \cos((j + z)\ratio \sqrt{\vartheta} v)v^{\beta-d-1}(v^{2}-s^{2})^{\frac{d-3}{2}}\mathrm{d}v.
          \end{aligned}
          \end{equation}
          Note that for $v \geq |s|$, we have
          \begin{equation*}
              \begin{aligned}
                  v^{\beta-d-1}(v^2-s^2)^{(d-3)/2}&=v^{\beta-d-1}(v^2-s^2)^{-1/2}(v^2-s^2)^{(d-2)/2}\\
                  &\leq v^{\beta-d-1}v^{d-2}((v-|s|)(v+|s|))^{-1/2}\\
                  &\leq v^{\beta-3}|s|^{-1/2}(v-|s|)^{-1/2}\\
                  &\leq |s|^{\beta-7/2}(v-|s|)^{-1/2}\mathbbm{1}_{[|s|,|s|+\varepsilon]}(v)+v^{\beta-7/2}\varepsilon^{-1/2}\mathbbm{1}_{(|s|+\varepsilon,\infty)}(v).
              \end{aligned}
          \end{equation*}          
          We will bound the first integral directly and use integration by parts for the second one. In the special case of $d=2$ we obtain
          \begin{equation*}
              \begin{aligned}
                  &\int_{|s|+\varepsilon}^{\infty} \cos((j + z)\ratio \sqrt{\vartheta} v)v^{\beta-3}(v^{2}-s^{2})^{-1/2}\mathrm{d}v\\
                  &=\left[\frac{\sin((j+z)\ratio\sqrt{\vartheta}v)}{(j+z)\ratio\sqrt{\vartheta}}v^{\beta-3}(v^{2}-s^{2})^{-1/2}\right]_{|s|+\varepsilon}^\infty-\int_{|s|+\varepsilon}^\infty \frac{\sin((j+z)\ratio\sqrt{\vartheta}v)}{(j+z)\ratio\sqrt{\vartheta}}\frac{\mathrm{d}}{\mathrm{d}v}\left(v^{\beta-3}(v^{2}-s^{2})^{-1/2}\right)\mathrm{d}v\\
                  &\leq\frac{|s|^{\beta-7/2}\varepsilon^{-1/2}}{|j+z|\ratio\sqrt{\vartheta}}+\int_{|s|+\varepsilon}^\infty \frac{1}{|j+z|\ratio\sqrt{\vartheta}}\left|\frac{\mathrm{d}}{\mathrm{d}v}\left(v^{\beta-3}(v^{2}-s^{2})^{-1/2}\right)\right|\mathrm{d}v\\
                  &=\frac{|s|^{\beta-7/2}\varepsilon^{-1/2}}{|j+z|\ratio\sqrt{\vartheta}}-\frac{1}{|j+z|\ratio\sqrt{\vartheta}}\int_{|s|+\varepsilon}^\infty \frac{\mathrm{d}}{\mathrm{d}v}\left(v^{\beta-3}(v^{2}-s^{2})^{-1/2}\right)\mathrm{d}v\\
                  &\leq\frac{2|s|^{\beta-7/2}\varepsilon^{-1/2}}{|j+z|\ratio\sqrt{\vartheta}},
              \end{aligned}
          \end{equation*}
          whereas the penultimate equality fails in $d\geq3$. In that case we bound the derivative of $v^{\beta-d-1}(v^2-s^2)^{(d-3)/2}$ by the triangle inequality and consider each term separately.
          In total,
          \begin{equation*}
              \begin{aligned}
                  H_{\mathrm{sp},\ratio,j}(s)&\lesssim|s|^{\beta-7/2}\left(\varepsilon^{1/2}+\frac{1}{\ratio|j+z|\varepsilon^{1/2}}\right)\\
                  &\leq |s|^{\beta-7/2}\varepsilon^{1/2}+\frac{1}{\ratio|j+z|\varepsilon^{1/2}}\\
                  &=|s|^{\beta-7/2}\frac{1}{\ratio^{1/2}|j+z|^{1/2}}
              \end{aligned}
          \end{equation*}
          for the choice $\varepsilon=\ratio^{-1}|j+z|^{-1}$. Plugging the last display into \eqref{eq:plancherel_estimate}, it follows for the norm
          \begin{equation*}
          \begin{aligned}
              \Vert f_{\mathrm{sp},\ratio,j} \Vert_{L^{2}((-\pi, \pi))}^{2} \lesssim &\frac{1}{\ratio|j+z|}\int_{-\pi}^{\pi} \sin^{8}(s/2) \left(\sum_{l \in \mathbb{Z}} |s + 2\pi l|^{\beta-7/2}\right)^{2}\mathrm{d}s.\\
              &\lesssim\frac{1}{\ratio|j+z|}\int_{-\pi}^{\pi} \sin^{8}(s/2) \left(|s|^{\beta-7/2}+\sum_{l \in \mathbb{Z},|l|\geq1} |l|^{\beta-7/2}\right)^2\mathrm{d}s\\
                &\lesssim\frac{1}{\ratio|j+z|}\int_{-\pi}^{\pi} \sin^{8}(s/2) (|s|^{2\beta-7}+1)\mathrm{d}s\\
                &\lesssim \frac{1}{\ratio|j+z|},
          \end{aligned}
          \end{equation*}
          proving the assertion.
    \end{step}
    \begin{step}[Proving \eqref{eq:temporal_sum_decay}]
      The structure of the analysis of $C_{\mathrm{te},k,j,z}$ is essentially similar. In the analysis of $C_{\mathrm{sp},k,j,z}$, we isolated the direction $\rho \cdot \fv= \fv_1$ and captured all dependencies on $j$ as well as $\ratio$ within the mapping $f_{\mathrm{sp},a,j}$. Likewise for $C_{\mathrm{te},k,j,z}$, we first transform the integral to polar coordinates, i.e.\
      \begin{equation*}
      \begin{aligned}
      C_{\mathrm{te},k,j,z} &=\int_{\mathbb{R}^{d}} \cos(j  |\fv|)\cos((k+z)(\ratio \sqrt{\vartheta})^{-1} \rho \cdot \fv)\sin^{4}(|\fv|/2)|\fv|^{\beta-d-2}\mathrm{d}\fv\\
      &=\int_{0}^{\infty}\cos(jr)\sin^{4}(r/2)r^{\beta-3}\left(\int_{\mathbb{S}^{d-1}}\cos((k+z)(\ratio \sqrt{\vartheta})^{-1} \rho \cdot (r\fv)) \mathrm{d}\sigma(\fv) \right)\mathrm{d}r\\
      &=
      \int_{0}^{\infty}\cos(jr)\sin^{4}(r/2)r^{\beta-3} H_{\mathrm{te},\ratio,k}(r)\mathrm{d}r
      \end{aligned}
      \end{equation*}
      with 
      \begin{equation}
      \label{eq:tilde_H_a_k}
      H_{\mathrm{te}, \ratio, k}(r) = \int_{\mathbb{S}^{d-1}}\cos((k+z)(\ratio \sqrt{\vartheta})^{-1} \rho \cdot (r\fv))\mathrm{d}\sigma(\fv) .
      \end{equation}
      Now periodisation yields 
      \begin{equation*}
      \sum_{j \in \mathbb{Z}}C_{\mathrm{te},k,j,z}^{2} \lesssim \Vert {f}_{\mathrm{te}, \ratio, k} \Vert_{L^{2}([-\pi, \pi])}^{2},
      \end{equation*}
      with
      \begin{equation*}
      {f}_{\mathrm{te},\ratio,k}(r)= \mathbbm{1}_{(0,\infty)}(r) \sin^{4}(r/2)\sum_{l \in \mathbb{Z}} {H}_{\mathrm{te},\ratio,k}(r+2\pi l)|r + 2\pi l|^{\beta-3}.
      \end{equation*}
      By the classical Funk-Hecke formula and substitution, we can rewrite \eqref{eq:tilde_H_a_k} as
      \begin{equation*}
      \begin{aligned}
          H_{\mathrm{te},\ratio,k}(r) &=2\sarea[d-2]r^{-1}\int_{0}^{r}\cos((\ratio \sqrt{\vartheta})^{-1}(k+z)s)((r^2-s^{2})/r^2)^{(d-3)/2}\mathrm{d}s\\
          &=2\sarea[d-2] r^{-d+2}\int_{0}^{r}\cos((\ratio \sqrt{\vartheta})^{-1}(k+z)s)(r^2-s^{2})^{(d-3)/2}\mathrm{d}s.
      \end{aligned}
      \end{equation*}
      As in the situation of $H_{\mathrm{sp},\ratio,j}$, we can continue by splitting the integral at $r-\varepsilon$. Using that 
      \begin{equation*}
          (r^2-s^2)^{(d-3)/2} \leq r^{d-5/2}\left(\varepsilon^{-1/2}\mathbbm{1}_{(0,r-\varepsilon)}(s)+(r-s)^{-1/2}\mathbbm{1}_{[r-\varepsilon,r]}\right),
      \end{equation*}
      it follows analogously with the choice $\varepsilon=\ratio \sqrt{\vartheta}/|k+z|$ that 
      \begin{equation*}
           H_{\mathrm{te},\ratio,k}(r)\lesssim r^{-1/2}\frac{\ratio^{1/2}}{|k+z|^{1/2}}.
      \end{equation*}
      Thus, 
      \begin{equation*}
      \begin{aligned}
          \Vert f_{\mathrm{te},\ratio,k}\Vert^2_{L^2((-\pi,\pi))}&\lesssim\frac{\ratio}{|k+z|}\int_{-\pi}^{\pi}\sin^8(r/2)\left(\sum_{l \in \mathbb{Z}}^{n} |r + 2\pi l|^{\beta-7/2}\right)^2\mathrm{d}r\lesssim\frac{\ratio}{|k+z|}.
      \end{aligned}\qedhere
      \end{equation*}
    \end{step}
\end{proof}
Summing \eqref{eq:spatial_sum_decay} over $|j|\geq2$ yields by \Cref{assumption:bias_convergence}
\begin{equation*}
    \sum_{|j|\geq2}\sum_{k\in\mathbb{Z}}C_{\mathrm{sp},k,j,z}^2\lesssim\log(m)\ratio^{-1}\rightarrow0.
\end{equation*}
The convergence in the last display is required to establish the convergence in \Cref{proposition:variance_space_time_variation}. As mentioned in \Cref{rmk:assumption_weakening}, the claim of \Cref{proposition:variance_space_time_variation} also holds in $d\geq3$ when \Cref{assumption:bias_convergence} is not fulfilled. This can be achieved by improving \Cref{lemma:C_k_j_f_a_j} significantly, i.e.\ instead of \eqref{eq:spatial_sum_decay} we can show the stronger statement
\begin{equation}
\label{eq: improvement C_k_j_gamma}
    \sum_{k \in \mathbb{Z} } C_{\mathrm{sp},k,j,z}^{2}\lesssim \frac{1}{\ratio^2|j+z|^2},\quad d\geq3,
\end{equation}
which entails
\begin{equation*}
    \sum_{|j|\geq2}\sum_{k\in\mathbb{Z}}C_{\mathrm{sp},k,j,z}\lesssim\ratio^{-2}\rightarrow0,\quad \ratio\rightarrow\infty.
\end{equation*}
Indeed, instead of the splitting-argument in \eqref{eq: integral_splitting_H}, we can apply integration by parts directly, using that 
\begin{equation*}
\begin{aligned}
    \left|\frac{\mathrm{d}}{\mathrm{d}v}\left(v^{\beta-d-1}(v^2-s^2)^{(d-3)/2}\right)\right|&\lesssim 
    \begin{cases}
        v^{\beta-5},\quad &d=3,\\
        v^{\beta-d-2}(v^2-s^2)^{(d-3)/2}+v^{\beta-d}(v^2-s^2)^{(d-5)/2},\quad &d>3,
    \end{cases}\\
    &\lesssim\begin{cases}
        v^{\beta-5},\quad &d=3,\\
        v^{\beta-5}+v^{\beta-4}|s|^{1/2}(v-|s|)^{-1/2},\quad &d>3.
    \end{cases}
\end{aligned}
\end{equation*}
Consequently, 
\begin{equation*}
    \begin{aligned}
        H_{\mathrm{sp},\ratio,j}(s)\lesssim \frac{1}{\ratio|j+z|}|s|^{\beta-4}
    \end{aligned}
\end{equation*}
and 
\begin{equation*}
    \Vert f_{\mathrm{sp},\ratio,j}\Vert^2_{L^2((-\pi,\pi))}\lesssim\frac{1}{\ratio^2|j+z|^2},
\end{equation*}
proving \eqref{eq: improvement C_k_j_gamma}. In principle, \eqref{eq: improvement C_k_j_gamma} can be refined even further, utilizing multiple applications of integration by parts in growing dimensions. Intuitively, this stems from the idea of separating the directions in which the temporal and spatial indices $j$ and $k$, respectively, act. The contribution of $\rho\cdot\fv$ to $|\fv|$ becomes significantly more important in lower dimensions, making a separation more difficult. The case $d=2$ constitutes an edge case, resulting in a slower Fourier decay, whereas separation becomes impossible in $d=1$. There, \Cref{assumption:helper} imposes a tool to control the cosine-frequencies $k$ and $j\ratio$, which is used to bound the coefficients $C_{\mathrm{sp},k,j,z}$ directly. 

\begin{lemma}
  \label[lemma]{lemma:d_1_bound_C_j_k} Suppose that $d=1$ and $j,k\in\mathbb{Z}$ and let \Cref{assumption:helper} hold true. Let $z\in\{0,\pm 1,\pm2\}$. If $|j+z|\geq1$, then  $C_{\mathrm{sp},k,j,z}$ defined in \Cref{lemma:C_k_j_f_a_j} satisfies
  \begin{equation*}
  C_{\mathrm{sp},k,j,z}^2 \lesssim \ratio^{-4}.
  \end{equation*}
  Similarly, if $|k+z| \geq 1$, then $C_{\mathrm{te},k,j,z}$ defined in \Cref{lemma:C_k_j_f_a_j} satisfies
  \begin{equation*}
  C_{\mathrm{te},k,j,z}^2 \lesssim \ratio^4.
  \end{equation*}
  \end{lemma}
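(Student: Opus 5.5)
In dimension $d=1$ we have $\rho=\pm 1$ and $|\fv|=|s|$, so both coefficients are explicit one-dimensional integrals over $\mathbb{R}$. After symmetrising, they reduce to integrals over $(0,\infty)$. For the spatial coefficient we obtain
\begin{equation*}
C_{\mathrm{sp},k,j,z}=2\int_{0}^{\infty}\cos(ks)\cos(Ms)\sin^{4}(s/2)s^{\beta-3}\mathrm{d}s,\qquad M\coloneqq (j+z)\ratio\sqrt{\vartheta}.
\end{equation*}
The plan is to apply the product-to-sum formula,
\begin{equation*}
\cos(ks)\cos(Ms)=\tfrac12\bigl(\cos((M+k)s)+\cos((M-k)s)\bigr).
\end{equation*}
This writes $C_{\mathrm{sp},k,j,z}$ as a sum of two cosine transforms $\int_0^\infty \cos(\omega s)\,h(s)\,\mathrm{d}s$ of the single function $h(s)=\sin^4(s/2)s^{\beta-3}$, evaluated at the frequencies $\omega=M\pm k$.

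The key input is therefore a decay bound $|\int_0^\infty\cos(\omega s)h(s)\mathrm{d}s|\lesssim|\omega|^{-2}$ for $|\omega|\geq 1$. I would prove it by integrating by parts twice. Near the origin we have $h(s)\sim s^{\beta+1}/16$, so $h(0)=0$. The boundary term of the second integration by parts is $h'(0)\cos(0)/\omega^{2}$; since $h'(s)\sim s^{\beta}$ near zero, $h'(0)=0$ for $\beta>0$, and this term vanishes. Both $h''$, which behaves like $s^{\beta-1}$ near zero and is integrable since $\beta>0$, and $h'$ are integrable on $(0,\infty)$, because $h$ and its derivatives decay like $s^{\beta-3}$ at infinity. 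This is the one-dimensional content of \Cref{result:regularity_g}, namely $\partial_i\partial_j \gspzero\in L^1$, which I may invoke directly. The result is $|\mathcal{F}_c(h)(\omega)|\lesssim \|h''\|_{L^1}\,\omega^{-2}$.

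It remains to show $|M\pm k|\gtrsim \ratio$ uniformly, and this is where \Cref{assumption:helper} enters; I regard it as the main (though short) step. Since $|j+z|\geq 1$, we have $|M|\geq\sqrt{\vartheta}\ratio$. Only indices $|k|\leq n-1$ are relevant, and \Cref{assumption:helper} gives $n\leq \sqrt{\vartheta}\ratio/2$. Hence
\begin{equation*}
|M\pm k|\geq \sqrt{\vartheta}\ratio-\tfrac{\sqrt{\vartheta}\ratio}{2}=\tfrac{\sqrt{\vartheta}\ratio}{2}\geq 1
\end{equation*}
for large $\ratio$. Consequently $|C_{\mathrm{sp},k,j,z}|\lesssim\ratio^{-2}$, and squaring gives the first claim.

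The temporal statement follows by the same argument with roles exchanged. Now $C_{\mathrm{te},k,j,z}=2\int_0^\infty\cos(js)\cos(Ns)h(s)\mathrm{d}s$ with $N\coloneqq(k+z)(\ratio\sqrt{\vartheta})^{-1}$. Because $|k+z|\geq1$, we have $|N|\geq(\ratio\sqrt{\vartheta})^{-1}$, while $|j|\leq m-1\leq(2\sqrt{\vartheta}\ratio)^{-1}$ by \Cref{assumption:helper}. Therefore $|N\pm j|\geq(2\sqrt{\vartheta}\ratio)^{-1}$, and the $\omega^{-2}$ decay yields $|C_{\mathrm{te},k,j,z}|\lesssim\ratio^{2}$, i.e.\ $C_{\mathrm{te},k,j,z}^2\lesssim\ratio^4$.
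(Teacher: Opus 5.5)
Your argument is correct and is essentially the paper's proof: you symmetrise to $(0,\infty)$, split via product-to-sum into cosine transforms of $\sin^4(s/2)s^{\beta-3}$ at the frequencies $k\pm(j+z)\ratio\sqrt{\vartheta}$ (resp.\ $j\pm(k+z)(\ratio\sqrt{\vartheta})^{-1}$), integrate by parts twice using $h(0)=h'(0)=0$ and $h''\in L^1$, and bound the frequencies from below by $\sqrt{\vartheta}\ratio/2$ (resp.\ $(2\sqrt{\vartheta}\ratio)^{-1}$) via \Cref{assumption:helper}. You also make explicit the restriction $|k|\leq n-1$, $|j|\leq m-1$ that the paper uses tacitly, and your temporal conclusion $|C_{\mathrm{te},k,j,z}|\lesssim\ratio^{2}$ is the correct one; the paper's final display writes $\ratio^{-2}$, which is a typo.
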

  \begin{proof}
  \begin{step}[Arguments for $C_{\mathrm{sp},k,j,z}$]
  If $d=1$, we can simply assume that $\rho=1$. By direct computation
  \begin{equation*}
  \begin{aligned}
  C_{\mathrm{sp},k,j,z}&=\int_{\mathbb{R}} \cos(k \fv)\cos((j+z) \ratio \sqrt{\vartheta} \fv)\sin^{4}( \fv/2)|\fv|^{\beta-3}\mathrm{d}\fv\\
  &=2\int_{0}^{\infty} \cos(k \fv)\cos((j+z) \ratio \sqrt{\vartheta} \fv) \fv/2)\sin^{4}( \fv/2)\fv^{\beta-3}\mathrm{d}\fv\\
  &=\int_{0}^{\infty}\left(\cos((k + (j + z)\ratio \sqrt{\vartheta}) \fv)+\cos((k  - (j + z)\ratio \sqrt{\vartheta}) \fv)\right)\gspzero(\fv) \mathrm{d}\fv.
  \end{aligned}
  \end{equation*}
  Note that $\gspzero(0)=\gspzero^\prime(0)=0$. Hence, applying integration by parts twice, we obtain
  \begin{equation*}
  \begin{aligned}
  |C_{\mathrm{sp},k,j,z}| &\lesssim \sum_{z \in \{0, \pm1, \pm 2\}} \left(\frac{1}{|k + (j+z)\ratio \sqrt{\vartheta}|^{2}}+ \frac{1}{|k - (j+z)\ratio \sqrt{\vartheta}|^{2}}\right) \Vert \gspzero'' \Vert_{L^{1}((0,\infty))}\\
  &\lesssim \ratio^{-2},
  \end{aligned}
  \end{equation*}
  where we have used $|k \pm (j+z)\ratio \sqrt{\vartheta}| \geq ||j+z|\ratio \sqrt{\vartheta} - |k|| \geq \ratio \sqrt{\vartheta} - n \geq \ratio \sqrt{\vartheta}/2$ due to \Cref{assumption:helper}.
  \end{step}
  \begin{step}[Argument for $C_{\mathrm{te},k,j,z}$]
  If $d=1$, we have 
  \begin{equation*}
  C_{\mathrm{te},k,j,z} =\int_{\mathbb{R}} \cos(j  \fv)\cos((k+z)(\ratio \sqrt{\vartheta})^{-1}\fv)\sin^{4}(\fv/2)|\fv|^{\beta-3}\mathrm{d}\fv.
  \end{equation*}
  Analogously, 
  \begin{equation*}
  |C_{\mathrm{te},k,j,z}| \lesssim \sum_{z \in \{0, \pm1, \pm 2\}} \left(\frac{1}{|j + (k+z)(\ratio \sqrt{\vartheta})^{-1}|^{2}}+ \frac{1}{|j - (k+z)(\ratio \sqrt{\vartheta})^{-1}|^{2}}\right) \Vert \gtemp'' \Vert_{L^{1}((0,\infty))} \lesssim \ratio^{-2}. \qedhere
  \end{equation*}
  \end{step}
  \end{proof}

  \begin{lemma}
      \label[lemma]{lemma: numerator_space_time_increments}
      Grant \Cref{assumption:bias_convergence} and \Cref{assumption:helper}. Then, in the asymptotic regime $\alpha \rightarrow \infty$, the numerator in \eqref{eq:lyapunov_condition_box_increments} satisfies
      \begin{equation*}
          \begin{aligned}
              &\left(\max_{j=1,\dots,m}\max_{l=1,\dots,n} \sum_{k=1}^{n}\sum_{i=1}^{m}|\mathrm{Cov}(\mathbf{I}_{\mathrm{sp},\mathrm{te},i,k},\mathbf{I}_{\mathrm{sp},\mathrm{te},j,l})|\right)^2=\begin{cases}
                  \mathcal{O}(\delta^2\lambda^{4-2\beta}(n^2m^2+n^2m^4\ratio^{-4})),\quad &d=1,\\
                  \mathcal{O}(\delta^2\lambda^{4-2\beta}(n^2m^2+nm^3\ratio^{-1})),\quad &d\geq2,
              \end{cases}
        \end{aligned}
    \end{equation*}
    and for $\alpha \rightarrow 0$
    \begin{equation*}
            \begin{aligned}
              &\left(\max_{j=1,\dots,m}\max_{l=1,\dots,n} \sum_{k=1}^{n}\sum_{i=1}^{m}|\mathrm{Cov}(\mathbf{I}_{\mathrm{sp},\mathrm{te},i,k},\mathbf{I}_{\mathrm{sp},\mathrm{te},j,l})|\right)^2=\begin{cases}
                  \mathcal{O}(\delta^2\lambda^{4-2\beta}(n^2m^2+n^2m^4\ratio^{4})),\quad &d=1,\\
                  \mathcal{O}(\delta^2\lambda^{4-2\beta}(n^2m^2+nm^3\ratio)),\quad &d\geq2.
              \end{cases}
          \end{aligned}
      \end{equation*}
  \end{lemma}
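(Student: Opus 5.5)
Treat the regime $\ratio\to\infty$ first, then transfer the argument to $\ratio\to0$. By \Cref{lemma:covariance_space_time_increments} and the uniform bound \eqref{eq:uniform_remainder_bound_sp} on $R_{\mathrm{sp}}$, write
\begin{equation*}
|\mathrm{Cov}(\mathbf{I}_{\mathrm{sp},\mathrm{te},i,k},\mathbf{I}_{\mathrm{sp},\mathrm{te},j,l})|\lesssim \delta\lambda^{2-\beta}\Big(1+(i\land j)\,\Big|\sum_{z\in\{0,\pm1,\pm2\}}c_z C_{\mathrm{sp},k-l,i-j,z}\Big|\Big).
\end{equation*}
This uses the expansion $\sin^4(x)=3/8-\cos(2x)/2+\cos(4x)/8$ from the proof of \Cref{proposition:variance_space_time_variation}, with $c_z$ and $C_{\mathrm{sp},k,j,z}$ as in \eqref{eq:coeff_gamma}.

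Summing the constant part over $i\le m$, $k\le n$ gives $nm$, which after squaring produces the term $\delta^2\lambda^{4-2\beta}n^2m^2$. For the main part, bound $i\land j\le m$ and substitute $p=k-l$, $q=i-j$. It then suffices to control
\begin{equation*}
m\sum_{|q|\le m-1}\sum_{|p|\le n-1}\sum_{z}|C_{\mathrm{sp},p,q,z}|.
\end{equation*}
Split this sum according to whether $q+z=0$ or $q+z\neq0$.

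\emph{Diagonal terms, $q=-z$ (at most five values of $q$).} Here $C_{\mathrm{sp},p,q,z}=\mathcal{F}_c(\gspzero)(p\rho)$. By \Cref{result:regularity_g} and the argument of \Cref{eq:numerator_control_spatial_t_fixed}, this is $\lesssim |p|^{-1}$ for $p\ne0$, so these terms contribute $\lesssim m(1+\log n)$. Squared, this gives $m^2\log(n)^2$, which is dominated by $n^2m^2$ because $\log n\lesssim n$.

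\emph{Off-diagonal terms, $q+z\ne0$.} In $d\ge2$, use Cauchy--Schwarz in $p$ together with \eqref{eq:spatial_sum_decay} from \Cref{lemma:C_k_j_f_a_j}:
\begin{equation*}
\sum_{|p|\le n-1}|C_{\mathrm{sp},p,q,z}|\le \sqrt{2n}\Big(\sum_{p}C_{\mathrm{sp},p,q,z}^2\Big)^{1/2}\lesssim \sqrt{n}\,(\ratio|q+z|)^{-1/2}.
\end{equation*}
Summing over $|q|\le m-1$ gives $\sum_q|q+z|^{-1/2}\lesssim m^{1/2}$. The off-diagonal contribution is therefore $\lesssim m\cdot n^{1/2}m^{1/2}\ratio^{-1/2}$, whose square is $nm^3\ratio^{-1}$, as claimed.

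In $d=1$, use \Cref{lemma:d_1_bound_C_j_k} instead, which gives $|C_{\mathrm{sp},p,q,z}|\lesssim\ratio^{-2}$ under \Cref{assumption:helper}. The double sum over $p,q$ is then $\lesssim nm\ratio^{-2}$, so the off-diagonal part is $\lesssim nm^2\ratio^{-2}$, whose square is $n^2m^4\ratio^{-4}$.

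\emph{Regime $\ratio\to0$.} Use the second representation in \Cref{lemma:covariance_space_time_increments} with the bound \eqref{eq:uniform_remainder_bound_temp}, and expand the spatial factor $\sin^4((\ratio\sqrt\vartheta)^{-1}\rho\cdot\fv/2)$ in the same way. The main part is then bounded through
\begin{equation*}
m\sum_{q,p,z}|C_{\mathrm{te},p,q,z}|.
\end{equation*}
The diagonal terms are now $p=-z$, where $C_{\mathrm{te},p,q,z}=\mathcal{F}^+_c(\gtemp)(q)\lesssim|q|^{-1}$; these contribute $m\log m$, i.e.\ $m^2\log(m)^2$ after squaring, which is dominated by $n^2m^2$ only under an extra rate condition (e.g.\ $n\gtrsim\log m$) that the stated assumptions do not guarantee, since \Cref{assumption:bias_convergence} constrains $n/m$ only from above. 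The off-diagonal terms use Cauchy--Schwarz in $q$ with \eqref{eq:temporal_sum_decay}, giving $\sum_q|C_{\mathrm{te},p,q,z}|\lesssim\sqrt{m}\,(\ratio/|p+z|)^{1/2}$. Summing over $|p|\le n$ gives $\sqrt{mn\ratio}$, so the contribution is $\lesssim m^{3/2}n^{1/2}\ratio^{1/2}$, whose square is $nm^3\ratio$. In $d=1$ the bound $|C_{\mathrm{te}}|\lesssim\ratio^{2}$ from \Cref{lemma:d_1_bound_C_j_k} yields $n^2m^4\ratio^4$.

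Finally, the stated bounds carry the prefactor $\delta^2\lambda^{4-2\beta}$, whereas this regime naturally gives the prefactor $\delta^{6-2\beta}$. I will match them via $\delta^{6-2\beta}=\delta^2\lambda^{4-2\beta}\ratio^{4-2\beta}$. Since $\ratio\to0$, we have $\ratio^{4-2\beta}\le1$ eventually, so the $\delta^{6-2\beta}$ bound implies the stated one.

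The main obstacle is the bookkeeping of the near-diagonal terms. The $\log$ terms ($\log n$ for $\ratio\to\infty$, $\log m$ for $\ratio\to0$) must be absorbed into $n^2m^2$; this is fine for $\ratio\to\infty$, but for $\ratio\to0$ it needs the extra condition noted above, which the stated assumptions do not supply. One must also check that the Cauchy--Schwarz step retains the full $\sqrt n$ (respectively $\sqrt m$) factor rather than a worse power.
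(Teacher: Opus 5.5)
The $\alpha\to0$ half of the statement is left unproven. You stop at the diagonal contribution $\delta^{6-2\beta}m^2\log(m)^2$ and claim that absorbing it needs an extra rate condition such as $n\gtrsim\log m$. For the lemma as stated, that claim is wrong, and the missing step is in your own prefactor conversion. You write $\delta^{6-2\beta}=\delta^2\lambda^{4-2\beta}\alpha^{4-2\beta}$ and then bound $\alpha^{4-2\beta}\le1$, which discards exactly the factor that kills the logarithm. By \eqref{eq:bias_assumption_ratio} in \Cref{assumption:bias_convergence}, $\alpha(nm)^{1/4}\to0$. Hence eventually $\alpha\le m^{-1/4}$, and since $\beta<2$,
\[
\alpha^{4-2\beta}\log(m)^2\le m^{-(1-\beta/2)}\log(m)^2\to0 .
\]
Therefore $\delta^{6-2\beta}m^2\log(m)^2=o\bigl(\delta^2\lambda^{4-2\beta}m^2\bigr)$, which lies inside $\mathcal{O}(\delta^2\lambda^{4-2\beta}n^2m^2)$. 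The remaining terms are $\delta^{6-2\beta}(n^2m^2+nm^3\alpha)$, or $\delta^{6-2\beta}(n^2m^2+n^2m^4\alpha^4)$ when $d=1$. These are bounded by the stated ones as soon as $\alpha\le1$. So no additional hypothesis is needed, and your computation closes once you keep that factor.

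Apart from this, your route is the paper's. You reduce via \Cref{lemma:covariance_space_time_increments} to $nm+m\sum_{q,p,z}|C_{\mathrm{sp},p,q,z}|$ (respectively with $C_{\mathrm{te}}$). You then use Cauchy--Schwarz with \Cref{lemma:C_k_j_f_a_j} for $d\ge2$ and the pointwise bound of \Cref{lemma:d_1_bound_C_j_k} for $d=1$.

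The one difference is the diagonal $q+z=0$ (respectively $p+z=0$). The paper applies Cauchy--Schwarz there too, getting $m\sqrt n$ for $\alpha\to\infty$. You instead use the $|p|^{-1}$ (respectively $|q|^{-1}$) decay of the Fourier coefficients. For $\alpha\to\infty$ both work.

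For $\alpha\to0$, your choice is the better one. Cauchy--Schwarz in $q$ only gives $m^{3/2}$ on the diagonal, i.e.\ $\delta^{6-2\beta}m^3$ after squaring. That is not covered by the stated bound in general, for example with $d\ge2$, $n$ bounded, $\beta>3/2$ and $\alpha=m^{-1/2}$.

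Your worry is legitimate only for the sharper bound with prefactor $\delta^{6-2\beta}$, which is what the Lyapunov ratio in \Cref{theorem:clt_box_increments} actually uses. Even there the term is harmless, because the variance is of order $\delta^{6-2\beta}nm^3$ and $\log(m)^2/(nm)\to0$.
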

  \begin{proof}
  We prove the claim for the case $\ratio\rightarrow\infty$. The arguments for $\ratio\rightarrow0$ are identical. According to \Cref{lemma:covariance_space_time_increments}, we can bound 
      \begin{equation*}
          \begin{aligned}
              &\max_{j=1,\dots,m}\max_{l=1,\dots,n} \sum_{k=1}^{n}\sum_{i=1}^{m}|\mathrm{Cov}(\mathbf{I}_{\mathrm{sp},\mathrm{te},i,k},\mathbf{I}_{\mathrm{sp},\mathrm{te},j,l})|\\
              &\lesssim \delta\lambda^{2-\beta}\left(nm+m\sum_{|j|\leq m-1}\sum_{|k|\leq n-1}\left|\int_{\mathbb{R}^d}\cos(k\rho\cdot w)\cos(j\ratio\sqrt{\vartheta}|\fv|)\sin^4(\ratio\sqrt{\vartheta}|\fv|/2)\gspzero(\fv)\mathrm{d}\fv\right|\right)\\
              &\lesssim\delta\lambda^{2-\beta}\left(nm+m\sum_{|j|\leq m-1}\sum_{z\in\{0,\pm1\pm2\}}\sum_{|k|\leq n-1}\left|C_{\mathrm{sp},k,j,z}\right|\right)
          \end{aligned}
      \end{equation*}
      with $C_{\mathrm{sp},k,j,z}$ from \eqref{eq:coeff_gamma}. Following the proof structure of \Cref{proposition:variance_space_time_variation}, we further obtain with the Cauchy-Schwarz inequality, \Cref{lemma:C_k_j_f_a_j} and \Cref{lemma:d_1_bound_C_j_k}
      \begin{equation*}
          \begin{aligned}
              \sum_{|j|\leq m-1}\sum_{z\in\{0,\pm1\pm2\}}\sum_{|k|\leq n-1}\left|C_{\mathrm{sp},k,j,z}\right|&\lesssim \sqrt{n}\sum_{|j|\leq m-1}\left(\sum_{\substack{z\in\{0,\pm1\pm2\}\\
              z=-j}}+\sum_{\substack{z\in\{0,\pm1\pm2\}\\
              z\neq-j}}\right)\left(\sum_{|k|\leq n-1}C_{\mathrm{sp},k,j,z}^2\right)^{1/2}\\
              &\lesssim\sqrt{n}\begin{cases}
                  1+m\sqrt{n}\ratio^{-2},\quad &d=1,\\
                  1+\sqrt{m}\ratio^{-1/2},\quad &d\geq2.
              \end{cases}
          \end{aligned}
      \end{equation*}
      In total,
      \begin{equation*}
          \begin{aligned}
              &\left(\max_{j=1,\dots,m}\max_{l=1,\dots,n} \sum_{k=1}^{n}\sum_{i=1}^{m}|\mathrm{Cov}(\mathbf{I}_{\mathrm{sp},\mathrm{te},i,k},\mathbf{I}_{\mathrm{sp},\mathrm{te},j,l})|\right)^2=\begin{cases}
                  \mathcal{O}(\delta^2\lambda^{4-2\beta}(n^2m^2+n^2m^4\ratio^{-4})),\quad &d=1,\\
                  \mathcal{O}(\delta^2\lambda^{4-2\beta}(n^2m^2+nm^3\ratio^{-1})),\quad &d\geq2.
              \end{cases}
          \end{aligned}
      \end{equation*}
  \end{proof}

\begin{proof}[Proof of \Cref{theorem:clt_box_increments}]
By \Cref{lemma: numerator_space_time_increments} and \Cref{proposition:variance_space_time_variation}, it holds
\begin{equation*}
\frac{\left( \max_{j=1,\dots,m}\max_{l=1,\dots,n} \sum_{k=1}^{n}\sum_{i=1}^{m}|\mathrm{Cov}(\mathbf{I}_{\mathrm{sp},\mathrm{te},i,k},\mathbf{I}_{\mathrm{sp},\mathrm{te},j,l})|\right)^{2}}{\V(\mathbf{V}_{\mathrm{sp}, \mathrm{te}})} =  \begin{cases}
\mathcal{O}(nm^{-1}+nm\ratio^{-4}), \quad &d=1,\\
\mathcal{O}(nm^{-1}+\ratio^{-1}),\quad &d\geq2,
\end{cases}
\end{equation*}
and
\begin{equation*}
\frac{\left( \max_{j=1,\dots,m}\max_{l=1,\dots,n} \sum_{k=1}^{n}\sum_{i=1}^{m}|\mathrm{Cov}(\mathbf{I}_{\mathrm{sp},\mathrm{te},i,k},\mathbf{I}_{\mathrm{sp},\mathrm{te},j,l})|\right)^{2}}{\V(\mathbf{V}_{\mathrm{sp}, \mathrm{te}})} =  \begin{cases}
\mathcal{O}(nm^{-1}+nm\ratio^{4}), \quad &d=1,\\
\mathcal{O}(nm^{-1}+\ratio),\quad &d\geq2,
\end{cases}
\end{equation*}
given the regime $\ratio \rightarrow \infty$ and $\ratio \rightarrow 0$, respectively. By \Cref{assumption:bias_convergence} these expressions converge to zero.
Overall, we have
\begin{equation*}
\frac{\mathbf{V}_{\mathrm{sp}, \mathrm{te}}-\E[\mathbf{V}_{\mathrm{sp}, \mathrm{te}}]}{\sqrt{\V(\mathbf{V}_{\mathrm{sp}, \mathrm{te}})}} \xrightarrow{d} N(0,1),
\end{equation*}
which yields the convergences
\begin{equation*}
{\sqrt{nm}\left(\frac{\delta^{-1}\lambda^{\beta-2}}{nm^{2}}\mathbf{V}_{\mathrm{sp}, \mathrm{te}}-\E\left[\frac{\delta^{-1}\lambda^{\beta-2}}{nm^{2}}\mathbf{V}_{\mathrm{sp}, \mathrm{te}} \right]\right)} \xrightarrow{d} N\left(0, \frac{\boxconstvarsp}{\vartheta^{2}}\right), \quad \ratio \rightarrow \infty,
\end{equation*}
and
\begin{equation*}
{\sqrt{nm}\left(\frac{\delta^{\beta-3}}{nm^{2}}\mathbf{V}_{\mathrm{sp}, \mathrm{te}}-\E\left[\frac{\delta^{\beta-3}}{nm^{2}}\mathbf{V}_{\mathrm{sp}, \mathrm{te}} \right]\right)} \xrightarrow{d} N\left(0, \frac{\boxconstvartemp}{\vartheta^{\beta}}\right), \quad \ratio \rightarrow 0.
\end{equation*}
By \Cref{proposition:box_expectation} and \Cref{assumption:bias_convergence}, we can replace the expectations in the last display by their limits, concluding
\begin{equation*}
{\sqrt{nm}\left(\frac{\delta^{-1}\lambda^{\beta-2}}{nm^{2}}\mathbf{V}_{\mathrm{sp}, \mathrm{te}}-\frac{C_{\mathrm{box,sp,\E}}}{\vartheta}\right)} \xrightarrow{d} N\left(0, \frac{\boxconstvarsp}{\vartheta^{2}}\right), \quad \ratio \rightarrow \infty,
\end{equation*}
and
\begin{equation*}
{\sqrt{nm}\left(\frac{\delta^{\beta-3}}{nm^{2}}\mathbf{V}_{\mathrm{sp}, \mathrm{te}}-\frac{C_{\mathrm{box,temp,\E}}}{\vartheta^{\beta/2}}\right)} \xrightarrow{d} N\left(0, \frac{\boxconstvartemp}{\vartheta^{\beta}}\right), \quad \ratio \rightarrow 0. \qedhere
\end{equation*}
\end{proof}
\subsection{General results on increments}
\label{section:general_results_on_increments}
Let $f:\mathbb{R}\rightarrow\mathbb{R}$ and $h:\mathbb{R}^{2} \rightarrow \mathbb{R}$.
Let $q,z\in\mathbb{Z}$. Recall the second-order discrete increment in two variables from \eqref{eq:second_order_incremental_operator_d_2}, i.e.\
\begin{equation*}
\begin{aligned}
\mathcal{I}^{(2)}[h](q, z) &= 4h(q, z) + h(q+1,z+1) + h(q+1, z-1) + h(q-1,z+1) + h(q-1, z-1) \\
& \qquad- 2\left(h(q,z+1) + h(q, z-1) + h(q-1,z)+h(q+1,z)\right).
\end{aligned}
\end{equation*}
Let $p\in\mathbb{N}$ denote the number of increments. The first, second, third and fourth-order increments of $f$ and index $z$ are given by
\begin{equation}
\label{eq:increments_1_2_3_4}
\begin{aligned}
\mathfrak{I}^{(1)}[f](z) &= f(z+1)-f(z-1),\\
	\mathfrak{I}^{(2)}[f](z) &= f(z+1) + f(z-1) -2f(z),\\
\mathfrak{I}^{(3)}[f](z) &= f(z+2)-2f(z+1)+2f(z-1)-f(z-2),\\
\mathfrak{I}^{(4)}[f](z) &= f(z+2)-4f(z+1) + 6f(z)-4f(z-1)+f(z-2).
\end{aligned}
\end{equation}
The pattern described by \eqref{eq:increments_1_2_3_4} can be formalised as follows. If $p$ is even, we define the incremental operator applied to the real function $f$ as
\begin{equation*}
	\mathfrak{I}^{(p)}[f](z)\coloneqq\underline{\mathfrak{I}}^{(p)}[f](z - p/2),
\end{equation*}
with the forward incremental operator
\begin{equation*}
	\underline{\mathfrak{I}}^{(p)}[f](z)=\sum_{q=0}^p (-1)^{p-q} \binom{p}{q}f(z+q).
\end{equation*}
 If $p$ is odd, we have
\begin{equation*}
	\mathfrak{I}^{(p)}[f](z)\coloneqq\sum_{q=0}^{p-1} (-1)^{q} \binom{p-1}{q} \left(f(z+q+1-(p-1)/2)-f(z+q-1-(p-1)/2)\right).
\end{equation*}
\begin{lemma}
\label[lemma]{result:higher_order_increments_trig}
The increments of the cosine and the sine admit the representations
\begin{equation*}
\mathfrak{I}^{(p)}[\cos(\cdot r)](z)= \begin{cases}
2^{p}\sin^{p}(r/2)\cos\left(zr + \frac{p\pi}{2}\right), \quad p \text{ even },\\
-2^{p}\sin(r)\sin^{p-1}(r/2)\sin(zr + (n-1)\pi/2), \quad p \text{ odd },
\end{cases}
\end{equation*}
and
\begin{equation*}
\mathfrak{I}^{(p)}[\sin(\cdot r)](z)= \begin{cases}
2^{p}\sin^{p}(r/2)\sin\left(zr + \frac{p\pi}{2}\right), \quad p \text{ even },\\
 2^{p}\sin(r) \sin^{p-1}(r/2)\cos(zr + (p-1)\pi/2), \quad p \text{ odd }.
\end{cases}
\end{equation*}
\end{lemma}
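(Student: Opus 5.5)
The plan is to prove both formulas at once by applying the increment operators to the complex exponential $e_r(z)\coloneqq e^{\ii zr}$ and then taking real and imaginary parts. All operators $\mathfrak{I}^{(p)}$ are linear with real coefficients. Hence
\[
\mathfrak{I}^{(p)}[\cos(\cdot r)]=\mathrm{Re}\,\mathfrak{I}^{(p)}[e_r], \qquad \mathfrak{I}^{(p)}[\sin(\cdot r)]=\mathrm{Im}\,\mathfrak{I}^{(p)}[e_r].
\]
It therefore suffices to establish the single identity
\[
\mathfrak{I}^{(p)}[e_r](z)=
\begin{cases}
2^{p}\sin^{p}(r/2)\,e^{\ii(zr+p\pi/2)}, & p \text{ even},\\
2^{p}\sin(r)\sin^{p-1}(r/2)\,e^{\ii(zr+p\pi/2)}, & p \text{ odd}.
\end{cases}
\]

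\textbf{Even $p$.} The binomial theorem gives for the forward operator
\[
\underline{\mathfrak{I}}^{(p)}[e_r](z)=e^{\ii zr}\sum_{q=0}^{p}(-1)^{p-q}\binom{p}{q}e^{\ii qr}=e^{\ii zr}(e^{\ii r}-1)^{p}.
\]
Shifting the argument by $-p/2$ then gives
\[
\mathfrak{I}^{(p)}[e_r](z)=e^{\ii zr}\bigl(e^{-\ii r/2}(e^{\ii r}-1)\bigr)^{p}=e^{\ii zr}(2\ii\sin(r/2))^{p}.
\]
Since $\ii^{p}=e^{\ii p\pi/2}$, this is the claimed expression. Taking real and imaginary parts yields $2^{p}\sin^{p}(r/2)\cos(zr+p\pi/2)$ and $2^{p}\sin^{p}(r/2)\sin(zr+p\pi/2)$.

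\textbf{Odd $p$.} Write $w_q=z+q-(p-1)/2$. Each bracket in the definition equals
\[
e_r(w_q+1)-e_r(w_q-1)=2\ii\sin(r)\,e^{\ii w_q r}.
\]
Pulling this factor out of the sum and applying the binomial theorem gives
\[
\mathfrak{I}^{(p)}[e_r](z)=2\ii\sin(r)\,e^{\ii zr}e^{-\ii(p-1)r/2}(1-e^{\ii r})^{p-1}=2\ii\sin(r)\,e^{\ii zr}(-2\ii\sin(r/2))^{p-1}.
\]
Because $p-1$ is even, $(-2\ii\sin(r/2))^{p-1}=(2\ii\sin(r/2))^{p-1}$, so the whole expression equals $2^{p}\ii^{p}\sin(r)\sin^{p-1}(r/2)e^{\ii zr}$. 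For the real part, use
\[
\cos(zr+p\pi/2)=\cos\bigl(zr+(p-1)\pi/2+\pi/2\bigr)=-\sin\bigl(zr+(p-1)\pi/2\bigr).
\]
This gives $-2^{p}\sin(r)\sin^{p-1}(r/2)\sin(zr+(p-1)\pi/2)$. For the imaginary part, $\sin(zr+p\pi/2)=\cos(zr+(p-1)\pi/2)$ gives the sine formula.

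As a sanity check, $p=1,2,3,4$ can be compared with \eqref{eq:increments_1_2_3_4}; for instance, $p=4$ gives $16\sin^{4}(r/2)\cos(zr)$, as used in the proof of \Cref{lemma:covariance_second_order_spatial_increments}.

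There is no real analytic obstacle here; the main risk is sign and phase bookkeeping. In particular, the exponent $(-1)^{p-q}$ must match $(e^{\ii r}-1)^{p}$ rather than $(1-e^{\ii r})^{p}$. Also, the phase $(n-1)\pi/2$ appearing in the odd cosine case of the statement should read $(p-1)\pi/2$; I would prove the statement with $p$ in place of $n$.
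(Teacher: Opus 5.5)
Your proof is correct and follows the paper's route: pass to $e^{\ii zr}$, use the binomial theorem to obtain $(e^{\ii r}-1)^{p}$, and rewrite $e^{-\ii r/2}(e^{\ii r}-1)=2\ii\sin(r/2)$. Your correction of $(n-1)\pi/2$ to $(p-1)\pi/2$ is also right. The only organisational difference is in the odd case. There the paper invokes $\mathfrak{I}^{(p)}=\mathfrak{I}^{(p-1)}\mathfrak{I}^{(1)}$ with $\mathfrak{I}^{(1)}[\cos(\cdot r)](z)=-2\sin(r)\sin(zr)$ and $\mathfrak{I}^{(1)}[\sin(\cdot r)](z)=2\sin(r)\cos(zr)$, and then reuses the even formula. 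You instead pull $2\ii\sin(r)$ out of each bracket and redo the binomial sum directly, which amounts to the same factorisation.
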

\begin{proof}
\begin{case}[$p$ is even]
Let us start with the simpler case that the incremental order $p$ is even. 
Now, notice that in the case of the cosine, we have
\begin{equation*}
	\underline{\mathfrak{I}}^{(p)}[\cos(\cdot r)](z)= \mathrm{Re}\left( \sum_{q=0}^p (-1)^{p-q} \binom{p}{q}\exp(\ii r(z+q))\right) =\mathrm{Re}\left( e^{\ii rz}\sum_{q=0}^p (-1)^{p-q} \binom{p}{q}\exp(\ii rq)\right).
\end{equation*}
Note that, as the coefficients of the increments stem from Pascal's triangle, we quite naturally observe
\begin{equation*}
	\underline{\mathfrak{I}}^{(p)}[\cos(\cdot r)](z)=\mathrm{Re}\left( e^{\ii rz}(e^{\ii r}-1)^{p}\right), \quad \underline{\mathfrak{I}}^{(p)}[\sin(\cdot r)](z)=\mathrm{Im}\left( e^{\ii rz}(e^{\ii r}-1)^{p}\right).
\end{equation*}
Next, notice that
\begin{equation*}
	e^{\ii r}-1=e^{\ii r}-e^{\ii r/2}e^{-\ii r/2} = e^{\ii r/2}(e^{\ii r/2}-e^{-\ii r/2}).
\end{equation*}
Thus, plugging in Euler's formula, we have
\begin{equation*}
	e^{\ii r/2}-e^{-\ii r/2} = 2\ii \sin(r/2).
\end{equation*}
Consequently, we obtain
\begin{equation*}
	\underline{\mathfrak{I}}^{(p)}[\cos(\cdot r)](z) = \mathrm{Re}\left(  \sin^{p}(r/2)(2\ii)^{p}e^{\ii r(z + p/2)})\right)=2^{p}\sin^{p}(r/2)\mathrm{Re}\left( \ii^{p}e^{\ii r(z + p/2)}\right).
\end{equation*}
We can write the latter object into one exponential, by noticing that $\ii^{p}=e^{p\ii\pi/2}$ such that
\begin{equation*}
	\begin{aligned}
		\underline{\mathfrak{I}}^{(p)}[\cos(\cdot r)](z) & = 2^{p}\sin^{p}(r/2)\mathrm{Re}\left( e^{\ii(rz + rp/2 + p\pi/2)}\right)= 2^{p}\sin^{p}(r/2)\cos\left(zr + \frac{pr}{2} + \frac{p\pi}{2}\right), \\
		\underline{\mathfrak{I}}^{(p)}[\sin(\cdot r)](z) & =2^{p}\sin^{p}(r/2)\mathrm{Im}\left( e^{\ii(rz + rp/2 + p\pi/2)}\right)= 2^{p}\sin^{p}(r/2)\sin\left(zr + \frac{pr}{2} + \frac{p\pi}{2}\right).
	\end{aligned}
\end{equation*}
Consequently, we have for the symmetric differences
\begin{equation*}
	\begin{aligned}
		{\mathfrak{I}}^{(p)}[\cos(\cdot r)](z) & = \underline{\mathfrak{I}}^{(p)}[\cos(\cdot r)](z-p/2)= 2^{p}\sin^{p}(r/2)\cos\left((z-\frac{p}{2})r + \frac{pr}{2} + \frac{p\pi}{2}\right) \\
		                                                & =2^{p}\sin^{p}(r/2)\cos\left(zr + \frac{p\pi}{2}\right),                                                                                                                         \\
		{\mathfrak{I}}^{(p)}[\sin(\cdot r)](z) & =\underline{\mathfrak{I}}^{(p)}[\sin(\cdot r)](z-p/2)                                                                                                                                    \\
		                                                & = 2^{p}\sin^{p}(r/2)\sin\left(zr + \frac{p\pi}{2}\right)
	\end{aligned}
\end{equation*}
Note that since we have assumed that $p$ was even, this is simply a shift of the order $\pi$.    
\end{case}
\begin{case}[$p$ is odd] We can reduce the odd case to the even case using the fact that $\mathfrak{I}^{(p)}=\mathfrak{I}^{(p-1)}\mathfrak{I}^{(1)}$. In particular, we compute for the first-order increment $\mathfrak{I}^{(1)}[\sin(\cdot r)](z)=2\sin(r)\cos(zr)$ and $\mathfrak{I}^{(1)}[\cos(\cdot r)](z)=-2\sin(r)\sin(zr)$. Thus, we have
\begin{align*}
\mathfrak{I}^{(p)}[\cos(r \cdot)](z) &=-2\sin(r) 2^{p-1}\sin^{p-1}(r/2)\sin\left(zr + \frac{(p-1)\pi}{2}\right),\\
\mathfrak{I}^{(p)}[\sin(r \cdot)](z) &= 2 \sin(r) 2^{p-1}\sin^{p-1}(r/2)\cos\left(zr + \frac{(p-1)\pi}{2}\right).\tag*{\qedhere}
\end{align*}
\end{case}
\end{proof}

Next, we will present a couple of results to relate $\mathcal{I}^{(2)}$ from \eqref{eq:second_order_incremental_operator_d_2} with the increments \eqref{eq:increments_1_2_3_4}.
\begin{lemma}
\label[lemma]{lemma:f_+_-_increments}
It holds
\begin{equation*}
\begin{aligned}
    \mathcal{I}^{(2)}[h](q,z) &= \mathfrak{I}^{(4)}[f](q+z), \quad h(q,z)=f(q+z),\\
        \mathcal{I}^{(2)}[h](q,z) &= \mathfrak{I}^{(4)}[f](q-z),\quad h(q,z)=f(q-z).
\end{aligned}
\end{equation*}
\end{lemma}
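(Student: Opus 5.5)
The plan is to expand the definition of $\mathcal{I}^{(2)}$ directly and regroup the nine terms according to the value of the single argument of $f$. For the first identity, set $h(q,z)=f(q+z)$ and substitute into \eqref{eq:second_order_incremental_operator_d_2}. The term $4h(q,z)$ becomes $4f(q+z)$. The four diagonal terms $h(q\pm1,z\pm1)$ become $f(q+z+2)$, $f(q+z)$, $f(q+z)$ and $f(q+z-2)$. The four axial terms give $-2\bigl(f(q+z+1)+f(q+z-1)+f(q+z-1)+f(q+z+1)\bigr)$.

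Collecting coefficients with $w=q+z$, we get
\[
f(w+2)-4f(w+1)+6f(w)-4f(w-1)+f(w-2),
\]
which is exactly $\mathfrak{I}^{(4)}[f](w)$ as listed in \eqref{eq:increments_1_2_3_4}. Equivalently, it is $\underline{\mathfrak{I}}^{(4)}[f](w-2)$ with binomial coefficients $1,-4,6,-4,1$.

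For the second identity, set $h(q,z)=f(q-z)$ and repeat the computation with $w=q-z$. The diagonal terms $h(q+1,z+1)$ and $h(q-1,z-1)$ now give $f(w)$, while $h(q+1,z-1)$ and $h(q-1,z+1)$ give $f(w+2)$ and $f(w-2)$. The axial terms $h(q,z\pm1)$ give $f(w\mp1)$, and $h(q\pm1,z)$ give $f(w\pm1)$. The coefficients are therefore again $1,-4,6,-4,1$, so the result is $\mathfrak{I}^{(4)}[f](q-z)$.

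A more structural way to see both identities is to note that $\mathcal{I}^{(2)}$ is the tensor product of two one-dimensional second differences, $\mathfrak{I}^{(2)}_q\mathfrak{I}^{(2)}_z$. On functions of $q\pm z$, each one-dimensional second difference acts as $\mathfrak{I}^{(2)}$ in the single variable; for $q-z$ the sign flip is harmless because $\mathfrak{I}^{(2)}$ is symmetric under reflection. Composing gives $\mathfrak{I}^{(2)}\mathfrak{I}^{(2)}=\mathfrak{I}^{(4)}$, since $(x-2+x^{-1})^2=x^2-4x+6-4x^{-1}+x^{-2}$.

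There is no real obstacle here. The only care needed is bookkeeping the sign in the $q-z$ case, where the roles of the two diagonals swap.
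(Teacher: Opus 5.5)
Your proof is correct and follows the paper's argument: you expand the nine terms of $\mathcal{I}^{(2)}$ directly and collect the coefficients $1,-4,6,-4,1$. You also write out the $q-z$ case, which the paper only dismisses as ``similar'', and your tensor-product remark is a valid but unnecessary cross-check.
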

\begin{proof}
When $h(q,z)=f(q+z),$ the proof follows immediately by observing
\begin{equation*}
\begin{aligned}
\mathcal{I}^{(2)}[h](q, z) &= 4f(q+z)+f(q+z+2)+f(q+z)+f(q+z)+f(q+z-2)\\
&\qquad -2(f(q+z+1)+f(q+z-1) +f(q+z-1)+f(q+z+1))\\
&=6f(q+z)+f(q+z+2)+f(q+z-2)-4f(q+z+1)-4f(q+z-1)\\
&=\mathfrak{I}^{(4)}[f](q+z)
\end{aligned}
\end{equation*}
and similarly for the case $h(q,z)=f(q-z)$.
\end{proof}

\begin{lemma}
\label[lemma]{result:discrete_product_rule}
Suppose we have a function $h(x,y)=xf(x-y)$. Then, we have
\begin{equation*}
\mathcal{I}^{(2)}[h](q,z) = i \mathfrak{I}^{(4)}[f](q-z) + \mathfrak{I}^{(3)}[f](q-z).
\end{equation*}
\end{lemma}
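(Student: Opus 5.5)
The plan is a direct expansion of the stencil in \eqref{eq:second_order_incremental_operator_d_2}, splitting the linear factor $x$ into its value at the centre point plus a shift. Write $w \coloneqq q-z$. I read the prefactor $i$ in the statement as the first argument $q$ of $h$ (a leftover from the temporal index notation), so the target is $\mathcal{I}^{(2)}[h](q,z)=q\,\mathfrak{I}^{(4)}[f](w)+\mathfrak{I}^{(3)}[f](w)$.

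First, rewrite $\mathcal{I}^{(2)}$ in product form. Set $c_{\pm1}=1$ and $c_0=-2$. Then
$$\mathcal{I}^{(2)}[h](q,z)=\sum_{a,b\in\{-1,0,1\}}c_ac_b\,h(q+a,z+b),$$
since the weight is $4$ at the centre, $1$ at the corners and $-2$ at the edges. For $h(x,y)=xf(x-y)$ each term equals $(q+a)f(w+a-b)$. Splitting $q+a$ gives two sums:
$$\mathcal{I}^{(2)}[h](q,z)=q\sum_{a,b}c_ac_bf(w+a-b)+\sum_{a,b}a\,c_ac_bf(w+a-b).$$

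For the first sum, observe that $\sum_{a,b}c_ac_bf(w+a-b)=\mathcal{I}^{(2)}[\tilde h](q,z)$ with $\tilde h(x,y)=f(x-y)$. By \Cref{lemma:f_+_-_increments} this equals $\mathfrak{I}^{(4)}[f](w)$, which produces the term $q\,\mathfrak{I}^{(4)}[f](q-z)$.

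For the second sum, only $a=\pm1$ contribute, and there $c_a=1$. The sum therefore reduces to
$$\sum_b c_b\big(f(w+1-b)-f(w-1-b)\big).$$
Expanding over $b\in\{-1,0,1\}$ with weights $1,-2,1$ gives
$$[f(w+2)-f(w)]-2[f(w+1)-f(w-1)]+[f(w)-f(w-2)].$$
The two $f(w)$ terms cancel, leaving $f(w+2)-2f(w+1)+2f(w-1)-f(w-2)$. This is exactly $\mathfrak{I}^{(3)}[f](w)$ by \eqref{eq:increments_1_2_3_4}.

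There is no real obstacle here. The only point needing care is the sign and index bookkeeping in the second sum, namely checking that the cancellation of the $f(w)$ terms leaves precisely the centred third-order increment with the stated sign convention.
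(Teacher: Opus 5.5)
Your proof is correct and is essentially the paper's argument. The paper expands all nine stencil terms of $\mathcal{I}^{(2)}[h](q,z)$ and regroups them as $q$ times $\mathfrak{I}^{(4)}[f](q-z)$ plus a $q$-free remainder equal to $\mathfrak{I}^{(3)}[f](q-z)$; your product form $\sum_{a,b}c_ac_b\,h(q+a,z+b)$ and your appeal to \Cref{lemma:f_+_-_increments} for the $q$-part organise the same computation more tidily, and your reading of the prefactor $i$ as $q$ matches what the paper's proof actually derives.
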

\begin{proof}
From the definition, we have
\begin{align*}
&\mathcal{I}^{(2)}[h](q,z) \\
&= 4q f(q-z) + (q+1)f(q-z) + (q+1)f(q-z+2) + (q-1)f(q-z-2) + (q-1)f(q-z) \\
&\quad -2\Big(qf(q-z-1) + qf(q-z+1) + (q-1)f(q-z-1)+(q+1)f(q-z+1)\Big)\\
&= q\Big(6f(q-z) + f(q-z+2) + f(q-z-2) - 4f(q-z-1) - 4f(q-z+1)\Big)\\
&\quad + \Big(f(q-z+2) - f(q-z-2) + 2f(q-z-1) - 2f(q-z+1)\Big)\\
&= q \mathfrak{I}^{(4)}[f](q-z) + \mathfrak{I}^{(3)}[f](q-z).\tag*{\qedhere}
\end{align*}
\end{proof}
\begin{lemma}
\label[lemma]{lemma:minimum_increment_lemma}
Suppose that $h(x,y)=(x \land y) f(x-y)$ and that $f$ is symmetric. Then, we have 
\begin{equation*}
\mathcal{I}^{(2)}[h](q,z)=\begin{cases}
(q\land z)\mathfrak{I}^{(4)}[f](|q-z|) -\mathfrak{I}^{(3)}[f](|q-z|)+ 4f(1)-2f(2), &q=z,\\
(q \land z)\mathfrak{I}^{(4)}[f](|q-z|)-\mathfrak{I}^{(3)}[f](|q-z|)-f(1), &|q-z|=1,\\
(q\land z) \mathfrak{I}^{(4)}[f](|q-z|) - \mathfrak{I}^{(3)}[f](|q-z|), &|q-z|\geq2.
\end{cases}
\end{equation*}
\end{lemma}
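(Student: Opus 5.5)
The plan is to reduce the lemma to \Cref{result:discrete_product_rule} and \Cref{lemma:f_+_-_increments}. The key step is to split the minimum as
\begin{equation*}
x \land y = x - (x-y)_+ ,
\end{equation*}
where $u_+ = \max(u,0)$. This gives
\begin{equation*}
h(x,y) = x f(x-y) - g(x-y), \qquad g(u) := u_+ f(u).
\end{equation*}
Since $\mathcal{I}^{(2)}$ is linear, the two pieces can be handled separately.

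\emph{First piece.} Applying \Cref{result:discrete_product_rule} to $x f(x-y)$ gives $q\,\mathfrak{I}^{(4)}[f](q-z) + \mathfrak{I}^{(3)}[f](q-z)$. (The index written $i$ in the statement of that lemma is the first argument $q$, as its proof shows.)

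\emph{Second piece.} Applying \Cref{lemma:f_+_-_increments} to $g(x-y)$ gives $\mathfrak{I}^{(4)}[g](q-z)$. Together,
\begin{equation*}
\mathcal{I}^{(2)}[h](q,z) = q\,\mathfrak{I}^{(4)}[f](w) + \mathfrak{I}^{(3)}[f](w) - \mathfrak{I}^{(4)}[g](w), \qquad w = q-z,
\end{equation*}
valid for all $q,z$ without any case distinction.

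\emph{Parity.} Because $f$ is even, $\mathfrak{I}^{(4)}[f]$ is even, so $\mathfrak{I}^{(4)}[f](w) = \mathfrak{I}^{(4)}[f](|w|)$. Likewise $\mathfrak{I}^{(3)}[f]$ is odd, so $\mathfrak{I}^{(3)}[f](w) = \operatorname{sign}(w)\,\mathfrak{I}^{(3)}[f](|w|)$, and in particular $\mathfrak{I}^{(3)}[f](0)=0$. Both facts follow directly from the explicit formulas \eqref{eq:increments_1_2_3_4}.

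The remaining work is a case analysis in $w$, using that $\mathfrak{I}^{(4)}[g](w)$ only involves $g$ at $w-2,\dots,w+2$ and that $g$ vanishes on $(-\infty,0]$.

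\emph{Case $w \le -2$.} Then $\mathfrak{I}^{(4)}[g](w)=0$ and $q = q \land z$. The formula reduces to $(q\land z)\,\mathfrak{I}^{(4)}[f](|w|) - \mathfrak{I}^{(3)}[f](|w|)$, as claimed.

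\emph{Case $w = -1$.} Only $g(1)=f(1)$ survives, so the correction is $-f(1)$. Again $q = q\land z$.

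\emph{Case $w = 0$.} Here $\mathfrak{I}^{(4)}[g](0) = g(2) - 4g(1) = 2f(2) - 4f(1)$ and $\mathfrak{I}^{(3)}[f](0)=0$. This yields the extra term $4f(1) - 2f(2)$.

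\emph{Case $w \ge 1$.} Now $q = (q\land z) + w$, so the term $w\,\mathfrak{I}^{(4)}[f](w)$ must be absorbed. I would handle this case by the symmetry $h(x,y)=h(y,x)$, which holds because $f$ is even. Together with $\mathcal{I}^{(2)}[h](q,z) = \mathcal{I}^{(2)}[h](z,q)$, which is immediate from the symmetric definition \eqref{eq:second_order_incremental_operator_d_2}, this reduces $w\ge 1$ to the already treated cases $w \le -1$.

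The only real obstacle is the bookkeeping in the boundary cases $w\in\{-1,0,1\}$, and in particular making sure the constants $4f(1)-2f(2)$ and $-f(1)$ come out with the right signs. As a safeguard, I would cross-check the case $w=1$ directly: expanding $\mathfrak{I}^{(4)}[g](1) = 3f(3) - 8f(2) + 6f(1)$ should reproduce $-\mathfrak{I}^{(3)}[f](1) - f(1)$ after subtracting $\mathfrak{I}^{(4)}[f](1)$, which is consistent with the symmetry reduction.
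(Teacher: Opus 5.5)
Your proof is correct, but it takes a different route from the paper's.

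\textbf{The paper's route.} The paper expands the nine-point stencil of $\mathcal{I}^{(2)}$ with every minimum written out and treats the three regimes separately.
\begin{enumerate}
\item For $|q-z|\geq 2$, all minima in the stencil resolve to the same coordinate. The discrete product rule (\Cref{result:discrete_product_rule}) then applies directly, with $x$ and $y$ swapped via the symmetry of $f$ when $q>z$.
\item The cases $q=z$ and $|q-z|=1$ are checked by brute-force expansion of the stencil, with $z=q-1$ left as ``similarly''.
\end{enumerate}

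\textbf{Your route.} You use the global splitting $x\land y=x-(x-y)_+$. By linearity and \Cref{lemma:f_+_-_increments}, this gives the single identity $\mathcal{I}^{(2)}[h](q,z)=q\,\mathfrak{I}^{(4)}[f](w)+\mathfrak{I}^{(3)}[f](w)-\mathfrak{I}^{(4)}[g](w)$, with $w=q-z$ and $g(u)=u_+f(u)$, valid for all $q,z$.
\begin{enumerate}
\item The case analysis then only asks which of $g(w-2),\dots,g(w+2)$ are nonzero.
\item The regime $w\geq 1$ follows from the symmetry $h(x,y)=h(y,x)$, the invariance of the stencil under swapping coordinates, and the symmetry in $(q,z)$ of the claimed right-hand side.
\end{enumerate}

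\textbf{Checks.} Your boundary values are right: $\mathfrak{I}^{(4)}[g](-1)=f(1)$ and $\mathfrak{I}^{(4)}[g](0)=2f(2)-4f(1)$. Your cross-check at $w=1$ gives $-f(3)+2f(2)-2f(0)$ on both sides.

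\textbf{What each approach buys.} In your approach the extra constants $4f(1)-2f(2)$ and $-f(1)$ appear naturally, as fourth differences of the kink function $u_+f(u)$ near $u=0$. There are no nine minima to track, and the same trick carries over to higher-order stencils. The paper's route is more pedestrian but fully explicit, at the cost of repeated stencil expansions.

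You could even skip the symmetry step. For $w\geq 2$, $g(u)=uf(u)$ at all relevant points, and the one-dimensional rule $\mathfrak{I}^{(4)}[uf](w)=w\,\mathfrak{I}^{(4)}[f](w)+2\,\mathfrak{I}^{(3)}[f](w)$ gives the claim directly.
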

\begin{proof}
From the definition, we have
\begin{equation}
\label{eq:second_order_general_mini_rep}
\begin{aligned}
    \mathcal{I}^{(2)}[h](q,z) 
    &= 4(q \land z)f(q, z) 
    \\&\qquad+ ((q+1) \land (z+1))f(q+1,z+1) + ((q+1) \land (z-1))f(q+1, z-1) \\
    &\qquad + ((q-1) \land (z+1))f(q-1,z+1) + ((q-1) \land (z-1))f(q-1, z-1) \\
    &\qquad - 2\Big((q \land (z+1))f(q,z+1) + (q \land (z-1))f(q, z-1) \\
    &\qquad\qquad+ ((q-1)\land z)f(q-1,z)+ ((q+1)\land z)f(q+1,z)\Big)\\
    &= 4(q \land z)f(q-z) 
    \\&\qquad+ ((q+1) \land (z+1))f(q-z) + ((q+1) \land (z-1))f(q-z+2) \\
    &\qquad + ((q-1) \land (z+1))f(q-z-2) + ((q-1) \land (z-1))f(q-z) \\
    &\qquad - 2\Big((q \land (z+1))f(q-z-1) + (q \land (z-1))f(q-z+1) \\
    &\qquad\qquad+ ((q-1)\land z)f(q-z-1)+ ((q+1)\land z)f(q-z+1)\Big).
\end{aligned}
\end{equation}
\begin{case}[$|q-z| \geq 2$.]
    Say that $q+1 \leq z-1$. Then, all the $z$ in the minima in \eqref{eq:second_order_general_mini_rep} will simply vanish and the result will reduce to the case $x \leq y$ with $h(x,y)=x f (x-y)$. Similarly, since we have assumed $f$ to be symmetric, the case $z+1 \leq q-1$ reduces to the analysis for $y \leq x$ and $h(x,y)=yf(y-x)$. Thus, whenever $|q-z|\geq 2$, \Cref{result:discrete_product_rule} yields the representation of the second-order increments:
    \begin{equation}
    \label{eq:two_cases_representation_second_order_increment}
    \mathcal{I}^{(2)}[h](q,z)= 
        \begin{cases}
        q \mathfrak{I}^{(4)}[f](q-z) + \mathfrak{I}^{(3)}[f](q-z), &q+2 \leq z,\\
        z \mathfrak{I}^{(4)}[f](z-q) + \mathfrak{I}^{(3)}[f](z-q), &z+2 \leq q.
        \end{cases}
    \end{equation}
    Note that since the third order increment is an odd increment and the fourth order increment is even, we have the property
    \begin{equation}
    \label{eq:oddness_third_order_increment}
    \begin{aligned}
        \mathfrak{I}^{(3)}[f](q-z)&=f(q-z+2)-2f(q-z+1)+2f(q-z-1)-f(q-z-2)\\
        &=f(z-q-2)-2f(z-q-1)+2f(z-q+1)-f(z-q+2)\\
        &=-\mathfrak{I}^{(3)}[h](z-q),\\
        \mathfrak{I}^{(4)}[f](q-z)&= f(q-z+2)-4f(q-z+1) + 6f(q-z)-4f(q-z-1)+f(q-z-2)\\
        &=f(z-q-2)-4f(z-q-1) + 6f(z-q)-4f(z-q+1)+f(z-q+2)\\
        &= f(z-q+2)-4f(z-q+1) + 6f(z-q)-4f(z-q-1)+f(z-q-2)\\
        &= \mathfrak{I}^{(4)}[f](z-q).
    \end{aligned}
    \end{equation}
    where we have used the symmetry of $f$.
    Consequently, \eqref{eq:oddness_third_order_increment} allows us derive the following more compact representation for \eqref{eq:two_cases_representation_second_order_increment}:
    \begin{equation}
    \begin{aligned}
        \mathcal{I}^{(2)}[h](q,z)&= 
            \begin{cases}
            q \mathfrak{I}^{(4)}[f](q-z) + \mathfrak{I}^{(3)}[f](q-z), &q+2 \leq z,\\
            z \mathfrak{I}^{(4)}[f](z-q) + \mathfrak{I}^{(3)}[f](z-q), &z+2 \leq q,
            \end{cases}\\
            &= (q \land z ) \mathfrak{I}^{(4)}[f](|q-z|)-\mathfrak{I}^{(3)}[f](|q-z|).
    \end{aligned}
    \end{equation}
    
\end{case}
\begin{case}[$z=q$] We have that
\begin{equation*}
\begin{aligned}
    \mathcal{I}^{(2)}[h](q, q) 
    &= 4q f(0) + (q+1)f(0) + (q-1)f(2) + (q-1)f(-2) + (q-1)f(0) \\
    & \qquad - 2\Big(q f(-1) + (q-1)f(1) + (q-1)f(-1)+ qf(1)\Big)\\
    &= q \left( 6f(0) + f(2) + f(-2) -4f(-1)-4f(1)\right) - f(2) - f(-2) +2f(1)+2f(-1)\\
    &=q (6 f(0)+ 2f(2) -8f(1)) + 4f(1)-2f(2).
\end{aligned}
\end{equation*}
We also have
\begin{equation*}
\begin{aligned}
\mathfrak{I}^{(4)}[f](0) &= f(2)-4f(1)+6f(0) -4f(-1) +f(-2)=6f(0)+2f(2)-8f(1)\\
\mathfrak{I}^{(3)}[f](0) &=0,
\end{aligned}
\end{equation*}
yielding 
\begin{equation*}
\begin{aligned}
 \mathcal{I}^{(2)}[h](q, q) &= q\mathfrak{I}^{(4)}[f](0) + 4f(1)-2f(2)\\
 &=(q\land z)\mathfrak{I}^{(4)}[f](|q-z|) -\mathfrak{I}^{(3)}[f](|q-z|)+ 4f(1)-2f(2), \quad q=z.
 \end{aligned}
\end{equation*}
\end{case}
\begin{case}[$|q-z|=1$] Assume first that $z=q+1$. We observe
\begin{equation*}
\begin{aligned}
    \mathcal{I}^{(2)}[h](q, q+1) &= 4qf(-1) + (q+1)f(-1) + qf(1)+ (q-1)f(-3) + (q-1)f(-1)\\
    &\qquad- 2\Big(qf(-2) + qf(0) + (q-1)f(-2)+ (q+1)f(0)\Big)\\
    &= q(7 f(1) +f(3)-4f(2)-4f(0)) + 2f(2)-2f(0) -f(3).
\end{aligned}
\end{equation*}
Note that
\begin{equation*}
\begin{aligned}
\mathfrak{I}^{(4)}[f](1)&=7f(1) + f(3) - 4f(2)-4f(0),\\
\mathfrak{I}^{(3)}[f](1)&=f(3)-f(1) +2f(0)-2f(2),
\end{aligned}
\end{equation*}
leading to
\begin{equation*}
 \mathcal{I}^{(2)}[h](q, q+1)=q\mathfrak{I}^{(4)}[f](1)-\mathfrak{I}^{(3)}[f](1)-f(1).
\end{equation*}
Similarly, we can derive the case $z=q-1$ leading in total to 
\begin{equation*}
     \mathcal{I}^{(2)}[h](q,z)=(q \land z)\mathfrak{I}^{(4)}[f](|q-z|)-\mathfrak{I}^{(3)}[f](|q-z|)-f(1), \quad |q-z|=1. \qedhere
\end{equation*}
\end{case}
\end{proof}
\begin{lemma}
\label[lemma]{result:increment_sine_absolute}
Let $h(x,y)=f(|x-y|)$. Then, we have
\begin{equation*}
\mathcal{I}^{(2)}[h](q,z)=\begin{cases}
6f(0)-8f(1)+2f(2), & |q-z|=0,\\
7f(1)+f(3)-4f(2)-4f(0), & |q-z|=1,\\
\mathfrak{I}^{(4)}[f](|q-z|), & |q-z| \geq 2.
\end{cases}
\end{equation*}
\end{lemma}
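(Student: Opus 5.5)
The plan is to expand the operator $\mathcal{I}^{(2)}$ from \eqref{eq:second_order_incremental_operator_d_2} directly with $h(x,y)=f(|x-y|)$. Every one of the nine terms depends only on a shifted difference. Writing $d=q-z$, the arguments are:
\begin{itemize}
\item the centre term, with coefficient $4$, gives $|d|$;
\item the diagonal corners $(q\pm1,z\pm1)$ both give $|d|$;
\item the anti-diagonal corners give $|d+2|$ and $|d-2|$;
\item the four edge terms, each with coefficient $-2$, give $|d-1|$ twice and $|d+1|$ twice.
\end{itemize}
Hence
\begin{equation*}
\mathcal{I}^{(2)}[h](q,z)=6f(|d|)+f(|d+2|)+f(|d-2|)-4f(|d+1|)-4f(|d-1|).
\end{equation*}
Equivalently, $\mathcal{I}^{(2)}[h](q,z)=\mathfrak{I}^{(4)}[\tilde f](d)$ with $\tilde f(x)\coloneqq f(|x|)$, which is exactly the second identity of \Cref{lemma:f_+_-_increments} applied to the symmetric function $\tilde f$.

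Since $\tilde f$ is even, the expression is invariant under $d\mapsto -d$, so it suffices to treat $d=|q-z|\geq0$. I split into the three cases of the statement.

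\emph{Case $|q-z|\geq2$.} All the arguments $d,d\pm1,d\pm2$ are nonnegative, so the absolute values drop. The expression is then literally $f(d+2)-4f(d+1)+6f(d)-4f(d-1)+f(d-2)=\mathfrak{I}^{(4)}[f](|q-z|)$, using the definition in \eqref{eq:increments_1_2_3_4}.

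\emph{Case $d=0$.} The arguments are $0,2,2,1,1$, which gives $6f(0)+2f(2)-8f(1)$.

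\emph{Case $d=1$.} The arguments are $|1|=1$, $|3|=3$, $|-1|=1$, $|2|=2$ and $|0|=0$. This gives $6f(1)+f(3)+f(1)-4f(2)-4f(0)=7f(1)+f(3)-4f(2)-4f(0)$.

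There is no genuine obstacle here. The only care needed is the bookkeeping of which of the nine lattice neighbours produce which shifted difference; in particular, both $(q+1,z+1)$ and $(q-1,z-1)$ give the unshifted difference, so the centre coefficient totals $6$. The case $d=1$ is where the reflection $|-1|=1$ merges two terms into the coefficient $7$. I would double-check these coefficients against the corresponding computations already carried out in the proof of \Cref{lemma:minimum_increment_lemma}, which produce the same constants $6,-8,2$ and $7,1,-4,-4$.
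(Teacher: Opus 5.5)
Your proof is correct and follows essentially the paper's argument: expand the nine-point stencil $\mathcal{I}^{(2)}$ directly with $h(x,y)=f(|x-y|)$, then split into the cases $|q-z|\geq 2$, $q=z$ and $|q-z|=1$. Your single general expansion $6f(|q-z|)+f(|q-z+2|)+f(|q-z-2|)-4f(|q-z+1|)-4f(|q-z-1|)=\mathfrak{I}^{(4)}[\tilde f](q-z)$, with $\tilde f=f(|\cdot|)$ even, is slightly tidier than the paper's version because it handles both signs of $q-z$ at once, whereas the paper writes out only $z=q+1$ explicitly.
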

\begin{proof}
\begin{case}[$|q-z|\geq 2$]
Assume first that $q>z$. By definition \eqref{eq:second_order_incremental_operator_d_2} we obtain
    \begin{equation*}
        \begin{aligned}
            \mathcal{I}^{(2)}[h](q,z)&=f(|q-z+2|)-4f(|q-z+1|)+6f(|q-z|)-4f(|q-z-1|)+f(|q-z-2|)\\
            &=f(q-z+2)-4f(q-z+1)+6f(q-z)-4f(q-z-1) + f(q-z-2) \\
            &=\mathfrak{I}^{(4)}[f](q-z).
        \end{aligned}
    \end{equation*}
    If $z>q$, we similarly obtain by the symmetry of $|\cdot|$ that $\mathcal{I}^{(2)}[h](q-z)=\mathfrak{I}^{(4)}[f](z-q)$. Thus, in total this amounts to
    \begin{equation*}
        \mathcal{I}^{(2)}[h](q,z)=\mathfrak{I}^{(4)}[f](|q-z|). 
   \end{equation*}
\end{case}
\begin{case}[$q=z$]
We have
\begin{equation*}
\begin{aligned}
\mathcal{I}^{(2)}[h](q,z)=6h(0)-8h(1)+2h(2).
\end{aligned}
\end{equation*}
\end{case}
\begin{case}[$z=q+1$]
It holds directly by \eqref{eq:second_order_incremental_operator_d_2} that
\begin{equation*}
\begin{aligned}
\mathcal{I}^{(2)}[h](q,z)=7f(1)+f(3)-4f(2)-4f(0).
\end{aligned}\qedhere
\end{equation*} 
\end{case}
\end{proof}

\subsection{Auxiliary results}
\label{sec: ax_results_g}

\begin{lemma}
\label[lemma]{result:regularity_g}
Consider
\begin{equation*}
\begin{aligned}
\gtemp(\fv) &=\sin^{4}(|\fv|/2)|\fv|^{\beta-d-2}, \quad \fv \in \mathbb{R}^{d},\\
\gspzero(\fv) &= \sin^{4}(\rho \cdot \fv/2)|\fv|^{\beta-d-2}, \quad \fv \in \mathbb{R}^{d}.
\end{aligned}
\end{equation*}
We have $\gtemp, \gspzero \in L^{1}(\mathbb{R}^{d})$, $\nabla \gtemp, \nabla \gspzero \in L^{1}(\mathbb{R}^{d})$ and $\partial_i \partial_j \gtemp,  \partial_i \partial_j \gspzero \in L^{1}(\mathbb{R}^{d})$ for all $i,j=1,\dots,d$.
\end{lemma}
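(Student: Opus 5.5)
The plan is to split $\mathbb{R}^d$ into the unit ball $B=\{|\fv|\le 1\}$ and its complement, and to bound every function pointwise by radial majorants whose integrability follows from polar coordinates: $\int_B |\fv|^{\gamma}\mathrm{d}\fv<\infty$ iff $\gamma>-d$, and $\int_{B^c}|\fv|^{\gamma}\mathrm{d}\fv<\infty$ iff $\gamma<-d$. The only inputs are the elementary bounds $|\sin(x/2)|\le \min(1,|x|/2)$ and $|\rho\cdot\fv|\le|\fv|$. The latter gives $|\sin(\rho\cdot\fv/2)|\le \min(1,|\fv|/2)$, so $\gspzero$ and $\gtemp$ admit the same majorants.

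For the functions themselves, near the origin we have $|g(\fv)|\lesssim |\fv|^{4}|\fv|^{\beta-d-2}=|\fv|^{\beta-d+2}$, which is integrable since $\beta+2>0$. Away from the origin, $|g(\fv)|\le|\fv|^{\beta-d-2}$, which is integrable since $\beta-2<0$.

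For the first derivatives, I will apply the product rule to $s(\fv)\,p(\fv)$. Here $s(\fv)=\sin^4(\cdot/2)$ is evaluated at $|\fv|$ or at $\rho\cdot\fv$, and $p(\fv)=|\fv|^{\beta-d-2}$. We have $|\nabla p|\lesssim|\fv|^{\beta-d-3}$. For the sine factor, $\nabla s = 2\sin^3(\cdot/2)\cos(\cdot/2)\,\nabla(\cdot)$, where $\nabla|\fv|=\fv/|\fv|$ and $\nabla(\rho\cdot\fv)=\rho$ are both bounded. Hence $|\nabla s|\lesssim\min(1,|\fv|^3)$. This gives $|\nabla g|\lesssim \min(1,|\fv|^{3})|\fv|^{\beta-d-2}+\min(1,|\fv|^4)|\fv|^{\beta-d-3}$. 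Near the origin this is $\lesssim|\fv|^{\beta-d+1}$, integrable since $\beta+1>0$. At infinity it is $\lesssim|\fv|^{\beta-d-2}$, integrable as before.

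For the second derivatives, I will expand $\partial_i\partial_j(sp)=(\partial_i\partial_j s)p+\partial_i s\,\partial_j p+\partial_j s\,\partial_i p+s\,\partial_i\partial_j p$ and use $|\partial_i\partial_j p|\lesssim|\fv|^{\beta-d-4}$. The term needing care is $\partial_i\partial_j s$ in the radial case, because $\nabla^2|\fv|$ is of size $|\fv|^{-1}$. Explicitly, $\partial_i\partial_j s=(\sin^4)''(|\fv|/2)\tfrac{\fv_i\fv_j}{4|\fv|^2}+(\sin^4)'(|\fv|/2)\tfrac12\partial_i\partial_j|\fv|$. Since $(\sin^4)''(x)=O(x^2)$ and $(\sin^4)'(x)=O(x^3)$ near $0$, the first term is $O(\min(1,|\fv|^2))$ and the second is $O(|\fv|^3\cdot|\fv|^{-1})=O(|\fv|^2)$ near $0$. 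At infinity both terms are $O(1)$ once $|\fv|\ge1$. So $|\partial_i\partial_j s|\lesssim\min(1,|\fv|^2)$ in both cases; in the $\rho$-case this is immediate, since the Hessian of $\rho\cdot\fv$ vanishes. Collecting the four terms, near $0$ they are $\lesssim|\fv|^{2}|\fv|^{\beta-d-2}+|\fv|^{3}|\fv|^{\beta-d-3}+|\fv|^4|\fv|^{\beta-d-4}\lesssim|\fv|^{\beta-d}$, integrable since $\beta>0$. At infinity every term is $\lesssim|\fv|^{\beta-d-2}$.

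Since $\beta\in(0,2)$ gives $\beta-d<0$ and $\beta-d-2<-d$, all majorants are integrable. This is also the main point where the hypothesis is used: the second-derivative bound near the origin is exactly borderline and needs $\beta>0$. The only other subtlety is that $g$ is smooth away from $0$, and at $0$ the weak derivatives agree with the classical ones because the majorants are locally integrable. I will justify this with a standard cutoff argument: excise a ball of radius $\varepsilon$ and note that the boundary terms vanish, since $|g|\,\varepsilon^{d-1}\lesssim\varepsilon^{\beta+1}\to0$ and $|\nabla g|\,\varepsilon^{d-1}\lesssim\varepsilon^{\beta}\to0$.
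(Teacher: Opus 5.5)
Your proof is correct and follows essentially the same route as the paper. Both split at $|\fv|=1$, use the fourth-order vanishing of the $\sin^4$ factor against the singularity of $|\fv|^{\beta-d-2}$, and power-count in polar coordinates, with $\beta>0$ entering exactly at the second derivatives; there you should write $\beta-d>-d$ rather than $\beta-d<0$. Your product-rule majorants improve on the paper's radial-profile computation in two ways: they treat the non-radial $\gspzero$ directly through $|\rho\cdot\fv|\le|\fv|$, where the paper only says ``analogous'', and your cutoff argument identifies the classical derivatives with the weak ones at the origin, a step the paper leaves implicit.
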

\begin{proof}
We prove the result for $\gtemp$. The proof for $\gspzero$ is analogous. 
\begin{step}[$\gtemp \in L^{1}(\mathbb{R}^{d})$]
We observe that with $\sarea$ being the surface measure of $\mathbb{S}^{d-1}$: 
\begin{equation*}
\begin{aligned}
\int_{\mathbb{R}^{d}}|\gtemp(\fv)|\mathrm{d}\fv
&=\sarea \int_{0}^{\infty}\sin^{4}(r/2)r^{\beta-d-2}r^{d-1}\mathrm{d}r\\
&= \sarea \int_{0}^{\infty}\sin^{4}(r/2)r^{\beta-3}\mathrm{d}r\\
&\leq \sarea \frac{1}{2^{4}}\int_{0}^{1}r^{\beta+1}\mathrm{d}r + \sarea \int_{1}^{\infty}r^{\beta-3}\mathrm{d}r <\infty,
\end{aligned}
\end{equation*}
since $\beta-3<-1$ and $\beta +1 > -1$ as $\beta \in (0,2 \land d)$.
\end{step}
\begin{step}[$\nabla \gtemp \in L^{1}(\mathbb{R}^{d})$] 
We may rewrite $\gtemp(\fv)=\widetilde{\gtemp}(|\fv|)$ with $\widetilde{\gtemp}(r)=\sin^{4}(r/2)r^{\beta-2-d}$. By the chain rule, we have $\partial_{\fv_i} \gtemp(\fv)=\widetilde{\gtemp}'(|\fv|)\partial_{\fv_i} |\fv|$. In particular, we have $\partial_{\fv_i} |\fv|=\frac{\fv_i}{|\fv|}$ such that $\partial_{\fv_i} \gtemp(\fv) = \widetilde{\gtemp}'(|\fv|)\frac{\fv_i}{|\fv|}$. As a consequence $\nabla \gtemp (\fv)=\widetilde{\gtemp}'(|\fv|) \frac{\fv}{|\fv|}$ and $|\nabla \gtemp(\fv)|=|\widetilde{\gtemp}'(|\fv|)|$. Next, we observe using the product rule that 
\begin{equation*}
\widetilde{\gtemp}'(r)=2 \cos\left(\frac{r}{2}\right) \sin^{3}\left(\frac{r}{2}\right) \, r^{{\beta} - d - 2} + \left({\beta} - d - 2\right) \sin^{4}\left(\frac{r}{2}\right) \, r^{{\beta} - d - 3}. 
\end{equation*}
Next, we would like to show that $\nabla \gtemp \in L^{1}(\mathbb{R}^{d})$. As a first step, we obtain the upper bound
\begin{equation*}
\begin{aligned}
\int_{\mathbb{R}^{d}} |\nabla \gtemp (\fv)|\mathrm{d}\fv&=\int_{\mathbb{R}^{d}}|\widetilde{\gtemp}'(|\fv|)|\mathrm{d}\fv= \sarea \int_{0}^{\infty} |\widetilde{\gtemp}'(r)|r^{d-1} \mathrm{d}r\\
&=\sarea \int_{0}^{\infty} \left|2 \cos\left(\frac{r}{2}\right) \sin^{3}\left(\frac{r}{2}\right) \, r^{{\beta} - d - 2} + \left({\beta} - d - 2\right) \sin^{4}\left(\frac{r}{2}\right) \, r^{{\beta} - d - 3} \right|r^{d-1}\mathrm{d}r\\
&\leq 2\sarea \int_{0}^{\infty} |\sin^{3}(r/2)|r^{\beta-3}\mathrm{d}r + |\beta-d-2| \sarea\int_{0}^{\infty}|\sin^{4}(r/2)|r^{\beta-4}\mathrm{d}r.
\end{aligned}
\end{equation*}
Next, we can split each of the integrals into two parts, corresponding to the region near the origin and the tail as in the first step and conclude that $\nabla \gtemp \in L^{1}(\mathbb{R}^{d})$.
\end{step}
\begin{step}[$\partial_{i} \partial_{j} g \in L^{1}(\mathbb{R}^{d})$]
We have $\gtemp(\fv)=\widetilde{\gtemp}(|\fv|)$. Thus, the second derivatives satisfy
\begin{equation*}
\partial_{i,j}^{2}\gtemp(\fv) = \widetilde{\gtemp}''(|\fv|)\frac{\fv_i \fv_j}{|\fv|^{2}} + \widetilde{\gtemp}'(|\fv|)\left(\frac{\mathbbm{1}(i=j)}{|\fv|}-\frac{\fv_i \fv_j}{|\fv|^{3}}\right),
\end{equation*}
and consequently
\begin{equation*}
|\partial_{i,j}^{2}\gtemp(\fv)| \lesssim |\widetilde{\gtemp}''(|\fv|)| + \frac{|\widetilde{\gtemp}'(|\fv|)|}{|\fv|}.
\end{equation*}
Clearly the mapping $\fv \mapsto {|\widetilde{\gtemp}'(|\fv|)|}{|\fv|^{-1}}$ is already in $L^{1}(\mathbb{R}^{d})$, since $\beta-1 > -1$ and $\beta-3<-1$. The same can be observed for the second-derivative once we explicitly compute
\begin{equation*}\begin{aligned}
\widetilde{\gtemp}(r)&=-\sin^{4}\left(\frac{r}{2}\right) \, r^{{\beta} - d - 2} + 3 \cos^{2}\left(\frac{r}{2}\right) \sin^{2}\left(\frac{r}{2}\right) \, r^{{\beta} - d - 2} \\
&\quad+ 4 \left({\beta} - d - 2\right) \cos\left(\frac{r}{2}\right) \sin^{3}\left(\frac{r}{2}\right) \, r^{{\beta} - d - 3} + \left({\beta} - d - 3\right) \left({\beta} - d - 2\right) \sin^{4}\left(\frac{r}{2}\right) \, r^{{\beta} - d - 4},
\end{aligned}
\end{equation*}
and passing to polar coordinates. \qedhere
\end{step}
\end{proof}

\begin{lemma}
\label[lemma]{result:uniform_integrability_f_lambda}
Consider the function $g_{\mathrm{sp, \lambda}}$ defined in \eqref{eq:g_sp_lambda}:
\begin{equation*}
\gsplambda(\fv)= (1-\mathrm{sinc}(2 t\sqrt{\vartheta}\lambda^{-1} |\fv|)\sin^{4}(\rho \cdot \fv / 2) |\fv|^{\beta-d-2}.
\end{equation*}
Then, the sequences $(g_{\mathrm{sp}, \lambda_n})_{n \in \mathbb{N}}$ and $(\nabla g_{\mathrm{sp}, \lambda_n})_{n \in \mathbb{N}}$ are uniformly integrable.
\end{lemma}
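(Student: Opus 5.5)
The approach is to exhibit a single integrable majorant that is independent of $\lambda$. Pointwise domination by an $L^1(\mathbb{R}^d)$ function gives uniform integrability at once, both in the sense of uniformly small tails and in the sense of uniformly small integrals over sets of small measure. Write $a\coloneqq 2t\sqrt{\vartheta}\lambda^{-1}$ and $\psi_a(\fv)\coloneqq 1-\mathrm{sinc}(a|\fv|)$, so that $\gsplambda=\psi_a\,\gspzero$. Since $|\mathrm{sinc}|\le 1$, we get $0\le\psi_a\le 2$, and hence $|\gsplambda|\le 2|\gspzero|$. Because $\gspzero\in L^1(\mathbb{R}^d)$ by \Cref{result:regularity_g}, the first claim follows immediately.

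For the gradients, the product rule gives $\nabla\gsplambda=\psi_a\nabla\gspzero+\gspzero\nabla\psi_a$. The first term is bounded by $2|\nabla\gspzero|\in L^1$, again by \Cref{result:regularity_g}. The second term needs more care, because $\nabla\psi_a(\fv)=-a\,\mathrm{sinc}'(a|\fv|)\fv/|\fv|$ carries the factor $a\to\infty$ when $\lambda\to0$. The key observation is that $|x\,\mathrm{sinc}'(x)|=|\cos x-\mathrm{sinc}\,x|\le 2$ for all $x$. It follows that
\begin{equation*}
|\nabla\psi_a(\fv)|=\frac{|a|\fv|\,\mathrm{sinc}'(a|\fv|)|}{|\fv|}\le \frac{2}{|\fv|},
\end{equation*}
uniformly in $a>0$. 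Hence $|\gspzero\nabla\psi_a|\le 2|\fv|^{-1}|\gspzero(\fv)|$, and it remains to check that $|\cdot|^{-1}\gspzero\in L^1(\mathbb{R}^d)$.

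That last integrability check is the only genuine step, and I will handle it with the estimate $\sin^4(\rho\cdot\fv/2)\le \min(1,|\fv|^4/16)$. Passing to polar coordinates, the majorant is bounded by $\sarea\int_0^1 r^{\beta+1}\,\mathrm{d}r+\sarea\int_1^\infty r^{\beta-4}\,\mathrm{d}r<\infty$, since $\beta\in(0,2)$. (This fact was also used implicitly in the proof of \Cref{lemma:covariance_space_time_increments}.)

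Combining the pieces gives the $\lambda$-independent bound $|\nabla\gsplambda|\le 2|\nabla\gspzero|+2|\cdot|^{-1}|\gspzero|\in L^1(\mathbb{R}^d)$, which proves uniform integrability of the gradient sequence. The main obstacle is the $a$-blowup in $\nabla\psi_a$, and the bound on $x\,\mathrm{sinc}'(x)$ is exactly what trades the factor $a$ for $|\fv|^{-1}$. I will also note that $\psi_a$ is smooth (since $\mathrm{sinc}$ is even and analytic), so the product rule is legitimate away from the origin, where $\gspzero$ is smooth as well. The origin is a null set, and the weak-derivative interpretation is consistent with the integration by parts used in \Cref{eq:numerator_control_spatial_t_fixed}, because the boundary terms vanish by the same $r^{\beta+1}$ behaviour near $0$ and the $r^{\beta-3}$ decay at infinity.
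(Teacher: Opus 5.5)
Your proposal is correct and follows essentially the paper's argument: factor $g_{\mathrm{sp},\lambda}=h\,g_{\mathrm{sp}}$ with $0\le h\le 2$, apply the product rule, and use $|\nabla h(\omega)|\lesssim|\omega|^{-1}$ uniformly in $\lambda$ (because $\cos x-\mathrm{sinc}\,x$ is bounded) to reduce everything to $g_{\mathrm{sp}},\nabla g_{\mathrm{sp}},|\cdot|^{-1}g_{\mathrm{sp}}\in L^1(\mathbb{R}^d)$; your explicit $\lambda$-free pointwise majorant makes the uniform-integrability conclusion slightly cleaner than the paper's uniform $L^1$ bounds. Two small remarks. Your constant $2$ for $|\cos x-\mathrm{sinc}\,x|$ is the safe choice, since the paper's claimed bound $1$ is marginally false (the supremum is about $1.06$, near $x\approx 2.74$). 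Also, in polar coordinates the majorant $|\omega|^{-1}g_{\mathrm{sp}}$ behaves like $r^{\beta}$ rather than $r^{\beta+1}$ near the origin, which is still integrable.
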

\begin{proof}
We begin by rewriting $\gsplambda$ as $\gsplambda(\fv)=h_{\mathrm{sp}, \lambda}(\fv)\gspzero(\fv)$ with  $h_{\mathrm{sp}, \lambda}(\fv)=(1-\mathrm{sinc}(2 t\sqrt{\vartheta}\lambda^{-1} |\fv|)$ and $\gspzero(\fv)=\sin^{4}(\rho \cdot \fv/2 )|\fv|^{\beta-d-2}$. The uniform integrability of $(g_{\mathrm{sp}, \lambda_n})_{n \in \mathbb{N}}$ follows analogously to \Cref{result:regularity_g} by additionally noting that $h_{\mathrm{sp}, \lambda}(\fv)\leq 2$ for any $\fv \in \mathbb{R}^{d}$ and $\lambda>0$. Next, we would like to uniformly bound the derivative
\begin{equation*}
\partial_{j} [h_{\mathrm{sp}, \lambda}(\fv)\gspzero(\fv)]=\partial_{j}\gspzero(\fv)h_{\mathrm{sp}, \lambda}(\fv) + \gspzero(\fv)\partial_{j}h_{\mathrm{sp}, \lambda}(\fv).
\end{equation*}
Clearly, we have
\begin{equation*}
\Vert \partial_{j}(\gspzero h_{\mathrm{sp}, \lambda}) \Vert_{L^{1}(\mathbb{R}^{d})} \leq \Vert (\partial_{j}\gspzero)h_{\mathrm{sp}, \lambda}  \Vert_{L^{1}(\mathbb{R}^{d})} + \Vert \gspzero (\partial_{j}h_{\mathrm{sp}, \lambda}) \Vert_{L^{1}(\mathbb{R}^{d})}.
\end{equation*}
The function $h_{\mathrm{sp}, \lambda}$ remains uniformly bounded in $\lambda$ so that once $\partial_{j}\gspzero \in L^{1}(\mathbb{R}^{d})$, we also have
\begin{equation*}
\Vert (\partial_{j}\gspzero)h_{\mathrm{sp}, \lambda}  \Vert_{L^{1}(\mathbb{R}^{d})} \leq \int_{\mathbb{R}^{d}} |\partial_{j}\gspzero(\fv)||h_{\mathrm{sp}, \lambda}(\fv)|\mathrm{d}\fv \leq 2 \Vert \partial_{}\gspzero \Vert_{L^{1}(\mathbb{R}^{d})}. 
\end{equation*}
For the other term, we compute with $a=2t\sqrt{\vartheta}\lambda_n^{-1}$
\begin{equation*}
\partial_{j}h_{\mathrm{sp}, \lambda}(\fv)=- \partial_{j} \mathrm{sinc}(a |\fv|)=-\frac{\fv_j}{|\fv|} \frac{1}{|\fv|}\left(\cos(a|\fv|)-\mathrm{sinc}(a|\fv|)\right).
\end{equation*}
In particular, the difference of the cosine and the $\mathrm{sinc}$-kernel always remains bounded by one, yielding the uniform upper bound
\begin{equation*}
\begin{aligned}
\Vert \gspzero (\partial_{j} h_{\mathrm{sp}, \lambda}) \Vert_{L^{1}(\mathbb{R}^{d})}&\leq \int_{\mathbb{R}^{d}}|\gspzero(\fv)| \left|-\frac{\fv_j}{|\fv|} \frac{1}{|\fv|}\left(\cos(a|\fv|)-\mathrm{sinc}(a|\fv|)\right)\right| \mathrm{d}\fv\\
&\leq \int_{\mathbb{R}^{d}}|\gspzero (\fv)||\fv|^{-1} \frac{|\fv_j|}{|\fv|} \mathrm{d}\fv \leq \int_{\mathbb{R}^{d}} |\gspzero(\fv)||\fv|^{-1}\mathrm{d}\fv < \infty.
\end{aligned}
\end{equation*}
Note that the additional singularity $|\fv|^{-1}$ is still counteracted by the regularity of $\sin^{4}$ around zero.
\end{proof}

\paragraph*{Acknowledgements}
The research of EZ has been partially funded by Deutsche Forschungsgemeinschaft \\(DFG)—SFB1294/1-318763901.

\printbibliography
\end{document}